\tikzset{every picture/.style={line width=0.7pt}}
\newenvironment{enumeratei}{\begin{enumerate}[\upshape (i)]}%
{\end{enumerate}}
\newenvironment{enumerater}{\begin{enumerate}[\upshape (1)]}%
{\end{enumerate}}
\newcommand{\cm}{commutative monoid}
\newcommand{\Vhom}{V-ho\-mo\-mor\-phism}
\newcommand{\lex}{\times_{\mathrm{lex}}}
\newcommand{\bt}{\boxtimes}
\newcommand{\todot}{\overset{\boldsymbol{.}}{\rightarrow}}
\newcommand{\Todot}{\overset{\boldsymbol{.}\ }{\Rightarrow}}
\newcommand{\utr}{\unlhd}
\newcommand{\utrr}{\unlhd_{\mathrm{r}}}
\newcommand{\aspr}{\asymp_{\mathrm{r}}}
\newcommand{\otmF}{\otimes^{\mu}_{\Phi}}
\newcommand{\otlF}{\otimes^{\gl}_{\Phi}}
\newcommand{\Pow}{\mathfrak{P}}
\newcommand{\Powi}{\mathfrak{P}_{\mathrm{inj}}}
\newcommand{\pup}[1]{\textup{(}{#1}\textup{)}}
\newcommand{\lgrp}{$\ell$-group}
\newcommand{\lhom}{$\ell$-ho\-mo\-mor\-phism}
\newcommand{\eqdef}{\overset{\mathrm{def}}{=}}
\newcommand{\rF}{\mathrm{F}}
\newcommand{\xF}{\mathbf{F}}
\newcommand{\Metr}{\mathbf{Metr}}
\newcommand{\Pres}{\mathbf{Pres}}
\newcommand{\Mat}[2]{\operatorname{M}_{{#1}}({#2})}
\newcommand{\IM}[2]{\operatorname{I}_{{#1}}({#2})}
\DeclareMathOperator{\cf}{cf}
\DeclareMathOperator{\rV}{V}
\DeclareMathOperator{\Idp}{Idp}
\DeclareMathOperator{\Subc}{Sub_c}
\DeclareMathOperator{\At}{At}
\DeclareMathOperator{\Ult}{Ult}
\newcommand{\Ultb}{\mathrm{Ult}^{\flat}}
\newcommand{\LL}{\mathbb{L}}
\DeclareMathOperator{\card}{card}
\DeclareMathOperator{\Ids}{Id_{s}}
\DeclareMathOperator{\Max}{Max}
\DeclareMathOperator{\Min}{Min}
\newcommand{\ga}{\alpha}
\newcommand{\gb}{\beta}
\newcommand{\gc}{\gamma}
\newcommand{\gd}{\delta}
\newcommand{\gf}{\varphi}
\newcommand{\gk}{\kappa}
\newcommand{\gl}{\lambda}
\newcommand{\gq}{\theta}
\newcommand{\gs}{\sigma}
\newcommand{\go}{\omega}
\newcommand{\eps}{\varepsilon}
\newcommand{\bga}{\boldsymbol{\alpha}}
\newcommand{\bxi}{\boldsymbol{\xi}}
\newcommand{\gC}{\Gamma}
\newcommand{\gD}{\Delta}
\newcommand{\gL}{\Lambda}
\newcommand{\gO}{\Omega}
\newcommand{\gS}{\Sigma}
\newcommand{\sd}{\mathbin{\smallsetminus}}
\newcommand{\jz}{$(\vee,0)$}
\newcommand{\js}{join-semi\-lat\-tice}
\newcommand{\jzs}{\jz-semi\-lat\-tice}
\newcommand{\jzh}{\jz-ho\-mo\-mor\-phism}
\newcommand{\ajs}{al\-most join-sem\-i\-lat\-tice}
\newcommand{\pjs}{pseu\-do join-sem\-i\-lat\-tice}
\newcommand{\two}{\mathbf{2}}
\newcommand{\ol}[1]{\overline{#1}}
\newcommand{\pI}[1]{\bigl({#1}\bigr)}
\newcommand{\pII}[1]{\Bigl({#1}\Bigr)}
\newcommand{\rI}[1]{\bigl[{#1}\bigr]}
\newcommand{\set}[1]{\{#1\}}
\newcommand{\setm}[2]{\set{{#1}\mid{#2}}}
\newcommand{\vecm}[2]{({#1}\mid{#2})}
\newcommand{\Vecm}[2]{\left({#1}\mid{#2}\right)}
\newcommand{\seq}[1]{\langle{#1}\rangle}
\newcommand{\seql}[1]{{\langle{#1}\rangle}^{\ell}}
\newcommand{\id}{\mathrm{id}}
\newcommand{\dnw}{\mathbin{\downarrow}}
\newcommand{\upw}{\mathbin{\uparrow}}
\newcommand{\sor}{\mathbin{\triangledown}}
\newcommand{\Sor}{\mathbin{\bigtriangledown}}
\newcommand{\es}{\varnothing}
\newcommand{\res}{\mathbin{\restriction}}
\newcommand{\CC}{\mathbb{C}}
\newcommand{\ZZ}{\mathbb{Z}}
\newcommand{\lGrp}{\boldsymbol{\ell}\mathbf{Grp}}
\newcommand{\vNR}{\mathbf{vNRing}}
\newcommand{\Ring}{\mathbf{Ring}}
\newcommand{\DLatz}{\mathbf{DLat_0}}
\newcommand{\SLatz}{\mathbf{SLat_0}}
\newcommand{\Cev}{\mathbf{Cev}}
\newcommand{\Str}{\operatorname{\mathbf{Str}}}
\newcommand{\Bool}{\mathbf{Bool}}
\newcommand{\Boolc}{\mathbf{Bool}^{\mathrm{cstr}}}
\DeclareMathOperator{\Conc}{Con_c}
\DeclareMathOperator{\Id}{Id}
\DeclareMathOperator{\Idc}{Id_c}
\DeclareMathOperator{\Cs}{Cs}
\DeclareMathOperator{\Csc}{Cs_c}
\DeclareMathOperator{\dom}{dom}
\DeclareMathOperator{\rng}{rng}
\newcommand{\fa}{{\mathfrak{a}}}
\newcommand{\fb}{{\mathfrak{b}}}
\newcommand{\fc}{{\mathfrak{c}}}
\newcommand{\cA}{{\mathcal{A}}}
\newcommand{\cB}{{\mathcal{B}}}
\newcommand{\cC}{{\mathcal{C}}}
\newcommand{\cG}{{\mathcal{G}}}
\newcommand{\cI}{{\mathcal{I}}}
\newcommand{\cP}{{\mathcal{P}}}
\newcommand{\cR}{{\mathcal{R}}}
\newcommand{\cS}{{\mathcal{S}}}
\newcommand{\cT}{{\mathcal{T}}}
\newcommand{\cU}{{\mathcal{U}}}
\newcommand{\cV}{{\mathcal{V}}}
\newcommand{\cX}{{\mathcal{X}}}
\numberwithin{equation}{section}
\newtheorem*{stat}{\name}
\newcommand{\name}{testing}
\newenvironment{all}[1]{\renewcommand{\name}{#1}\begin{stat}}
                        {\end{stat}}
\theoremstyle{plain}
\newtheorem{theorem}{Theorem}[section]
\newtheorem{proposition}[theorem]{Proposition}
\newtheorem{corollary}[theorem]{Corollary}
\newtheorem{lemma}[theorem]{Lemma}
\newtheorem{claim}{Claim}
\newtheorem*{sclaim}{Claim}
\theoremstyle{definition}
\newtheorem{definition}[theorem]{Definition}
\newtheorem{notation}[theorem]{Notation}
\theoremstyle{remark}
\newtheorem{remark}[theorem]{Remark}
\newtheorem*{note}{Note}
\newcommand{\qedc}{{\qed}~{\rm Claim~{\theclaim}.}}
\newcommand{\qedsc}{{\qed}~{\rm Claim.}}
\newenvironment{cproof}
{\begin{proof}[Proof of Claim.]}
{\qedc\renewcommand{\qed}{}\end{proof}}
\newenvironment{scproof}
{\begin{proof}[Proof of Claim.]}
{\qedsc\renewcommand{\qed}{}\end{proof}}
\numberwithin{figure}{section}
\numberwithin{table}{section}
\newcommand{\ba}{\boldsymbol{a}}
\newcommand{\bb}{\boldsymbol{b}}
\newcommand{\be}{\boldsymbol{e}}
\newcommand{\bp}{\boldsymbol{p}}
\newcommand{\bs}{\boldsymbol{s}}
\newcommand{\bx}{\boldsymbol{x}}
\newcommand{\by}{\boldsymbol{y}}
\newcommand{\bA}{\boldsymbol{A}}
\newcommand{\bB}{\boldsymbol{B}}
\newcommand{\bC}{\boldsymbol{C}}
\newcommand{\bU}{\boldsymbol{U}}
\newcommand{\bX}{\boldsymbol{X}}
\newcommand{\vx}{\mathsf{x}}
\newcommand{\vy}{\mathsf{y}}
\newcommand{\sF}{\mathsf{F}}
\newcommand{\sG}{\mathsf{G}}
\newcommand{\scL}{\mathbin{\mathscr{L}}}
\newcommand{\us}[1]{\underset{{}^{\smile}}{#1}}
\title[Anti-elementary classes]%
{{}From non-commutative diagrams to\\
anti-elementary classes}
\author[F. Wehrung]{Friedrich Wehrung}
\address{LMNO, CNRS UMR 6139\\
D\'epartement de Math\'ematiques\\
Universit\'e de Caen Normandie\\
14032 Caen cedex\\
France}
\email{friedrich.wehrung01@unicaen.fr}
\urladdr{https://wehrungf.users.lmno.cnrs.fr}
\date{\today}
\subjclass[2010]{Primary 18A30; 18A35;
Secondary 03E05; 06A07; 06A12; 06C20; 06D22; 06D35; 06F20; 08C05; 08A30; 16E20; 16E50}
\keywords{category; functor; directed; colimit; presentable; accessible; condensate; elementary; purity; freshness; anti-elementary; DCPO; scaled; Boolean algebra; diagram; commutative; uniformization; norm-covering; lifter; Armature Lemma; lattice; distributive; Cevian; lattice-ordered group; ring; nonstable K-theory; coordinatization}
\begin{document}

\begin{abstract}
\emph{Anti-el\-e\-men\-tar\-ity} is a strong way of ensuring that a class of structures, in a given first-order language, is not closed under elementary equivalence with respect to any infinitary language of the form~$\scL_{\infty\gl}$\,.
We prove that many naturally defined classes are anti-el\-e\-men\-tary, including the following:
\begin{itemize}
\item
the class of all lattices of finitely generated convex $\ell$-sub\-groups of members of any class of \lgrp{s} containing all Archimedean \lgrp{s};

\item
the class of all semilattices of finitely generated $\ell$-ideals of members of any nontrivial quasivariety of \lgrp{s};

\item
the class of all Stone duals of spectra of MV-algebras --- this yields a negative solution to the \emph{MV-spectrum Problem};

\item
the class of all semilattices of finitely generated two-sided ideals of rings;

\item
the class of all semilattices of finitely generated submodules of modules;

\item
the class of all monoids encoding the nonstable K${}_0$-theory of von Neumann regular rings, respectively C*-algebras of real rank zero;

\item
\pup{assuming arbitrarily large Erd\H{o}s cardinals} the class of all coordinatizable sectionally complemented modular lattices with a large $4$-frame.

\end{itemize}
The main underlying principle is that under quite general conditions, for a functor $\Phi\colon\cA\to\cB$, if there exists a non-com\-mu\-ta\-tive diagram~$\vec{D}$ of~$\cA$, indexed by a common sort of poset called an \emph{\ajs}, such that
\begin{itemize}
\item
$\Phi\vec{D}^I$ is a com\-mu\-ta\-tive diagram for every set~$I$,

\item
$\Phi\vec{D}\not\cong\Phi\vec{X}$ for any com\-mu\-ta\-tive diagram~$\vec{X}$ in~$\cA$,
\end{itemize}
then the range of~$\Phi$ is anti-el\-e\-men\-tary.
\end{abstract}

\maketitle

\tableofcontents

\section{Introduction}\label{S:Intro}
The present paper is an extension of a negative solution of Problem~2 in Mundici's monograph~\cite{Mund2011}, sometimes called the \emph{MV-spectrum Problem}, stated as ``which topological spaces are homeomorphic to the spectrum of some MV-algebra?''.
Due to the generality of the methods involved, much more came out than expected.

Formalizing the MV-spectrum problem in terms of the Stone duals of the topological spaces in question, and involving the categorical equivalence between MV-algebras and Abelian \lgrp{s} with order-unit established in Mundici~\cite{Mund86}, the MV-spectrum Problem can be recast in terms of the classical problem, dating back to the seventies, of characterizing the lattices of principal $\ell$-ideals of Abelian \lgrp{s}.
Although the author characterized the \emph{countable} such lattices by a first-order statement called \emph{complete normality} (cf. Wehrung~\cite{MV1}), it was not even known, in the uncountable case, whether those lattices could be characterized by any class of~$\scL_{\infty\gl}$ sentences of lattice theory, for some large enough cardinal number~$\gl$.

The present paper's results imply that it cannot be so (cf. Corollary~\ref{C:NotMV}).

For any set~$\gO$, denote by~$\Powi(\gO)$ the category whose objects are all subsets of~$\gO$ and whose morphisms are all one-to-one maps $f\colon X\rightarrowtail Y$ where $X,Y\subseteq\gO$, with $g\circ f$ defined if{f} the domain of~$g$ is equal to the codomain of~$f$.

\begin{definition}\label{D:AntiElt}
Let~$\cC_0$ and~$\cC_1$ be classes of objects in a category~$\cS$. 
The pair $(\cC_0,\cC_1)$ is \emph{anti-el\-e\-men\-tary} if for any cardinal~$\gq$ there are infinite cardinals~$\gl$ and~$\gk$, with~$\gl$ regular and $\gq\leq\gl<\gk$, together with a functor $\gC\colon\Powi(\gk)\to\cS$ preserving all $\gl$-directed colimits, such that $\gC(\gl)\in\cC_0$ and $\gC(\gk)\notin\nobreak\cC_1$\,.
If $\cC_0=\cC_1$ we then say that~$\cC_0$ is anti-el\-e\-men\-tary.
\end{definition}

If $(\cC_0,\cC_1)$ is anti-el\-e\-men\-tary, then every class~$\cC$ such that $\cC_0\subseteq\cC\subseteq\cC_1$ is anti-el\-e\-men\-tary (obviously this is meaningful only in case $\cC_0\subseteq\cC_1$).
Throughout the paper we will often meet situations where the~$\cC_i$ are subcategories  (as opposed to mere classes of objects) of~$\cS$, in which case anti-el\-e\-men\-tar\-ity will of course be stated on their respective classes of objects.

Let us now relate that concept to elementary equivalence with respect to infinitary languages.
For any first-order language~$\gS$ (with possibly infinite arity), denote by~$\Str{\gS}$ the category of all models for~$\gS$ with $\gS$-homomorphisms.
By an extension of Feferman's \cite[Theorem~6]{Fefe1972}, stated in Proposition~\ref{P:EltEq} (established \emph{via} a categorical version of $\gl$-el\-e\-men\-tar\-ity that we call \emph{$\gl$-freshness}), an anti-el\-e\-men\-tary class in~$\Str\gS$ cannot be closed under $\scL_{\infty\gl}$-el\-e\-men\-tary equivalence, for any large enough infinite regular cardinal~$\gl$.
In particular, it is not the class of models of any class of~$\scL_{\infty\gl}$ sentences.
However, anti-el\-e\-men\-tar\-ity also implies further forms of failure of el\-e\-men\-tar\-ity.
For example, if~$\gk=\gl^{+2}$ witnesses the anti-el\-e\-men\-tar\-ity of~$\cC$ (this will be the case throughout Sections \ref{S:Ceva}--\ref{S:V(R)}), then the least cardinal~$\mu$ such that $\gC(\mu)\notin\cC$ is either~$\gl^+$ or~$\gl^{+2}$, thus (using Proposition~\ref{P:EltEq}) $\vecm{\gC(\xi)}{\gl\leq\xi<\mu}$ is an $\scL_{\infty\gl}$-el\-e\-men\-tary chain in~$\cC$, of length the regular cardinal $\mu>\gl$, whose union~$\gC(\mu)$ does not belong to~$\cC$.

The bulk of the present paper is devoted to developing techniques enabling us to prove anti-el\-e\-men\-tar\-ity for many classes:
\begin{itemize}
\item
The part ``$\gC(\gl)\in\cC_0$'' is taken care of by two new results, which we call the Uniformization Lemma (Lemma~\ref{L:UnifLem}) and the Boosting Lemma (Lemma~\ref{L:Boosting}).

\item
The part ``$\gC(\gk)\notin\cC_1$'' is taken care of by Lemmas~\ref{L:ExtArm} and~\ref{L:ExtCLL}, which are extensions, to the case of non-com\-mu\-ta\-tive diagrams, of the original Armature Lemma and Condensate Lifting Lemma (CLL) established in Gillibert and Wehrung~\cite{Larder}.

\item
Lemmas~\ref{L:ExtArm} and~\ref{L:ExtCLL} entail a collection of results of which the underlying principle can, loosely speaking, be paraphrased as follows.
\begin{quote}\em
Under quite general conditions, for a functor $\Phi\colon\cA\to\cB$, if there exists a poset-indexed \pup{necessarily non-com\-mu\-ta\-tive} diagram~$\vec{D}$ of~$\cA$ such that
\begin{itemize}
\item
$\Phi\vec{D}^I$ is a com\-mu\-ta\-tive diagram for every set~$I$,

\item
$\Phi\vec{D}\not\cong\Phi\vec{X}$ for any com\-mu\-ta\-tive diagram~$\vec{X}$ in~$\cA$,
\end{itemize}
then the range of~$\Phi$ is anti-el\-e\-men\-tary.
\end{quote}
\end{itemize}
The class of posets~$P$ indexing the diagrams~$\vec{D}$ above needs to be restricted to the \emph{\ajs{s} with zero} introduced in Gillibert and Wehrung~\cite{Larder} (cf. Section~\ref{S:NormCov}).
Every \js\ is an \ajs.
In Sections~\ref{S:Ceva}--\ref{S:V(R)}, $P$ is the powerset lattice of a three-element chain, while in Section~\ref{S:4SCML} it is a lattice with zero, of cardinality~$\aleph_1$\,, denoted there by~$Q$.
In Gillibert and Wehrung \cite[Ch.~5]{Larder}, condensate-type constructions, used there to establish non-rep\-re\-sentabil\-ity results, are applied with~$P$ the powerset lattice of a two-element chain.
In all currently known applications of condensates, $P$ is a well-founded lattice.

Corollary~\ref{C:NotMV} states that the class of all Stone duals of MV-spectra is anti-el\-e\-men\-tary; in particular, it cannot be the class of all models of any class of~$\scL_{\infty\gl}$ sentences.

The main problem consists of determining the functor~$\gC$ witnessing anti-el\-e\-men\-tar\-ity.
This is quite a difficult task, whose framework is mostly categorical and which will take up most of Sections~\ref{S:CatBckgd}--\ref{S:ExtCLL}.
The functor~$\gC$ will arise from constructions called \emph{$\Phi$-condensates}, denoted in the form $\bA\otlF\vec{S}$.
In that notation,
\begin{itemize}
\item
$\bA$ is a \emph{$P$-scaled Boolean algebra} (cf. Section~\ref{S:BoolP}), that is, a Boolean algebra, together with a collection of ideals (subjected to certain conditions) indexed by a poset~$P$;

\item
$\gl$ is an infinite regular cardinal;

\item
$\Phi$ is a functor from a category~$\cS$ to a category~$\cT$;

\item
$\vec{S}$ is a $P$-indexed diagram in~$\cS$.
In contrast to the general situation in Gillibert and Wehrung~\cite{Larder}, the diagram~$\vec{S}$ will only satisfy a weak form of commutativity called \emph{$\Phi$-com\-mu\-ta\-tiv\-ity} (cf. Definition~\ref{D:PhiComm}).

\end{itemize}

The object $\bA\otlF\vec{S}$ is first defined over $\gl$-pre\-sentable $P$-scaled Boolean algebras~$\bA$ (under additional assumptions on~$P$ if~$\gl>\go$);
in that case, $\bA\otlF\vec{S}=\Phi(\bA\bt\vec{S})$, where $\bA\bt\vec{S}$ is a new construct (cf. Definition~\ref{D:gfotlvecS}), agreeing with the $\bA\otimes\vec{S}$ from Gillibert and Wehrung~\cite{Larder} if $\gl=\go$ and~$\vec{S}$ is a commutative diagram.

An important difference between ${}_{-}\bt\vec{S}$ and ${}_{-}\otimes\vec{S}$ is that, due to the non-com\-mu\-ta\-tiv\-ity of the diagram~$\vec{S}$, the former no longer sends morphisms to morphisms: \emph{for a morphism~$\gf$, $\gf\bt\vec{S}$ is now a nonempty set of morphisms} (as opposed to a single morphism).
This is where the abovementioned $\Phi$-com\-mu\-ta\-tiv\-ity of~$\vec{S}$ comes into play: if that assumption holds, then the value taken by the functor~$\Phi$ on all morphisms in $\gf\bt\vec{S}$ is constant, and then naturally defined as $\gf\otlF\vec{S}$.

In the general case, $\bA\otlF\vec{S}$ is then defined as the $\gl$-directed colimit of the $\bU\otlF\vec{S}$, formed over $\gl$-pre\-sentable substructures~$\bU$ of~$\bA$ (cf. Definition~\ref{D:AotmFS}).

The combinatorial background for our results mostly rests on the concept of \emph{lifter} introduced in Gillibert and Wehrung~\cite{Larder}, which itself rests on infinitary Ramsey-type statements arising from the classical relation $(\gk,r,\gl)\rightarrow\rho$ (cf. Erd\H{o}s \emph{et al.}~\cite{EHMR}).
The lifters~$X$ in question are involved in the construction of certain $P$-scaled Boolean algebras, introduced in~\cite{Larder} and denoted as~$\xF(X)$ (cf. Section~\ref{S:NormCov}).
As in~\cite{Larder}, those lifters will be crucial in both formulations of the extended Armature Lemma and CLL (cf. Lemmas~\ref{L:ExtArm} and~\ref{L:ExtCLL}).

The last layer of our construction, of the functor~$\gC$, will consist of making the $\gl$-lifter~$X$ sufficiently functorial in a cardinal~$\gk>\gl$ defined by a certain infinite combinatorial property.
This is where \emph{standard lifters} will come into play (cf. Definition~\ref{D:StandLift}), enabling us to write the desired lifters in the form~$P\seq{\gk}$.

The outline above can then be condensed into the single formula
 \begin{equation*}
 \gC(U)=\xF(P\seq{U})\otlF\vec{S}\,,\quad\text{for every }
 U\subseteq\gk\,.
 \end{equation*}
 
Due to the generality of the underlying framework, our results will enable us to establish the anti-el\-e\-men\-tar\-ity of many further classes of algebraic structures.
A sample of such classes runs as follows:

\begin{enumerater}\em
\item\label{AntiCsc}
\pup{Corollary~\textup{\ref{C:NotCev.5}}}
The class of all lattices of finitely generated convex subgroups of the members of any class of \lgrp{s} containing all Ar\-chi\-me\-dean \lgrp{s}.

\item\pup{Corollary~\textup{\ref{C:NonIdc}}}
The class of all semilattices of finitely generated $\ell$-ideals of any class of \lgrp{s} which is closed under products and colimits indexed by all large enough regular cardinals and contains all Ar\-chi\-me\-dean \lgrp{s}.

\item\pup{Corollary~\textup{\ref{C:NonIdcvNR}}}
The class of all semilattices of finitely generated two-sided ideals of the members of any class of unital rings, containing all matrix algebras over a given field, and closed under products and all directed colimits.

\item\pup{Corollary~\textup{\ref{C:NonIdcvNR}}}
The class of all semilattices of finitely generated submodules of right modules.

\item\pup{Corollaries~\textup{\ref{C:NonVvNR}} and~\textup{\ref{C:NonVRR0}}}
The class of all monoids encoding the nonstable K${}_0$-theory of  unital von~Neumann regular rings \pup{resp., unital C*-algebras of real rank zero}.
\emph{(The problem for von~Neumann regular rings was initiated in Goodearl~\cite{Good95}.)}

\item\label{AntiCoord}
\pup{Corollary~\textup{\ref{C:LLAntiElt}}, assuming the existence of arbitrarily large Erd\H{o}s cardinals}
The class of all coordinatizable sectionally complemented modular lattices with a large $4$-frame.
\emph{(The problem was initiated in J\'onsson~\cite{Jons1962}; see Wehrung~\cite{NonCoord}.)}
\end{enumerater}

Define a \emph{projective class} (or~\emph{PC}) within~$\scL_{\gk\gl}$\,, on a first-order language~$\gS$, as the class of $\gS$-reducts of the class of all models of some~$\scL_{\gk\gl}$ sentence in a language containing~$\gS$.
It is worth observing that the classes of algebras considered in \eqref{AntiCsc}--\eqref{AntiCoord} above, which we prove to be anti-el\-e\-men\-tary, are usually PCs within~$\scL_{\infty\go}$\,, thus lending some optimality to our anti-el\-e\-men\-tar\-ity results.
For example, the class~$\cC$, of all lattices of the form $\Idc{G}\eqdef$ lattice of all principal $\ell$-ideals of~$G$, for some Abelian \lgrp~$G$, is a PC within~$\scL_{\go_1\go}$\,.
To see this, observe that if an infinite lattice~$D$ belongs to~$\cC$, then it is isomorphic to~$\Idc{G}$ for some Abelian \lgrp~$G$ with the same cardinality as~$D$;
in other words, there are an Abelian \lgrp\ structure~$G$ on~$D$ and a surjective map $f\colon G^+\twoheadrightarrow D$ such that
 \[
 f(x)\leq_D f(y)\quad\Longleftrightarrow\quad\pII{x\leq_Gy
 \underbrace{+_G\cdots+_G}_{n\text{ times}}y\quad
 \text{for some positive integer }n}
 \]
whenever $x,y\in G^+$.
This is PC within~$\scL_{\go_1\go}$\,.
Now the remaining collection of finite lattices is easily taken care of by an~$\scL_{\go_1\go}$ sentence.

All our anti-el\-e\-men\-tar\-ity results, established in Sections~\ref{S:Ceva}--\ref{S:4SCML}, are obtained by combining our new techniques ($\Phi$-condensates) with earlier non-rep\-re\-sentabil\-ity results on the functors in question.

\section{Notation and terminology}\label{S:Basic}

\subsection{Category theory}
The categorical context of our paper is much related to the one of Gabriel and Ulmer~\cite{GabUlm}, Ad\'amek and Rosick\'y~\cite{AdRo94}.

For any class~$\cX$ of objects in a category~$\cA$ and any functor $\Phi\colon\cA\to\cB$, we denote by~$\Phi\cX$ (or~$\Phi(\cX)$) the class of all objects~$B$ of~$\cB$ such that $B\cong\Phi(X)$ for some $X\in\cX$.
If~$\cX$ is the class of all objects of~$\cA$, $\Phi\cX$ will be called the \emph{range of~$\Phi$} and denoted by~$\rng\Phi$.

We denote by~$\id_A$ the identity morphism on any object~$A$ in a given category.

\begin{itemize}
\item
Monomorphisms (in any category) and one-to-one maps (between sets) will be denoted in the form $f\colon A\rightarrowtail B$.

\item
A category may come along with a special class of monomorphisms called \emph{embeddings} (e.g., order-embeddings between posets), which will then be denoted in the form $f\colon A\hookrightarrow B$.

\item
Epimorphisms (in any category) and surjective maps (between sets) will be denoted in the form $f\colon A\twoheadrightarrow B$.

\end{itemize}

To any category~$\cC$, any object~$C$ of~$\cC$, and any full subcategory~$\cS$ of~$\cC$, we assign
\begin{itemize}
\item
the \emph{slice category} $\cS/C$, whose objects are all the arrows with domain in~$\cS$ and codomain~$C$, and where a morphism from an arrow $x\colon X\to C$ to an arrow $y\colon Y\to C$ is defined as an arrow $f\colon X\to Y$ such that $x=y\circ f$;

\item
the \emph{subobject category} $\cS\dnw C$, whose objects are all the monomorphisms with domain in~$\cS$ and codomain~$C$, and where a morphism from an arrow $x\colon X\rightarrowtail C$ to an arrow $y\colon Y\rightarrowtail C$ is defined as the unique arrow (necessarily monic) $f\colon X\rightarrowtail Y$, if it exists, such that $x=y\circ f$; in that case we write $x\utr y$.

\end{itemize}

Natural transformations from a functor~$\Phi$ to a functor~$\Psi$ will be denoted in the form $\eta\colon\Phi\todot\Psi$.

\subsection{Set theory}
We denote by~$\card{X}$ the cardinality of a set~$X$, and we say that~$X$ is \emph{$\gl$-small} (for a cardinal~$\gl$) if $\card{X}<\gl$.
We denote by~$\Pow(X)$ the powerset of~$X$, by~$[X]^{{<}\gl}$ the set of all $\gl$-small subsets of~$X$, and by~$[X]^{\gl}$ the set of all subsets of~$X$ with cardinality~$\gl$.

For sets~$X$ and~$Y$ with $X\subseteq Y$, we denote by~$\id_X^Y$ the inclusion map from~$X$ into~$Y$, and we set $\id_X\eqdef\id_X^X$\,.

We denote by~$\dom(f)$ (resp., $\rng(f)$) the domain (resp., range) of a function~$f$.
For a subset~$X$ of the domain (resp., range) of~$f$, we denote by~$f[X]$ (resp., $f^{-1}[X]$, or~$f^{-1}X$) the image (resp., inverse image) of~$X$ under~$f$.

We denote by~$\sup{X}$ the least upper bound of any set~$X$ of cardinal numbers.

For any cardinal number~$\gl$, we denote by~$\gl^+$ the successor cardinal of~$\gl$.
Further, we set $\gl^{+0}\eqdef\gl$, $\gl^{+(n+1)}\eqdef(\gl^{+n})^+$ for every nonnegative integer~$n$.
We also set
$\exp_0(\gl)\eqdef\gl$, $\exp_{n+1}(\gl)\eqdef2^{\exp_n(\gl)}$,
$\exp_n(\us{\gl})\eqdef\sup\setm{\exp_n(\ga)}{\ga<\gl}$.

\subsection{Partially ordered sets (posets)}
Let~$P$ be a poset.
We set $P^{\infty}\eqdef P\cup\set{\infty}$ for a new top element~$\infty$.

Let~$\gl$ be an infinite regular cardinal.
We say that~$P$ is
\begin{itemize}
\item
\emph{$\gl$-directed} if every nonempty $\gl$-small subset of~$P$ is bounded above in~$P$;

\item
\emph{directed} if it is $\go$-directed;

\item
\emph{$\gl$-join-com\-plete} if every nonempty $\gl$-small subset of~$P$ has a least upper bound in~$P$.

\end{itemize}

For a subset~$X$ and an element~$a$ in~$P$, we set
 \[
 X\dnw a\eqdef\setm{x\in X}{x\leq a}\quad\text{and}\quad
 X\upw a\eqdef\setm{x\in X}{x\geq a}\,.
 \]
We say that~$P$ is a \emph{forest} if~$P\dnw p$ is a chain whenever $p\in P$.

We say that a subset~$X$ of~$P$ is
\begin{itemize}
\item
\emph{a lower subset} (resp., \emph{an upper subset}) of~$P$ if for every $x\in X$, $P\dnw x\subseteq X$ (resp., $P\upw x\subseteq X$);

\item
\emph{an ideal of~$P$} if it is a nonempty directed lower subset of~$P$; we  denote by~$\Id{P}$ the set of all ideals of~$P$, partially ordered under set inclusion;

\item
\emph{cofinal in~$P$} if every element of~$P$ is bounded above by some element of~$X$;

\item
\emph{$\gl$-closed in~$P$}, where~$\gl$ is an infinite regular cardinal and~$P$ is $\gl$-join-com\-plete, if the least upper bound of any $\gl$-small subset of~$X$ belongs to~$X$.

\end{itemize}

It is well known that whenever $\gl>\go$ and~$P$ is $\gl$-join-com\-plete, \emph{any intersection of a $\gl$-small set of $\gl$-closed cofinal subsets of~$P$ is $\gl$-closed cofinal}.
This observation will be mostly applied to finite products of sets of the form~$[\gO]^{<\gl}$.

Many of our ``counterexample diagrams'' will be indexed by the powerset of a three-element set, denoted as
$\Pow[3]\eqdef\set{\es,1,2,3,12,13,23,123}$ (with $12=21$, $123=312$, and so on), partially ordered by inclusion.

A poset~$P$ is \emph{lower $\gl$-small} if $\card(P\dnw x)<\gl$ whenever $x\in P$.
We will just say \emph{lower finite} instead of lower $\go$-small.

We denote by~$\Min{P}$ (resp., $\Max{P}$) the set of all minimal (resp. maximal) elements of~$P$, and we set
 \[
 P^-\eqdef P\setminus\Min{P}\quad\text{and}\quad
 P^=\eqdef P\setminus\Max{P}\,.
 \]
For posets~$P$ and~$Q$, a map $f\colon P\to Q$ is \emph{isotone} (resp., \emph{antitone}) if for all $x,y\in P$, $x\leq y$ implies that $f(x)\leq f(y)$ (resp., $f(y)\leq f(x)$).
If~$x\leq y$ is equivalent to $f(x)\leq f(y)$, we say that~$f$ is an \emph{order-embedding}.

We refer the reader to Gr\"atzer~\cite{LTF} for all undefined lattice-theoretical concepts.

\subsection{Lattice-ordered groups}
An \emph{\lgrp} is a group endowed with a translation-invariant lattice order (cf. Bigard \emph{et al.}~\cite{BKW}, Anderson and Feil~\cite{AnFe}).
Although our \lgrp{s} will not be assumed to be Abelian, we will denote them additively.
The \emph{positive cone} of an \lgrp~$G$ is $G^+\eqdef\setm{x\in G}{x\geq0}$.
An \emph{$\ell$-sub\-group} of~$G$ is a subgroup closed under the lattice operations, and an \emph{$\ell$-ideal} is an order-convex normal $\ell$-sub\-group.
For any element~$x$ in an \lgrp~$G$, we denote by~$\seq{x}$ (resp., $\seql{x}$) the convex $\ell$-sub\-group (resp., $\ell$-ideal) of~$G$ generated by~$x$.

The Stone dual of the spectrum of any Abelian $\ell$-group~$G$ is isomorphic to the (distributive) lattice~$\Idc{G}$ of all principal (equivalently, finitely generated) $\ell$-ideals of~$G$ (cf. Proposition~1.19, together with Theorem~1.10 and Lemma~1.20, in Keimel~\cite{Keim1971}).

Whenever~$G$ is a totally ordered group and~$H$ is an \lgrp, the \emph{lexicographical product} $G\lex H$ is the \lgrp\ structure on the cartesian product~$G\times H$ with the positive cone
 \[
 (G\lex H)^+\eqdef\setm{(x,y)\in G\times H}{x\geq0\text{ and }
 (x=0\Rightarrow y\geq0)}\,.
 \]
 
\subsection{Rings}
Our rings will not necessarily be unital.
A ring~$R$ is
\begin{itemize}
\item
\emph{von Neumann regular} (cf. Goodearl~\cite{Good91} for the unital case) if for all $x\in R$ there exists $y\in R$ such that $x=xyx$;

\item\emph{semiprimitive} if its Jacobson radical (cf. Jacobson \cite[\S~2]{Jaco1945}, Herstein \cite[page~16]{Hers1968}) is trivial;

\item an \emph{exchange ring} if for all $x\in R$ there are $r,s\in R$ together with an idempotent~$e\in R$ such that $e=rx=x+s-sx$ (cf. Warfield~\cite{Warf1972}, and Ara~\cite{Ara97} for the non-unital case).
\end{itemize}

Also recall that a C*-algebra is an exchange ring if{f} it has real rank zero (cf. Ara \cite[Theorem~3.8]{Ara97}).
For more information and references, see Wehrung~\cite{VLiftDefect}.

\subsection{Model theory for infinitary languages}
Let~$\gS$ be a first-order language and let~$\gl$ be an infinite regular cardinal.
The formulas of~$\scL_{\infty\gl}$ are constructed inductively from atomic formulas of~$\gS$, by allowing quantifications over less than~$\gl$ variables, negations, and arbitrary conjunctions and disjunctions over sets of formulas with less than~$\gl$ free variables altogether.
The satisfaction of an $\scL_{\infty\gl}$ formula, in a $\gS$-structure~$M$, is defined the standard way.
Assignments of free variables will be written in the form $\vec{a}\subseteq M$; we should keep in mind that~$\vec{a}$ is in fact a function with $\gl$-small domain and with values in~$M$.

We refer to Dickmann~\cite{Dickm1985a} for more information on model theory for infinitary languages.

\section{Setting up the categorical background}\label{S:CatBckgd}

\subsection{Non-commutative diagrams}\label{Su:NonCommDiag}
The whole paper will be articulated around the concept of a (non-com\-mu\-ta\-tive) diagram introduced in Wehrung~\cite{Ceva}.
Let us recall the underlying definitions.

A (\emph{com\-mu\-ta\-tive}) \emph{diagram}, in a category~$\cS$, is often defined as a functor~$D$ from a category~$\cP$ (the ``indexing category'' of the diagram) to~$\cS$.
Allowing any morphism in~$\cP$ to be sent to more than one morphism in~$\cS$, we get a more general definition of diagram that makes~$D$ a kind of ``non-de\-ter\-min\-is\-tic
 functor''.

\begin{definition}\label{D:Diagram}
Let~$\cP$ and~$\cS$ be categories.
A \emph{$\cP$-indexed diagram in~$\cS$} is an assignment~$D$, sending every object~$p$ of~$\cP$ to an object~$D(p)$ (or~$D_p$) of~$\cS$ and each morphism $x\colon p\to q$ in~$\cP$ to a \emph{nonempty set}~$D(x)$ of morphisms from~$D(p)$ to~$D(q)$, in such a way that the following statements hold:
\begin{enumerater}
\item $\id_{D(p)}\in D(\id_p)$ for every objet~$p$ of~$\cP$;

\item whenever $x\colon p\to q$ and $y\colon q\to r$ are morphisms in~$\cP$, $u\in D(x)$, and $v\in D(y)$, the composite~$v\circ u$ belongs to~$D(y\circ x)$.
\end{enumerater}
We say that~$D$ is a \emph{com\-mu\-ta\-tive diagram} if~$D(x)$ is a singleton whenever~$x$ is a morphism in~$\cP$.
A \emph{uniformization} of a diagram~$D$ is a $\cP$-indexed \emph{com\-mu\-ta\-tive} diagram~$D^*$ such that $D^*(x)\subseteq D(x)$ for every morphism~$x$ in~$\cP$.

We denote by~$\cS^{\cP}$ the category of all functors from~$\cP$ to~$\cS$ with natural transformations as morphisms.
\end{definition}

Specializing to the case where~$\cP$ is the category naturally assigned to a poset~$P$ (i.e., there is an arrow from~$p$ to~$q$ if{f} $p\leq q$, and then the arrow is unique), we get poset-indexed diagrams.
We will often write poset-indexed \emph{com\-mu\-ta\-tive} diagrams in the form
 \[
 \vec{D}=\vecm{D_p,\gd_p^q}{p\leq q\text{ in }P}\,,
 \]
where all~$D_p$ are objects and all $\gd_p^q\colon D_p\to D_q$ are morphisms subjected to the usual commutation relations (i.e., $\gd_p^p=\id_{D_p}$, $\gd_p^r=\gd_q^r\circ\gd_p^q$ whenever $p\leq q\leq r$); hence $\vec{D}(p,q)=\set{\gd_p^q}$.
If~$P$ is a directed (resp., $\gl$-directed, with~$\gl$ an infinite regular cardinal) poset we will say that~$\vec{D}$ is a \emph{direct system} (resp., \emph{$\gl$-direct system}).

Our next definition introduces powers of a diagram.

\begin{definition}\label{D:D^I}
Let~$I$ be a set, let~$\cS$ be a category, let~$P$ be a poset, and let~$D$ be a $P$-indexed diagram in~$\cS$.
We suppose that the product $D^I(p)\eqdef\prod_{i\in I}D(p_i)$ exists in~$\cS$ for every $p=\vecm{p_i}{i\in I}\in P^I$.
Whenever $p=\vecm{p_i}{i\in I}$ and $q=\vecm{q_i}{i\in I}$ in~$P^I$ with $p\leq q$, let~$D^I(p,q)$ consist of all morphisms in~$\cS$ of the form $\prod_{i\in I}f_i\colon D^I(p)\to D^I(q)$ where each $f_i\in D(p_i,q_i)$.
\end{definition}

It is straightforward to verify that the structure~$D^I$ introduced in Definition~\ref{D:D^I} is a $P^I$-indexed diagram (in the sense of Definition~\ref{D:Diagram}).

\begin{remark}\label{Rk:Diagram}
It could be tempting to set Definition~\ref{D:Diagram} as foundation of a kind of ``non-commutative category theory''.
The author's enquiries on the matter, aimed both at a few specialists and the literature, were unsuccessful.
Not all usual categorical concepts would carry over to that more general framework, most notably the one of directed colimit.
That caveat, and the needs of the present paper, gave rise to the Uniformization Lemma (Lemma~\ref{L:UnifLem}).

On the other hand, observe that every $\cP$-indexed (non-commutative) diagram~$D$ in~$\cS$ can, in principle, be viewed as a commutative diagram~$\widehat{D}$, indexed by a category~$\widehat{\cP}$ ``covering''~$\cP$.
In more detail: let~$\widehat{\cP}$ have the same objects as~$\cP$, and for objects~$p$ and~$q$ in~$\cP$ define~$\widehat{\cP}(p,q)$ (i.e., the class of all morphisms from~$p$ to~$q$ in~$\widehat{\cP}$) as the class of all pairs $(x,u)$ where $x\in\cP(p,q)$ and $u\in D(x)$.
Such pairs are composed componentwise (i.e., $(x,u)\circ(y,v)=(x\circ y,u\circ v)$).
The forgetful functor $F\colon\widehat{\cP}\to\cP$ sends every object to itself and every pair~$(x,u)$ to~$x$.
This functor is \emph{full} (because all~$D(x)$ are nonempty) and \emph{small-to-one} in the sense that the preimage under~$F$ of any morphism in~$\cP$ is a set (because all~$D(x)$ are sets).
Now define a functor $\widehat{D}\colon\widehat{\cP}\to\cS$ by $\widehat{D}(p)\eqdef D(p)$ and $\widehat{D}(x,u)\eqdef u$ whenever $x\colon p\to q$ in~$\widehat{\cP}$ and $u\in D(x)$.

Conversely, let~$\cP'$ be a category with the same objects as~$\cP$ and let $F\colon\cP'\to\cP$ be a full, small-to-one functor sending every object of~$\cP$ to itself.
Then every commutative diagram $D'\colon\cP'\to\cS$ gives rise to a $\cP$-indexed diagram~$D$ in~$\cS$, by setting $D(p)\eqdef D'(p)$ and $D(x)\eqdef\setm{D'(X)}{X\in\cP'(p,q)\,,\ x=F(X)}$ whenever $x\colon p\to q$ in~$\cP$.
This transformation, evaluated at $\cP'\eqdef\widehat{\cP}$ and the forgetful functor $F\colon\widehat{\cP}\to\cP$ both defined in the paragraph above, returns the original diagram~$D$.
In that sense, the commutative diagram~$\widehat{D}$ encodes the diagram~$D$.
\end{remark}

\subsection{{}From finitely presentable to $\gl$-pre\-sentable}
\label{S:Fin2gk}

Let~$\gl$ be an infinite regular cardinal.
We will say that an object~$A$ in a category~$\cA$ is \emph{weakly $\gl$-pre\-sentable}%
\footnote{
Although formally different, the definition of ``weakly $\gl$-pre\-sentable'' introduced in Gillibert and Wehrung~\cite{Larder} will be equivalent to ours in all contexts occurring in the present paper.
}
if for every $\gl$-directed colimit co-cone
 \[
 \vecm{B,\gb_i}{i\in I}=\varinjlim\vecm{B_i,\gb_i^j}{i\leq j\text{ in }I}
 \quad\text{within }\cA\,,
 \]
every morphism $\gf\colon A\to B$ factors through some~$B_i$\,, that is, $\gf=\gb_i\circ\psi$ for some $\psi\colon A\to B_i$\,.
If, in addition, for all $i\in I$ and all $\xi,\eta\colon A\to B_i$ such that $\gb_i\circ\xi=\gb_i\circ\eta$ there exists $j\in I$ such that $i\leq j$ and $\gb_i^j\circ\xi=\gb_i^j\circ\eta$, we get the usual definition of \emph{$\gl$-pre\-sentability} of~$A$ (cf. Gabriel and Ulmer \cite[Definition~6.1]{GabUlm} or Definitions~1.1 and~1.13 in Ad\'amek and Rosick\'y~\cite{AdRo94}).

We shall denote by~$\Pres_{\gl}\cA$ the full subcategory of~$\cA$ consisting of all $\gl$-pre\-sentable objects.

Recall (cf. Gabriel and Ulmer~\cite{GabUlm},  Ad{\'a}mek and Ro\-si\-ck{\'y}~\cite{AdRo94}) that a category~$\cC$ is \emph{$\gl$-filtered} if every subcategory of~$\cC$ with less than~$\gl$ morphisms has a compatible co-cone.
If $\gl=\go$ we will just say \emph{filtered} instead of $\go$-filtered.

Ad\'amek and Rosick\'y \cite[Theorem~1.5]{AdRo94} prove that every small filtered category~$\cC$ admits a cofinal functor from a directed poset.
They also observe in \cite[Remark~1.21]{AdRo94} that this result extends, with a similar proof, to $\gl$-filtered categories: \emph{for every small $\gl$-filtered category~$\cC$, there are a $\gl$-directed poset~$P$ and a cofinal functor from~$P$ to~$\cC$}.
Hence, \emph{a functor preserves $\gl$-filtered colimits \pup{indexed by small $\gl$-filtered categories} if{f} it preserves $\gl$-directed colimits \pup{indexed by $\gl$-directed posets}.}

We will say that a functor is \emph{$\gl$-continuous} if it preserves all $\gl$-directed colimits.

Next, we need to show how to extend the results of Gillibert and Wehrung \cite[\S~1.4]{Larder}, enabling us to extend a functor from $\gl$-pre\-sentable objects to all objects, from the finitely presentable case to the $\gl$-pre\-sentable case.
The technical result \cite[Lemma~1.4.1]{Larder} extends modulo the following changes:
\begin{itemize}
\item
The category~$\cS$, instead of having all directed colimits, is required to have all $\gl$-directed colimits.

\item
The~$A_i$ are all $\gl$-pre\-sentable.
\end{itemize}

We say that a full subcategory~$\cA^{\dagger}$ of~$\cC$ is \emph{$\gl$-dense} in~$\cA$ if every object of~$\cA$ is a colimit of a $\gl$-direct system from~$\cA^{\dagger}$.
The relevant analogue of Gillibert and Wehrung \cite[Proposition~1.4.2]{Larder} is then the following.

\begin{lemma}\label{L:ExtFunctgkPres}
Let~$\cA$ be a category, let~$\cA^{\dagger}$ be a full subcategory of~$\Pres_{\gl}\cA$ which is $\gl$-dense in~$\cA$, and let~$\cS$ be a category with all $\gl$-directed colimits.
Then every functor~$\Psi\colon\cA^{\dagger}\to\cS$ extends to a unique \pup{up to natural isomorphism} functor $\ol{\Psi}\colon\cA\to\nobreak\cS$ which preserves all $\gl$-directed colimits from~$\cA^{\dagger}$.
Furthermore, if~$\cA^{\dagger}$ has small hom-sets, then~$\ol{\Psi}$ is $\gl$-continuous.
\end{lemma}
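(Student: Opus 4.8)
The plan is to mimic the standard proof that a functor from a dense subcategory of finitely presentable objects extends uniquely to a colimit-preserving functor, but carried out at level $\gl$ instead of $\go$; this is exactly the content of Gillibert and Wehrung~\cite[Proposition~1.4.2]{Larder}, whose proof machinery (their \cite[Lemma~1.4.1]{Larder}) the excerpt has already told us extends verbatim once ``directed colimit'' is replaced by ``$\gl$-directed colimit'' and ``finitely presentable'' by ``$\gl$-pre\-sentable''. So the first step is to invoke that extended form of \cite[Lemma~1.4.1]{Larder}.

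First I would fix, for each object $A$ of $\cA$, a $\gl$-directed colimit cocone $\vecm{A,\ga_i}{i\in I_A}=\varinjlim\vecm{A_i,\ga_i^j}{i\le j}$ with all $A_i\in\cA^{\dagger}$ (possible by $\gl$-density of~$\cA^{\dagger}$), and define $\ol{\Psi}(A)\eqdef\varinjlim\vecm{\Psi(A_i),\Psi(\ga_i^j)}{i\le j}$, which exists since $\cS$ has all $\gl$-directed colimits. For a morphism $f\colon A\to B$, one uses $\gl$-pre\-sentability of each $A_i$ to factor $\ga_i^A\circ\cdots$, more precisely to factor $f\circ\ga_i^A$ through some $B_{j}$, and then the uniqueness clause of $\gl$-pre\-sentability to see that the induced maps $\Psi(A_i)\to\ol{\Psi}(B)$ are compatible, yielding $\ol{\Psi}(f)\colon\ol{\Psi}(A)\to\ol{\Psi}(B)$; functoriality and well-definedness (independence of the chosen factorizations, and of the chosen cocone for $A$) are checked by the same diagram chases as in \cite{Larder}, now using that $I_A$ is $\gl$-directed to absorb the finitely (indeed $<\gl$) many indices produced. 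That $\ol{\Psi}$ restricts to $\Psi$ on $\cA^{\dagger}$, up to natural isomorphism, is immediate since a $\gl$-pre\-sentable object is a trivial $\gl$-directed colimit of itself. Preservation of $\gl$-directed colimits from $\cA^{\dagger}$ then follows by interchanging two $\gl$-directed colimits (a $\gl$-directed colimit of $\gl$-directed colimits is again one over the $\gl$-directed product index set), and uniqueness of such an extension is the usual argument: any two colimit-preserving extensions agreeing on $\cA^{\dagger}$ must agree on every object, since every object is canonically a $\gl$-directed colimit of objects of $\cA^{\dagger}$.

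For the final sentence, suppose $\cA^{\dagger}$ has small hom-sets. I would show $\ol{\Psi}$ is $\gl$-continuous, i.e.\ preserves \emph{all} $\gl$-directed colimits in $\cA$, not merely those coming from $\cA^{\dagger}$. Given a $\gl$-directed colimit $\vecm{A,\ga_i}{i\in I}=\varinjlim\vecm{A_i,\ga_i^j}{i\le j}$ in $\cA$ with the $A_i$ arbitrary, replace each $A_i$ by a $\gl$-directed colimit presentation over $\cA^{\dagger}$ and assemble these into a single $\gl$-directed system over $\cA^{\dagger}$ indexed by (a cofinal $\gl$-directed subposet of) the Grothendieck-type construction; smallness of the hom-sets of $\cA^{\dagger}$ is what guarantees this assembled index category is small and hence, by the Ad\'amek--Rosick\'y remark \cite[Remark~1.21]{AdRo94} quoted in the excerpt, admits a cofinal functor from a $\gl$-directed poset. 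Applying the already-established preservation of $\gl$-directed colimits from $\cA^{\dagger}$, together with the fact that $A$ is itself the colimit of that assembled system, gives $\ol{\Psi}(A)=\varinjlim\ol{\Psi}(A_i)$.

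The main obstacle is the last step: verifying that the assembled index category over $\cA^{\dagger}$ is small (this is precisely where the small-hom-sets hypothesis is consumed) and $\gl$-filtered, and that $A$ is genuinely its colimit in $\cA$ — this is the place where one must be careful that a ``$\gl$-directed colimit of $\gl$-directed colimits'' reassembles correctly, rather than the routine but lengthy diagram chases establishing functoriality and well-definedness of $\ol{\Psi}$, which go through exactly as in \cite[\S\,1.4]{Larder}.
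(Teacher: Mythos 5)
Your plan is correct and follows essentially the same route as the paper, which itself proves the lemma by repeating the argument of Gillibert--Wehrung \cite[Proposition~1.4.2]{Larder} with ``directed'' replaced by ``$\gl$-directed'', the key claim restated as the relevant assembled index category being $\gl$-filtered, and \cite[Remark~1.21]{AdRo94} invoked (exactly where you use it) to replace that small $\gl$-filtered category by a cofinal $\gl$-directed poset. Nothing further is needed.
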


Necessarily, if an object~$A$ of~$\cA$ is expressed as a $\gl$-directed colimit $A=\varinjlim_{i\in I}A_i$\,, with all $A_i\in\cA^{\dagger}$, then $\ol{\Psi}(A)=\varinjlim_{i\in I}\Psi(A_i)$.
One of the main difficulties of the argument is to prove that~$\varinjlim_{i\in I}\Psi(A_i)$ is, up to isomorphism, independent of the chosen $\gl$-direct system based on the~$A_i$\,.
The proof of Lemma~\ref{L:ExtFunctgkPres} is essentially the same as the one of Gillibert and Wehrung \cite[Proposition~1.4.2]{Larder}, with the following changes:
\begin{itemize}
\item
Change ``directed'' to ``$\gl$-directed'' and ``directed colimits'' to ``$\gl$-directed colimits''.

\item
Restate the Claim on page~30 as ``$\cP$ is $\gl$-filtered''.
The proof remains almost identical.

\item
At the bottom of page~31, change the use of \cite[Theorem~1.5]{AdRo94} to \cite[Remark~1.21]{AdRo94}.

\end{itemize}

All along the present paper, Lemma~\ref{L:ExtFunctgkPres} will be applied to the case where~$\cA$ is the category~$\Bool_P$ of all $P$-scaled Boolean algebras (cf. Section~\ref{S:BoolP}), for a poset~$P$, and~$\cA^{\dagger}$ is the full subcategory of all $\gl$-pre\-sentable objects of~$\cA$, for an infinite regular cardinal~$\gl$ (under additional conditions on~$P$ if $\gl>\go$).

\section{Purity and freshness}\label{S:fresh}

In this section we shall introduce a concept, making sense in any category, strengthening the one of \emph{$\gl$-purity of a morphism} introduced in Ad\'a\-mek and Rosick\'y \cite[Definition~2.27]{AdRo94}.
It will turn out that this concept is equivalent to $\gl$-purity in categories of sets (cf. Lemma~\ref{L:SetPureFresh}), implies $\scL_{\infty\gl}$-el\-e\-men\-tary embeddability for categories of models of first-order languages (cf. Proposition~\ref{P:Fresh2Eltary}), and is preserved under $\gl$-continuous functors (cf. Proposition~\ref{P:PresHomFresh}).
As a consequence, $\gl$-continuous functors on categories of models generate $\scL_{\infty\gl}$-el\-e\-men\-tary embeddings (cf. Proposition~\ref{P:EltEq}); this will be the main support for our definition of anti-el\-e\-men\-tar\-ity.

\begin{definition}\label{D:Refresh}
Let~$\cC$ be a category and let~$\gl$ be an infinite regular cardinal.
A morphism $f\colon A\to B$ in~$\cC$ is \emph{$\gl$-fresh} if for all $\gl$-pre\-sentable objects~$A'$ and~$B'$ and all morphisms $\xi\colon A'\to B'$, $\ga\colon A'\to A$, and $\gb\colon B'\to B$, if $\gb\xi=f\ga$, then there are a morphism $\eta\colon B'\to A$ and an automorphism~$\gs$ of~$B$ such that $\ga=\eta\xi$ and $f\eta=\gs\gb$ (cf. Figure~\ref{Fig:fresh}).
\end{definition}

\begin{figure}[htb]
\begin{tikzcd}
\centering
A' \arrow[r,"\xi"]\arrow[d,"\ga"] & B'\arrow[d,"\gb"] &&
A' \arrow[r,"\xi"]\arrow[d,"\ga"] &
B'\arrow[d,"\gs\gb"]\arrow[dl,"\eta"]\arrow[r,"\gb"] &
B\arrow[dl,"\gs",bend left=10]\\
A\arrow[r,"f"'] & B &&
A\arrow[r,"f"'] & B &
\end{tikzcd}
\caption{Stating $\gl$-freshness of the morphism $f$}
\label{Fig:fresh}
\end{figure}

Observe that $\gl$-freshness strengthens $\gl$-purity, which is just the part of Definition~\ref{D:Refresh} not involving~$\gs$ (i.e., only require the existence of~$\eta$ such that $\ga=\eta\xi$).
There are other purely categorical approaches of $\gl$-el\-e\-men\-tar\-ity embeddings, such as Beke and Rosick\'{y}'s \emph{$\gl$-embeddings} \cite[Definition~3.1]{BekRos2018}.
Our Definition~\ref{D:Refresh} covers an \emph{a priori} different purpose: in order to maximize the strength and scope of our results of anti-el\-e\-men\-tar\-ity, we need $\gl$-freshness to be as strong as possible while consistent with both Lemma~\ref{L:SetPureFresh} and Proposition~\ref{P:PresHomFresh}.
In that light, the simplicity of Definition~\ref{D:Refresh} came to the author as a surprise.

As the following lemma, whose proof we leave to the reader as an exercise, shows, purity and freshness coincide in every category of the form~$\Powi(\gO)$.
This observation extends easily to further categories, such as the category of all sets with one-to-one maps.

\begin{lemma}[$\gl$-freshness for sets]\label{L:SetPureFresh}
Let~$\gl$ be an infinite regular cardinal, let~$\gO$ be a set, and let $f\colon A\rightarrowtail B$ be a morphism in~$\Powi(\gO)$.
The following are equivalent:
\begin{enumeratei}
\item $f$ is $\gl$-fresh.

\item $f$ is $\gl$-pure.

\item Either~$f$ is a bijection or $\gl\leq\card A$.

\end{enumeratei}
\end{lemma}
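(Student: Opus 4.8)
The plan is to prove the cycle of implications (i) $\Rightarrow$ (ii) $\Rightarrow$ (iii) $\Rightarrow$ (i). The implication (i) $\Rightarrow$ (ii) is immediate from the definitions, since, as observed right after Definition~\ref{D:Refresh}, $\gl$-purity is literally the fragment of $\gl$-freshness that omits the automorphism~$\gs$: given a $\gl$-fresh $f$ and the purity data $\xi\colon A'\to B'$, $\ga\colon A'\to A$, $\gb\colon B'\to B$ with $\gb\xi=f\ga$, the $\eta$ produced by freshness already witnesses purity. So the only real content is (ii) $\Rightarrow$ (iii) and (iii) $\Rightarrow$ (i).

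For (ii) $\Rightarrow$ (iii), I would argue contrapositively. Suppose $f\colon A\rightarrowtail B$ is not a bijection and $\card A<\gl$. Then $A$ itself is $\gl$-presentable in $\Powi(\gO)$ (finite-or-small subsets are the $\gl$-presentable objects here — any one-to-one map out of $A$ factors through a $\gl$-presentable subobject simply because $A$ is already $\gl$-small, and the ``second clause'' of $\gl$-presentability is automatic since there is at most one morphism $A\to B_i$ over a given target in the injective-maps category once the image is fixed... actually one should be slightly careful, but the point is that $\gl$-small subsets of $\gO$ behave as the $\gl$-presentable objects). Pick an element $b\in B\setminus\rng(f)$; let $A'\eqdef A$, let $B'\eqdef f[A]\cup\set{b}\subseteq\gO$ (which is $\gl$-small, hence $\gl$-presentable), let $\xi\colon A'\rightarrowtail B'$ be $f$ with restricted codomain, let $\ga\eqdef\id_A$, and let $\gb\colon B'\rightarrowtail B$ be the inclusion. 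Then $\gb\xi=f=f\ga$. A purity witness would be $\eta\colon B'\to A$ with $\ga=\eta\xi$, i.e.\ $\eta\circ\xi=\id_A$; but then $\eta$ is a one-to-one map from a set strictly larger than $A$ (it contains the extra point $b$) into $A$, contradicting $\card B'=\card A+1>\card A$ together with injectivity. Hence $f$ is not $\gl$-pure.

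For (iii) $\Rightarrow$ (i), split into the two cases. If $f$ is a bijection, then given freshness data $\gb\xi=f\ga$ with $\xi\colon A'\rightarrowtail B'$ one-to-one, set $\eta\eqdef f^{-1}\circ\gb\colon B'\to A$ (a composite of one-to-one maps, hence a morphism of $\Powi(\gO)$) and $\gs\eqdef\id_B$; then $\eta\xi=f^{-1}\gb\xi=f^{-1}f\ga=\ga$ and $f\eta=\gb=\gs\gb$, as required. If instead $\gl\le\card A$, the task is to build, from the extra room in $B$ that a non-surjective-looking configuration might demand, an \emph{automorphism} $\gs$ of $B$ correcting $\gb$. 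Here is the key move: given $\gb\xi=f\ga$ with $A',B'$ $\gl$-presentable (so $\card{B'}<\gl\le\card A$), first observe that $\gb[B']\setminus f[A]$ and $f[A]\setminus\gb[B'\cap\xi$-nonsense$]$ can be matched — more precisely, one wants $\eta\colon B'\to A$ extending $\ga\circ\xi^{-1}$ (defined on $\xi[A']$) to all of $B'$ by choosing images in $A\setminus\ga[A']$ for the points of $B'\setminus\xi[A']$; this is possible because $\card{B'\setminus\xi[A']}<\gl\le\card A$ and, crucially, $\card{A\setminus\ga[A']}$ is also large enough (if $\ga[A']$ had cofinite-in-$A$ image one can still enlarge — the safe statement is that $\card A\ge\gl>\card{B'}$ guarantees enough free points, possibly after first shrinking via a bijection of $A$ with itself, which is where the automorphism of $B$ will ultimately come from through $f$). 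Then $f\eta\colon B'\to B$ is one-to-one and agrees with $\gb$ on $\xi[A']$; since both $f\eta[B']$ and $\gb[B']$ are $\gl$-small subsets of the ``large'' set $B$ (note $\card B\ge\card A\ge\gl$), there is a permutation $\gs$ of $B$ fixing things appropriately with $\gs\gb=f\eta$ on $B'$, and one checks $\ga=\eta\xi$ holds by construction.

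\textbf{Main obstacle.} The routine parts are (i) $\Rightarrow$ (ii) and the bijection case of (iii) $\Rightarrow$ (i). The delicate step is the $\gl\le\card A$ case of (iii) $\Rightarrow$ (i): one must simultaneously (a) extend $\ga\circ\xi^{-1}$ to a one-to-one $\eta\colon B'\to A$ and (b) produce a \emph{global} automorphism $\gs$ of $B$ — not merely a partial injection — with $f\eta=\gs\gb$, all while only having the inequality $\card{B'}<\gl\le\card A\le\card B$ to play with. The cardinal bookkeeping (making sure enough ``fresh'' points of $A$ are available to define $\eta$ injectively off $\xi[A']$, and that the resulting finite/small discrepancy between $f\eta[B']$ and $\gb[B']$ inside $B$ can be absorbed into a bijection of $B$) is exactly the content the author elides by saying the proof is ``left to the reader as an exercise''; writing it carefully amounts to a back-and-forth argument between $A$ and $B$ using that $f$ identifies all but a $\gl$-small sliver of the relevant data. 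I would present this case as the heart of the argument and keep the cardinal inequalities explicit throughout.
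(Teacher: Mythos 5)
Since the paper leaves this lemma as an exercise, there is no official proof to compare against; judged on its own terms, your cycle (i)$\Rightarrow$(ii)$\Rightarrow$(iii)$\Rightarrow$(i) and the two constructions are essentially the right argument, but one justification is genuinely wrong as stated. In (ii)$\Rightarrow$(iii) you derive the contradiction from ``$\card{B'}=\card{A}+1>\card{A}$''. This fails whenever $A$ is infinite, which is exactly the case that matters when $\gl>\aleph_0$ (e.g.\ $\gl=\aleph_1$, $\card A=\aleph_0$): then $\card{A}+1=\card{A}$ and injections $B'\rightarrowtail A$ abound. The contradiction must instead come from the equation $\eta\xi=\id_A$ itself: it forces $\eta(f(a))=a$ for all $a\in A$, so $\eta[f[A]]=A$ is already all of $A$, and then $\eta(b)\in A=\eta[f[A]]$ violates injectivity of $\eta$ because $b\notin f[A]$ (equivalently: $\eta$ is onto $A$ and injective, hence bijective, so $\xi=\eta^{-1}$ would be onto $B'$, contradicting $b\notin\xi[A]$). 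With that repair your choice of test data $A'=A$, $B'=f[A]\cup\set{b}$, $\xi$, $\ga=\id_A$, $\gb=$ inclusion works verbatim.

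The rest is sound but could be stated more cleanly. In the case $\gl\leq\card A$ of (iii)$\Rightarrow$(i), the hedging parenthetical (``if $\ga[A']$ had cofinite-in-$A$ image\dots possibly after first shrinking via a bijection of $A$'') is unnecessary: since $\card{\ga[A']}\leq\card{A'}<\gl\leq\card A$ and $A$ is infinite, one always has $\card\pI{A\setminus\ga[A']}=\card A\geq\gl>\card\pI{B'\setminus\xi[A']}$, so $\ga\circ\xi^{-1}$ extends directly to an injection $\eta\colon B'\rightarrowtail A$; no preliminary bijection of $A$ is needed, and the automorphism of $B$ plays no role in this step. For $\gs$, the precise point to record is that the partial injection $\gb(b')\mapsto f\eta(b')$ on $\gb[B']$ extends to a permutation of $B$ because both complements $B\setminus\gb[B']$ and $B\setminus f\eta[B']$ have cardinality $\card B$ (remove a set of size $\card{B'}<\gl\leq\card B$ from the infinite set $B$), so they are in bijection; note also that $\gs$ automatically fixes $\gb[\xi[A']]$ pointwise since $f\eta\xi=f\ga=\gb\xi$. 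Finally, a complete write-up should include (or cite) the identification of the $\gl$-presentable objects of $\Powi(\gO)$ with the $\gl$-small subsets, which you use in both directions and which the paper itself invokes later in the proof of Proposition~\ref{P:EltEq}.
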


In \cite[Proposition~5.15]{AdRo94}, Ad\'amek and Rosick\'y characterize $\gl$-purity of a homomorphisms of~$\gS$-structures by elementarity with respect to all positive-primitive formulas.
Our next observation shows that in that context, $\gl$-freshness is a strong form of $\scL_{\infty\gl}$-el\-e\-men\-tar\-ity.

\begin{proposition}\label{P:Fresh2Eltary}
Let~$\gl$ be an infinite regular cardinal and let~$\gS$ be a first-order language of arity less%
\footnote{
More precisely, all function and relation symbols of~$\gS$ have arity less than~$\gl$.
}
than~$\gl$.
Let $f\colon A\to\nobreak B$ be a $\gS$-homomorphism.
If~$f$ is $\gl$-fresh within~$\Str\gS$, then it is an $\scL_{\infty\gl}$-el\-e\-men\-tary embedding.
\end{proposition}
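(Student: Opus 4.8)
The plan is to prove something stronger than $\scL_{\infty\gl}$-elementarity of the single embedding $f\colon A\to B$, namely that $f$ is $\scL_{\infty\gl}$-elementary as witnessed by a back-and-forth system of partial isomorphisms indexed by $\gl$-presentable ``probes''. Concretely, I would consider the family $\cF$ of all pairs $(\ga,\gb)$ of homomorphisms $\ga\colon A'\to A$, $\gb\colon A'\to B$ with $A'$ a $\gl$-presentable $\gS$-structure and $f\ga=\gb$ (so that each such pair records a ``$\gl$-small configuration'' of $A$ together with its image under $f$ in $B$). The freshness hypothesis is exactly what makes this family into a back-and-forth system: given $(\ga,\gb)\in\cF$ and any extension $\xi\colon A'\rightarrowtail B'$ along a $\gl$-presentable $B'$ carrying some new tuple, freshness produces $\eta\colon B'\to A$ with $\ga=\eta\xi$ and an automorphism $\gs$ of $B$ with $f\eta=\gs\gb$. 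Composing appropriately, $(\eta, \gs^{-1}\circ(\text{inclusion of }B'\text{ into }B))$ — more precisely the pair built from $\eta$ and the given map $B'\to B$ — lies in $\cF$ and extends $(\ga,\gb)$; the automorphism $\gs$ is harmless since automorphisms of $B$ are $\scL_{\infty\gl}$-elementary self-maps.

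The key steps, in order, would be: (1) Observe that every $\gl$-small tuple $\vec a\subseteq A$ factors through a $\gl$-presentable structure: take $A'$ to be the substructure of $A$ generated by the (less-than-$\gl$) entries of $\vec a$, which is $\gl$-presentable in $\Str\gS$ since $\gS$ has arity $<\gl$, with $\ga$ the inclusion and $\gb=f\ga$; so $(\ga,\gb)\in\cF$, and $\cF$ is ``rich enough'' to see all $\gl$-small assignments on the $A$-side, and symmetrically (via $\gb$) enough on the $B$-side up to the action of $\Aut(B)$. (2) Prove by induction on the complexity of an $\scL_{\infty\gl}$ formula $\varphi(\vec x)$ (with $<\gl$ free variables) that for every $(\ga,\gb)\in\cF$ and every $\vec x\mapsto\vec a'$ in $A'$, one has $A\models\varphi(\ga\vec a')\iff B\models\varphi(\gb\vec a')$. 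The atomic case is immediate because $\ga,\gb$ are $\gS$-homomorphisms with $\gb=f\ga$ and, for the converse direction of atomic relations, one uses that $\ga$ is obtained as a restriction of a map into a freshly-produced $\eta$ (so no spurious relations are created — this is where I must be a little careful, see below); Boolean connectives are routine since conjunctions/disjunctions are over $<\gl$-many formulas in $<\gl$ free variables altogether, matching the back-and-forth index calculus; the quantifier case $\exists\vec y\,\psi$ with $\card(\vec y)<\gl$ is handled on the $B$-side by pulling a witness $\vec b\subseteq B$ back: enlarge $B'$ to a $\gl$-presentable structure containing $\vec b$ together with the image of $A'$, apply freshness to get $\eta\colon B'\to A$ and $\gs\in\Aut(B)$, use $\gs$'s own elementarity to reduce to $f\eta=\gb''$, and read off an $A$-side witness $\eta(\vec b)$; the $A$-side direction of $\exists$ is even easier, just push the witness forward by $f$ and absorb it into a larger $A'$. (3) Conclude $f$ is $\scL_{\infty\gl}$-elementary by applying (2) to the pair $(\id_{A'},f\res_{A'})$ for arbitrary $\gl$-generated $A'\subseteq A$ and an arbitrary assignment into $A'$.

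The main obstacle I anticipate is the \emph{atomic converse / injectivity bookkeeping}: to run the induction cleanly one wants the maps in $\cF$ to be embeddings (reflecting atomic formulas), not mere homomorphisms, whereas Definition~\ref{D:Refresh} only hands back a homomorphism $\eta$. The resolution is that $\gl$-freshness already \emph{implies} $\gl$-purity (as the excerpt notes right after Definition~\ref{D:Refresh}), and for the specific probes $A'$ we use — substructures of $A$ — the composite $\eta\xi=\ga$ is the inclusion, which forces $\xi$, hence the relevant restriction of $\eta$, to be injective and atomic-reflecting on the image of $A'$; so one should phrase the induction so that the only configurations ever tested are images of genuine substructures, and the freshness square is always applied with $\ga$ (or $\gb$, after twisting by $\gs^{-1}$) an inclusion. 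A secondary, lighter obstacle is checking the arity bookkeeping in the quantifier step — that the enlarged $\gl$-presentable $B'$ (generated by $<\gl$-many elements of $B$) is still $\gl$-presentable, which is exactly where the hypothesis ``$\gS$ of arity less than $\gl$'' is used, together with the standard fact that in such a $\Str\gS$ the $\gl$-presentable objects are precisely the $<\gl$-generated structures.
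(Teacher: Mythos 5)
There is a genuine gap, and it sits at the foundation of your plan: you take as probes the substructures of~$A$ (and later of~$B$) generated by fewer than~$\gl$ elements, and you justify this with the ``standard fact that in such a $\Str\gS$ the $\gl$-pre\-sentable objects are precisely the ${<}\gl$-generated structures''. That fact is false. In $\Str\gS$ the $\gl$-pre\-sentable objects are the structures \emph{presented} by fewer than~$\gl$ generators \emph{and} fewer than~$\gl$ atomic relations; ${<}\gl$-generated substructures of an arbitrary~$A$ are only $\gl$-\emph{generated}, and in general they satisfy atomic relations not entailed by any $\gl$-small presentation (already for $\gl=\go$: a finitely generated structure need not be finitely presentable). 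Since Definition~\ref{D:Refresh} only lets you invoke freshness/purity against genuinely $\gl$-pre\-sentable $A'$ and~$B'$, every place where your argument feeds a substructure of~$A$ or an ``enlarged $B'$ containing $\vec b$ and the image of $A'$'' into the freshness square is unjustified; this affects both the construction of your family~$\cF$ and the quantifier step, and it also undermines your proposed fix for the atomic converse, which again relies on the probes being substructure inclusions.

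The repair is to choose the probes as the paper does: syntactic objects rather than subobjects of~$A$ or~$B$. For an atomic~$\sF(\vec\vx)$ one takes $A'$ to be the free (term) $\gS$-structure on~$\vec\vx$ and $B'\eqdef A'/\bxi$ with~$\bxi$ the congruence generated by the single relation~$\sF(\vec\vx)$; these are $\gl$-pre\-sentable because~$\gS$ has arity~${<}\gl$, and then $\gl$-\emph{purity} alone yields $\eta\colon B'\to A$ with $\eta\xi=\ga$, hence $A\models\sF(\vec a)$ — no appeal to injectivity or atomic-reflection of the probes is needed. For $\sF=(\exists\vec\vy)\sG$ one takes the free structures on~$\vec\vx$ and on~$\vec\vx\cup\vec\vy$ with~$\xi$ the inclusion; here your idea of using the automorphism~$\gs$ is indeed the right one, and the precise computation is $f\ga=f\eta\xi=\gs\gb\xi=\gs f\ga$, so $\gs f\vec a=f\vec a$, while $\gs\vec c=f\vec b$ with $\vec b\eqdef\eta\vec\vy$; elementarity of~$\gs$ then gives $B\models\sG(f\vec a,f\vec b)$ and the induction hypothesis transfers this to $A\models\sG(\vec a,\vec b)$. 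So the automorphism-twisting part of your outline matches the intended argument, but as written the proposal does not go through because the objects you apply freshness to are not $\gl$-pre\-sentable.
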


\begin{proof}
We need to prove that $A\models\sF(\vec{a})$ if{f} $B\models\sF(f\vec{a})$, for every~$\scL_{\infty\gl}$ formula~$\sF$ of~$\gS$ and every assignment~$\vec{a}$ of the free variables of~$\sF$ in~$A$.
We argue by induction on the complexity of~$\sF$.

The only two nontrivial induction steps consist of proving that $B\models\sF(f\vec{a})$ implies that $A\models\sF(\vec{a})$, for any $\scL_{\infty\gl}$ formula~$\sF$ which is either atomic or of the form~$(\exists\vec{\vy})\sG$ for a formula~$\sG$ of smaller complexity.

Let us begin with the case where~$\sF$ is atomic and denote by~$\vec{\vx}$ the set of all free variables of~$\sF$.
Define~$A'$ as the free~$\gS$-structure on~$\vec{\vx}$ (term algebra, with all relation symbols interpreted by the empty set) and~$\bxi$ as the $\gS$-congruence of~$A'$ generated by the single relation~$\sF(\vec{\vx})$; then set $B'\eqdef A'/{\bxi}$ and denote by $\xi\colon A'\twoheadrightarrow B'$ the canonical projection.
Using the relation $B\models\sF(f\vec{a})$, we get (unique) $\gS$-homomorphisms $\ga\colon A'\to A$ and $\gb\colon B'\to B$ such that $\ga\vec{\vx}=\vec{a}$ and $\gb\xi\vec{\vx}=f\vec{a}$.
Since~$f$ is $\gl$-pure, there exists $\eta\colon B'\to A$ such that $\ga=\eta\xi$.
Since $B'\models\sF(\xi\vec{\vx})$ and~$\eta$ is a $\gS$-homomorphism, we get $A\models\sF(\eta\xi\vec{\vx})$, that is, $A\models\sF(\vec{a})$, as desired.

Now let~$\sF(\vec{\vx})$ be~$(\exists\vec{\vy})\sG(\vec{\vx},\vec{\vy})$.
Our assumption $B\models\sF(f\vec{a})$ means that there exists $\vec{c}\subseteq B$ such that $B\models\sG(f\vec{a},\vec{c})$.
Define~$A'$ as the free~$\gS$-structure on~$\vec{\vx}$, $B'$ as the free~$\gS$-structure on~$\vec{\vx}\cup\vec{\vy}$, and~$\xi$ as the natural inclusion from~$A'$ into~$B'$.
There are (unique) $\gS$-homomorphisms $\ga\colon A'\to A$ and $\gb\colon B'\to B$ such that $\ga\vec{\vx}=\vec{a}$, $\gb\vec{\vx}=f\vec{a}$, and $\gb\vec{\vy}=\vec{c}$.
Since~$f$ is $\gl$-fresh, there are a $\gS$-homomorphism $\eta\colon B'\to A$ and a $\gS$-automorphism~$\gs$ of~$B$ such that $\ga=\eta\xi$ and $f\eta=\gs\gb$ (cf. Figure~\ref{Fig:fresh}).
Observe that $f\ga=f\eta\xi=\gs\gb\xi=\gs f\ga$, thus (as $\vec{a}=\ga\vec{\vx}$) we get $\gs f\vec{a}=f\vec{a}$.
Furthermore, setting $\vec{b}\eqdef\eta\vec{\vy}$, we get $\gs\vec{c}=\gs\gb\vec{\vy}=f\vec{b}$.
Now since $B\models\sG(f\vec{a},\vec{c})$ and~$\gs$ is an automorphism, we get $B\models\sG(\gs f\vec{a},\gs\vec{c})$, that is, by the above, $B\models\sG(f\vec{a},f\vec{b})$.
{F}rom our induction hypothesis it follows that $A\models\sG(\vec{a},\vec{b})$, thus $A\models\sF(\vec{a})$, as desired.
\end{proof}

Ad\'amek and Rosick\'y establish in \cite[Proposition~2.30]{AdRo94} that in any locally $\gl$-pre\-sentable category, the $\gl$-pure morphisms are exactly the $\gl$-directed colimits of split monomorphisms.
It follows that any image of a $\gl$-pure morphism, under a $\gl$-continuous functor (between $\gl$-accessible categories) is $\gl$-pure.
Our next result focuses on the latter preservation result, also extending it to $\gl$-fresh morphisms.

\begin{proposition}\label{P:PresHomFresh}
Let~$\cS$ and~$\cT$ be categories, let~$\gl$ be an infinite regular cardinal, and let $\gC\colon\cS\to\cT$ be a $\gl$-continuous functor.
We assume that~$\Pres_{\gl}\cS$ is $\gl$-dense in~$\cS$.
Then for every morphism $f\colon A\to B$ in~$\cS$, if~$f$ is $\gl$-pure \pup{resp., $\gl$-fresh} in~$\cS$, then~$\gC(f)$ is $\gl$-pure \pup{resp., $\gl$-fresh} in~$\cT$.
\end{proposition}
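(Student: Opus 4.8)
Rather than routing through the characterization of $\gl$-pure morphisms as $\gl$-directed colimits of split monomorphisms (which would require local $\gl$-presentability or $\gl$-accessibility of $\cS$), the plan is to argue directly from Definition~\ref{D:Refresh}, transporting the test data back and forth along $\gl$-directed colimit presentations. Since $\Pres_{\gl}\cS$ is $\gl$-dense in~$\cS$, write $A$ and $B$ as $\gl$-directed colimits $\vecm{A,\ga_i}{i\in I}=\varinjlim\vecm{A_i,\ga_i^j}{i\leq j}$ and $\vecm{B,\gb_k}{k\in K}=\varinjlim\vecm{B_k,\gb_k^l}{k\leq l}$ of $\gl$-presentable objects of~$\cS$; applying the $\gl$-continuous functor~$\gC$ turns these into $\gl$-directed colimit cocones $\vecm{\gC A,\gC\ga_i}{i\in I}$ and $\vecm{\gC B,\gC\gb_k}{k\in K}$ in~$\cT$. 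Now fix $\gl$-presentable objects $A',B'$ of~$\cT$ and morphisms $\xi'\colon A'\to B'$, $\ga'\colon A'\to\gC A$, $\gb'\colon B'\to\gC B$ with $\gb'\xi'=\gC(f)\ga'$; the task is to build $\eta'\colon B'\to\gC A$ together with an automorphism~$\gs'$ of~$\gC B$ (omitted in the $\gl$-pure case) such that $\ga'=\eta'\xi'$ and $\gC(f)\eta'=\gs'\gb'$.

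The middle step is the usual ``pass to a late enough stage'' argument. Using weak $\gl$-presentability of $A'$ and $B'$ in~$\cT$, factor $\ga'=\gC(\ga_i)\circ\bar\ga$ through some $\gC A_i$ and $\gb'=\gC(\gb_k)\circ\bar\gb$ through some $\gC B_k$. Since $A_i$ is $\gl$-presentable in~$\cS$, the morphism $f\ga_i\colon A_i\to B$ factors as $\gb_k\circ g$ for some $g\colon A_i\to B_k$ after replacing $k$ by a common upper bound of the two relevant indices; and since $\gb'\xi'=\gC(f)\ga'$, the morphisms $\bar\gb\xi'$ and $\gC(g)\circ\bar\ga$ from~$A'$ to~$\gC B_k$ are identified in $\gC B=\varinjlim_k\gC B_k$, so by the uniqueness clause of $\gl$-presentability of~$A'$ they are equalized by a transition map $\gC(\gb_k^{k'})$. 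After one further enlargement of~$k$, with the corresponding adjustments to $\bar\gb$ and~$g$, one is left with data $i\in I$, $k\in K$, $\bar\ga\colon A'\to\gC A_i$, $\bar\gb\colon B'\to\gC B_k$, and $g\colon A_i\to B_k$ in~$\cS$ satisfying $\ga'=\gC(\ga_i)\bar\ga$, $\gb'=\gC(\gb_k)\bar\gb$, $f\ga_i=\gb_k g$ in~$\cS$, and $\bar\gb\xi'=\gC(g)\bar\ga$ in~$\cT$.

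To finish, apply $\gl$-freshness (resp.\ $\gl$-purity) of~$f$ in~$\cS$ to the $\gl$-presentable objects $A_i,B_k$ and the morphisms $g\colon A_i\to B_k$, $\ga_i\colon A_i\to A$, $\gb_k\colon B_k\to B$, for which $\gb_k g=f\ga_i$: this produces $\eta\colon B_k\to A$ with $\ga_i=\eta g$, and (in the fresh case) an automorphism~$\gs$ of~$B$ with $f\eta=\gs\gb_k$. Then set $\eta'\eqdef\gC(\eta)\circ\bar\gb$ and $\gs'\eqdef\gC(\gs)$, the latter being an automorphism of~$\gC B$ because~$\gC$ is a functor. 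A one-line diagram chase gives $\eta'\xi'=\gC(\eta)\gC(g)\bar\ga=\gC(\eta g)\bar\ga=\gC(\ga_i)\bar\ga=\ga'$ and $\gC(f)\eta'=\gC(f\eta)\bar\gb=\gC(\gs\gb_k)\bar\gb=\gC(\gs)\gC(\gb_k)\bar\gb=\gs'\gb'$, as wanted; deleting every occurrence of~$\gs$ and~$\gs'$ yields the $\gl$-pure statement.

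The only delicate point is the middle paragraph: several factorizations through a priori different colimit components of~$\gC A$ and~$\gC B$ must be consolidated at a single large enough index, via repeated appeals to $\gl$-directedness of~$I$ and~$K$ and to the uniqueness clause of $\gl$-presentability, and one must track carefully how $\bar\gb$ and~$g$ transform under each such enlargement. Everything else is a formal chase around the diagram of Figure~\ref{Fig:fresh}, and the fact that functors preserve automorphisms.
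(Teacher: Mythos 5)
Your proposal is correct and follows essentially the same route as the paper's proof: factor the test data through $\gl$-presentable colimit stages of~$A$ and~$B$, consolidate at a common index using $\gl$-directedness and the uniqueness clause of $\gl$-presentability so that the square commutes already at stage level, apply $\gl$-freshness (resp. $\gl$-purity) of~$f$ in~$\cS$, and transport $\eta$ and~$\gs$ forward by~$\gC$. The bookkeeping you flag as the delicate point is exactly what the paper does (its $\xi',\ga',\gb'$ play the roles of your $g,\bar\ga,\bar\gb$), so no further comparison is needed.
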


\begin{proof}
We provide the proof for $\gl$-freshness; the proof for $\gl$-purity is contained in that argument.
Let~$P$ and~$Q$ be $\gl$-pre\-sentable objects of~$\cT$ and let $\xi\colon P\to Q$, $\ga\colon P\to\gC(A)$, and $\gb\colon Q\to\gC(B)$ be morphisms in~$\cT$ such that $\gb\xi=\gC(f)\ga$.
{F}rom our denseness assumption it follows that there are $\gl$-directed colimit representations
 \begin{align*}
 \vecm{A,\mu_i}{i\in I}&=
 \varinjlim\Vecm{U_i,\mu_i^{i'}}{i\leq i'\text{ in }I}\,,\\
 \vecm{B,\nu_j}{j\in J}&=
 \varinjlim\Vecm{V_j,\nu_j^{j'}}{j\leq j'\text{ in }J}
 \end{align*}
in~$\cS$, with all~$U_i$ and~$V_j$ being $\gl$-pre\-sentable.
Since $\ga\colon P\to\gC(A)=\varinjlim_{i\in I}\gC(U_i)$ and~$P$ is $\gl$-pre\-sentable, there are~$i\in I$ and $\ga'\colon P\to\gC(U_i)$ such that $\ga=\gC(\mu_i)\ga'$.
Since $f\mu_i\colon U_i\to B=\varinjlim_{j\in J}V_j$ and~$U_i$ is $\gl$-pre\-sentable, there exists $j_0\in J$ such that~$f\mu_i$ factors through~$V_{j_0}$\,.
Since $\gb\colon Q\to\gC(B)=\varinjlim_{j\in J}\gC(V_j)$ and~$Q$ is $\gl$-pre\-sentable, there are~$j\in J$ and $\gb'\colon Q\to\gC(V_j)$ such that $\gb=\gC(\nu_j)\gb'$.
Furthermore, we may replace~$j$ by any upper bound of~$\set{j_0,j}$ in~$J$ and thus assume that $f\mu_i$ factors through~$V_j$\,.
Pick $\xi'\colon U_i\to V_j$ such that $f\mu_i=\nu_j\xi'$.
{F}rom $\gC(f)\ga=\gb\xi$ it follows that $\gC(f\mu_i)\ga'=\gC(\nu_j)\gb'\xi$, that is, $\gC(\nu_j)\gC(\xi')\ga'=\gC(\nu_j)\gb'\xi$.
Since the common domain of the morphisms~$\gC(\xi')\ga'$ and~$\gb'\xi$ (viz.~$P$) is $\gl$-pre\-sentable, there exists $k\geq j$ such that $\gC(\nu_j^k)\gC(\xi')\ga'=\gC(\nu_j^k)\gb'\xi$.
Replacing~$j$ by~$k$, we may thus suppose that $\gC(\xi')\ga'=\gb'\xi$, that is, the diagram represented in the left hand side of Figure~\ref{Fig:PresHomFresh} is commutative.

\begin{figure}[htb]
\begin{tikzcd}
\centering
P \arrow[r,"\xi"]\arrow[d,"\ga'"]\arrow[dd,"\ga"',bend right=40]
& Q\arrow[d,"\gb'"']\arrow[dd,"\gb",bend left=40] &&&&\\
\gC(U_i)\arrow[r,"\gC(\xi')"]\arrow[d,"\gC(\mu_i)"] &
\gC(V_j)\arrow[d,"\gC(\nu_j)"'] &&
U_i\arrow[r,"\xi'"]\arrow[d,"\mu_i"'] &
V_j\arrow[d,"\gs'\nu_j"]\arrow[dl,"\eta'"]\arrow[r,"\nu_j"] &
B\arrow[dl,"\gs'",bend left=30]\\
\gC(A)\arrow[r,"\gC(f)"'] & \gC(B) && A\arrow[r,"f"'] & B&
\end{tikzcd}
\caption{Illustrating the proof of Proposition~\ref{P:PresHomFresh}}
\label{Fig:PresHomFresh}
\end{figure}

Since~$f$ is $\gl$-fresh and~$U_i$ and~$V_j$ are both $\gl$-pre\-sentable, there are $\eta'\colon V_j\to A$ and an automorphism~$\gs'$ of~$B$ such that $\eta'\xi'=\mu_i$ and $f\eta'=\gs'\nu_j$ (see the right hand side of Figure~\ref{Fig:PresHomFresh}).
Now set $\eta\eqdef\gC(\eta')\gb'$ and $\gs\eqdef\gC(\gs')$.
Then
 \[
 \eta\xi=\gC(\eta')\gb'\xi=\gC(\eta')\gC(\xi')\ga'=\gC(\mu_i)\ga'=\ga
 \]
whereas~$\gs$ is an automorphism of~$\gC(B)$ and
 \begin{equation*}
 \gC(f)\eta=\gC(f\eta')\gb'=\gC(\gs'\nu_j)\gb'=\gs\gb\,.\tag*{\qed}
 \end{equation*}
 \renewcommand{\qed}{}
\end{proof}

The following result is essentially a reformulation, in our context, of Feferman's \cite[Theorem~6]{Fefe1972} or Beke and Rosick\'{y}'s \cite[Proposition~2.14]{BekRos2018}.
However, while the preservation of monomorphisms belongs to the \emph{assumptions} of the two abovementioned results, it appears in our result as a \emph{conclusion} (for monomorphisms with large enough domain).

\begin{proposition}\label{P:EltEq}
Let~$\gl$ be an infinite regular cardinal, let~$\cC$ be a category, let~$\gO$ be a set, and let $\gC\colon\Powi(\gO)\to\cC$ be a $\gl$-continuous functor.
Then for every $f\colon X\rightarrowtail Y$ in~$\Powi(\gO)$ with $\card{X}\geq\gl$, $\gC(f)$ is a $\gl$-fresh morphism from~$\gC(X)$ into~$\gC(Y)$.
In particular, if~$\cC=\Str\gS$ for a first-order language~$\gS$ with arity less than~$\gl$, then~$\gC(f)$ is an $\scL_{\infty\gl}$-el\-e\-men\-tary embedding from~$\gC(X)$ into~$\gC(Y)$.
\end{proposition}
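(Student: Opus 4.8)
The plan is to obtain the statement by chaining together the three results of Section~\ref{S:fresh} that precede it: Lemma~\ref{L:SetPureFresh}, Proposition~\ref{P:PresHomFresh}, and Proposition~\ref{P:Fresh2Eltary}. First I would observe that the morphism $f\colon X\rightarrowtail Y$ is $\gl$-fresh \emph{within $\Powi(\gO)$}: since $\card{X}\geq\gl$, taking $A=X$ in Lemma~\ref{L:SetPureFresh} puts us in case~(iii), which yields case~(i). (This is exactly the role of the hypothesis $\card{X}\geq\gl$: a one-to-one map with small domain need not be $\gl$-fresh.) The remaining task is then to transport $\gl$-freshness across the $\gl$-continuous functor $\gC$, and, in the model-theoretic case, to translate the output into elementarity.

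Transporting $\gl$-freshness is what Proposition~\ref{P:PresHomFresh} does, provided its standing hypothesis holds, i.e.\ provided $\Pres_{\gl}(\Powi(\gO))$ is $\gl$-dense in $\Powi(\gO)$. This is the step I expect to require actual care, because $\Powi(\gO)$ is not a standard locally presentable category; in fact it is not even cocomplete, so ``$\gl$-continuous'' here must be read as ``preserving all $\gl$-directed colimits that exist''. The verification I have in mind runs as follows. One checks that \emph{any} $\gl$-directed colimit $\vecm{B,\gb_i}{i\in I}=\varinjlim\vecm{B_i,\gb_i^j}{i\leq j}$ that exists in $\Powi(\gO)$ is computed on underlying sets: the legs $\gb_i$ are one-to-one (being colimit legs of a $\gl$-directed diagram of one-to-one maps), they are jointly surjective, so $B=\bigcup_{i\in I}\gb_i[B_i]$ (otherwise the union would be a strictly smaller cocone, contradicting the universal property). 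From this, every subset $S\subseteq\gO$ with $\card{S}<\gl$ is $\gl$-presentable in $\Powi(\gO)$: a morphism $S\to B$ has $<\gl$-small image, which by $\gl$-directedness lies inside a single $\gb_i[B_i]$, producing a factorization through $B_i$, and the uniqueness-of-factorization clause is automatic since the $\gb_i$ are monic. Finally, $\gl$-density is witnessed, for each $X\subseteq\gO$, by the $\gl$-directed diagram of its $<\gl$-small subsets with inclusion maps: its colimit in $\Powi(\gO)$ is $\vecm{X,\id_S^X}{S\in[X]^{<\gl}}$, because any compatible family of one-to-one maps $g_S\colon S\rightarrowtail Z$ glues to the map $x\mapsto g_{\{x\}}(x)$, which is one-to-one since injectivity need only be tested on the two-element subsets of $X$ (using $2<\gl$), and this cocone is visibly initial among cocones valued in $\Powi(\gO)$.

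With that hypothesis in place, Proposition~\ref{P:PresHomFresh} applies to $\gC$ and gives that $\gC(f)$ is a $\gl$-fresh morphism from $\gC(X)$ to $\gC(Y)$, which is the first assertion. For the second assertion, when $\cC=\Str\gS$ with all symbols of $\gS$ of arity less than $\gl$, Proposition~\ref{P:Fresh2Eltary} upgrades $\gl$-freshness of $\gC(f)$ to the conclusion that $\gC(f)$ is an $\scL_{\infty\gl}$-elementary embedding (in particular, a monomorphism, which is the point emphasized in the remark preceding the statement). The only genuinely delicate point is the middle paragraph — pinning down exactly which $\gl$-directed colimits $\Powi(\gO)$ has and checking that the $<\gl$-small subsets are strictly (not merely weakly) $\gl$-presentable there; everything else is a direct invocation of the cited results.
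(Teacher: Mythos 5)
Your proposal is correct and follows essentially the same route as the paper: Lemma~\ref{L:SetPureFresh} gives $\gl$-freshness of $f$ within $\Powi(\gO)$, Proposition~\ref{P:PresHomFresh} transports it along the $\gl$-continuous functor~$\gC$, and Proposition~\ref{P:Fresh2Eltary} yields the $\scL_{\infty\gl}$-elementary embedding statement. The paper simply asserts, without proof, that the $\gl$-presentable objects of $\Powi(\gO)$ are the $\gl$-small subsets and that they form a $\gl$-dense full subcategory; the verification you spell out (colimits computed on underlying sets, factorization through a single leg, gluing over $\gl$-small subsets) is exactly the intended justification of that step.
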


\begin{proof}
By Lemma~\ref{L:SetPureFresh}, $f$ is a $\gl$-fresh morphism within~$\Powi(\gO)$.
The $\gl$-pre\-sentable members of~$\Powi(\gO)$ are exactly the $\gl$-small subsets of~$\gO$, and those form a $\gl$-dense full subcategory of~$\Powi(\gO)$.
By Proposition~\ref{P:PresHomFresh}, $\gC(f)$ is a $\gl$-fresh morphism within~$\cC$.
The last statement of Proposition~\ref{P:EltEq} now follows from Proposition~\ref{P:Fresh2Eltary}.
\end{proof}

\section{$P$-scaled Boolean algebras; normal morphisms}
\label{S:BoolP}

The whole monograph Gillibert and Wehrung~\cite{Larder} is articulated around the concept of a \emph{$P$-scaled Boolean algebra}.
This section will consist of a gentle recollection of some known material on those structures, followed by a few basic results on their $\gl$-pre\-sentability.

For an arbitrary poset~$P$, a \emph{$P$-scaled Boolean algebra} is a structure
 \[
 \bA=\pI{A,\vecm{A^{(p)}}{p\in P}}\,,
 \]
where~$A$ is a Boolean algebra, every~$A^{(p)}$ is an ideal of~$A$, $A=\bigvee\vecm{A^{(p)}}{p\in P}$ within the ideal lattice of~$A$, and $A^{(p)}\cap A^{(q)}=\bigvee\vecm{A^{(r)}}{r\geq p,q}$ whenever $p,q\in P$.
For $P$-scaled Boolean algebras~$\bA$ and~$\bB$, a \emph{morphism} from~$\bA$ to~$\bB$ is a homomorphism $f\colon A\to B$ of Boolean algebras such that $f[A^{(p)}]\subseteq B^{(p)}$ for every $p\in P$.
If~$f$ is surjective and $f[A^{(p)}]=B^{(p)}$ for every~$p$, we say that~$f$ is \emph{normal}%
\footnote{
In Gillibert and Wehrung \cite[\S~2.5]{Larder} we observed, without proof, that the normal morphisms are exactly the regular epimorphisms in~$\Bool_P$\,.
We will not need that fact here either.
}
.
For a $P$-scaled Boolean algebra~$\bA$ and an ideal~$I$ of~$A$, with canonical projection~$\pi_I\colon A\twoheadrightarrow A/I$, the \emph{quotient algebra}~$\bA/I$ has underlying Boolean algebra~$B/I$ and $(A/I)^{(p)}=\pi_I[A^{(p)}]$.
Furthermore, $\pi_I$ is a normal morphism and every normal morphism arises this way (cf. \cite[\S~2.5]{Larder}).

The category of all $P$-scaled Boolean algebras is denoted by~$\Bool_P$\,.

The following observation is stated without proof in Gillibert and Wehrung \cite[Remark~2.4.8]{Larder}.
We include a proof here for completeness.

\begin{lemma}\label{L:BoolPmono}
A morphism $\gf\colon\bA\to\bB$ in $\Bool_P$ is monic if{f} it is one-to-one.
\end{lemma}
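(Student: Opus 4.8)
The ``if'' direction is trivial: a one-to-one map is a monomorphism in the category of sets, hence \emph{a fortiori} in $\Bool_P$, since composition in $\Bool_P$ is just composition of underlying Boolean algebra homomorphisms. So the work is in the ``only if'' direction: assume $\gf\colon\bA\to\bB$ is monic in $\Bool_P$ and deduce that $\gf$ is one-to-one. The natural strategy is the standard categorical trick of separating two points by a pair of parallel morphisms out of a suitably chosen test object, then showing that this test object can be endowed with a $P$-scaled structure making those two morphisms into genuine $\Bool_P$-morphisms.

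\textbf{Main steps.} Suppose $\gf$ is not one-to-one, so there is $a\in A$ with $a\neq 0$ and $\gf(a)=0$ (using that a Boolean algebra homomorphism is injective iff its kernel is trivial). First I would produce a two-element Boolean algebra $\two=\set{0,1}$ together with the two Boolean homomorphisms $u,v\colon\two\to A$ determined by $u(1)=0$ and $v(1)=a$; these are distinct, and $\gf\circ u=\gf\circ v$ (both send $1$ to $0$). The point is to make $\two$ into a $P$-scaled Boolean algebra $\mathbf{2}_P$ in such a way that \emph{both} $u$ and $v$ become morphisms in $\Bool_P$. For $u$ this is automatic whatever scaling we choose on $\two$, since $u$ is the zero map on the positive part. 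For $v$ we need $v[(\two)^{(p)}]\subseteq A^{(p)}$ for every $p\in P$, i.e.\ if $1\in(\two)^{(p)}$ then $a\in A^{(p)}$. The safe choice is to set $(\two)^{(p)}\eqdef\set{0}$ for every $p\in P$ \emph{except} that we must also satisfy the defining axioms of a $P$-scaled Boolean algebra, in particular $A=\bigvee_{p\in P}A^{(p)}$ within the ideal lattice --- which for the two-element Boolean algebra $\two$ forces $1\in(\two)^{(p)}$ for at least one $p$. Hence the honest choice is: fix any $p_0\in P$ with $a\in A^{(p_0)}$ (such $p_0$ exists because $A=\bigvee_{p\in P}A^{(p)}$, so every element of $A$, in particular $a$, lies in a finite join of the $A^{(p)}$'s; here a small argument is needed to get $a$ into a \emph{single} $A^{(p)}$ --- see the obstacle below), and set $(\two)^{(p)}\eqdef\two$ for $p\leq p_0$ and $(\two)^{(p)}\eqdef\set{0}$ otherwise, or more simply take $P^\infty$-style support and define the scaling via the principal down-set of $p_0$. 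One checks the two axioms ($A=\bigvee$ of the ideals, and the intersection formula $A^{(p)}\cap A^{(q)}=\bigvee_{r\geq p,q}A^{(r)}$) directly for this $\two$; both are easy because $\two$ has only the ideals $\set{0}$ and $\two$. With $\mathbf{2}_P$ so defined, $u$ and $v$ are both $\Bool_P$-morphisms from $\mathbf{2}_P$ to $\bA$ with $\gf u=\gf v$ and $u\neq v$, contradicting monicity of $\gf$. Therefore $\gf$ is one-to-one.

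\textbf{The main obstacle.} The one delicate point is getting the single nonzero element $a$ into a \emph{single} ideal $A^{(p_0)}$, rather than merely into a finite join $A^{(p_1)}\vee\cdots\vee A^{(p_n)}$ of them. The condition $A=\bigvee_{p\in P}A^{(p)}$ only gives the latter. The fix is to choose the scaling on $\two$ more flexibly: rather than insisting on a principal down-set, allow $(\two)^{(p)}=\two$ exactly for $p$ in the up-set generated by $\set{p_1,\dots,p_n}$... no --- the cleanest route is to observe that we are free to write $a=a_1\vee\cdots\vee a_n$ with $a_i\in A^{(p_i)}$ and $a_i\neq 0$ for at least one $i$ (discarding zero terms), pick that index $i$, replace $a$ by $a_i$, and rename $p_i$ as $p_0$; since $a_i\neq 0$ still, and $\gf(a_i)\leq\gf(a)=0$, we still have $\gf(a_i)=0$, so the argument goes through with $a_i$ in place of $a$. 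This reduces everything to the case $a\in A^{(p_0)}$ for a single $p_0$, and then the principal-down-set scaling on $\two$ works and all axiom checks are routine. I would present the proof in exactly this order: (i) reduce to a single nonzero $a$ with $a\in A^{(p_0)}$ and $\gf(a)=0$; (ii) equip $\two$ with the down-$p_0$ scaling and verify it is a $P$-scaled Boolean algebra; (iii) exhibit $u,v$ and conclude.
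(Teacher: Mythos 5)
There is a genuine gap, and it sits at the heart of your separation argument: the maps $u,v\colon\two\to A$ you propose, with $u(1)=0$ and $v(1)=a$, are not morphisms of Boolean algebras. In $\Bool_P$ a morphism is a \emph{unital} Boolean homomorphism (it must preserve the top element and complementation; this is also forced by the paper's later use of $\gf^{-1}[\fb]$ as an ultrafilter of $A$ for $\fb\in\Ult B$). Since $\two$ is initial in Boolean algebras, there is exactly \emph{one} homomorphism from $\two$ into any nontrivial Boolean algebra, so no choice of $P$-scaling on $\two$ can produce two distinct parallel arrows into $\bA$; your contradiction with monicity never gets off the ground. The preliminary reduction (to a single $a\neq0$ with $\gf(a)=0$ lying in a single $A^{(p_0)}$) is fine, and your scaling of $\two$ by the down-set of $p_0$ is exactly the object $\two[p_0]$ used later in the paper, but the test object itself is the wrong one.

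The repair requires precisely the extra idea in the paper's proof: unitality forces the images of the atoms of the test object to form a partition of $1$ compatible with the ideals $A^{(p)}$. From $A=\bigvee\vecm{A^{(p)}}{p\in P}$ one extracts a finite set $Q\subseteq P$ and a decomposition $1=\bigoplus\vecm{u_q}{q\in Q}$ with $u_q\in A^{(q)}$; then one takes $C\eqdef\Pow(Q\times\set{0,1})$, scaled by letting $C^{(p)}$ be generated by $(Q\upw p)\times\set{0,1}$, and defines two morphisms $\ga_0,\ga_1\colon\bC\to\bA$ sending $\set{(q,0)}$ to $u_q\wedge a_i$ and $\set{(q,1)}$ to $u_q\wedge\neg a_i$ (for two elements $a_0,a_1$ with $\gf(a_0)=\gf(a_1)$; your kernel reduction would correspond to $a_1=0$). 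These are unital, land each atom's image inside the correct $A^{(q)}$, and are equalized by $\gf$, so monicity gives $u_q\wedge a_0=u_q\wedge a_1$ for all $q$ and hence $a_0=a_1$. Without this partition-of-unity step your argument cannot be completed, because making $v(1)=1$ would need $1\in A^{(p)}$ for a single $p$, which the axioms of a $P$-scaled Boolean algebra do not provide.
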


\begin{proof}
It is trivial that if~$\gf$ is one-to-one then it is monic.
Suppose, conversely, that~$\gf$ is monic and let $a_0,a_1\in A$ such that $\gf(a_0)=\gf(a_1)$.
Since~$1$ belongs to $\bigvee\vecm{A^{(p)}}{p\in P}$, there are a nonempty finite subset~$Q$ of~$P$ and elements $u_q\in A^{(q)}$, for $q\in Q$, such that $1=\bigoplus\vecm{u_q}{q\in Q}$.
Let $C\eqdef\Pow(Q\times\set{0,1})$.
For each $p\in P$, denote by~$C^{(p)}$ the ideal of~$C$ generated by $c_p\eqdef(Q\upw p)\times\set{0,1}$.
Then $\bC\eqdef(C,\vecm{C^{(p)}}{p\in P})$ is a $P$-scaled Boolean algebra.
For each $i\in\set{0,1}$ there is a unique morphism $\ga_i\colon\bC\to\bA$ sending each $\set{(q,0)}$ to~$u_q\wedge a_i$ and each $\set{(q,1)}$ to~$u_q\wedge\neg a_i$\,.
Since $\gf\circ\ga_0=\gf\circ\ga_1$ and~$\gf$ is monic, we get $\ga_0=\ga_1$\,, thus $a_0=a_1$\,.
\end{proof}

For every $\bA\in\Bool_P$\,, the ultrafilter space of~$A$ is denoted in Gillibert and Wehrung~\cite{Larder} by~$\Ult{A}$, and further,
for every $\fa\in\Ult{A}$, the subset
 \begin{equation}\label{Eq:||fa||}
 \|\fa\|_{\bA}\eqdef\setm{p\in P}{\fa\cap A^{(p)}\neq\es}
 \end{equation}
is an ideal of~$P$ (cf. Gillibert and Wehrung \cite[Lemma~2.2.4]{Larder}).
The pair $(\Ult{A},\|{}_{-}\|_{\bA})$ is a so-called \emph{$P$-normed Boolean space} (cf. \cite[\S~2.2]{Larder}, in particular for the duality between $P$-scaled Boolean algebras and $P$-normed Boolean spaces).

\begin{notation}\label{Not:BoolP<gl}
For any infinite regular cardinal~$\gl$, we denote by $\Bool_P^{<\gl}$ the full subcategory of~$\Bool_P$ consisting of all its $\gl$-pre\-sentable members.
\end{notation}

We established in Gillibert and Wehrung \cite[Proposition~2.3.1]{Larder} that the category~$\Bool_P$ has all directed colimits.
Furthermore, the description of those directed colimits given there amounts to saying that $\bA=\varinjlim_{i\in I}\bA_i$\,, with limiting morphisms $\ga_i\colon\bA_i\to\bA$, if{f} $A=\varinjlim_{i\in I}A_i$ in the category of all Boolean algebras and each $A^{(p)}$ is the (directed) union, over $i\in I$, of all $\ga_i[A_i^{(p)}]$.
We will express this by saying that \emph{directed colimits in~$\Bool_P$ are standard}.
Moreover, in \cite[\S~2.4]{Larder} we characterized the finitely pre\-sentable $P$-scaled Boolean algebras  as those~$\bA$ with finite underlying Boolean algebra~$A$ such that for every atom~$a$ of~$A$ there is a largest $p\in P$ such that $a\in A^{(p)}$\,.

For a $P$-scaled Boolean algebra~$\bA$, we introduced in \cite[\S~2.4]{Larder} the set~$\gS_{\bA}$ of all maps~$f$ from the set~$\At{U}$ of all atoms of~$U$ to~$P$, for a finite subalgebra~$U=A_f$ of~$A$, such that $u\in A^{(f(u))}$ for all $u\in\At U$.
Denoting, for each $p\in P$, by~$A_f^{(p)}$ the ideal of~$A_f$ generated by all atoms~$u$ of~$A_f$ such that $p\leq f(u)$, we obtained there a finitely presentable $P$-scaled Boolean algebra~$\bA_f$\,, for which the inclusion map $\bA_f\rightarrowtail\bA$ is a monomorphism in~$\Bool_P$\,.

For $f,g\in\gS_{\bA}$\,, let $f\sqsubseteq g$ hold if $A_f\subseteq A_g$ and for all $(u,v)\in(\At{A_f})\times(\At{A_g})$, $v\leq u$ implies that $f(u)\leq g(v)$.
We proved in \cite[\S~2.4]{Larder} that~$\gS_{\bA}$ is directed under~$\sqsubseteq$ and that $\bA=\varinjlim_{f\in\gS_{\bA}}\bA_f$ with all transition morphisms and limiting morphisms defined as inclusion mappings (they are thus all monomorphisms).
This enabled us to prove that $\Bool_P^{<\go}$ is $\go$-dense in~$\Bool_P$\,; in particular, \emph{$\Bool_P$ is an $\go$-accessible}%
\footnote{
As established in \cite[Proposition~2.3.2]{Larder}, the category~$\Bool_P$ has also all binary products.
On the other hand, it may not have binary coproducts, in which case it is not locally presentable.
}
\emph{category}.
By Ad\'amek and Rosick\'y \cite[Theorem~2.11]{AdRo94}, it thus follows that \emph{$\Bool_P$ is a $\gl$-accessible category, for every infinite regular cardinal~$\gl$.
In particular, $\Bool_P^{<\gl}$ is $\gl$-dense in~$\Bool_P$}\,.

Since~$\Bool_P$ is $\go$-accessible, the equivalence between the strong form of~\eqref{glpresBoolP} and~\eqref{glsmallcolim} in the following Lemma~\ref{L:glPresBoolP} is a consequence of Makkai and Par\'e \cite[Proposition~2.3.11]{MakPar1989}.
However, the following argument is direct, and it yields the additional combinatorial criterion~\eqref{gSbAcof}.

\begin{lemma}\label{L:glPresBoolP}
The following are equivalent, for any infinite regular cardinal~$\gl$ and any $P$-scaled Boolean algebra~$\bA$:
\begin{enumeratei}
\item\label{glpresBoolP}
$\bA$ is $\gl$-pre\-sentable \pup{resp., weakly $\gl$-pre\-sentable}.

\item\label{glsmallcolim}
$\bA$ is a directed colimit of a $\gl$-small direct system in $\Bool_P^{<\go}$\,.

\item\label{gSbAcof}
The poset $(\gS_{\bA},\sqsubseteq)$ has a $\gl$-small cofinal subset.

\end{enumeratei}
\end{lemma}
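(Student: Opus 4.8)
The plan is to prove the cycle $\eqref{gSbAcof}\Rightarrow\eqref{glsmallcolim}\Rightarrow\eqref{glpresBoolP}\Rightarrow\eqref{gSbAcof}$, arranged so that $\eqref{glsmallcolim}\Rightarrow\eqref{glpresBoolP}$ delivers the \emph{strong} form of $\gl$-pre\-sentability (which trivially entails the weak one), whereas $\eqref{glpresBoolP}\Rightarrow\eqref{gSbAcof}$ uses only \emph{weak} $\gl$-pre\-sentability; the two versions of~\eqref{glpresBoolP} are then squeezed between the same two statements. Throughout I would lean on the facts recalled above from \cite[\S~2.4]{Larder}: $\bA=\varinjlim_{f\in\gS_{\bA}}\bA_{f}$ with~$\gS_{\bA}$ directed under~$\sqsubseteq$\,, each~$\bA_{f}$ finitely pre\-sentable, all transition and limiting morphisms of that colimit being inclusions, and directed colimits in~$\Bool_{P}$ being standard.

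For $\eqref{gSbAcof}\Rightarrow\eqref{glsmallcolim}$: a cofinal subset~$T$ of the directed poset~$\gS_{\bA}$ is itself directed (given $s,t\in T$, pick an upper bound~$u$ of~$\set{s,t}$ in~$\gS_{\bA}$ and then $t'\in T$ above~$u$), so the colimit restricts to $\bA=\varinjlim_{f\in T}\bA_{f}$\,, a $\gl$-small direct system in~$\Bool_{P}^{<\go}$\,. For $\eqref{glsmallcolim}\Rightarrow\eqref{glpresBoolP}$: the indexing poset of a $\gl$-small direct system in~$\Bool_{P}^{<\go}$ is $\gl$-small as a diagram, and its objects, being finitely pre\-sentable, are $\gl$-pre\-sentable, so the colimit is $\gl$-pre\-sentable by the standard fact that a $\gl$-small colimit of $\gl$-pre\-sentable objects is $\gl$-pre\-sentable (cf. \cite{AdRo94}).

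The real content is $\eqref{glpresBoolP}\Rightarrow\eqref{gSbAcof}$. The difficulty is that~$\gS_{\bA}$ is only directed, whereas weak $\gl$-pre\-sentability speaks of $\gl$-directed colimits; I would resolve this by passing to the poset~$\cP$ of all $\gl$-small \emph{directed} subsets of~$\gS_{\bA}$\,, ordered by inclusion. A closure argument ($\go$-fold adjunction of upper bounds for finite subsets, together with regularity of~$\gl$) shows that every $\gl$-small subset of~$\gS_{\bA}$ lies in a member of~$\cP$; in particular every singleton does, and~$\cP$ is $\gl$-directed. For $D\in\cP$ put $\bA_{D}\eqdef\varinjlim_{f\in D}\bA_{f}$\,; standardness of directed colimits makes~$\bA_{D}$ the $P$-scaled subalgebra of~$\bA$ carried by $\bigcup_{f\in D}A_{f}$ with scale $\bigl(\bigcup_{f\in D}A_{f}^{(p)}\bigr)_{p\in P}$\,, all transitions~$\bA_{D}\to\bA_{D'}$ (for $D\subseteq D'$) and limiting maps $\ga_{D}\colon\bA_{D}\to\bA$ being inclusions. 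Since~$\cP$ is directed and $\bigcup_{D\in\cP}\bA_{D}=\bA$, standardness again gives $\bA=\varinjlim_{D\in\cP}\bA_{D}$\,, a $\gl$-directed colimit. Weak $\gl$-pre\-sentability then yields $D_{0}\in\cP$ and $\psi\colon\bA\to\bA_{D_{0}}$ with $\ga_{D_{0}}\circ\psi=\id_{\bA}$\,; being one-to-one (hence monic) and a split epimorphism, $\ga_{D_{0}}$ is an isomorphism in~$\Bool_{P}$\,, so $\bigcup_{f\in D_{0}}A_{f}=A$ and $\bigcup_{f\in D_{0}}A_{f}^{(p)}=A^{(p)}$ for every $p\in P$.

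It remains to check that~$D_{0}$ is cofinal in~$(\gS_{\bA},\sqsubseteq)$\,. Given $g\in\gS_{\bA}$\,, I would use $\bigcup_{f\in D_{0}}A_{f}=A$ to pick $f_{0}\in D_{0}$ with $A_{g}\subseteq A_{f_{0}}$\,, and for each atom~$u$ of~$A_{g}$ use $u\in A^{(g(u))}=\bigcup_{f\in D_{0}}A_{f}^{(g(u))}$ to pick $f_{u}\in D_{0}$ with $u\in A_{f_{u}}^{(g(u))}$\,; then choose $f\in D_{0}$ above~$f_{0}$ and all the~$f_{u}$ (finitely many). One checks $A_{g}\subseteq A_{f_{0}}\subseteq A_{f}$\,, and for atoms $u$ of~$A_{g}$ and~$v$ of~$A_{f}$ with $v\leq u$: the (unique) $A_{f_{u}}$-atom~$w$ above~$v$ satisfies $v\leq w\leq u$ (since $u\in A_{f_{u}}$), the relation $u\in A_{f_{u}}^{(g(u))}$ forces $g(u)\leq f_{u}(w)$, and $f_{u}\sqsubseteq f$ forces $f_{u}(w)\leq f(v)$; hence $g(u)\leq f(v)$, so $g\sqsubseteq f\in D_{0}$\,. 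I expect the bulk of the work, and the only real obstacle, to lie in this implication $\eqref{glpresBoolP}\Rightarrow\eqref{gSbAcof}$: first in the device of passing from the merely directed index set~$\gS_{\bA}$ to the $\gl$-directed poset~$\cP$ so that weak $\gl$-pre\-sentability becomes applicable, and then in the atom-and-$\sqsubseteq$ bookkeeping that upgrades ``$\bA_{D_{0}}=\bA$'' to ``$D_{0}$ is $\sqsubseteq$-cofinal in~$\gS_{\bA}$''; the implications $\eqref{gSbAcof}\Rightarrow\eqref{glsmallcolim}\Rightarrow\eqref{glpresBoolP}$ are routine.
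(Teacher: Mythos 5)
Your proposal is correct, and it runs on the same machinery as the paper's proof --- the $\gl$-directed poset of all $\gl$-small $\sqsubseteq$-directed subsets of~$\gS_{\bA}$, the fact that the limiting maps of standard colimits are monic so that the splitting supplied by weak $\gl$-pre\-sentability forces an isomorphism, and Ad\'amek--Rosick\'y's fact that $\gl$-small colimits of $\gl$-pre\-sentable objects are $\gl$-pre\-sentable --- but the cycle is cut differently. The paper proves (i)$\Rightarrow$(ii) (exactly your $\cP$/$D_0$ device, stopping at $\bA\cong\varinjlim_{f\in D_0}\bA_f$) and then, separately, (ii)$\Rightarrow$(iii): starting from an \emph{arbitrary} $\gl$-small system $\vecm{\bB_i,\gb_i^j}{i\leq j}$ of finitely pre\-sentable objects with colimit~$\bA$, it manufactures elements $g_i\in\gS_{\bA}$ by taking the finite images $V_i=\gb_i[B_i]$ and reading off, on each atom of~$V_i$, the largest norm of an atom preimage; these~$g_i$ form the cofinal set. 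You instead go directly (i)$\Rightarrow$(iii), exploiting that your~$D_0$ already sits inside~$\gS_{\bA}$ and that all maps involved are inclusions, so the cofinality check reduces to the atom bookkeeping you carry out (which is sound: $v\leq w\leq u$, then $g(u)\leq f_u(w)\leq f(v)$); (iii)$\Rightarrow$(ii) is then the trivial restriction of the colimit to a cofinal subset. What the paper's decomposition buys is the implication (ii)$\Rightarrow$(iii) in full generality --- it applies to a $\gl$-small presentation whose transition morphisms are not inclusions and need not come from~$\gS_{\bA}$ --- whereas your route keeps all the combinatorics inside~$\gS_{\bA}$ and is correspondingly a bit lighter. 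One cosmetic point: your justification that every $\gl$-small subset of~$\gS_{\bA}$ lies in a member of~$\cP$ by ``$\go$-fold adjunction of upper bounds'' is the right argument for uncountable~$\gl$, but for $\gl=\go$ that iteration would produce an infinite set; there one simply adjoins a single upper bound of the given finite set, which already yields a finite directed set, so the claim (and hence your proof) still holds in that case.
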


\begin{proof}
\eqref{glpresBoolP}$\Rightarrow$\eqref{glsmallcolim}.
Let~$\bA$ be weakly $\gl$-pre\-sentable.
The set~$\cI$ of all nonempty $\gl$-small, $\sqsubseteq$-directed subsets of~$\gS_{\bA}$ is $\gl$-directed under set inclusion.
We set $\bA(I)\eqdef\varinjlim_{f\in I}\bA_f$ whenever $I\in\cI$.
Since all directed colimits in~$\Bool_P$ are standard, every limiting morphism $\ga_I\colon\bA(I)\to\bA$ is monic (cf. Lemma~\ref{L:BoolPmono}).
{F}rom $\bA=\varinjlim_{f\in\gS_{\bA}}\bA_f$ it follows that $\bA=\varinjlim_{I\in\cI}\bA(I)$, thus, since~$\bA$ is weakly $\gl$-pre\-sentable and~$\cI$ is $\gl$-directed, there are $I\in\cI$ and $\xi\colon\bA\to\bA(I)$ such that $\id_{\bA}=\ga_I\circ\xi$.
Since~$\ga_I$ is monic, it follows that~$\ga_I$ and~$\xi$ are mutually inverse isomorphisms; whence $\bA\cong\bA(I)=\varinjlim_{f\in I}\bA_f$\,.

\eqref{glsmallcolim}$\Rightarrow$\eqref{gSbAcof}.
We are given a $\gl$-small directed colimit cocone $\vecm{\bA,\gb_i}{i\in I}=\varinjlim\vecm{\bB_i,\gb_i^j}{i\leq j\text{ in }I}$ with all~$\bB_i$ finitely pre\-sentable.
For each $i\in I$, we consider the finite Boolean subalgebra $V_i\eqdef\gb_i[B_i]$ of~$A$.
Any atom~$v$ of~$V_i$ can be written as $v=\gb_i(x)$ for a unique atom~$x$ of~$B_i$\,, necessarily outside $\gb_i^{-1}\set{0}$\,.
Since~$B_i$ is finitely pre\-sentable, there is a largest $p\in P$ such that $x\in B_i^{(p)}$; denote it by~$g_i(v)$.
Now let $f\in\gS_{\bA}$ and set $U\eqdef A_f$\,.
Since directed colimits in~$\Bool_P$ are standard, there is $i\in I$ such that $U\subseteq V_i$ and $u\in\gb_i\rI{B_i^{(f(u))}}$ whenever $u\in\At{U}$.
Let $(u,v)\in(\At{U})\times(\At{V_i})$ such that $v\leq u$.
Write $v=\gb_i(x)$ where $x\in(\At{B_i})$\,.
Since $\gb_i(x)=v\leq u\in\gb_i\rI{B_i^{(f(u))}}$ and~$x$ is an atom of~$B_i$ outside~$\gb_i^{-1}\set{0}$\,, we get $x\in B_i^{(f(u))}$, that is, $f(u)\leq g_i(v)$; whence $f\sqsubseteq g_i$\,.
We have thus proved that $\setm{g_i}{i\in I}$ is cofinal in~$\gS_{\bA}$\,.

\eqref{gSbAcof}$\Rightarrow$\eqref{glpresBoolP}.
Since $\bA=\varinjlim_{f\in\gS_{\bA}}\bA_f$\,, it follows from our assumption that~$\bA$ is a $\gl$-small directed colimit of finitely pre\-sentable $P$-scaled Boolean algebras.
By Ad\'amek and Rosick\'y \cite[Proposition~1.16]{AdRo94}, it follows that~$\bA$ is $\gl$-pre\-sentable.
\end{proof}

\begin{corollary}\label{C:glPresBoolP}
For any $P$-scaled Boolean algebra~$\bA$,
 \[
 \card{A}+\card{P}<\gl\Rightarrow
 (\bA\text{ is }\gl\text{-presentable}\,)
 \Rightarrow\card{A}<\gl\,.
 \]
\end{corollary}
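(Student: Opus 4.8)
The corollary asserts two implications, and I would obtain each from the characterisation of $\gl$-presentability in Lemma~\ref{L:glPresBoolP}, together with the fact (recalled just before Notation~\ref{Not:BoolP<gl}) that directed colimits in~$\Bool_P$ are standard.

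\textbf{From $\card A+\card P<\gl$ to $\gl$-presentability of~$\bA$.} The plan is to show that the directed poset $(\gS_{\bA},\sqsubseteq)$ is itself $\gl$-small, so that it is a $\gl$-small cofinal subset of itself, and then to invoke Lemma~\ref{L:glPresBoolP} (the implication from the existence of a $\gl$-small cofinal subset of $\gS_{\bA}$ to $\gl$-presentability of~$\bA$). To bound $\card{\gS_{\bA}}$, recall that an element of $\gS_{\bA}$ is a map with finite domain~$\At U$, where~$U$ ranges over the finite subalgebras of~$A$, and with values in~$P$. Since every finite subalgebra is generated by a finite subset, the set of finite subalgebras of~$A$ has cardinality at most~$\card A$ if~$A$ is infinite, and is finite otherwise; and for a fixed finite subalgebra~$U$, the set of maps $\At U\to P$ has cardinality at most~$\card P$ if~$P$ is infinite, and is finite otherwise. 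A short case distinction — treating separately whether $A$ and/or $P$ is finite, and the degenerate case $\gl=\aleph_0$, in which $A$ and $P$ are then both finite — yields $\card{\gS_{\bA}}\le\card A+\card P$ whenever at least one of $A$, $P$ is infinite, and $\gS_{\bA}$ finite otherwise. In every case $\card{\gS_{\bA}}<\gl$, which is what is needed.

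\textbf{From $\gl$-presentability of~$\bA$ to $\card A<\gl$.} By Lemma~\ref{L:glPresBoolP} (the implication from $\gl$-presentability to being a $\gl$-small directed colimit in $\Bool_P^{<\go}$), there is a $\gl$-small directed colimit cocone $\vecm{\bA,\gb_i}{i\in I}=\varinjlim\vecm{\bB_i,\gb_i^j}{i\leq j\text{ in }I}$ with every~$\bB_i$ finitely presentable, hence with every~$B_i$ finite and $\card I<\gl$. Since directed colimits in~$\Bool_P$ are standard, $A$ is the directed union of the subalgebras $\gb_i[B_i]$, each of which, being a homomorphic image of the finite algebra~$B_i$, is finite. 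Hence $\card A\leq\sum_{i\in I}\card{\gb_i[B_i]}\leq\card I\cdot\aleph_0$; if~$I$ is infinite this already gives $\card A\leq\card I<\gl$, and if~$I$ is finite it has a greatest element~$m$, so that $A\cong\gb_m[B_m]$ is finite. Either way $\card A<\gl$.

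\textbf{Expected main obstacle.} Neither half is conceptually difficult once Lemma~\ref{L:glPresBoolP} is in hand; the only point requiring real care is the cardinal bookkeeping in the first implication, specifically making the estimate on $\card{\gS_{\bA}}$ robust across the degenerate cases ($\gl=\aleph_0$, or one of $A$, $P$, $I$ finite), where a crude bound introducing an extra $\aleph_0$ summand would be too weak.
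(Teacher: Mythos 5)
Your proof is correct and follows essentially the same route as the paper: the first implication via the bound $\card{\gS_{\bA}}\leq\card{A}+\card{P}$ (when infinite) together with $\bA=\varinjlim_{f\in\gS_{\bA}}\bA_f$ and Lemma~\ref{L:glPresBoolP}, and the second via criterion~(ii) of Lemma~\ref{L:glPresBoolP} plus standardness of directed colimits in~$\Bool_P$. Your treatment of the degenerate cases ($\gl=\aleph_0$, finite $A$, $P$, or $I$) simply makes explicit the bookkeeping the paper leaves implicit.
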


\begin{proof}
The first implication follows from the relation $\bA=\varinjlim_{f\in\gS_{\bA}}\bA_f$\,, together with the inequality $\card\gS_{\bA}\leq\card{A}+\card{P}$ in case~$\gS_{\bA}$ is infinite.
The second implication follows from the criterion stated in Lemma~\ref{L:glPresBoolP}\eqref{glsmallcolim}.
\end{proof}

\begin{note}
Easy examples show that none of the implications of Corollary~\ref{C:glPresBoolP} can be reversed.
\end{note}

\begin{lemma}\label{L:Approxmusmall}
Let~$\gl$ be an infinite regular cardinal.
Then every normal morphism $\gf\colon\bA\to\bB$ of $P$-scaled Boolean algebras is a $\gl$-directed colimit \pup{within the category of all arrows of $\Bool_P$} of normal morphisms in~$\Bool_P^{<\gl}$\,.
\end{lemma}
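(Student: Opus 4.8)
The plan is to realize $\bA$ as a $\gl$-directed colimit of its $\gl$-pre\-sentable $P$-scaled Boolean \emph{subalgebras} and then push this presentation forward through $\gf$. Concretely, I would start from the presentation produced inside the proof of Lemma~\ref{L:glPresBoolP}: letting $\cI$ be the set of all nonempty $\gl$-small $\sqsubseteq$-directed subsets of $\gS_{\bA}$, ordered by inclusion, the poset $\cI$ is $\gl$-directed, $\bA=\varinjlim_{I\in\cI}\bA(I)$ with $\bA(I)\eqdef\varinjlim_{f\in I}\bA_f$ being $\gl$-pre\-sentable and each limiting morphism $\bA(I)\to\bA$ monic, so after identifying $\bA(I)$ with its image we may treat it as a $P$-scaled Boolean subalgebra of $\bA$ with the inclusions $\bA(I)\hookrightarrow\bA(I')$ as transition maps; moreover $\bigcup_{I\in\cI}A(I)=A$ and $\bigcup_{I\in\cI}A(I)^{(p)}=A^{(p)}$ for each $p\in P$ (standardness of directed colimits in $\Bool_P$), and $A(I)^{(p)}\subseteq A(I')^{(p)}$ whenever $I\subseteq I'$.

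Next I would define, for any $P$-scaled Boolean subalgebra $\bU$ of $\bA$, the structure $\gf[\bU]$ with underlying Boolean algebra $f[U]$ and $p$-th ideal $f[U^{(p)}]$. A routine computation with ideals shows that the restriction of $f$ to $U$ (a homomorphism of Boolean algebras) satisfies $f[\bigvee_j J_j]=\bigvee_j f[J_j]$ and $f[J_1\cap J_2]=f[J_1]\cap f[J_2]$ inside the ideal lattice of $f[U]$, for ideals $J_j$ of $U$; combining this with the fact that $\bU$ is a $P$-scaled Boolean algebra yields that $\gf[\bU]$ is a $P$-scaled Boolean algebra and that the corestriction $\gf_{\bU}\colon\bU\to\gf[\bU]$ of $\gf$ is a \emph{normal} morphism. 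To see that $\gf[\bU]$ is $\gl$-pre\-sentable when $\bU$ is, I would write $\bU=\varinjlim_k\bW_k$ as a $\gl$-small directed colimit of finitely pre\-sentable objects (Lemma~\ref{L:glPresBoolP}), observe that $\gf[\bU]$ is then the corresponding $\gl$-small directed colimit of the images of the $\bW_k$, and check that the image of a finitely pre\-sentable $P$-scaled Boolean algebra $\bW$ under any morphism $g$ of $\Bool_P$ is again finitely pre\-sentable: the underlying Boolean algebra stays finite, and for an atom $a$ of $g[\bW]$ there is a unique atom $x$ of $\bW$ with $g(x)=a\neq0$, for which $a\in g[W^{(p)}]$ iff $x\in W^{(p)}$ (if $a=g(y)$ with $y\in W^{(p)}$ then $g(y\wedge x)=a\neq0$ forces $x\leq y$, hence $x\in W^{(p)}$), so a largest such $p$ exists; then criterion~\eqref{glsmallcolim} of Lemma~\ref{L:glPresBoolP} applies.

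Applying this with $\bU=\bA(I)$, I obtain normal morphisms $\gf_I\colon\bA(I)\to\gf[\bA(I)]$ in $\Bool_P^{<\gl}$, and for $I\subseteq I'$ in $\cI$ the inclusions $\bA(I)\hookrightarrow\bA(I')$ and $\gf[\bA(I)]\hookrightarrow\gf[\bA(I')]$ (a morphism of $\Bool_P$ since $A(I)^{(p)}\subseteq A(I')^{(p)}$) form a commutative square, because all four maps are restrictions of $f$ or inclusions. Thus $I\mapsto\gf_I$ is a $\gl$-directed diagram in the category of arrows of $\Bool_P$. Its colimit is computed componentwise: the colimit of the domains is $\bA$ by construction, and the colimit of the codomains has underlying Boolean algebra $\bigcup_{I\in\cI}f[A(I)]=f[A]=B$ (surjectivity of $f$) and $p$-th ideal $\bigcup_{I\in\cI}f[A(I)^{(p)}]=f\bigl[\bigcup_{I\in\cI}A(I)^{(p)}\bigr]=f[A^{(p)}]=B^{(p)}$, where normality of $\gf$ is used in the last equality; the induced colimit morphism is $\gf$ itself. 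Hence $\gf=\varinjlim_{I\in\cI}\gf_I$ exhibits $\gf$ as a $\gl$-directed colimit of normal morphisms in $\Bool_P^{<\gl}$.

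The main obstacle is the $\gl$-pre\-sentability of the codomains $\gf[\bA(I)]$. The naive cardinality bound $\card{f[A(I)]}\leq\card{A(I)}<\gl$ coming from Corollary~\ref{C:glPresBoolP} is \emph{not} enough once $\card P\geq\gl$, since $\Bool_P^{<\gl}$ then contains objects with arbitrarily small underlying Boolean algebra that fail to be $\gl$-pre\-sentable; one genuinely needs the observation that the ``largest-label'' characterization of finite presentability in $\Bool_P$ from \cite[\S~2.4]{Larder} is preserved under taking images, feeding into criterion~\eqref{glsmallcolim} of Lemma~\ref{L:glPresBoolP}. The remaining verifications (that $\gf[\bU]$ is a $P$-scaled Boolean algebra, that $\gf_{\bU}$ is normal, and that the componentwise colimit recovers $\gf$) are routine.
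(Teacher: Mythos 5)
Your proof is correct and follows essentially the same route as the paper's: both express $\gf$ as a $\gl$-directed colimit arising from a representation of $\bA$ as a $\gl$-directed colimit of $\gl$-pre\-sentable objects, and both settle the $\gl$-pre\-sentability of the codomains by descending, via Lemma~\ref{L:glPresBoolP}, to the finitely presentable level and reassembling by standardness of directed colimits in~$\Bool_P$\,. The only difference is presentational: you take the codomains to be the images $\gf[\bA(I)]\subseteq\bB$ (hence must check by hand that images are $P$-scaled Boolean algebras, that the corestrictions are normal, and that images of finitely presentable objects are finitely presentable), whereas the paper identifies~$\bB$ with~$\bA/I$ for $I=\gf^{-1}\set{0}$ and takes the codomains to be the quotients $\bA_j/\ga_j^{-1}[I]$, which are canonically isomorphic to your images and for which normality is automatic from \cite[\S~2.5]{Larder}.
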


\begin{proof}
Our argument follows the lines of the one of Gillibert and Wehrung \cite[Proposition~2.5.5]{Larder}.
We may identify~$\bB$ with~$\bA/I$ and~$\gf$ with the canonical projection from~$\bA$ onto~$\bA/I$, where $I\eqdef\gf^{-1}\set{0}$.
If~$\bA$ is $\gl$-pre\-sentable, then, by Lemma~\ref{L:glPresBoolP}, there is a $\gl$-small directed colimit representation
 \begin{equation}\label{Eq:bAlimbAj}
 \vecm{\bA,\ga_j}{j\in J}=
 \varinjlim\vecm{\bA_j,\ga_j^k}{j\leq k\text{ in }J}
 \end{equation}
where each~$\bA_j$ is finitely pre\-sentable.
Each $I_j\eqdef\ga_j^{-1}[I]$ is an ideal of~$A_j$ (necessarily principal since~$A_j$ is finite), each~$\bA_j/{I_j}$ is finitely pre\-sentable, and $\bA/I=\varinjlim_{j\in J}(\bA_j/{I_j})$ is thus $\gl$-pre\-sentable.

In the general case, there is a representation of the form~\eqref{Eq:bAlimbAj}, now with~$J$ $\gl$-directed and each~$\bA_j$ $\gl$-pre\-sentable.
This yields a directed colimit representation $\gf=\varinjlim_{j\in J}\gf_j$ where~$\gf_j$ denotes the canonical projection from~$\bA_j$ onto~$\bA_j/{\ga_j^{-1}[I]}$, which is (using the result of the paragraph above) a normal morphism between $\gl$-pre\-sentable objects.
\end{proof}

\section{Condensates}
\label{S:CondPhiCond}

In this section we shall introduce the crucial constructs~$\bA\bt\vec{S}$ (box condensates) and $\bA\otlF\vec{S}$ ($\Phi$-condensates).
Throughout Section~\ref{S:CondPhiCond} we shall fix a poset~$P$, a category~$\cS$, and a \pup{not necessarily com\-mu\-ta\-tive} $P$-indexed diagram $\vec{S}=\vecm{S_p,\vec{S}(p,q)}{p\leq q\text{ in }P}$ in~$\cS$.

\subsection{Box condensates and constricted morphisms}\label{Su:Cond}

\begin{notation}\label{Not:UltbA}
For any $P$-scaled Boolean algebra~$\bA$, we set
 \[
 \Ultb\bA\eqdef\setm{\fa\in\Ult{A}}
 {\|\fa\|_{\bA}\text{ has a least upper bound in }P}\,.
 \]
For any ultrafilter~$\fa$ of~$A$, we denote by~$|\fa|_{\bA}$ the least upper bound of~$\|\fa\|_{\bA}$ in~$P$ if it exists.
\end{notation}

\begin{definition}\label{D:Constricted}
A morphism $\gf\colon\bA\to\bB$ in~$\Bool_P$ is \emph{constricted} if $\gf^{-1}[\fb]$ belongs to~$\Ultb\bA$ whenever $\fb\in\Ultb\bB$.
That is, if~$\|\fb\|_{\bB}$ has a least upper bound in~$P$, then so does $\|\gf^{-1}[\fb]\|_{\bA}$\,, whenever $\fb\in\Ult B$.

We denote by~$\Boolc_P$ the category of all $P$-scaled Boolean algebras with constricted morphisms.
\end{definition}

Observe that in the context of Definition~\ref{D:Constricted}, the containment $\|\gf^{-1}[\fb]\|_{\bA}\subseteq\|\fb\|_{\bB}$ holds (because~$\gf$ is a morphism in~$\Bool_P$); whence $|\gf^{-1}[\fb]|_{\bA}\leq|\fb|_{\bB}$ if both sides of that inequality are defined.

\begin{definition}\label{D:otoS}
For any $P$-scaled Boolean algebra~$\bA$, we set
 \[
 \bA\bt\vec{S}\eqdef\prod
 \Vecm{S_{|\fa|_{\bA}}}{\fa\in\Ultb\bA}\quad
 \text{if the product exists}.
 \]
We will say that~$\bA\bt\vec{S}$ is a \emph{box condensate} of~$\vec{S}$.
\end{definition}

In particular, if~$\bA$ is finitely presentable, then $\bA\bt\vec{S}$ is identical to the construct (called there a condensate) $\bA\otimes\vec{S}$ introduced in Gillibert and Wehrung~\cite{Larder}.
In that case (i.e., $\bA$ is finitely presentable), $\Ultb\bA=\Ult A$ is the set of all principal ultrafilters associated to the atoms of~$A$.

\begin{definition}\label{D:gfotlvecS}
For any constricted morphism $\gf\colon\bA\to\bB$ in~$\Bool_P$\,,
if $\bA\bt\vec{S}$ and $\bB\bt\vec{S}$ both exist, then we define $\gf\bt\vec{S}$ as the set of all morphisms $f\colon\bA\bt\vec{S}\to\bB\bt\vec{S}$ in~$\cS$ of the form
 \[
 f=\prod\vecm{f_{\fb}}{\fb\in\Ultb\bB}\,,
 \qquad\text{where each }
 f_{\fb}\in\vec{S}\pI{|\gf^{-1}[\fb]|_{\bA},|\fb|_{\bB}}\,.
 \]
\end{definition}

This means that~$f$ is the unique morphism from~$\bA\bt\vec{S}$ to~$\bB\bt\vec{S}$ making the diagram represented in Figure~\ref{Fig:gfbtS} commute whenever $\fb\in\Ultb\bB$.
In that diagram, $\gd^{\bB}_{\fb}$ denotes the canonical projection of $\bB\bt\vec{S}$ onto~$S_{|\fb|_{\bB}}$\,.

\begin{figure}[htb]
\begin{tikzcd}
\centering
\bA\bt\vec{S}\arrow[r,"f"]
\arrow[d,"\gd^{\bA}_{\gf^{-1}[\fb]}"'] &
\bB\bt\vec{S}\arrow[d,"\gd^{\bB}_{\fb}"]\\
S_{|\gf^{-1}[\fb]|_{\bA}}
\arrow[r,"f_{\fb}"] &
S_{|\fb|_{\bB}}
\end{tikzcd}
\caption{A morphism~$f$ in $\gf\bt\vec{S}$}
\label{Fig:gfbtS}
\end{figure}

This is to be put in contrast with Gillibert and Wehrung \cite[\S~3.1]{Larder}, where $\gf\otimes\vec{S}$ is defined as a single morphism from~$\bA\otimes\vec{S}$ to $\bB\otimes\vec{S}$.

\begin{lemma}\label{L:otlDiagr}
The assignment ${}_{-}\bt\vec{S}$ defines a diagram in~$\cS$ \pup{in the sense of Definition~\textup{\ref{D:Diagram}}}, indexed by the full subcategory~$\Boolc_P(\vec{S})$ of~$\Boolc_P$ consisting of all~$\bA$ such that $\bA\bt\vec{S}$ exists.
\end{lemma}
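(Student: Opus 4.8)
The plan is to check the two axioms of Definition~\ref{D:Diagram} for the assignment $\bA\mapsto\bA\bt\vec{S}$, $\gf\mapsto\gf\bt\vec{S}$, after first making sure the data make sense: that each $\gf\bt\vec{S}$ is a \emph{nonempty} set of morphisms $\bA\bt\vec{S}\to\bB\bt\vec{S}$, and that $\Boolc_P(\vec{S})$ really is a (full sub)category, i.e. that constricted morphisms are closed under composition (identities being trivially constricted). Since $\Boolc_P(\vec{S})$ is full in $\Boolc_P$, the latter point reduces to: $\psi$, $\gf$ constricted implies $\psi\circ\gf$ constricted, which follows at once from $(\psi\circ\gf)^{-1}[\fc]=\gf^{-1}\bigl[\psi^{-1}[\fc]\bigr]$ by applying constrictedness of $\psi$ and then of $\gf$.

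First I would record the constrictedness bookkeeping. For $\fb\in\Ultb\bB$ and a constricted $\gf\colon\bA\to\bB$, constrictedness gives $\gf^{-1}[\fb]\in\Ultb\bA$, and the containment $\|\gf^{-1}[\fb]\|_{\bA}\subseteq\|\fb\|_{\bB}$ noted after Definition~\ref{D:Constricted} yields $|\gf^{-1}[\fb]|_{\bA}\leq|\fb|_{\bB}$; hence $\vec{S}\pI{|\gf^{-1}[\fb]|_{\bA},|\fb|_{\bB}}$ is a legitimate --- in particular nonempty --- hom-set of the diagram $\vec{S}$. Choosing one $f_{\fb}$ in each of these sets and composing with the projections $\gd^{\bA}_{\gf^{-1}[\fb]}$ of $\bA\bt\vec{S}$, the universal property of the product $\bB\bt\vec{S}$ produces the unique morphism $f=\prod_{\fb}f_{\fb}$ making Figure~\ref{Fig:gfbtS} commute; this shows $\gf\bt\vec{S}\neq\es$ and that its elements are morphisms $\bA\bt\vec{S}\to\bB\bt\vec{S}$. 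For axiom~(1) I specialize to $\gf=\id_{\bA}$: here $\id_{\bA}^{-1}[\fa]=\fa$, so the choice $f_{\fa}=\id_{S_{|\fa|_{\bA}}}\in\vec{S}(|\fa|_{\bA},|\fa|_{\bA})$ (available by axiom~(1) of Definition~\ref{D:Diagram} applied to $\vec{S}$) produces $\prod_{\fa}\id_{S_{|\fa|_{\bA}}}=\id_{\bA\bt\vec{S}}\in\id_{\bA}\bt\vec{S}$.

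The core step is axiom~(2): given constricted $\gf\colon\bA\to\bB$ and $\psi\colon\bB\to\bC$ in $\Boolc_P(\vec{S})$, $f=\prod_{\fb}f_{\fb}\in\gf\bt\vec{S}$ and $g=\prod_{\fc}g_{\fc}\in\psi\bt\vec{S}$, I must show $g\circ f\in(\psi\circ\gf)\bt\vec{S}$. I would test $g\circ f$ against the projections of $\bC\bt\vec{S}$: fixing $\fc\in\Ultb\bC$ and setting $\fb\eqdef\psi^{-1}[\fc]\in\Ultb\bB$, so that $\gf^{-1}[\fb]=(\psi\circ\gf)^{-1}[\fc]$, two applications of the commuting square of Figure~\ref{Fig:gfbtS} give
\[
\gd^{\bC}_{\fc}\circ(g\circ f)=g_{\fc}\circ\gd^{\bB}_{\fb}\circ f=g_{\fc}\circ f_{\fb}\circ\gd^{\bA}_{(\psi\circ\gf)^{-1}[\fc]}\,.
\]
Now $f_{\fb}\in\vec{S}\pI{|(\psi\circ\gf)^{-1}[\fc]|_{\bA},|\fb|_{\bB}}$ and $g_{\fc}\in\vec{S}\pI{|\fb|_{\bB},|\fc|_{\bC}}$, and since $|(\psi\circ\gf)^{-1}[\fc]|_{\bA}\leq|\fb|_{\bB}\leq|\fc|_{\bC}$ (by the bookkeeping above, applied to $\gf$ and then to $\psi$), axiom~(2) of Definition~\ref{D:Diagram} \emph{for the diagram $\vec{S}$} gives $g_{\fc}\circ f_{\fb}\in\vec{S}\pI{|(\psi\circ\gf)^{-1}[\fc]|_{\bA},|\fc|_{\bC}}$. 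As this holds for every $\fc$, the universal property of $\bC\bt\vec{S}$ identifies $g\circ f$ with the element of $(\psi\circ\gf)\bt\vec{S}$ attached to the family $\bigl(g_{\fc}\circ f_{\psi^{-1}[\fc]}\bigr)_{\fc\in\Ultb\bC}$, which finishes the verification.

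I do not expect a genuine obstacle: the argument is a direct diagram chase through Figure~\ref{Fig:gfbtS}. The only point requiring care --- and the reason constrictedness is built into the hypotheses --- is the index bookkeeping: one must check that every least upper bound $|{\cdot}|_{\bullet}$ written down actually exists, and that the indices of the $\vec{S}$-morphisms being composed really do form a composable pair in $\vec{S}$, which is exactly the content of the inequalities $|\gf^{-1}[\fb]|_{\bA}\leq|\fb|_{\bB}$ together with the identity $\gf^{-1}\bigl[\psi^{-1}[\fc]\bigr]=(\psi\circ\gf)^{-1}[\fc]$.
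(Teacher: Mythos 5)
Your proof is correct and follows essentially the same route as the paper's: both verify the identity axiom directly and then check the composition axiom via the identity $g\circ f=\prod\vecm{g_{\fc}\circ f_{\psi^{-1}[\fc]}}{\fc\in\Ultb\bC}$ together with $\gf^{-1}\psi^{-1}[\fc]=(\psi\circ\gf)^{-1}[\fc]$ and axiom~(2) of Definition~\ref{D:Diagram} applied to~$\vec{S}$. The extra bookkeeping you include (constricted morphisms compose, $\gf\bt\vec{S}\neq\es$ via the product's universal property) is sound and merely makes explicit what the paper leaves implicit.
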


\begin{proof}
It is trivial that $\id_{\bA}\bt\vec{S}$ contains, as an element, the identity on $\bA\bt\nobreak\vec{S}$.
Now let $\gf\colon\bA\to\bB$ and $\psi\colon\bB\to\bC$ be morphisms in $\Boolc_P(\vec{S})$, let $f\in\gf\bt\vec{S}$, and let $g\in\psi\bt\vec{S}$.
We need to verify that $g\circ f$ belongs to $(\psi\circ\gf)\bt\nobreak\vec{S}$.
Let
 \[
 f=\prod\vecm{f_{\fb}}{\fb\in\Ultb\bB}\quad\text{and}\quad
 g=\prod\vecm{g_{\fc}}{\fc\in\Ultb\bC}\,,
 \]
where each $f_{\fb}\in\vec{S}\pI{|\gf^{-1}[\fb]|_{\bA},|\fb|_{\bB}}$ and each $g_{\fc}\in\vec{S}\pI{|\psi^{-1}[\fc]|_{\bB},|\fc|_{\bC}}$.
Now $g\circ f=\prod\vecm{g_{\fc}\circ f_{\psi^{-1}[\fc]}}{\fc\in\Ultb\bC}$ where each $g_{\fc}\circ f_{\psi^{-1}[\fc]}$ belongs to $\vec{S}(|\gf^{-1}\psi^{-1}[\fc]|_{\bA},|\fc|_{\bC})=\vec{S}(|(\psi\circ\gf)^{-1}[\fc]|_{\bA},|\fc|_{\bC})$, as required.
\end{proof}

The usually unwieldy requirement that a morphism be constricted will often be ensured by the following convenient condition.

\begin{definition}\label{D:CondJoinCplt}
Let~$\gl$ be an infinite cardinal.
We say that the poset~$P$ is a \emph{conditional $\gl$-DCPO}%
\footnote{
Recall that \emph{DCPO} usually stands for ``directed-com\-plete partial order''.
}
if every nonempty, bounded above, directed, $\gl$-small subset of~$P$ has a least upper bound in~$P$.
If this holds for every~$\gl$ then we say that~$P$ is a \emph{conditional DCPO}.
\end{definition}

Observe that every poset is, trivially, a conditional $\go$-DCPO.

Throughout the paper we will be constantly using the following observation.

\begin{lemma}\label{L:Constr2easy}
The following statements hold, for any infinite regular cardinal~$\gl$, any conditional $\gl$-DCPO~$P$, and any $P$-scaled Boolean algebras~$\bA$ and~$\bB$:
\begin{enumerater}
\item\label{altUltb}
If~$\bA$ is $\gl$-pre\-sentable, then
$\Ultb\bA=\setm{\fa\in\Ult A}{\|\fa\|_{\bA}\text{ is bounded above in }P}$.

\item\label{AllConstr}
Every morphism $\gf\colon\bA\to\bB$ in~$\Bool_P$\,, with $\bA\in\Bool_P^{<\gl}$\,, is constricted.

\end{enumerater}
\end{lemma}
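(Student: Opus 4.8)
The plan is to deduce \eqref{AllConstr} from \eqref{altUltb}, and to prove \eqref{altUltb} by feeding the description of the $\gl$-pre\-sentable objects of $\Bool_P$ (Lemma~\ref{L:glPresBoolP}, together with the relation $\bA=\varinjlim_{f\in\gS_{\bA}}\bA_f$) into the conditional $\gl$-DCPO hypothesis on~$P$. In \eqref{altUltb} the inclusion of $\Ultb\bA$ into $\setm{\fa\in\Ult A}{\|\fa\|_{\bA}\text{ is bounded above in }P}$ is trivial, since a subset of~$P$ admitting a least upper bound is in particular bounded above; the content is the reverse inclusion. So I would fix $\fa\in\Ult A$ with $\|\fa\|_{\bA}$ bounded above in~$P$ and construct a least upper bound for it, recalling that $\|\fa\|_{\bA}$ is always a nonempty ideal of~$P$, hence a directed lower subset.

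First, using $\bA\in\Bool_P^{<\gl}$ and Lemma~\ref{L:glPresBoolP}, I would write $\bA=\varinjlim_{f\in\gS_0}\bA_f$ where $\gS_0$ is a $\gl$-small cofinal subposet of $(\gS_{\bA},\sqsubseteq)$ (hence directed), every $\bA_f$ is finitely pre\-sentable, and all transition and limiting maps are the inclusions. Since directed colimits in $\Bool_P$ are standard, $A^{(p)}=\bigcup_{f\in\gS_0}A_f^{(p)}$ for every $p\in P$. For $f\in\gS_0$, let $a_f$ be the unique atom of the finite Boolean algebra $A_f$ lying in $\fa$, and let $p_f\in P$ be the largest element with $a_f\in A_f^{(p_f)}$, which exists because $\bA_f$ is finitely pre\-sentable. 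A short computation, using that $A_f^{(p)}$ is the ideal of $A_f$ generated by the atoms $u$ with $f(u)\geq p$, yields $\|\fa\cap A_f\|_{\bA_f}=P\dnw p_f$, and consequently $\|\fa\|_{\bA}=\bigcup_{f\in\gS_0}(P\dnw p_f)$; in particular $\setm{p_f}{f\in\gS_0}$ is cofinal in $\|\fa\|_{\bA}$.

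The crucial point is that $f\mapsto p_f$ is isotone: if $f\sqsubseteq g$, then $\bA_f\subseteq\bA_g$, so $a_g\leq a_f$; since the inclusion $\bA_f\hookrightarrow\bA_g$ is a morphism in $\Bool_P$, we get $a_f\in A_f^{(p_f)}\subseteq A_g^{(p_f)}$, and as $A_g^{(p_f)}$ is an ideal this forces $a_g\in A_g^{(p_f)}$, i.e.\ $p_f\leq p_g$. Therefore $\setm{p_f}{f\in\gS_0}$ is a nonempty, $\gl$-small, directed subset of~$P$, which is bounded above because $\|\fa\|_{\bA}$ is. The conditional $\gl$-DCPO hypothesis now supplies a least upper bound~$q$ of $\setm{p_f}{f\in\gS_0}$ in~$P$; since that set is cofinal in $\|\fa\|_{\bA}$, the two have the same upper bounds, so $q$ is a least upper bound of $\|\fa\|_{\bA}$ as well and $\fa\in\Ultb\bA$, proving \eqref{altUltb}.

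Then \eqref{AllConstr} follows immediately: given $\gf\colon\bA\to\bB$ with $\bA\in\Bool_P^{<\gl}$ and $\fb\in\Ultb\bB$, put $\fa\eqdef\gf^{-1}[\fb]\in\Ult A$; the containment $\|\fa\|_{\bA}\subseteq\|\fb\|_{\bB}$ (valid because~$\gf$ is a morphism in $\Bool_P$, as noted after Definition~\ref{D:Constricted}) shows that $\|\fa\|_{\bA}$ is bounded above in~$P$ by any least upper bound of $\|\fb\|_{\bB}$, so $\fa\in\Ultb\bA$ by \eqref{altUltb}, i.e.\ $\gf$ is constricted. Everything except \eqref{altUltb} is routine; within \eqref{altUltb}, the one real obstacle is to exhibit a $\gl$-small \emph{directed} cofinal subset of the ideal $\|\fa\|_{\bA}$ (note that $\|\fa\|_{\bA}$ itself is directed but need not be $\gl$-small, while an arbitrary $\gl$-small cofinal subset need not be directed) --- this is precisely what the monotonicity of $f\mapsto p_f$, combined with the standardness of directed colimits in $\Bool_P$, provides, after which the conditional $\gl$-DCPO hypothesis finishes the job.
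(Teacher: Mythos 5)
Your proposal is correct and follows essentially the same route as the paper: write the $\gl$-pre\-sentable~$\bA$ as a $\gl$-small directed colimit of finitely pre\-sentable $P$-scaled Boolean algebras (Lemma~\ref{L:glPresBoolP}), use standardness of directed colimits in~$\Bool_P$ to produce a $\gl$-small cofinal family of elements~$p_f$ inside the ideal~$\|\fa\|_{\bA}$\,, and then invoke the conditional $\gl$-DCPO hypothesis, with~\eqref{AllConstr} deduced from~\eqref{altUltb} via $\|\gf^{-1}[\fb]\|_{\bA}\subseteq\|\fb\|_{\bB}$ exactly as in the paper. The only quibble is with your closing remark: a cofinal subset of a directed poset is automatically directed (given two of its elements, bound them in the directed set and then go up into the cofinal subset), so the isotonicity of $f\mapsto p_f$ --- the ``one real obstacle'' you single out --- is not actually needed, although proving it does no harm.
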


\begin{proof}
\emph{Ad}~\eqref{altUltb}.
By Lemma~\ref{L:glPresBoolP}, there is a $\gl$-small directed colimit representation
 \[
 \vecm{\bA,\ga_i}{i\in I}=
 \varinjlim\vecm{\bA_i,\ga_i^j}{i\leq j\text{ in }I}
 \]
with each~$\bA_i$ finitely pre\-sentable.
By standardness of the colimit, it follows that for every ultrafilter~$\fa$ of~$A$, the equality
$\|\fa\|_{\bA}=\bigcup\vecm{\|\ga_i^{-1}[\fa]\|_{\bA_i}}{i\in I}$ holds.
Since each~$\bA_i$ is finitely pre\-sentable, each $\|\ga_i^{-1}[\fa]\|_{\bA_i}$ has a largest element~$p_i$\,.
Hence $\|\fa\|_{\bA}$ has the $\gl$-small cofinal subset $\setm{p_i}{i\in I}$.
In particular, if $\|\fa\|_{\bA}$ is bounded above, then, since it is directed and since~$P$ is a conditional $\gl$-DCPO, it has a least upper bound.

\emph{Ad}~\eqref{AllConstr}.
Since~$\gf$ is a morphism in~$\Bool_P$\,, $\|\gf^{-1}[\fb]\|_{\bA}$ is contained in~$\|\fb\|_{\bB}$ for every ultrafilter~$\fb$ of~$B$.
In particular, if $\|\fb\|_{\bB}$ has a least upper bound, then it is bounded above, thus so is $\|\gf^{-1}[\fb]\|_{\bA}$\,.
Apply~\eqref{altUltb}.
\end{proof}

\begin{remark}\label{Rk:GenBoxCond}
Box condensates can be defined in more general settings than $P$-scaled Boolean algebras.
For any set~$I$, any map $a\colon I\to P$, and any $P$-indexed diagram~$\vec{S}$, one can define the box condensate $a\bt\vec{S}\eqdef\prod\vecm{S_{a(i)}}{i\in I}$ if the product exists.
The construction $\bA\bt\vec{S}$ of Definition~\ref{D:otoS} is then a particular case of that construction (let $I=\Ultb\bA$ and $a(i)=|i|_{\bA}$ whenever $i\in I$).
The construction $\gf\bt\vec{S}$ of Definition~\ref{D:gfotlvecS} can then be extended likewise.
A morphism $\gf\colon a\to b$, where $a\colon I\to P$ and $b\colon J\to P$, is now a map $\gf\colon J\to I$ such that $a\circ\gf\leq b$ (i.e., $a\gf(j)\leq b(j)$ whenever $j\in J$);
so it lives in what could be called an ``ordered slice category'' over~$P$.
One can then define the diagram ${}_{-}\bt\vec{S}$ as in Lemma~\ref{L:otlDiagr}, with $\gf\bt\vec{S}$ defined as the set of all morphisms of the form $\prod\vecm{f_j}{j\in J}$ where each $f_j\in\vec{S}(a\gf(j),b(j))$.
Following the method outlined in Remark~\ref{Rk:Diagram}, this diagram can then be encoded by a commutative diagram indexed by a category ``covering'' the ordered slice category outlined above.

Despite the greater level of generality brought by that approach, all our applications developed in Sections~\ref{S:Ceva}--\ref{S:4SCML}, and in fact all the applications we are currently aware of, require the passage through the original box condensates from Definition~\ref{D:otoS}.
We thus chose to keep the apparently less general approach through the paper.
\end{remark}

\subsection{$\Phi$-condensates}\label{Su:PhiCond}
Let~$\cT$ be a category and let~$\Phi\colon\cS\to\cT$ be a functor.
We want to argue that under certain conditions, the composition of the diagram ${}_{-}\bt\vec{S}$ with~$\Phi$ yields a \emph{com\-mu\-ta\-tive} diagram.
This condition arises from the construction~$\vec{S}^I$ introduced in Definition~\ref{D:D^I}.

\begin{definition}\label{D:PhiComm}
We say that a $P$-indexed diagram~$\vec{S}$ in~$\cS$ is \emph{$\Phi$-com\-mu\-ta\-tive} if the composition $\Phi\vec{S}^I$ is a com\-mu\-ta\-tive diagram for any set~$I$ such that~$\vec{S}^I$ is defined.

This means that for any $p=\vecm{p_i}{i\in I}$ and $q=\vecm{q_i}{i\in I}$ in~$P^I$ such that $p\leq q$, and any $f,g\in\prod\Vecm{\vec{S}(p_i,q_i)}{i\in I}$, the equality $\Phi(f)=\Phi(g)$ holds.
\end{definition}

Taking~$I$ a singleton, it follows that if the diagram~$\vec{S}$ is $\Phi$-com\-mu\-ta\-tive, then the composite~$\Phi\vec{S}$ is a com\-mu\-ta\-tive diagram.

The diagram of Abelian \lgrp{s} denoted by~$\vec{A}$ in Wehrung~\cite{Ceva} (cf. Section~\ref{S:Ceva}) is proven in that paper to be $\Idc$-com\-mu\-ta\-tive, where~$\Idc$ is the functor sending every Abelian \lgrp\ to its lattice of principal $\ell$-ideals.
However, $\vec{A}$ is not com\-mu\-ta\-tive.

\begin{lemma}\label{L:PhiComm}
Let~$\vec{S}$ be a $\Phi$-com\-mu\-ta\-tive $P$-indexed diagram in~$\cS$.
Then the composite $\Phi\pI{{}_{-}\bt\vec{S}}$ is a functor from~$\Boolc_P(\vec{S})$ to~$\cT$.
\end{lemma}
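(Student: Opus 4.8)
The plan is to bootstrap the statement from Lemma~\ref{L:otlDiagr}: that lemma already presents $_{-}\bt\vec{S}$ as a (generally non-com\-mu\-ta\-tive) $\Boolc_P(\vec{S})$-indexed diagram in~$\cS$, so the only real work is to show that, for every morphism $\gf\colon\bA\to\bB$ in~$\Boolc_P(\vec{S})$, the functor~$\Phi$ takes one and the same value on all morphisms in the nonempty set~$\gf\bt\vec{S}$. Granting that, I would define $\Phi\pI{_{-}\bt\vec{S}}$ by sending an object~$\bA$ to~$\Phi(\bA\bt\vec{S})$ and a morphism~$\gf$ to that common value, and then read off functoriality from Lemma~\ref{L:otlDiagr}: since $\id_{\bA\bt\vec{S}}\in\id_{\bA}\bt\vec{S}$, identities go to identities; and if $f\in\gf\bt\vec{S}$ and $g\in\psi\bt\vec{S}$ then $g\circ f\in(\psi\circ\gf)\bt\vec{S}$, so $\Phi(g\circ f)=\Phi(g)\circ\Phi(f)$ shows that $\Phi\pI{_{-}\bt\vec{S}}$ respects composition.

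The heart of the matter is thus the collapse of~$\gf\bt\vec{S}$ under~$\Phi$. Fix $\gf\colon\bA\to\bB$ in~$\Boolc_P(\vec{S})$ and put $I\eqdef\Ultb\bB$. For each $\fb\in I$, constrictedness of~$\gf$ places $\gf^{-1}[\fb]$ in~$\Ultb\bA$, so $|\gf^{-1}[\fb]|_{\bA}$ is defined; combined with the containment $\|\gf^{-1}[\fb]\|_{\bA}\subseteq\|\fb\|_{\bB}$ noted after Definition~\ref{D:Constricted}, this gives $|\gf^{-1}[\fb]|_{\bA}\le|\fb|_{\bB}$. Hence, setting $\vec{p}\eqdef\vecm{|\gf^{-1}[\fb]|_{\bA}}{\fb\in I}$ and $\vec{q}\eqdef\vecm{|\fb|_{\bB}}{\fb\in I}$, one gets $\vec{p}\le\vec{q}$ in~$P^I$, with $\vec{S}^I(\vec{q})=\bB\bt\vec{S}$ (whose product projections are the maps $\gd^{\bB}_{\fb}$ of Figure~\ref{Fig:gfbtS}) and $\vec{S}^I(\vec{p})=\prod\vecm{S_{|\gf^{-1}[\fb]|_{\bA}}}{\fb\in I}$ at our disposal. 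I would then introduce the single canonical morphism $h\colon\bA\bt\vec{S}\to\vec{S}^I(\vec{p})$ whose $\fb$-th component is the projection $\gd^{\bA}_{\gf^{-1}[\fb]}$, and check, by composing with the projections $\gd^{\bB}_{\fb}$ of $\bB\bt\vec{S}$, that every $f=\prod\vecm{f_{\fb}}{\fb\in I}$ in~$\gf\bt\vec{S}$ (with $f_{\fb}\in\vec{S}(|\gf^{-1}[\fb]|_{\bA},|\fb|_{\bB})$) factors as $f=F\circ h$, where $F\eqdef\prod\vecm{f_{\fb}}{\fb\in I}\in\vec{S}^I(\vec{p},\vec{q})$ is the associated morphism of the power diagram~$\vec{S}^I$. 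Since~$h$ is independent of the choice of~$f$, and since $\Phi$-com\-mu\-ta\-tiv\-ity of~$\vec{S}$ forces $\Phi$ to be constant on~$\vec{S}^I(\vec{p},\vec{q})$, it follows that $\Phi(f)=\Phi(F)\circ\Phi(h)$ does not depend on $f\in\gf\bt\vec{S}$, as desired.

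The step I expect to be the main obstacle — or at least the point where one must not be naive — is precisely this choice of index set. The singleton instance of $\Phi$-com\-mu\-ta\-tiv\-ity yields only that $\Phi\vec{S}$ is com\-mu\-ta\-tive, i.e. that $\Phi$ identifies parallel morphisms within any single hom-set $\vec{S}(r,s)$, and this is \emph{not} sufficient to collapse $\gf\bt\vec{S}$, because~$\Phi$ is not assumed to preserve products: composing with the projections $\gd^{\bB}_{\fb}$ would only give $\gd^{\bB}_{\fb}\circ\Phi(f)=\gd^{\bB}_{\fb}\circ\Phi(g)$ coordinatewise, from which one cannot conclude $\Phi(f)=\Phi(g)$. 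One therefore genuinely needs $\Phi$-com\-mu\-ta\-tiv\-ity at the (typically infinite) index set $I=\Ultb\bB$, and the bookkeeping cost of that is exactly the factorization $f=F\circ h$ through the power diagram~$\vec{S}^I$. A minor technical wrinkle is the availability of the product $\vec{S}^I(\vec{p})$: in the generality stated this is what is meant by ``$\vec{S}^I$ is defined'' in Definition~\ref{D:D^I}, and in all the applications of Sections~\ref{S:Ceva}--\ref{S:4SCML} the category~$\cS$ has all small products, so the point is harmless.
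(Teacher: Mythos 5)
Your proposal is correct and takes essentially the same route as the paper: the paper's proof simply observes that $\Phi(f)=\Phi(g)$ for all $f,g\in\gf\bt\vec{S}$ ``follows trivially from the commutativity of the diagram $\Phi\vec{S}^{\Ultb\bB}$'', and your factorization $f=F\circ h$ through the reindexed product is exactly the detail left implicit there, with functoriality then read off from Lemma~\ref{L:otlDiagr} just as you do.
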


\begin{proof}
We need to prove that for any $P$-scaled Boolean algebras~$\bA$ and~$\bB$, any constricted morphism $\gf\colon\bA\to\bB$, and any $f,g\in\prod\Vecm{\vec{S}\pI{|\gf^{-1}[\fb]|_{\bA},|\fb|_{\bB}}}{\fb\in\Ultb\bB}$, the equality
$\Phi(f)=\Phi(g)$ holds.
This follows trivially from the commutativity of the diagram $\Phi\vec{S}^{\Ultb\bB}$.
\end{proof}

Let us introduce a context that facilitates dealing with the category $\Boolc_P(\vec{S})$.

\begin{definition}\label{D:PRODgl}
Let~$\gl$ be an infinite cardinal.
We say that the category~$\cS$ \emph{has all $2^{<\gl}$-products} if it has all binary products and all products of at most~$2^{\ga}$ objects whenever $\go\leq\ga<\gl$.
(\emph{In particular, if $\gl>\go$, then~$\cS$ has a terminal object}.)
\end{definition}

\begin{lemma}\label{L:PRODgl}
Let~$\gl$ be an infinite regular cardinal.
Suppose that~$P$ is a conditional $\gl$-DCPO and that~$\cS$ has all $2^{<\gl}$-products.
Then $\bA\bt\vec{S}$ exists for every $\gl$-pre\-sentable~$\bA$, and every morphism in $\Bool_P^{<\gl}$ is constricted.
\end{lemma}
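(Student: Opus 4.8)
The plan is to reduce the general $\gl$-presentable case to the finitely presentable case via Lemma~\ref{L:glPresBoolP}, and then invoke the structure of directed colimits in~$\Bool_P$ together with Lemma~\ref{L:Constr2easy}. First I would note that since~$\bA$ is $\gl$-presentable, Lemma~\ref{L:glPresBoolP} gives a $\gl$-small directed colimit representation $\vecm{\bA,\ga_i}{i\in I}=\varinjlim\vecm{\bA_i,\ga_i^j}{i\leq j\text{ in }I}$ with each~$\bA_i$ finitely presentable; in particular $\card I<\gl$ (after shrinking~$I$ to a cofinal subset of the requisite size). By Lemma~\ref{L:Constr2easy}\eqref{altUltb}, $\Ultb\bA$ consists of exactly those ultrafilters~$\fa$ of~$A$ for which $\|\fa\|_{\bA}$ is bounded above in~$P$. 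The key cardinality point is that $\card{\Ult A}\leq 2^{\card A}$, and by the second implication of Corollary~\ref{C:glPresBoolP} we have $\card A<\gl$; hence either $A$ is finite (and $\card{\Ult A}$ is finite) or $\go\leq\card A<\gl$ and $\card{\Ult A}\leq 2^{\card A}$, a product size that~$\cS$ is assumed to accommodate. Therefore the product $\bA\bt\vec{S}=\prod\Vecm{S_{|\fa|_{\bA}}}{\fa\in\Ultb\bA}$, indexed by the set $\Ultb\bA$ of size at most $2^{<\gl}$, exists in~$\cS$ by the $2^{<\gl}$-products hypothesis. (In the degenerate case $\Ultb\bA=\es$, which can occur only if $\gl>\go$, the empty product is the terminal object supplied by Definition~\ref{D:PRODgl}.)

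For the second assertion, let $\gf\colon\bA\to\bB$ be any morphism in~$\Bool_P$ with $\bA\in\Bool_P^{<\gl}$. This is precisely Lemma~\ref{L:Constr2easy}\eqref{AllConstr}: because~$P$ is a conditional $\gl$-DCPO and~$\bA$ is $\gl$-presentable, and because $\|\gf^{-1}[\fb]\|_{\bA}\subseteq\|\fb\|_{\bB}$ holds for every ultrafilter~$\fb$ of~$B$ (as~$\gf$ is a morphism of $P$-scaled Boolean algebras), boundedness above of $\|\fb\|_{\bB}$ forces boundedness above of $\|\gf^{-1}[\fb]\|_{\bA}$, and then part~\eqref{altUltb} of that lemma yields $\gf^{-1}[\fb]\in\Ultb\bA$. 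So every morphism in $\Bool_P^{<\gl}$ is constricted, with nothing further to prove.

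The only real obstacle is the bookkeeping around product sizes when $\gl>\go$: one must check that the index set $\Ultb\bA$ genuinely has cardinality within the range $\{\,2^{\ga}\mid\go\leq\ga<\gl\,\}\cup\{\text{finite}\}$ covered by Definition~\ref{D:PRODgl}, and in particular that the edge case of a finite~$A$ (giving a finite product, handled by the ``all binary products'' clause) and the edge case $\Ultb\bA=\es$ (handled by the terminal object clause) are both subsumed. Everything else is a direct citation of Lemma~\ref{L:glPresBoolP}, Corollary~\ref{C:glPresBoolP}, and Lemma~\ref{L:Constr2easy}. I would therefore write the proof as: invoke Corollary~\ref{C:glPresBoolP} for $\card A<\gl$; bound $\card{\Ultb\bA}\leq\card{\Ult A}\leq\max(2^{\card A},\aleph_0)$; apply the $2^{<\gl}$-products hypothesis to conclude $\bA\bt\vec{S}$ exists; and cite Lemma~\ref{L:Constr2easy}\eqref{AllConstr} for constrictedness.
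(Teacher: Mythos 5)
Your proposal is correct and follows essentially the same route as the paper: bound $\card A<\gl$ via Corollary~\ref{C:glPresBoolP}, bound $\card(\Ultb\bA)\leq 2^{\card A}$ to invoke the $2^{<\gl}$-products hypothesis (with the same attention to the empty-index case, which the paper likewise rules out for $\gl=\go$ via finite presentability), and cite Lemma~\ref{L:Constr2easy} for constrictedness. The detour through Lemma~\ref{L:glPresBoolP} and Lemma~\ref{L:Constr2easy}\eqref{altUltb} to describe $\Ultb\bA$ is harmless but not needed for the cardinality bound.
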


\begin{proof}
Let~$\bA\in\Bool_P^{<\gl}$ and set $\ga\eqdef\card A$.
By Corollary~\ref{C:glPresBoolP}, $\ga<\gl$.
Since the set $\Ultb\bA$ has at most~$2^{\ga}$ elements and since it is nonempty if $\gl=\go$ (for in that case~$\bA$ is finitely pre\-sented, so $\Ultb\bA=\Ult A$), $\bA\bt\vec{S}$ exists in~$\cS$.
The last part of the statement of Lemma~\ref{L:PRODgl} follows immediately from Lemma~\ref{L:Constr2easy}.
\end{proof}

\begin{lemma}\label{L:otmFexists}
Let~$\gl$ be an infinite regular cardinal.
Suppose that~$P$ is a conditional $\gl$-DCPO, $\cS$ has all $2^{<\gl}$-products, $\cT$ has all $\gl$-directed colimits, and~$\vec{S}$ is $\Phi$-com\-mu\-ta\-tive.
Then the restriction of the functor $\Phi\pI{{}_{-}\bt\vec{S}}$ to~$\Bool_P^{<\gl}$ extends uniquely, up to natural isomorphism, to a $\gl$-continuous functor $\ol{\Psi}\colon\Bool_P^{<\gl}\to\nobreak\cT$.
\end{lemma}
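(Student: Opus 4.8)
The plan is to obtain $\ol{\Psi}$ by a single application of Lemma~\ref{L:ExtFunctgkPres}, taken with $\cA\eqdef\Bool_P$, with $\cA^{\dagger}\eqdef\Bool_P^{<\gl}$, and with $\cT$ in the role of the codomain category. Everything will then be formal, the one step deserving attention being the verification that the restriction named in the statement really is an \emph{ordinary} functor $\Bool_P^{<\gl}\to\cT$; this is precisely the place where $\Phi$-commutativity turns the non-commutative diagram ${}_{-}\bt\vec{S}$ into something to which Lemma~\ref{L:ExtFunctgkPres} applies.

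First I would check that $\Bool_P^{<\gl}$ is a full subcategory of the category $\Boolc_P(\vec{S})$ of Lemma~\ref{L:otlDiagr}. Since $P$ is a conditional $\gl$-DCPO and $\cS$ has all $2^{<\gl}$-products, Lemma~\ref{L:PRODgl} shows that $\bA\bt\vec{S}$ exists for every $\gl$-presentable~$\bA$ and that every morphism of $\Bool_P^{<\gl}$ is constricted; hence every object of $\Bool_P^{<\gl}$ lies in $\Boolc_P(\vec{S})$ and every $\Bool_P$-morphism between two such objects is a $\Boolc_P$-morphism, so $\Bool_P^{<\gl}$ is indeed a full subcategory of $\Boolc_P(\vec{S})$. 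Because $\vec{S}$ is $\Phi$-commutative, Lemma~\ref{L:PhiComm} gives that $\Phi\pI{{}_{-}\bt\vec{S}}$ is a functor from $\Boolc_P(\vec{S})$ to~$\cT$: $\Phi$ collapses the set $\gf\bt\vec{S}$ of arrows attached to a constricted morphism~$\gf$ to a single arrow of~$\cT$, while the identity and composition laws come from Lemma~\ref{L:otlDiagr}. Let $\Psi$ denote the restriction of $\Phi\pI{{}_{-}\bt\vec{S}}$ to $\Bool_P^{<\gl}$; this is the functor to be extended, and it is exactly the restriction appearing in the statement.

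It remains to check the hypotheses of Lemma~\ref{L:ExtFunctgkPres}. By Notation~\ref{Not:BoolP<gl}, $\Bool_P^{<\gl}$ is by definition the full subcategory of the $\gl$-presentable objects of $\Bool_P$, so $\Bool_P^{<\gl}\subseteq\Pres_{\gl}\Bool_P$; and, as recorded just after Lemma~\ref{L:glPresBoolP}, $\Bool_P$ is $\gl$-accessible for every infinite regular~$\gl$, hence $\Bool_P^{<\gl}$ is $\gl$-dense in $\Bool_P$. Moreover $\Bool_P^{<\gl}$ has small hom-sets: by Corollary~\ref{C:glPresBoolP} a $\gl$-presentable~$\bA$ satisfies $\card A<\gl$, and a morphism of $\Bool_P$ out of such an~$\bA$ is a Boolean homomorphism between the underlying algebras, of which there is only a set. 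Finally $\cT$ has all $\gl$-directed colimits by hypothesis. Lemma~\ref{L:ExtFunctgkPres} therefore yields a functor $\ol{\Psi}\colon\Bool_P\to\cT$ that extends~$\Psi$, preserves all $\gl$-directed colimits from $\Bool_P^{<\gl}$, and is unique up to natural isomorphism with these properties; it is $\gl$-continuous since $\Bool_P^{<\gl}$ has small hom-sets, and, as $\Bool_P^{<\gl}$ is $\gl$-dense in $\Bool_P$, any two $\gl$-continuous extensions of~$\Psi$ are naturally isomorphic, so the uniqueness also holds among $\gl$-continuous functors. I expect no real obstacle: the proof is an assembly of Lemmas~\ref{L:PRODgl}, \ref{L:PhiComm}, \ref{L:otlDiagr}, \ref{L:ExtFunctgkPres} and Corollary~\ref{C:glPresBoolP}, and the single point that needs care is the one flagged above, namely that $\Phi$-commutativity is what makes $\Phi\pI{{}_{-}\bt\vec{S}}$ an honest functor rather than a mere diagram.
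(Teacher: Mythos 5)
Your proposal is correct and follows essentially the same route as the paper: a single application of Lemma~\ref{L:ExtFunctgkPres} with $\cA:=\Bool_P$, $\cA^{\dagger}:=\Bool_P^{<\gl}$, and $\Psi$ the restriction of $\Phi\pI{{}_{-}\bt\vec{S}}$, with Lemma~\ref{L:PRODgl} (together with Lemmas~\ref{L:otlDiagr} and~\ref{L:PhiComm}) guaranteeing that this restriction is an honest functor and the $\gl$-density of $\Bool_P^{<\gl}$ in~$\Bool_P$ supplying the remaining hypothesis. Your additional checks (full subcategory of $\Boolc_P(\vec{S})$, small hom-sets via Corollary~\ref{C:glPresBoolP}) are correct elaborations of steps the paper leaves implicit.
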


\begin{proof}
We apply Lemma~\ref{L:ExtFunctgkPres} to $\cA:=\Bool_P$ and $\cA^{\dagger}:=\Bool_P^{<\gl}$\,, with~$\Psi$ defined as the restriction of the functor $\Phi\pI{{}_{-}\bt\vec{S}}$ to~$\Bool_P^{<\gl}$ (here and at many other places we apply Lemma~\ref{L:PRODgl}).
Since $\Bool_P^{<\gl}$ is $\gl$-dense in~$\Bool_P$\,, the assumptions of Lemma~\ref{L:ExtFunctgkPres} are indeed satisfied.
\end{proof}

\begin{definition}\label{D:AotmFS}
We shall denote by ${}_{-}\otlF\vec{S}$ the functor~$\ol{\Psi}$ whose existence is ensured by Lemma~\ref{L:otmFexists}.
Objects of~$\cS$ of the form $\bA\otlF\vec{S}$, for $\bA\in\Bool_P$\,, will be called \emph{$\Phi$-condensates} of~$\vec{S}$.
\end{definition}

We end this section by recalling a useful example of a $P$-scaled Boolean algebra, introduced in Gillibert and Wehrung \cite[Definition~2.6.1]{Larder}; namely,
 \[
 \two[p]\eqdef\pI{\two,\vecm{\two[p]^{(q)}}{q\in P}}
 \]
where we set~$\two\eqdef\set{0,1}$ and define $\two[p]^{(q)}$ as $\set{0,1}$ if~$q\leq p$, $\set{0}$ otherwise.
Whenever $p\leq q$ in~$P$, the identity map on~$\two$ induces a monomorphism $\eps_p^q\colon\two[p]\rightarrowtail\two[q]$ in~$\Bool_P^{<\go}$\,.
The following straightforward analogue of \cite[Lemma~3.1.3]{Larder} remains valid.

\begin{lemma}\label{L:2potmvecS}
Let~$\vec{S}$ be a \pup{not necessarily com\-mu\-ta\-tive} $P$-indexed diagram in~$\cS$.
Then the following statements hold:
\begin{enumerater}
\item
$\two[p]\bt\vec{S}=S_p$\,, for all $p\in P$.

\item
$\eps_p^q\bt\vec{S}=\vec{S}(p,q)$, for all $p\leq q$ in~$P$.
\end{enumerater}
\end{lemma}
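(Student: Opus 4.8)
The plan is to unwind the definitions directly, reducing everything to a one-point computation; this is entirely parallel to the proof of \cite[Lemma~3.1.3]{Larder}. The crucial remark is that the Boolean algebra~$\two$ has exactly one ultrafilter, namely the principal ultrafilter $\fa\eqdef\set{1}$ attached to its unique atom~$1$ (there are no others, $\two$ being finite).

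First I would treat~(1). For each $q\in P$ we have $1\in\two[p]^{(q)}$ if{f} $q\le p$, so by the definition of $\|{}_{-}\|$ we get $\|\fa\|_{\two[p]}=P\dnw p$, whose least upper bound in~$P$ is~$p$ itself. Hence $\fa\in\Ultb{\two[p]}$ with $|\fa|_{\two[p]}=p$, and $\Ultb{\two[p]}=\set{\fa}$ is a singleton. By Definition~\ref{D:otoS}, $\two[p]\bt\vec{S}$ is therefore the product of the single object~$S_p$, that is, $S_p$ itself, under the standing identification of a one-object product with that object; this is~(1).

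For~(2), I would first check that $\eps_p^q$ is constricted. It is induced by~$\id_{\two}$, so $(\eps_p^q)^{-1}[\fa]=\fa$ as an ultrafilter of~$\two[p]$, and $\|\fa\|_{\two[p]}=P\dnw p$ has a least upper bound, whence $(\eps_p^q)^{-1}[\fa]\in\Ultb{\two[p]}$; alternatively, $\two[p]\in\Bool_P^{<\go}$ and every poset is a conditional $\go$-DCPO, so Lemma~\ref{L:Constr2easy}\eqref{AllConstr} applies. Now $|(\eps_p^q)^{-1}[\fa]|_{\two[p]}=p$ while $|\fa|_{\two[q]}=q$, and $\Ultb{\two[q]}=\set{\fa}$ is a singleton, so Definition~\ref{D:gfotlvecS} gives $\eps_p^q\bt\vec{S}=\setm{f_{\fa}}{f_{\fa}\in\vec{S}(p,q)}$, which under the identification from~(1) is exactly~$\vec{S}(p,q)$. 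This proves~(2).

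I do not expect a real obstacle here: the proof is a pure unravelling of the definitions. The only points that call for any attention are the bookkeeping of one-object products (so that the two displayed equalities are understood up to the canonical identification used throughout the paper), and the remark that the possible non-commutativity of~$\vec{S}$ causes no trouble, since Definition~\ref{D:gfotlvecS} already permits $\gf\bt\vec{S}$ to be a whole set of morphisms and here that set turns out to be $\vec{S}(p,q)$ verbatim.
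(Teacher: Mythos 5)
Your proof is correct and is exactly the straightforward unravelling the paper has in mind: the paper omits the argument, citing it as an analogue of \cite[Lemma~3.1.3]{Larder}, and your computation (the unique ultrafilter $\set{1}$ of~$\two$ has norm ideal $P\dnw p$ with least upper bound~$p$, so both box condensates reduce to a one-factor product) is precisely that intended verification, including the correct check that~$\eps_p^q$ is constricted.
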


In particular, in the context of Lemma~\ref{L:2potmvecS}, we get
\begin{itemize}
\item
For all $p\in P$, $\two[p]\otlF\vec{S}=\Phi(S_p)$.

\item
For all $p\leq q$ in~$P$, $\eps_p^q\otlF\vec{S}$ is the common value of all~$\Phi(x)$ for $x\in\vec{S}(p,q)$.
\end{itemize}

\section{The Uniformization Lemma and the Boosting Lemma}\label{S:Forests}

While the construction~${}_{-}\bt\vec{S}$, originally defined on constricted morphisms, cannot be functorially extended to morphisms in full generality, we find, in this section, conditions under which such an extension is possible on diagrams of $P$-scaled Boolean algebras indexed by \emph{forests}.
We begin with an easy lemma.

\begin{lemma}\label{L:ChainInitSeg}
Let~$I$ be a chain, let~$P$ be a poset, and let $f\colon I\to P$ be an isotone map.
Then $f[I]\dnw f(i)=f[I\dnw i]$ for every $i\in I$.
\end{lemma}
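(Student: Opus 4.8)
This is an extremely short lemma with a completely routine proof, so the "plan" is essentially the proof itself.

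\medskip

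The plan is to prove the two inclusions $f[I]\dnw f(i)\supseteq f[I\dnw i]$ and $f[I]\dnw f(i)\subseteq f[I\dnw i]$ separately, for a fixed $i\in I$.

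The first inclusion is immediate from isotonicity alone and does not use that $I$ is a chain: if $y\in f[I\dnw i]$, write $y=f(j)$ with $j\in I$ and $j\le i$; then $y=f(j)\in f[I]$ and, since $f$ is isotone, $f(j)\le f(i)$, so $y\in f[I]\dnw f(i)$.

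The second inclusion is where the chain hypothesis is used. Let $y\in f[I]\dnw f(i)$, so $y=f(j)$ for some $j\in I$ with $f(j)\le f(i)$. Since $I$ is a chain, either $j\le i$ or $i\le j$. In the first case $j\in I\dnw i$ and we are done. In the second case, isotonicity gives $f(i)\le f(j)$, which together with $f(j)\le f(i)$ yields $f(j)=f(i)$; hence $y=f(j)=f(i)\in f[I\dnw i]$ since $i\in I\dnw i$. In either case $y\in f[I\dnw i]$, as desired.

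\medskip

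There is no real obstacle here; the only point worth being careful about is that the forward inclusion of $f[I]\dnw f(i)$ into $f[I\dnw i]$ genuinely requires the linearity of $I$ (an antichain would break it), and the argument handles the ``incomparable'' alternative by collapsing it to equality via the antisymmetry of the order on $P$.
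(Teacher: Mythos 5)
Your proof is correct and follows essentially the same route as the paper: the easy inclusion from isotonicity, and for the converse a case split using that $I$ is a chain, with the case $i\leq j$ collapsing to $f(j)=f(i)$ by antisymmetry. Nothing to add.
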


\begin{proof}
The containment $f[I\dnw i]\subseteq f[I]\dnw f(i)$ follows from the isotonicity of~$f$.
Conversely, every $p\in f[I]\dnw f(i)$ can be written as~$f(x)$ for some $x\in I$.
Since~$I$ is a chain, $x\leq i$ or $i\leq x$.
In the former case, $x\in I\dnw i$ thus $p=f(x)\in f[I\dnw i]$.
In the latter case, $p\leq f(i)\leq f(x)=p$, so $p=f(i)\in f[I\dnw i]$.
\end{proof}

The main result of Section~\ref{S:Forests} states that forest-indexed diagrams of box condensates of any diagram of constricted morphisms can always be uniformized (cf. Definition~\ref{D:Diagram} for the latter).

\begin{lemma}[Uniformization Lemma]\label{L:UnifLem}
Let~$P$ be a poset in which every bounded chain is finite, let~$\gL$ be a forest, let~$\vecm{\bA_i,\ga_i^j}{i\leq j\text{ in }\gL}$ be a $\gL$-indexed commutative diagram in~$\Bool_P$\,, let~$\cS$ be a category, and let~$\vec{S}$ be a $P$-indexed diagram in~$\cS$ such that each $\bA_i\bt\vec{S}$ exists.
Then every morphism in~$\Bool_P$ is constricted and the diagram $\Vecm{\bA_i\bt\vec{S},\ga_i^j\bt\vec{S}}{i\leq j\text{ in }I}$ has a uniformization.
\end{lemma}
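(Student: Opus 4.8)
The first assertion --- that every morphism of $\Bool_P$ is constricted --- does not use the forest hypothesis. I would first record the following consequence of the hypothesis on $P$: \emph{every directed subset $X$ of $P$ that is bounded above has a greatest element}, and hence a least upper bound in $P$. Indeed, start from any $x_0\in X$ and, as long as the element reached fails to be maximal in $X$, use directedness to pass to a strictly larger member of $X$; this produces a strictly increasing chain inside $X$ which is bounded (above by a bound of $X$, below by $x_0$), hence finite, so the process stops at a maximal --- equivalently, since $X$ is directed, a greatest --- element of $X$. Now let $\gf\colon\bA\to\bB$ be any morphism of $\Bool_P$ and let $\fb\in\Ultb\bB$. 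Recall that $\|\fa\|_{\bA}$ is always an ideal of $P$; since $\gf$ is a morphism of $\Bool_P$, the ideal $\|\gf^{-1}[\fb]\|_{\bA}$ is contained in $\|\fb\|_{\bB}$, hence bounded above, hence (by the above) admits a least upper bound in $P$, i.e.\ $\gf^{-1}[\fb]\in\Ultb\bA$. So $\gf$ is constricted; in particular each $\ga_i^j\bt\vec S$ is defined and $\Vecm{\bA_i\bt\vec S,\ga_i^j\bt\vec S}{i\leq j\text{ in }\gL}$ is a diagram in the sense of Definition~\ref{D:Diagram}.

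For the uniformization, the plan is to replace the problem by that of uniformizing a pullback of $\vec S$ along an isotone map out of a forest. Put $\cU\eqdef\coprod_{j\in\gL}\Ultb\bA_j$, ordered by declaring $(i,\fa)\leq(j,\fb)$ when $i\leq j$ in $\gL$ and $\fa=(\ga_i^j)^{-1}[\fb]$; functoriality of $\Vecm{\bA_i,\ga_i^j}{i\leq j}$ makes this a partial order, and for each $(j,\fb)$ the map $i\mapsto\bigl(i,(\ga_i^j)^{-1}[\fb]\bigr)$ is an order-isomorphism from $\gL\dnw j$ onto $\cU\dnw(j,\fb)$ (using the first assertion to know the $(\ga_i^j)^{-1}[\fb]$ lie in $\Ultb\bA_i$). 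Since $\gL$ is a forest, so is $\cU$. Because each $\ga_i^j$ is a morphism of $\Bool_P$, one has $\|(\ga_i^j)^{-1}[\fb]\|_{\bA_i}\subseteq\|\fb\|_{\bA_j}$, so the map $\tilde p\colon\cU\to P$, $(j,\fb)\mapsto|\fb|_{\bA_j}$, is isotone; moreover $\tilde p[\cU\dnw(j,\fb)]$ is a chain of $P$ bounded above by $|\fb|_{\bA_j}$, hence \emph{finite}. Writing $\vec T\eqdef\vec S\circ\tilde p$ for the resulting $\cU$-indexed diagram in $\cS$ (a pullback of $\vec S$; it is a diagram by the two axioms for $\vec S$), one checks directly, using Definition~\ref{D:gfotlvecS} together with the composition rule for ${}_{-}\bt\vec S$ established in the proof of Lemma~\ref{L:otlDiagr}, that any uniformization $\vec T^*$ of $\vec T$ induces a uniformization of $\Vecm{\bA_i\bt\vec S,\ga_i^j\bt\vec S}{i\leq j}$, by sending $i\leq j$ to $\prod_{\fb\in\Ultb\bA_j}\vec T^*\bigl((i,(\ga_i^j)^{-1}[\fb]),(j,\fb)\bigr)$.

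It therefore remains to uniformize $\vec T=\vec S\circ\tilde p$ on the forest $\cU$, and this is where finiteness of the sets $\tilde p[\cU\dnw u]$ is used decisively. For comparable $u\leq u'$ in $\cU$, the chain $\cU\dnw u$ is an \emph{initial segment} of the chain $\cU\dnw u'$, so $\tilde p[\cU\dnw u]$ is an initial segment of the finite chain $\tilde p[\cU\dnw u']$; consequently, calling a pair $r<r'$ in $\tilde p[\cU]$ \emph{consecutive} if it is adjacent in some chain $\tilde p[\cU\dnw u']$ is unambiguous, and adjacency is inherited by initial segments. Using the axiom of choice, choose for every consecutive pair $r<r'$ a morphism $g_r^{r'}\in\vec S(r,r')$ (nonempty, by Definition~\ref{D:Diagram}). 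For $u\leq u'$ in $\cU$, enumerate the members of $\tilde p[\cU\dnw u']$ lying between $\tilde p(u)$ and $\tilde p(u')$ as $\tilde p(u)=r_0<r_1<\dots<r_m=\tilde p(u')$, and set $\vec T^*(u,u')\eqdef g_{r_{m-1}}^{r_m}\circ\dots\circ g_{r_0}^{r_1}$ (the empty composite $\id_{S_{\tilde p(u)}}$ when $m=0$). Then $\vec T^*(u,u')\in\vec T(u,u')$ by iterating the composition axiom for $\vec S$; $\vec T^*(u,u)=\id$; and $\vec T^*(u',u'')\circ\vec T^*(u,u')=\vec T^*(u,u'')$ for $u\leq u'\leq u''$, because $\tilde p[\cU\dnw u']$ is an initial segment of $\tilde p[\cU\dnw u'']$ and adjacency is inherited, so that the three composites in question are built from exactly the same chain of $g$'s.

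I expect the main obstacle to be precisely this last combinatorial point. A forest $\gL$ may contain chains $\gL\dnw j$ of arbitrarily wild order type --- dense, with no successors, with no least element --- along which no naive recursion ``from below'' is available; indeed, for a poset $P$ not satisfying the hypothesis the statement is false. What the hypothesis ``every bounded chain of $P$ is finite'' buys is that $\tilde p$ \emph{collapses} each such chain onto a finite chain of values, and the delicate part is to make the transition morphisms cohere simultaneously over the whole forest $\cU$ --- which is achieved above by routing every $\vec T^*(u,u')$ through the finite value-chain $\tilde p[\cU\dnw u']$ rather than through the index chain $\cU\dnw u'$ itself. The remaining verifications (that $\cU$ is a forest, the initial-segment description of $\cU\dnw u$, and the two routine diagram-chases transferring $\vec T^*$ back to $\Vecm{\bA_i\bt\vec S,\ga_i^j\bt\vec S}{i\leq j}$ via Lemma~\ref{L:otlDiagr}) are straightforward.
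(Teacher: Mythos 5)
Your proof is correct and is essentially the paper's own argument: your forest $\cU$, the norm map $\tilde p$, the initial-segment observation, and the composites of the chosen $g_r^{r'}$ along the finite value-chains are exactly the paper's set $\cU$, its maps $\pi_{k,\fc}$, its Claim (proved via Lemma~\ref{L:ChainInitSeg}), and its morphisms $\tau_{j,k,\fc}$ built from a global choice of $\gs_p^q\in\vec{S}(p,q)$. The only cosmetic differences are that you package the construction as a uniformization of the pullback $\vec{S}\circ\tilde p$ before transferring back through products (using the composition rule from Lemma~\ref{L:otlDiagr}), and that you prove constrictedness by a direct greatest-element argument rather than by invoking Lemma~\ref{L:Constr2easy}.
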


\begin{proof}
Since~$P$ has no bounded strictly ascending sequence, it is a conditional DCPO, thus every morphism in~$\Bool_P$ is constricted (cf. Lemma~\ref{L:Constr2easy}).
For each pair $(p,q)\in P\times P$ such that $p<q$, we pick a morphism $\gs_p^q\in\vec{S}(p,q)$.
We emphasize that the~$\gs_p^q$ are not expected to form a commutative diagram in~$\cS$ (which indeed they usually do not).

Set $\cU\eqdef\bigcup\Vecm{\set{k}\times(\Ultb{\bA_k})}{k\in\gL}$.
For each $(k,\fc)\in\cU$, denote by\linebreak $\pi_{k,\fc}\colon\gL\dnw k\to P$ the map defined by
$\pi_{k,\fc}(j)\eqdef|(\ga_j^k)^{-1}[\fc]|_{\bA_j}$ whenever $j\in\gL\dnw k$ (since~$\ga_j^k$ is constricted, the map~$\pi_{k,\fc}$ is indeed well defined).
Observe that $\pi_{k,\fc}(k)=|\fc|_{\bA_k}$.
For all $i\leq j\leq k$ in~$\gL$,
 \[
 \pi_{k,\fc}(i)=|(\ga_i^k)^{-1}[\fc]|_{\bA_i}=
 |(\ga_i^j)^{-1}(\ga_j^k)^{-1}[\fc]|_{\bA_i}\leq|(\ga_j^k)^{-1}[\fc]|_{\bA_j}
 =\pi_{k,\fc}(j)\,,
 \]
so~$\pi_{k,\fc}$ is isotone.
It follows that the range~$P_{k,\fc}$ of~$\pi_{k,\fc}$ is a chain contained in~$P$.

\begin{sclaim}
Let $j\leq k$.
Then $P_{k,\fc}\dnw\pi_{k,\fc}(j)=P_{j,(\ga_j^k)^{-1}[\fc]}$\,.
\end{sclaim}

\begin{scproof}
A direct calculation:
 \begin{align*}
 P_{j,(\ga_j^k)^{-1}[\fc]}&=
 \setm{|(\ga_i^j)^{-1}(\ga_j^k)^{-1}[\fc]|_{\bA_i}}{i\in\gL\dnw j}\\
 &=\setm{|(\ga_i^k)^{-1}[\fc]|_{\bA_i}}{i\in\gL\dnw j}\\
 &=\pi_{k,\fc}[\gL\dnw j]\\
 &=\pi_{k,\fc}[(\gL\dnw k)\dnw j]\\
 &=\pi_{k,\fc}[\gL\dnw k]\dnw\pi_{k,\fc}(j)
 \qquad(\text{because }\gL\dnw k
 \text{ is a chain and by Lemma~\ref{L:ChainInitSeg}})\\
 &=P_{k,\fc}\dnw\pi_{k,\fc}(j)\,.\tag*{\qedsc}
 \end{align*}
\renewcommand{\qedsc}{}
\end{scproof}

Whenever $(k,\fc)\in\cU$ and $j\leq k$, it follows from the assumptions on~$P$ that the subchain $P_{k,\fc}\upw\pi_{k,\fc}(j)=P_{k,\fc}\cap[\pi_{k,\fc}(j),\pi_{k,\fc}(k)]$ is finite; write it as $\set{p_0,\dots,p_n}$ with $p_0<\cdots<p_n$\,, and then define
 \[
 \tau_{j,k,\fc}\eqdef
 \gs_{p_{n-1}}^{p_n}\circ\cdots\circ\gs_{p_0}^{p_1}\,.
 \]
In particular, $\tau_{j,k,\fc}$ belongs to $\vec{S}(p_0,p_n)=\vec{S}(|(\ga_j^k)^{-1}[\fc]|_{\bA_j},|\fc|_{\bA_k})$\,.
It follows that the morphisms~$\tau_{j,k,\fc}$\,, for~$\fc$ ranging over~$\Ultb{\bA_k}$\,, are the components of a morphism
 \[
 \gf_j^k\eqdef\prod\vecm{\tau_{j,k,\fc}}{\fc\in\Ultb{\bA_k}}
 \in\ga_j^k\bt\vec{S}\,.
 \]
If $j=k$, then each~$\tau_{j,k,\fc}$ is the identity, thus~$\gf_j^k$ is the identity on~$\bA_j\bt\vec{S}$.

We shall now prove that $\gf_i^k=\gf_j^k\circ\gf_i^j$ whenever $i\leq j\leq k$ in~$\gL$.
This amounts to proving that the relation $\tau_{i,k,\fc}=\tau_{j,k,\fc}\circ\tau_{i,j,(\ga_j^k)^{-1}[\fc]}$ holds for every $\fc\in\Ultb\bA_k$\,.
Writing $P_{k,\fc}\upw\pi_{k,\fc}(i)=\set{p_0,\dots,p_n}$ with $p_0<\cdots<p_n$\,, we obtain, by definition,
 \begin{equation}\label{Eq:tauikfc}
 \tau_{i,k,\fc}=\gs_{p_{n-1}}^{p_n}\circ\cdots\circ\gs_{p_0}^{p_1}\,.
 \end{equation}
Let $m\in\set{0,\dots,n}$ such that $\pi_{k,\fc}(j)=p_m$\,.
Then $P_{k,\fc}\upw\pi_{k,\fc}(j)=\set{p_m,\dots,p_n}$\,, thus
 \begin{equation}\label{Eq:taujkfc}
 \tau_{j,k,\fc}=
 \gs_{p_{n-1}}^{p_n}\circ\cdots\circ\gs_{p_m}^{p_{m+1}}\,.
 \end{equation}
Finally, it follows from the Claim above that
$P_{j,(\ga_j^k)^{-1}[\fc]}=\set{p_0,\dots,p_m}$\,, thus
 \begin{equation}\label{Eq:tauijfc}
 \tau_{i,j,(\ga_j^k)^{-1}[\fc]}=
 \gs_{p_{m-1}}^{p_m}\circ\cdots\circ\gs_{p_0}^{p_1}\,.
 \end{equation}
The relations~\eqref{Eq:tauikfc}, \eqref{Eq:taujkfc}, and~\eqref{Eq:tauijfc} together imply the required relation\linebreak
$\tau_{i,k,\fc}=\tau_{j,k,\fc}\circ\tau_{i,j,(\ga_j^k)^{-1}[\fc]}$.
This concludes the proof that $\Vecm{\bA_i\bt\vec{S},\gf_i^j}{i\leq j\text{ in }I}$ is a uniformization of $\Vecm{\bA_i\bt\vec{S},\ga_i^j\bt\vec{S}}{i\leq j\text{ in }I}$.
\end{proof}

Under conditions on~$\gl$ and~$P$ and for a $\gl$-pre\-sentable $P$-scaled Boolean algebra~$\bB$, the $\Phi$-condensate $\bB\otlF\vec{S}$ is defined as $\Phi(\bB\bt\vec{S})$, thus it belongs to the range of~$\Phi$.
The following result enables us to extend the latter observation to the case where~$\bB$ is the colimit of a direct system, indexed by~$\gl$, of $\gl$-pre\-sentable $P$-scaled Boolean algebras.

\begin{lemma}[The Boosting Lemma]\label{L:Boosting}
Let~$\gl$ be an infinite regular cardinal and let~$P$ be a poset in which every bounded chain is finite.
Let~$\cS$ be a category with all $2^{<\gl}$-products \pup{cf. Definition~\textup{\ref{D:PRODgl}}} and all colimits indexed by~$\gl$, let $\cT$ be a category with all $\gl$-directed colimits, and let $\Phi\colon\cS\to\cT$ be a functor preserving all colimits indexed by~$\gl$.
Let~$\vec{S}$ be a $\Phi$-com\-mu\-ta\-tive $P$-indexed diagram in~$\cS$, and consider a directed colimit cocone
 \[
 \vecm{\bB,\beta_{\xi}}{\xi<\gl}=
 \varinjlim\vecm{\bB_{\xi},\beta_{\xi}^{\eta}}{\xi\leq\eta<\gl}
 \qquad\text{in }\Bool_P\,,
 \]
with all~$\bB_{\xi}$ $\gl$-pre\-sentable.
Then $\bB\otlF\vec{S}$ belongs to the range of~$\Phi$.
\end{lemma}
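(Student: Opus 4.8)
The plan is to compute $\bB\otlF\vec{S}$ as a colimit, indexed by~$\gl$, of the objects~$\bB_{\xi}\otlF\vec{S}$\,, to realize that colimit as the $\Phi$-image of a colimit formed \emph{inside~$\cS$}, and thereby to place $\bB\otlF\vec{S}$ in~$\rng\Phi$. First I would record the preliminaries. Since every bounded chain of~$P$ is finite, $P$ is a conditional DCPO (as already observed within the proof of Lemma~\ref{L:UnifLem}), hence a conditional $\gl$-DCPO; in particular $\bB\otlF\vec{S}$ is defined (Lemma~\ref{L:otmFexists}), and, as~$\cS$ has all $2^{<\gl}$-products, Lemma~\ref{L:PRODgl} shows that $\bB_{\xi}\bt\vec{S}$ exists in~$\cS$ for every $\xi<\gl$ (each~$\bB_{\xi}$ being $\gl$-pre\-sentable) and that every morphism of~$\Bool_P^{<\gl}$ is constricted.

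Next I would invoke the Uniformization Lemma. The ordinal~$\gl$, viewed as a poset, is a chain, hence a forest, and $\vecm{\bB_{\xi},\beta_{\xi}^{\eta}}{\xi\leq\eta<\gl}$ is a $\gl$-indexed com\-mu\-ta\-tive diagram in~$\Bool_P$\,; so by Lemma~\ref{L:UnifLem} the diagram $\vecm{\bB_{\xi}\bt\vec{S},\beta_{\xi}^{\eta}\bt\vec{S}}{\xi\leq\eta<\gl}$ has a uniformization, i.e., a \emph{com\-mu\-ta\-tive} diagram $\vec{C}=\vecm{\bB_{\xi}\bt\vec{S},\gf_{\xi}^{\eta}}{\xi\leq\eta<\gl}$ in~$\cS$ with $\gf_{\xi}^{\eta}\in\beta_{\xi}^{\eta}\bt\vec{S}$ for all $\xi\leq\eta<\gl$. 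Since~$\cS$ has all colimits indexed by~$\gl$, I then form $C\eqdef\varinjlim_{\xi<\gl}\pI{\bB_{\xi}\bt\vec{S}}$ in~$\cS$ along the~$\gf_{\xi}^{\eta}$\,, and since~$\Phi$ preserves all colimits indexed by~$\gl$, applying~$\Phi$ gives $\Phi(C)=\varinjlim_{\xi<\gl}\Phi\pI{\bB_{\xi}\bt\vec{S}}$ in~$\cT$, with transition morphisms~$\Phi\pI{\gf_{\xi}^{\eta}}$.

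It remains to identify this colimit with~$\bB\otlF\vec{S}$. Because~$\vec{S}$ is $\Phi$-com\-mu\-ta\-tive, Lemma~\ref{L:PhiComm} makes $\Phi\pI{{}_{-}\bt\vec{S}}$ a functor on~$\Boolc_P(\vec{S})$ whose restriction to~$\Bool_P^{<\gl}$ is, by Lemma~\ref{L:otmFexists} and Definition~\ref{D:AotmFS}, the restriction of~${}_{-}\otlF\vec{S}$; hence $\Phi\pI{\bB_{\xi}\bt\vec{S}}=\bB_{\xi}\otlF\vec{S}$ and, crucially, $\Phi\pI{\gf_{\xi}^{\eta}}=\beta_{\xi}^{\eta}\otlF\vec{S}$, a value independent of which element~$\gf_{\xi}^{\eta}$ of~$\beta_{\xi}^{\eta}\bt\vec{S}$ was picked by the uniformization. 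So $\Phi(C)=\varinjlim_{\xi<\gl}\pI{\bB_{\xi}\otlF\vec{S}}$ along the~$\beta_{\xi}^{\eta}\otlF\vec{S}$\,. On the other hand, as~$\gl$ is regular the poset~$\gl$ is $\gl$-directed, so $\vecm{\bB,\beta_{\xi}}{\xi<\gl}$ exhibits~$\bB$ as a $\gl$-directed colimit of the $\gl$-pre\-sentable objects~$\bB_{\xi}$\,, and Lemma~\ref{L:ExtFunctgkPres} (applied as in the construction of~${}_{-}\otlF\vec{S}=\ol{\Psi}$ in Lemma~\ref{L:otmFexists}) gives $\bB\otlF\vec{S}=\varinjlim_{\xi<\gl}\pI{\bB_{\xi}\otlF\vec{S}}$ along the same~$\beta_{\xi}^{\eta}\otlF\vec{S}$\,. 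Comparing the two, $\bB\otlF\vec{S}\cong\Phi(C)\in\rng\Phi$, as desired.

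I expect the delicate point to be exactly this last identification: one must match the cocone ``colimit computed in~$\cS$, then pushed through~$\Phi$'' with the cocone that \emph{defines}~$\bB\otlF\vec{S}$ inside~$\cT$ \emph{via} Lemma~\ref{L:ExtFunctgkPres}, transition morphisms included. This is where $\Phi$-com\-mu\-ta\-tiv\-ity is indispensable, since it is what guarantees $\Phi\pI{\gf_{\xi}^{\eta}}=\beta_{\xi}^{\eta}\otlF\vec{S}$ regardless of the chosen uniformization; the Uniformization Lemma does the complementary job of producing a com\-mu\-ta\-tive diagram in~$\cS$ in the first place, which the construction~${}_{-}\bt\vec{S}$ — a mere diagram, not a functor, on general morphisms — does not directly supply.
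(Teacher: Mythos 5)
Your proposal is correct and follows essentially the same route as the paper's proof: uniformize the chain-indexed diagram of box condensates via Lemma~\ref{L:UnifLem}, take its colimit in~$\cS$, push through~$\Phi$, and use $\Phi$-commutativity (so that $\Phi(\gf_{\xi}^{\eta})=\beta_{\xi}^{\eta}\otlF\vec{S}$) together with the $\gl$-continuity of ${}_{-}\otlF\vec{S}$ and uniqueness of colimits to identify the result with $\bB\otlF\vec{S}$. The only cosmetic difference is that you make explicit that the regular chain~$\gl$ is $\gl$-directed, which the paper subsumes under "$\gl$-continuous functors preserve colimits indexed by~$\gl$".
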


\begin{proof}
Our assumptions ensure that all box condensates~$\bB_{\xi}\bt\vec{S}$ exist and all the morphisms~$\gb_{\xi}^{\eta}$ are constricted (cf. Lemmas~\ref{L:Constr2easy} and~\ref{L:PRODgl}).
By Lemma~\ref{L:UnifLem}, the diagram
$\Vecm{\bB_{\xi}\bt\vec{S},\beta_{\xi}^{\eta}\bt\vec{S}}{\xi\leq\eta<\gl}$ has a uniformization
$\Vecm{\bB_{\xi}\bt\vec{S},\gf_{\xi}^{\eta}}{\xi\leq\eta<\gl}$.
Sin\-ce~$\cS$ has all colimits indexed by~$\gl$, there exists a directed colimit cocone
 \[
 \vecm{S,\gf_{\xi}}{\xi<\gl}=\varinjlim
 \vecm{\bB_{\xi}\bt\vec{S},\gf_{\xi}^{\eta}}{\xi\leq\eta<\gl}
 \qquad\text{in }\cS\,.
 \]
(\emph{Of course, $S$ will play the role of the undefined $\bB\bt\vec{S}$}.)
Since~$\Phi$ preserves all colimits indexed by~$\gl$, it follows that
 \begin{equation}\label{Eq:PhiSascolim}
 \vecm{\Phi(S),\Phi(\gf_{\xi})}{\xi<\gl}=\varinjlim
 \vecm{\bB_{\xi}\otlF\vec{S},\Phi(\gf_{\xi}^{\eta})}{\xi\leq\eta<\gl}
 \qquad\text{in }\cT\,.
 \end{equation}
Since~$\cT$ has all $\gl$-directed colimits, $\bB\otlF\vec{S}$ is well defined.
Since the functor ${}_{-}\otlF\vec{S}$ is $\gl$-continuous, it also preserves all colimits indexed by~$\gl$, whence
 \begin{equation}\label{Eq:bAoFSascolim}
 \vecm{\bB\otlF\vec{S},\beta_{\xi}\otlF\vec{S}}{\xi<\gl}=\varinjlim
 \vecm{\bB_{\xi}\otlF\vec{S},\beta_{\xi}^{\eta}\otlF\vec{S}}
 {\xi\leq\eta<\gl}
 \qquad\text{in }\cT\,.
 \end{equation}
Now whenever $\xi\leq\eta<\gl$, it follows from the relation $\gf_{\xi}^{\eta}\in\beta_{\xi}^{\eta}\bt\vec{S}$ that $\Phi(\gf_{\xi}^{\eta})=\beta_{\xi}^{\eta}\otlF\vec{S}$.
By~\eqref{Eq:PhiSascolim} and~\eqref{Eq:bAoFSascolim} together with the uniqueness of the directed colimit, we obtain that $\Phi(S)\cong\bB\otlF\vec{S}$, as desired.
\end{proof}

\section{Tensoring with normal morphisms}
\label{S:TensNorNeu}

\begin{all}{Standing Hypothesis}
$\gl$ is an infinite regular cardinal, $P$ is a conditional $\gl$-DCPO, $\cS$ is a category with all $2^{<\gl}$-products \pup{cf. Definition~\textup{\ref{D:PRODgl}}}, $\cT$ is a category with all $\gl$-directed colimits, $\Phi\colon\cS\to\cT$ is a functor, and~$\vec{S}$ is a $\Phi$-com\-mu\-ta\-tive $P$-indexed diagram in~$\cS$.
\end{all}

In this section we shall analyze the structure of the morphisms of the form $\gf\otlF\vec{S}$ in case~$\gf$ is a normal morphism of $P$-scaled Boolean algebras.

\goodbreak

\begin{definition}\label{D:ExtPhiProj}\hfill
\begin{enumerater}
\item
A \emph{projection} in a category is the canonical morphism from a product to one of its factors.

\item
A \emph{$\Phi$-pro\-jec\-tion} in~$\cT$ is an isomorphic copy of~$\Phi(p)$ for a projection~$p$ in~$\cS$.

\item
A \emph{$\gl$-extended $\Phi$-pro\-jec\-tion} in~$\cT$ is a $\gl$-directed colimit, within the category of all arrows of~$\cT$, of $\Phi$-pro\-jec\-tions in~$\cT$.
\end{enumerater}
\end{definition}

The following result extends Gillibert and Wehrung \cite[Proposition~3.1.2]{Larder} to our current context.

\begin{proposition}\label{P:otlFproj}
For every normal morphism $\gf\colon\bA\to\bB$ in~$\Bool_P$\,, $\gf\otlF\vec{S}$ is a $\gl$-extended $\Phi$-pro\-jec\-tion.
Moreover, if~$\bA$ is $\gl$-pre\-sentable, then $\gf\otlF\vec{S}$ is a $\Phi$-pro\-jec\-tion.
\end{proposition}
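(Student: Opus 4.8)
The plan is to prove the ``moreover'' (the $\gl$-pre\-sentable case) directly from the definitions, and then reduce the general statement to it by expressing an arbitrary normal morphism as a $\gl$-directed colimit of $\gl$-pre\-sentable ones and invoking $\gl$-continuity of ${}_{-}\otlF\vec S$. The observation underlying everything is a consequence of \emph{normality} that I would record first: if $\gf\colon\bA\to\bB$ is normal, then it is surjective, so the dual map $\fb\mapsto\gf^{-1}[\fb]$ is one-to-one from $\Ult B$ into $\Ult A$; moreover, from $\gf[A^{(p)}]=B^{(p)}$ for all $p\in P$ one upgrades the inclusion $\|\gf^{-1}[\fb]\|_{\bA}\subseteq\|\fb\|_{\bB}$ (valid for any $\Bool_P$-morphism) to the \emph{equality} $\|\gf^{-1}[\fb]\|_{\bA}=\|\fb\|_{\bB}$. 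In particular $\gf^{-1}[\fb]\in\Ultb\bA$ whenever $\fb\in\Ultb\bB$ (so $\gf$ is constricted and $\gf\bt\vec S$ is meaningful once the relevant products exist) and $|\gf^{-1}[\fb]|_{\bA}=|\fb|_{\bB}$; hence $\fb\mapsto\gf^{-1}[\fb]$ restricts to a one-to-one map $\Ultb\bB\rightarrowtail\Ultb\bA$ under which the indexed objects \emph{literally coincide}: $S_{|\gf^{-1}[\fb]|_{\bA}}=S_{|\fb|_{\bB}}$.

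Next I would treat the case where $\bA$ and $\bB$ are both $\gl$-pre\-sentable. Then $\bA\bt\vec S$ and $\bB\bt\vec S$ exist (Lemma~\ref{L:PRODgl}), $\gf$ is a morphism of $\Bool_P^{<\gl}$, and $\gf\otlF\vec S=\Phi(f)$ for every $f\in\gf\bt\vec S$ (Lemmas~\ref{L:PhiComm} and~\ref{L:otmFexists}, Definition~\ref{D:AotmFS}). Since $\vec S$ is only required to be a diagram, each component set $\vec S\pI{|\gf^{-1}[\fb]|_{\bA},|\fb|_{\bB}}=\vec S\pI{|\fb|_{\bB},|\fb|_{\bB}}$ contains $\id_{S_{|\fb|_{\bB}}}$; choosing all these identities as components yields an element $f=\prod\vecm{\id_{S_{|\fb|_{\bB}}}}{\fb\in\Ultb\bB}$ of $\gf\bt\vec S$. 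Re\-indexing $\bB\bt\vec S=\prod\vecm{S_{|\fb|_{\bB}}}{\fb\in\Ultb\bB}$ along the one-to-one map $\fb\mapsto\gf^{-1}[\fb]$ identifies it with the sub\-product $\prod\vecm{S_{|\fa|_{\bA}}}{\fa\in J}$ of $\bA\bt\vec S$, where $J$ is the image of that map, and under this identification $f$ becomes precisely the canonical projection $\bA\bt\vec S\twoheadrightarrow\prod\vecm{S_{|\fa|_{\bA}}}{\fa\in J}$: the identity on $\bA\bt\vec S$ if $J=\Ultb\bA$, and otherwise a genuine projection in the sense of Definition~\ref{D:ExtPhiProj} after regrouping $\bA\bt\vec S$ as the binary product of $\prod\vecm{S_{|\fa|_{\bA}}}{\fa\in J}$ with $\prod\vecm{S_{|\fa|_{\bA}}}{\fa\in\Ultb\bA\setminus J}$ (the latter factor existing by the $2^{<\gl}$-product hypothesis, Corollary~\ref{C:glPresBoolP}). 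Hence $\gf\otlF\vec S=\Phi(f)$ is a $\Phi$-pro\-jec\-tion. This settles the ``moreover'' part once one remarks that, when $\bA$ is $\gl$-pre\-sentable, $\bB$ --- being a quotient $\bA/I$ --- is $\gl$-pre\-sentable as well (as in the proof of Lemma~\ref{L:Approxmusmall}).

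For the general statement I would invoke Lemma~\ref{L:Approxmusmall}: an arbitrary normal morphism $\gf$ is a $\gl$-directed colimit, formed in the arrow category of $\Bool_P$, of normal morphisms $\gf_j$ between $\gl$-pre\-sentable objects. Colimits in arrow categories are computed component\-wise, so the $\gl$-continuous functor ${}_{-}\otlF\vec S$ carries this colimit to the $\gl$-directed colimit, in the arrow category of $\cT$, of the morphisms $\gf_j\otlF\vec S$ (equality of the mediating morphisms following from their uniqueness). By the previous paragraph each $\gf_j\otlF\vec S$ is a $\Phi$-pro\-jec\-tion, whence $\gf\otlF\vec S$ is a $\gl$-directed colimit of $\Phi$-pro\-jec\-tions, i.e.\ a $\gl$-extended $\Phi$-pro\-jec\-tion by Definition~\ref{D:ExtPhiProj}.

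The step I expect to be the main obstacle is the $\gl$-pre\-sentable case, and inside it the claim that the box-condensate morphism with identity components is \emph{literally} a product projection in $\cS$. This is exactly where normality --- not mere morphism-hood --- is essential: it forces $|\gf^{-1}[\fb]|_{\bA}=|\fb|_{\bB}$, so that the source and target factors match and $f$ hides no nontrivial transition morphism of $\vec S$, and it (via surjectivity) makes $\fb\mapsto\gf^{-1}[\fb]$ one-to-one, so that $\bB\bt\vec S$ sits inside $\bA\bt\vec S$ as a subproduct rather than as a quotient with repetitions. Once that identification is secured, the remainder --- the passage through Lemma~\ref{L:Approxmusmall} and the component\-wise computation of colimits in arrow categories --- is routine.
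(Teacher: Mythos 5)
Your proof is correct and follows essentially the same route as the paper: reduce to the $\gl$-presentable case via Lemma~\ref{L:Approxmusmall} and $\gl$-continuity of ${}_{-}\otlF\vec{S}$, then exhibit the element of $\gf\bt\vec{S}$ with identity components as a canonical projection onto a subproduct and conclude by $\Phi$-com\-mu\-ta\-tiv\-ity. The only (cosmetic) difference is that you derive the matching of norms and the injective correspondence of ultrafilters directly from normality of~$\gf$, whereas the paper passes through the presentation $\bB=\bA/I$, $\gf=\pi_I$ and indexes the subproduct by the ultrafilters of~$A$ disjoint from~$I$.
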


\begin{proof}
By Gillibert and Wehrung \cite[\S~2.5]{Larder}, we may assume that~$\bB=\bA/I$ and $\gf=\pi_I$ (cf. Section~\ref{S:BoolP}), for an ideal~$I$ of~$A$.

By Lemma~\ref{L:Approxmusmall}, it suffices to consider the case where~$\bA$ is $\gl$-pre\-sentable.
 \begin{align*}
 U&\eqdef\Ultb{\bA}\,,\\
 U_1&\eqdef\setm{\fa\in\Ultb{\bA}}{\fa\cap I=\es}\,,\\
 U_0&\eqdef U\setminus U_1\,. 
 \end{align*}
Then we can write
 \begin{align*}
 \bA\bt\vec{S}&=\prod\Vecm{S_{|\fa|_{\bA}}}{\fa\in U}\,,\\
 \bB\bt\vec{S}&=\prod\Vecm{S_{|\fa/I|_{\bA/I}}}{\fa\in U_1}\\
 &=\prod\Vecm{S_{|\fa|_{\bA}}}{\fa\in U_1}\,.
 \end{align*}
 The canonical projection $p\colon X\mathbin{\Pi} Y\to X$, where $X\eqdef\prod\Vecm{S_{|\fa|_{\bA}}}{\fa\in U_1}$ and $Y\eqdef\prod\Vecm{S_{|\fa|_{\bA}}}{\fa\in U_0}$, is $\prod\vecm{f_{\fa}}{\fa\in U_1}$ where each~$f_{\fa}$ is the identity on $S_{|\fa|_{\bA}}$; thus it belongs to $\gf\bt\vec{S}$.
Since~$\vec{S}$ is $\Phi$-com\-mu\-ta\-tive, it follows that $\gf\otlF\vec{S}=\Phi(p)$, a $\Phi$-pro\-jec\-tion in~$\cT$.
\end{proof}

\section{Submorphisms of norm-coverings}
\label{S:NormCov}
Let us first recall a few concepts from Gillibert and Wehrung~\cite{Larder}.
Following \cite[Definition~2.1.2]{Larder}, we say that a poset~$P$ is
\begin{itemize}
\item[---] a \emph{\pjs} if the set~$U$ of all upper bounds of any finite subset~$X$ of~$P$ is a finitely generated upper subset of~$P$; then we denote by $\Sor X$ the (finite) set of all minimal elements of~$U$;

\item[---] \emph{supported} if it is a \pjs\ and every finite subset of~$P$ is contained in a finite subset~$Y$ of~$P$ which is \emph{$\sor$-closed}, that is, $\Sor{Z}\subseteq Y$ whenever~$Z$ is a finite subset of~$Y$ (this definition is equivalent to the eponymous one introduced in Gillibert~\cite{Gill2009});

\item[---] an \emph{\ajs} if it is a \pjs\ in which every principal ideal~$P\dnw a$ is a \js.

\end{itemize}
In Gillibert and Wehrung \cite[\S~2.1]{Larder}, the non-reversible implications
 \[
 \text{\js}\Rightarrow\text{\ajs}\Rightarrow\text{supported}
 \Rightarrow\text{\pjs}
 \]
are observed.
As in~\cite{Larder}, we set $a_1\sor\cdots\sor a_n\eqdef\Sor\set{a_1,\dots,a_n}$.
Following \cite[\S~2.6]{Larder}, a \emph{norm-covering} of a poset~$P$ is a pair $(X,\partial)$ where~$X$ is a \pjs\ and $\partial\colon X\to P$ is an isotone map (sometimes denoted by~$\partial_X$ if~$X$ needs to be specified).
An ideal~$\bx$ of~$X$ is \emph{sharp} if the image~$\partial[\bx]$ has a largest element, then denoted by~$\partial\bx$.
We denote by~$\Ids{X}$ the set of all sharp ideals of~$X$, partially ordered under set inclusion.

We denote by~$\rF(X)$ the Boolean algebra defined by generators~$\tilde{u}$ (or $\tilde{u}^X$ in case~$X$ needs to be specified), where $u\in X$, and relations
 \begin{align}
 \tilde{v}&\leq\tilde{u}\,,&&\text{whenever }u\leq v\text{ in }X\,;
 \label{Eq:TildeDecr}\\
 \tilde{u}\wedge\tilde{v}&=
 \bigvee\vecm{\tilde{w}}{w\in u\sor v}\,,
 &&\text{whenever }u,v\in X\,;
 \label{Eq:TildeAntimor}\\
 1&=\bigvee\vecm{\tilde{w}}{w\in\Min{X}}\,.
 \label{Eq:Tilde01}
 \end{align}
(\emph{Since~$X$ is a \pjs, the sets $u\sor v=\Sor\set{u,v}$ and $\Min X=\Sor\es$ are both finite subsets of~$X$, thus the relations~\eqref{Eq:TildeAntimor} and~\eqref{Eq:Tilde01} both make sense}.)
Furthermore, for every $p\in P$, we denote by~$\rF(X)^{(p)}$ the ideal of~$\rF(X)$ generated by $\setm{\tilde{u}}{u\in X\,,\ p\leq\partial u}$.
The structure $\xF(X)\eqdef\pI{\rF(X),\vecm{\rF(X)^{(p)}}{p\in P}}$ is a $P$-scaled Boolean algebra (cf. \cite[Lemma~2.6.5]{Larder}).

For every sharp ideal~$\bx$ of~$X$, there is a unique morphism $\pi_{\bx}^X\colon\xF(X)\to\two[\partial\bx]$ that sends every $\tilde{u}$, where $u\in X$, to~$1$ if $u\in\bx$ and~$0$ otherwise.
This morphism is normal (cf. Gillibert and Wehrung \cite[Lemma~2.6.7]{Larder}).

As already observed in Gillibert and Wehrung~\cite{Larder}, every $\sor$-closed subset~$Y$ of~$X$ defines, by restriction of the map~$\partial$, a norm-covering of~$P$ and the inclusion map from~$Y$ into~$X$ induces a morphism $f_Y^X\colon\xF(Y)\to\xF(X)$ in~$\Bool_P$ (cf. \cite[Lemma~2.6.6]{Larder}).
We shall now extend that observation.

\begin{definition}\label{D:ProperNC}
For norm-coverings~$X$ and~$Y$ of a poset~$P$, a map $f\colon X\to Y$ is a \emph{submorphism} if~$f$ is isotone, $\partial_Xx\leq\partial_Yf(x)$ whenever $x\in X$, and for all~$n<\go$ and all $x_1,\dots,x_n\in X$ the containment $f(x_1)\sor_Y\cdots\sor_Yf(x_n)\subseteq f[x_1\sor_X\cdots\sor_Xx_n]$ holds.
\end{definition}

The latter condition can be reformulated as stating that for all $n<\go$, all $x_1,\dots,x_n\in X$, and all $y\in Y$, if each $f(x_i)\leq y$, then there exists $x\in X$ such that each $x_i\leq x$ and $f(x)\leq y$.
It obviously suffices to check this for $n=0$ and $n=2$.

\begin{lemma}\label{L:F(proper)}
Every submorphism $f\colon X\to Y$ of norm-coverings of~$P$ induces a unique morphism $\xF(f)\colon\xF(X)\to\xF(Y)$ of $P$-scaled Boolean algebras sending~$\tilde{x}^X$ to $\widetilde{f(x)}^Y$ whenever $x\in X$.
\end{lemma}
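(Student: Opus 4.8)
The plan is to get $\xF(f)$ from the universal property of the presentation of $\rF(X)$ by generators $\tilde x^X$ and relations \eqref{Eq:TildeDecr}--\eqref{Eq:Tilde01}, then to check compatibility with the $P$-scales. First I would verify that the family $\pI{\widetilde{f(x)}^Y}$, indexed by $x\in X$, satisfies inside $\rF(Y)$ the (images of the) three defining relations of $\rF(X)$, which will yield a unique Boolean algebra homomorphism $\xF(f)\colon\rF(X)\to\rF(Y)$ with $\xF(f)(\tilde x^X)=\widetilde{f(x)}^Y$. Relation \eqref{Eq:TildeDecr} is immediate: if $u\leq v$ in $X$ then $f(u)\leq f(v)$ in $Y$ since $f$ is isotone, so $\widetilde{f(v)}^Y\leq\widetilde{f(u)}^Y$ by \eqref{Eq:TildeDecr} in $\rF(Y)$. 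For \eqref{Eq:Tilde01}, the submorphism condition of Definition~\ref{D:ProperNC}, taken with the empty family (the case $n=0$), gives $\Min Y\subseteq f[\Min X]$, whence $\bigvee\vecm{\tilde z^Y}{z\in\Min Y}\leq\bigvee\vecm{\widetilde{f(w)}^Y}{w\in\Min X}$; since the left-hand side equals $1$ in $\rF(Y)$ by \eqref{Eq:Tilde01}, so does the right-hand side.

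The one relation needing a two-sided argument is \eqref{Eq:TildeAntimor}. Fix $u,v\in X$; in $\rF(Y)$ we have $\widetilde{f(u)}^Y\wedge\widetilde{f(v)}^Y=\bigvee\vecm{\tilde z^Y}{z\in f(u)\sor_Y f(v)}$. The containment $f(u)\sor_Y f(v)\subseteq f[u\sor_X v]$, which is exactly the defining property of a submorphism for $n=2$, shows that every index $z$ is of the form $f(w)$ with $w\in u\sor_X v$, so $\bigvee\vecm{\tilde z^Y}{z\in f(u)\sor_Y f(v)}\leq\bigvee\vecm{\widetilde{f(w)}^Y}{w\in u\sor_X v}$. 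Conversely, each $w\in u\sor_X v$ satisfies $u\leq w$ and $v\leq w$, hence $f(u)\leq f(w)$ and $f(v)\leq f(w)$, so $\widetilde{f(w)}^Y\leq\widetilde{f(u)}^Y\wedge\widetilde{f(v)}^Y$ by \eqref{Eq:TildeDecr}; taking the join over $w$ gives the reverse inequality, so $\bigvee\vecm{\widetilde{f(w)}^Y}{w\in u\sor_X v}=\widetilde{f(u)}^Y\wedge\widetilde{f(v)}^Y$, which is \eqref{Eq:TildeAntimor} for the images. (The finiteness of $u\sor_X v$ and of $\Min X$, needed for these to be the very joins occurring in the presentation, holds because $X$ is a \pjs; and the degenerate case $u\sor_X v=\es$ is covered uniformly, since then $f(u)\sor_Y f(v)=\es$ by the same containment and both sides are $0$.)

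It then remains to check that $\xF(f)$ is a morphism in $\Bool_P$, i.e.\ $\xF(f)\rI{\rF(X)^{(p)}}\subseteq\rF(Y)^{(p)}$ for every $p\in P$. Since $\rF(X)^{(p)}$ is the ideal generated by the $\tilde u^X$ with $p\leq\partial_X u$, and $\rF(Y)^{(p)}$ is an ideal, it suffices to note that $\xF(f)(\tilde u^X)=\widetilde{f(u)}^Y$ lies in $\rF(Y)^{(p)}$ whenever $p\leq\partial_X u$: indeed $p\leq\partial_X u\leq\partial_Y f(u)$ because $f$ is a submorphism, so $\widetilde{f(u)}^Y$ is one of the generators of $\rF(Y)^{(p)}$. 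Uniqueness is immediate, as the $\tilde x^X$ generate $\rF(X)$ as a Boolean algebra, so any morphism with the prescribed action on them coincides with $\xF(f)$.

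I do not expect a genuinely hard step here; the only point demanding a little care is relation \eqref{Eq:TildeAntimor}, where one inclusion comes from the defining containment of a submorphism and the other from isotonicity of $f$, together with the bookkeeping of the degenerate empty-$\sor$ case.
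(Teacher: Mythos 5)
Your proposal is correct and follows essentially the same route as the paper: verify that the elements $\widetilde{f(x)}^Y$ satisfy the defining relations \eqref{Eq:TildeDecr}--\eqref{Eq:Tilde01} of $\rF(X)$ (isotonicity for the first, the submorphism containment for one direction of \eqref{Eq:TildeAntimor} and for \eqref{Eq:Tilde01}, the easy converse inequality for the other), then check $p\leq\partial_Xx\leq\partial_Yf(x)$ to get compatibility with the ideals $\rF(X)^{(p)}$. No gaps; your explicit treatment of the $n=0$ case and of empty $\sor$'s only makes precise what the paper leaves implicit.
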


\begin{proof}
In order to verify the existence of~$\xF(f)$ at Boolean algebra level, it suffices to prove that the elements~$\widetilde{f(x)}^Y$ of~$\rF(Y)$, for $x\in X$, satisfy the defining relations of~$\rF(X)$.
Since~$f$ is isotone, $x\mapsto\widetilde{f(x)}^Y$ is antitone.
For all $x_0,x_1\in X$,
 \begin{align*}
 \widetilde{f(x_0)}^Y\cap\widetilde{f(x_1)}^Y&=
 \bigvee\Vecm{\tilde{y}^Y}{y\in f(x_0)\sor_Yf(x_1)}\\
 &\leq\bigvee\Vecm{\widetilde{f(x)}^Y}{x\in x_0\sor_Xx_1}
 &&(\text{by assumption on }f),
 \end{align*}
the converse inequality being obvious.
Similarly, $1=\bigvee\Bigl(\widetilde{f(x)}^Y\mid x\in\Min{X}\Bigr)$. This completes the proof of the existence of~$\gf\eqdef\xF(f)$ at Boolean algebra level.
Now let $p\in P$.
For each $x\in X$, $p\leq\partial_Xx$ implies $p\leq\partial_Yf(x)$, thus $\widetilde{f(x)}^Y\in\rF(Y)^{(p)}$.
By definition, it follows that $\gf\rI{\rF(X)^{(p)}}\subseteq\rF(Y)^{(p)}$, thus completing the proof that~$\gf$ is a morphism in~$\Bool_P$\,.
\end{proof}

\section{Standard lifters}
\label{S:Lifters}

We are now reaching the infinite combinatorial aspects of our theory.
Let us first recall the concept of \emph{lifter} introduced in Gillibert and Wehrung \cite[Definition~3.2.1]{Larder}.

\begin{definition}\label{D:Lifter}
Let~$\gl$ be an infinite cardinal and let~$P$ be a poset.
A \emph{$\gl$-lifter} of~$P$ is a triple $(X,\bX,\partial)$, with $(X,\partial)$ a norm-covering of~$P$ and $\bX\subseteq\Ids{X}$, which satisfies the following conditions:
\begin{enumerater}
\item
The set $\bX^=\eqdef\setm{\bx\in\bX}{\partial{\bx}\text{ is not maximal in }P}$ is lower $\cf(\gl)$-small.

\item
\label{DefLiftFree}
For every map $S\colon\bX^=\to[X]^{{<}\gl}$ there exists an isotone section $\gs\colon P\hookrightarrow\bX$ of~$\partial$ such that $S(\gs(p))\cap\gs(q)\subseteq\gs(p)$ whenever $p<q$ in~$P$.

\item
If $\gl=\go$, then~$X$ is supported.
\end{enumerater}
If $(X,\bX,\partial)$ is a $\gl$-lifter of~$P$ with~$\bX$ the set of all principal ideals of~$X$, we will say that $(X,\partial)$ is a \emph{principal $\gl$-lifter} of~$P$.
\end{definition}

Our next construction will provide us with all the lifters we will need.
The construction~$P\seq{K}$ was introduced in the proof of Gillibert and Wehrung \cite[Lemma~3.5.5]{Larder}.

\begin{definition}\label{D:PseqK}
For any poset~$P$ and any set~$K$, let
 \begin{equation}\label{Eq:DefPseqK}
 P\seq{K}\eqdef\setm{(a,x)}{a\in P\text{ and }
 (\exists\text{ finite }X\subseteq P\dnw a)(a\in\Sor{X}
 \text{ and }x\colon X\to K)}\,,
 \end{equation}
ordered componentwise (i.e., $(a,x)\leq(b,y)$ if{f} $a\leq b$ and~$y$ extends~$x$) and endowed with the $P$-valued ``norm function'' $\partial$ defined \emph{via} the rule $\partial(a,x)\eqdef a$.
\end{definition}

The following easy result is contained in the proof of Gillibert and Wehrung \cite[Lemma~3.5.5]{Larder}.

\begin{lemma}\label{L:PseqKajs}
Let~$P$ be an \ajs\ with zero and let~$K$ be a set.
Then~$P\seq{K}$ is a lower finite \ajs\ with zero and $(P\seq{K},\partial)$ is a norm-covering of~$P$.
\end{lemma}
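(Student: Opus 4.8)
The plan is to verify in turn that $P\seq{K}$ has a least element, is lower finite, is a \pjs, and has all its principal ideals join-semilattices, and that $\partial$ is isotone; the argument runs parallel to the proof of Gillibert and Wehrung \cite[Lemma~3.5.5]{Larder}. The routine points come first. Since~$P$ has a least element~$0$, we have $\Sor\es=\Min{P}=\set{0}$, so $(0,\es)$ — with~$\es$ the empty map — belongs to~$P\seq{K}$, and $(0,\es)\leq(a,x)$ for every $(a,x)\in P\seq{K}$ because $0\leq a$ and every map extends~$\es$; thus $(0,\es)$ is the zero of~$P\seq{K}$ and $\Min{P\seq{K}}=\set{(0,\es)}$. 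For lower finiteness, fix $(a,x)\in P\seq{K}$ and set $Z\eqdef\dom{x}$, a finite subset of~$P\dnw a$; any $(b,y)\leq(a,x)$ has $y\subseteq x$, hence $y=x\res Y$ with $Y\eqdef\dom{y}\subseteq Z$, and membership of~$(b,y)$ in~$P\seq{K}$ forces $b\in\Sor{Y}$ and $b\leq a$, so $(b,y)$ ranges over a finite set ($Z$ has finitely many subsets and each~$\Sor{Y}$ is finite). Finally $\partial$ is isotone directly from the definition of the ordering of~$P\seq{K}$, so once $P\seq{K}$ is known to be a \pjs\ we get that $(P\seq{K},\partial)$ is a norm-covering of~$P$.

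The crux is the following remark about an \ajs~$P$: \emph{if $D,D'\subseteq P$ are finite, $a\in\Sor{D}$, $b\in\Sor{D'}$, and $c\in\Sor\set{a,b}$, then $c\in\Sor(D\cup D')$.} Indeed $a\leq c$ and $b\leq c$, so $D\cup D'\subseteq P\dnw c$, and in the join-semilattice~$P\dnw c$ (this is where the \ajs\ hypothesis, rather than just the \pjs\ one, enters) we have $\bigvee D=a$ and $\bigvee D'=b$ — for $\bigvee D$ is an upper bound of~$D$ lying below the minimal upper bound~$a$. Hence any upper bound $c'\leq c$ of $D\cup D'$ satisfies $c'\geq\bigvee D=a$ and $c'\geq\bigvee D'=b$, so~$c'$ is an upper bound of~$\set{a,b}$ below $c\in\Sor\set{a,b}$, whence $c'=c$; thus $c\in\Sor(D\cup D')$. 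From this I would deduce that for $(a,\xi),(b,\eta)\in P\seq{K}$, either $\xi\cup\eta$ is not a map — so the two elements have no common upper bound — or, writing $\zeta\eqdef\xi\cup\eta$ and applying the remark with $D=\dom\xi$, $D'=\dom\eta$, each $(c,\zeta)$ with $c\in\Sor\set{a,b}$ lies in~$P\seq{K}$ and $\setm{(c,\zeta)}{c\in\Sor\set{a,b}}$ is precisely the finite set of minimal common upper bounds of $(a,\xi)$ and $(b,\eta)$ (every common upper bound dominates some $(c,\zeta)$, and distinct such $(c,\zeta)$ are incomparable). Together with the zero, which handles the empty subset, and the standard reduction of the \pjs\ condition for an $n$-element subset to the cases $n=0$ and $n=2$ (the intersection of finitely generated upper sets being finitely generated, by the pair case), this shows that~$P\seq{K}$ is a \pjs.

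It remains to see that each principal ideal $P\seq{K}\dnw(a,x)$ is a \js. It is finite, by lower finiteness, with least element $(0,\es)$, so it suffices that any two of its elements $(b,x\res Y)$, $(c,x\res Z)$ — with $Y,Z\subseteq\dom{x}$, $b\in\Sor{Y}$, $c\in\Sor{Z}$, and $b,c\leq a$ — have a least upper bound in it. Set $d\eqdef b\vee c$, the join computed in the join-semilattice~$P\dnw a$; then $d\in\Sor\set{b,c}$, because an upper bound $d'\leq d$ of~$\set{b,c}$ lies in~$P\dnw a$ and hence satisfies $d'\geq d$. By the description of the minimal common upper bounds obtained above, $(d,x\res(Y\cup Z))$ is therefore a common upper bound of $(b,x\res Y)$ and $(c,x\res Z)$, and it is $\leq(a,x)$ since $d\leq a$. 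Any common upper bound $(e,\psi)\leq(a,x)$ has $e\geq b,c$ and $e\leq a$, hence $e\geq d$, and $\psi\supseteq x\res(Y\cup Z)$, so $(d,x\res(Y\cup Z))\leq(e,\psi)$; thus $(d,x\res(Y\cup Z))$ is the required join, and $P\seq{K}$ is an \ajs.

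I expect the \pjs\ step to demand the most care — the reduction from finite subsets to pairs, and the check that every candidate generator $(c,\zeta)$ genuinely belongs to~$P\seq{K}$ — but the \ajs\ remark above is exactly what makes that check go through; it is there, as in the choice of~$d$ in the last step, that one genuinely uses that~$P$ is an \ajs\ and not merely a \pjs.
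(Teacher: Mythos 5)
Your proof is correct: the zero, lower-finiteness and isotonicity checks are right, and the two key computations --- that the minimal common upper bounds of $(a,\xi)$ and $(b,\eta)$ in $P\seq{K}$ are exactly the pairs $(c,\xi\cup\eta)$ with $c\in a\sor b$ when $\xi\cup\eta$ is a function (and that there is no upper bound otherwise), and that joins in $P\seq{K}\dnw(a,x)$ come from joins in the \js\ $P\dnw a$ --- are precisely what the \ajs\ hypothesis is for. The paper itself gives no argument here, deferring to the proof of Gillibert and Wehrung \cite[Lemma~3.5.5]{Larder}, and your verification is essentially the same routine one carried out there.
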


\begin{definition}\label{D:StandLift}
If~$P\seq{K}$ \pup{together with its canonical norm function~$\partial$} is a principal $\gl$-lifter of~$P$, we will call it a \emph{standard $\gl$-lifter} of~$P$.
\end{definition}

The following observation shows that the construction $P\seq{K}$ yields a convenient class of submorphisms of norm-coverings (cf. Definition~\ref{D:ProperNC}), and thus morphisms of $P$-scaled Boolean algebras (cf. Lemma~\ref{L:F(proper)}).
 
\begin{lemma}\label{L:P(f,gS)}
Let~$P$ be an \ajs\ with zero and let~$f\colon X\rightarrowtail Y$ be a one-to-one map from a set~$X$ into a set~$Y$.
Then the rule $(a,x)\mapsto(a,f\circ x)$ defines a one-to-one submorphism $P\seq{f}\colon P\seq{X}\rightarrowtail P\seq{Y}$ of norm-coverings.
In particular, it induces a morphism $\xF(P\seq{f})\colon\xF(P\seq{X})\to\xF(P\seq{Y})$ of $P$-scaled Boolean algebras.
\end{lemma}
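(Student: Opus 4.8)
The plan is to show that the rule $(a,x)\mapsto(a,f\circ x)$ is a one-to-one submorphism of norm-coverings in the sense of Definition~\ref{D:ProperNC} --- using Lemma~\ref{L:PseqKajs} to know that $P\seq X$ and $P\seq Y$ are norm-coverings of~$P$ --- and then to quote Lemma~\ref{L:F(proper)} for the last clause. The routine part comes first. If $(a,x)\in P\seq X$, say $x\colon Z\to X$ with $Z$ finite, $Z\subseteq P\dnw a$ and $a\in\Sor Z$, then the same~$Z$ witnesses $(a,f\circ x)\in P\seq Y$, and $\partial(a,f\circ x)=a=\partial(a,x)$; since an element of $P\seq K$ records its function together with its domain, the assignment is well defined and norm-preserving. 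It is isotone because $(a,x)\le(b,y)$ means $a\le b$ and~$y$ extends~$x$, whence $f\circ y$ extends $f\circ x$; and it is one-to-one because $f\circ x=f\circ y$ forces $\dom x=\dom y$ and then, by injectivity of~$f$, $x=y$.

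The substance is the $\sor$-condition. By the remark following Definition~\ref{D:ProperNC} it suffices to treat finite families of size $0$ and $2$, but one uniform argument handles any finite family $(a_i,u_i)$, $i\in F$, in $P\seq X$. The structural fact I would extract about \ajs{s} is: \emph{if $Z\subseteq P$ is finite, $a\in\Sor Z$, and $a\le c$, then $a=\bigvee^{P\dnw c}Z$, the join computed in the \js\ $P\dnw c$} --- indeed $Z\subseteq P\dnw a\subseteq P\dnw c$, the join $d_0\eqdef\bigvee^{P\dnw c}Z$ exists, $d_0\le a$ since $a$ bounds~$Z$ in $P\dnw c$, and $d_0$ is then an upper bound of~$Z$ in~$P$ below the minimal one~$a$, so $d_0=a$ (for $Z=\es$ use $\Sor\es=\Min P=\set 0$, valid since~$P$ has a zero). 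Now assume $(a_i,f\circ u_i)\le(c,z)$ for all $i\in F$, where $z\colon W\to Y$; writing $Z_i\eqdef\dom u_i$ this gives $a_i\le c$, $Z_i\subseteq W$ and $z\res Z_i=f\circ u_i$. Injectivity of~$f$ makes the $u_i$ pairwise compatible on overlaps, so $u\eqdef\bigcup_{i\in F}u_i$ is well defined on $Z\eqdef\bigcup_{i\in F}Z_i\subseteq P\dnw c$, with $z\res Z=f\circ u$. Put $d\eqdef\bigvee^{P\dnw c}Z$. Then $d\le c$; any upper bound $d'\le d$ of~$Z$ in~$P$ lies in $P\dnw c$ and hence is $\ge d$, so $d\in\Sor Z$; and $Z\subseteq P\dnw d$; thus $(d,u)\in P\seq X$. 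By the structural fact $a_i=\bigvee^{P\dnw c}Z_i\le d$ and $u$ extends each $u_i$, so $(a_i,u_i)\le(d,u)$; and $d\le c$ with $z$ extending $f\circ u$ gives $(P\seq f)(d,u)=(d,f\circ u)\le(c,z)$. This is precisely the reformulated $\sor$-condition (the empty family simply returns $(d,u)=(0,\es)$, the least element of $P\seq X$).

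I expect the one genuine obstacle to be spotting and proving that structural fact about \ajs{s}: without it one is tempted to form the joins of the $Z_i$ inside~$P$, where they need not exist, and the amalgamation stalls; the point is that below a common bound~$c$ everything collapses into the join-semilattice $P\dnw c$, which is exactly where the \ajs\ hypothesis (as opposed to mere \pjs) is used. Once $P\seq f$ is known to be a one-to-one submorphism of norm-coverings, Lemma~\ref{L:F(proper)} immediately supplies the induced morphism $\xF(P\seq f)\colon\xF(P\seq X)\to\xF(P\seq Y)$ in $\Bool_P$, sending each generator $\widetilde{(a,x)}$ of $\rF(P\seq X)$ to $\widetilde{(a,f\circ x)}$.
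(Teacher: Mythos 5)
Your proof is correct and takes essentially the same route as the paper: the paper simply records the closed-form identity $\Sor\setm{(a_i,f\circ x_i)}{1\leq i\leq n}=\Sor\setm{a_i}{1\leq i\leq n}\times\set{f\circ x}$ in $P\seq{Y}$ (and $=\es$ when $x\eqdef\bigcup_i x_i$ is not a function, which by injectivity of~$f$ is the only other case), calls its verification straightforward, and then invokes Lemma~\ref{L:F(proper)}. Your amalgamation argument via the reformulated condition --- in particular the observation that a minimal upper bound lying below a common bound~$c$ must equal the join computed in the \js~$P\dnw c$ --- is precisely the content hidden behind that ``straightforward to verify''.
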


\begin{proof}
Let $n<\go$, $(a_1,x_1),\dots,(a_n,x_n)\in P\seq{X}$, and set $x=\bigcup_{i=1}^nx_i$\,.
It is straightforward to verify that
 \[
 \Sor\setm{(a_i,f\circ x_i)}{1\leq i\leq n}=\begin{cases}
 \Sor\setm{a_i}{1\leq i\leq n}\times\set{f\circ x}\,,
 &\text{if }x\text{ is a function}\,,\\
 \es\,,&\text{otherwise},
 \end{cases}
 \]
where the left hand side is evaluated within~$P\seq{Y}$.

The last part of the statement of Lemma~\ref{L:P(f,gS)} then follows from Lemma~\ref{L:F(proper)}.
\end{proof}

Following Gillibert and Wehrung \cite[Definition~3.5.1]{Larder}, for cardinals~$\gk$ and~$\gl$ and a poset~$P$, $(\gk,{<}\gl)\leadsto P$ means that for every $F\colon\Pow(\gk)\to[\gk]^{<\gl}$ there exists a one-to-one map $f\colon P\rightarrowtail\gk$ such that
 \[
 F(f[P\dnw a])\cap f[P\dnw b]\subseteq f[P\dnw a]
 \quad\text{whenever }a<b\text{ in }P\,.
 \]
Also recall from Erd\H{o}s \emph{et al.}~\cite{EHMR} that for cardinals~$\gk$, $\gl$, $\rho$, $(\gk,{<}\go,\gl)\rightarrow\rho$ means that for every map $F\colon[\gk]^{<\go}\to[\gk]^{{<}\gl}$ there exists $H\in[\gk]^{\rho}$ such that $F(X)\cap H\subseteq X$ whenever $X\in[H]^{<\go}$.

As observed in Gillibert and Wehrung \cite[Proposition~3.4]{GilWeh2011}, the statements $(\gk,{<}\gl)\leadsto([\rho]^{<\go},\subseteq)$ and $(\gk,{<}\go,\gl)\rightarrow\rho$ are equivalent.
Since every lower finite poset~$P$ of cardinality~$\rho$ embeds into~$[P]^{<\go}$ (via $x\mapsto P\dnw x$), thus into $[\rho]^{<\go}$, we obtain the following.

\begin{lemma}\label{L:raw2leadsto}
Let~$\gk$ and~$\gl$ be infinite cardinals and let~$P$ be a lower finite poset.
Set $\rho\eqdef\card{P}$.
If $(\gk,{<}\go,\gl)\rightarrow\rho$, then $(\gk,{<}\gl)\leadsto P$.
\end{lemma}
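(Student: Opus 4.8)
The plan is to unfold the definition of $(\gk,{<}\go,\gl)\rightarrow\rho$ and apply it to the right auxiliary map. Let $F\colon\Pow(\gk)\to[\gk]^{{<}\gl}$ be arbitrary; we must produce a one-to-one map $f\colon P\rightarrowtail\gk$ with $F(f[P\dnw a])\cap f[P\dnw b]\subseteq f[P\dnw a]$ whenever $a<b$ in $P$. The point is that the relation $(\gk,{<}\go,\gl)\rightarrow\rho$ speaks only about maps defined on \emph{finite} subsets of $\gk$, so I would feed it not $F$ itself but the restriction $F\res[\gk]^{{<}\go}\colon[\gk]^{{<}\go}\to[\gk]^{{<}\gl}$. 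This yields a set $H\in[\gk]^{\rho}$ such that $F(X)\cap H\subseteq X$ for every $X\in[H]^{{<}\go}$. Since $\card H=\rho=\card P$, fix any bijection $f\colon P\to H$; in particular $f$ is a one-to-one map from $P$ into $\gk$.

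The verification then hinges on lower finiteness of $P$. For each $a\in P$, the set $P\dnw a$ is finite, so $f[P\dnw a]$ is a finite subset of $H$, i.e.\ an element of $[H]^{{<}\go}$; hence the freeness of $H$ gives $F(f[P\dnw a])\cap H\subseteq f[P\dnw a]$. Intersecting with $f[P\dnw b]$, which is contained in $H$ because $f$ takes all its values in $H$, yields $F(f[P\dnw a])\cap f[P\dnw b]\subseteq f[P\dnw a]$ — in fact for \emph{all} $a,b\in P$, not merely $a<b$. That is exactly the defining property of $(\gk,{<}\gl)\leadsto P$, so the proof is complete. This matches the route indicated in the text: \cite[Proposition~3.4]{GilWeh2011} identifies $(\gk,{<}\go,\gl)\rightarrow\rho$ with $(\gk,{<}\gl)\leadsto([\rho]^{{<}\go},\subseteq)$, and $P$ order-embeds into $[\rho]^{{<}\go}$ via $x\mapsto P\dnw x$ (lower finiteness makes the values finite, after identifying the underlying set of $P$ with $\rho$).

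I do not expect a substantive obstacle: the argument is a two-line computation once the arrow relation is in hand. The only points to get right are (i) handing the arrow relation the restriction $F\res[\gk]^{{<}\go}$ rather than $F$, whose domain $\Pow(\gk)$ is too large for that relation, and (ii) noticing that lower finiteness of $P$ is precisely what forces each $f[P\dnw a]$ into $[H]^{{<}\go}$ so that the freeness of $H$ applies. I would deliberately avoid framing the last step as "monotonicity of $\leadsto$ under order embeddings": the downset embedding $x\mapsto P\dnw x$ is not onto a lower subset of $[\rho]^{{<}\go}$ (e.g.\ $M_3$ is not a simplicial complex), so $e[P\dnw a]$ is in general strictly smaller than the full downset of $e(a)$ in $[\rho]^{{<}\go}$, and a genuine transfer along an arbitrary embedding would require re-engineering the auxiliary map on $\Pow(\gk)$ to absorb that mismatch. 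The direct unfolding above bypasses this complication entirely.
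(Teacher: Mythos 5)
Your proof is correct. Restricting the given $F\colon\Pow(\gk)\to[\gk]^{<\gl}$ to $[\gk]^{<\go}$, invoking $(\gk,{<}\go,\gl)\rightarrow\rho$ to get a free set $H\in[\gk]^{\rho}$, and taking for $f$ any bijection of $P$ onto $H$ does the job: lower finiteness puts each $f[P\dnw a]$ into $[H]^{<\go}$, so $F(f[P\dnw a])\cap H\subseteq f[P\dnw a]$, and intersecting with $f[P\dnw b]\subseteq H$ gives the required containment (indeed for all pairs $a,b$, not only $a<b$). This is, however, a different route from the one the paper takes: the paper does not unfold the free-set relation at all, but instead quotes the equivalence of $(\gk,{<}\go,\gl)\rightarrow\rho$ with $(\gk,{<}\gl)\leadsto([\rho]^{<\go},\subseteq)$ from Gillibert--Wehrung (Proposition~3.4 of the cited paper) and then transfers $\leadsto$ along the order-embedding $x\mapsto P\dnw x$ of $P$ into $[\rho]^{<\go}$, lower finiteness being what makes that embedding land in $[\rho]^{<\go}$. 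You correctly sense that this transfer step is not a formal triviality, since $F$ need not be monotone and $e[P\dnw a]$ is a proper subset of the downset of $e(a)$ in $[\rho]^{<\go}$; it can be repaired (replace $F(Y)$ by $\bigcup\setm{F(Z)}{Z\subseteq Y}$ for finite $Y$, which is legitimate because downsets of $[\rho]^{<\go}$ are finite and $\gl$ is infinite, and then use that $P\dnw x\subseteq P\dnw a$ forces $x\leq a$), which is exactly the ``re-engineering of the auxiliary map'' you chose to avoid. So the paper's proof buys brevity by outsourcing the combinatorics to the cited equivalence, while yours buys a self-contained two-step argument that bypasses the embedding altogether; both rest on the same free-set core.
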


We record the following consequence of Gillibert and Wehrung \cite[Lemma~3.5.5]{Larder}.

\begin{lemma}\label{L:355Larder}
Let~$P$ be a lower finite \ajs\ with zero, let~$\gl$ and~$\gk$ be infinite cardinals such that every element of~$P$ has less than~$\cf(\gl)$ upper covers and $(\gk,{<}\gl)\leadsto P$.
Then~$P\seq{\gk}$ is a standard $\gl$-lifter of~$P$.
\end{lemma}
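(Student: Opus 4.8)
The plan is to check the three clauses of Definition~\ref{D:Lifter} directly for the triple $(X,\bX,\partial)$, where $X\eqdef P\seq\gk$ and $\bX$ is the set of all principal ideals of~$X$; by Definition~\ref{D:StandLift} this is exactly what it means for $P\seq\gk$ to be a standard $\gl$-lifter of~$P$. Lemma~\ref{L:PseqKajs} already does most of the bookkeeping: $X$ is a lower finite \ajs\ with zero and $(X,\partial)$ is a norm-covering of~$P$, so in particular $X$ is supported and clause~(iii) of Definition~\ref{D:Lifter} holds outright. I would then record the elementary fact that every principal ideal $X\dnw(a,x)$ is sharp with $\partial\pI{X\dnw(a,x)}=a$: indeed $a=\partial(a,x)\in\partial\rI{X\dnw(a,x)}$, and any $(b,y)\le(a,x)$ has $b\le a$, so $a$ is the largest element of $\partial\rI{X\dnw(a,x)}$. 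Consequently $u\mapsto X\dnw u$ is an order-isomorphism from~$X$ onto~$\bX$, compatible with the norm~$\partial$, which carries $\bX^=$ to $\setm{u\in X}{\partial u\text{ not maximal in }P}$ and reduces the search for an isotone section $\gs\colon P\hookrightarrow\bX$ of~$\partial$ to the search for one $\gs'\colon P\hookrightarrow X$ of~$\partial$. Clause~(i) is then immediate: since~$X$ is lower finite each principal ideal of~$X$ is finite, hence $\bX$, and a fortiori $\bX^=$, is lower finite and therefore lower $\cf(\gl)$-small.

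What is left, clause~(ii), is the real content, and it is precisely what the proof of Gillibert and Wehrung \cite[Lemma~3.5.5]{Larder} establishes; I would invoke that argument after checking that its hypotheses match ours (a lower finite \ajs\ with zero, fewer than $\cf(\gl)$ upper covers at each point, and $(\gk,{<}\gl)\leadsto P$). The scheme is: from a map $S\colon\bX^=\to[X]^{{<}\gl}$ build a map $F\colon\Pow(\gk)\to[\gk]^{{<}\gl}$ out of the finite ``$\gk$-contents'' $\rng z$ of the pairs $(c,z)\in X$ that occur in the values of~$S$ on ideals whose $\gk$-content lies inside~$u$; then $(\gk,{<}\gl)\leadsto P$ yields a one-to-one $f\colon P\rightarrowtail\gk$ with $F\pI{f[P\dnw a]}\cap f[P\dnw b]\subseteq f[P\dnw a]$ whenever $a<b$ in~$P$; using that~$P$ is lower finite and $p\in\Sor(P\dnw p)$ for every~$p$, one puts $\gs'(p)\eqdef\pI{p,\,f\restriction(P\dnw p)}\in X$, which is an isotone section of~$\partial$ because $p\le q$ forces $f\restriction(P\dnw q)$ to extend $f\restriction(P\dnw p)$; and unwinding the encoding turns the displayed containment for~$F$ into $S(\gs(p))\cap\gs(q)\subseteq\gs(p)$ for $p<q$, with $\gs(p)\eqdef X\dnw\gs'(p)$.

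The one genuinely delicate point---the main obstacle---is the encoding $S\mapsto F$ in clause~(ii): it must be arranged so that each $F(u)$ really does stay $\gl$-small (this is where the hypotheses ``$P$ lower finite'' and ``fewer than $\cf(\gl)$ upper covers'' get consumed) and so that the abstract conclusion of $(\gk,{<}\gl)\leadsto P$ can be translated back, with no loss, into the containment demanded of the section~$\gs$. Everything else---sharpness of principal ideals, the identification $\bX\cong X$, and clauses~(i) and~(iii)---is routine and already prepared by Lemma~\ref{L:PseqKajs}, so that the write-up essentially amounts to correctly quoting \cite[Lemma~3.5.5]{Larder} for clause~(ii).
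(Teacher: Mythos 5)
Your proposal is correct and takes essentially the same route as the paper: the paper gives no argument beyond recording the lemma as a consequence of Gillibert and Wehrung [Lemma~3.5.5], whose proof introduces $P\seq{\gk}$ and establishes exactly the free-section clause (ii) that you likewise defer to that citation. Your routine verifications (sharpness of principal ideals, the order-isomorphism between $P\seq{\gk}$ and its set of principal ideals, and clauses (i) and (iii) via Lemma~\ref{L:PseqKajs}) are accurate, so the write-up would indeed reduce to quoting the cited proof.
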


Using Gillibert and Wehrung~\cite[Proposition~4.7]{GilWeh2011}, Lemma~\ref{L:355Larder} enables us to find lifters for finite posets:

\begin{corollary}\label{C:355Larder}
Let~$P$ be a nontrivial finite \ajs\ with zero and denote by~$n$ the order-dimension of~$P$.
Then for every infinite cardinal~$\gl$ and for every $\gk\geq\gl^{+(n-1)}$, $P\seq{\gk}$ is a standard $\gl$-lifter of~$P$.
\end{corollary}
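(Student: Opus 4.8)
The plan is to deduce the statement as a direct instance of Lemma~\ref{L:355Larder}, so the only work is to verify its hypotheses for the given $P$, $\gl$, $\gk$. Two of the three hypotheses are immediate: since $P$ is finite it is lower finite, and it is a nontrivial \ajs\ with zero by assumption; moreover every element of $P$ has only finitely many upper covers, hence fewer than $\cf(\gl)$ of them, because $\cf(\gl)$ is an infinite cardinal. Thus the entire content of the corollary is concentrated in the remaining hypothesis of Lemma~\ref{L:355Larder}, namely the combinatorial relation $(\gk,{<}\gl)\leadsto P$.

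To secure that relation I would invoke Gillibert and Wehrung~\cite[Proposition~4.7]{GilWeh2011}, which is exactly designed to produce $(\gk,{<}\gl)\leadsto P$ for every $\gk\geq\gl^{+(n-1)}$ out of the hypothesis that $P$ has order-dimension $n$ (the mechanism being that a finite poset of order-dimension $n$ order-embeds into a product of $n$ finite chains, and $\leadsto$ is controlled along such products). Note that $n\geq 1$ since $P$ is nontrivial, so $\gl^{+(n-1)}$ is meaningful; for $n=1$ it reads $\gk\geq\gl$, and for $P=\Pow[3]$ (order-dimension~$3$, used throughout Sections~\ref{S:Ceva}--\ref{S:V(R)}) it reads $\gk\geq\gl^{+2}$, matching the numerology announced in the introduction. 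Feeding $(\gk,{<}\gl)\leadsto P$ into Lemma~\ref{L:355Larder} then yields that $P\seq{\gk}$ is a standard $\gl$-lifter of~$P$, which is the assertion; the structural facts about $P\seq{\gk}$ needed along the way (that it is a lower finite \ajs\ with zero and that $(P\seq{\gk},\partial)$ is a norm-covering of~$P$) are already available from Lemma~\ref{L:PseqKajs}.

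From the standpoint of the present paper there is essentially nothing hard here, since the genuine combinatorics is packaged inside the external \cite[Proposition~4.7]{GilWeh2011}; I would simply cite it. It is worth flagging that one cannot shortcut via Lemma~\ref{L:raw2leadsto}, which would only deliver a bound in terms of $\card P$ rather than of the order-dimension~$n$ (for instance $\gl^{+7}$ instead of $\gl^{+2}$ when $P=\Pow[3]$), so passing through order-dimension is essential. Were one to reprove $(\gk,{<}\gl)\leadsto P$ in place of quoting it, the argument is an induction on the order-dimension: the base case $n=1$, a finite chain, is a direct recursion in which at each step the next value is chosen outside the union of the finitely many $\gl$-small forbidden sets accumulated so far (possible since that union still has cardinality $<\gl\leq\gk$); the inductive step, which raises the exponent from $\gl^{+(n-2)}$ to $\gl^{+(n-1)}$, is a Kuratowski/Erd\H{o}s--Rado free-set argument, a $\Delta$-system-style stabilization on the last coordinate of the product of chains. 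That induction is the natural candidate for the main obstacle, but it lies outside the scope of this paper.
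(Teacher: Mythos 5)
Your proposal is correct and matches the paper's own (implicit) argument exactly: the corollary is obtained by feeding $(\gk,{<}\gl)\leadsto P$, supplied by Gillibert and Wehrung \cite[Proposition~4.7]{GilWeh2011} for $\gk\geq\gl^{+(n-1)}$, into Lemma~\ref{L:355Larder}, whose remaining hypotheses (lower finiteness, \ajs\ with zero, fewer than $\cf(\gl)$ upper covers) are trivial for a finite poset. Your added remarks (why Lemma~\ref{L:raw2leadsto} would only give a cardinality-based bound, and the sketch of the external induction) are sensible but not needed.
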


\section{Extending the Armature Lemma and CLL}
\label{S:ExtCLL}

The main aim of this section is to establish Lemmas~\ref{L:ExtArm} and~\ref{L:ExtCLL}, which are extensions to $\Phi$-com\-mu\-ta\-tive diagrams and~$\otmF$ of the original Armature Lemma and Condensate Lifting Lemma CLL (cf. Lemmas~3.2.2 and~3.4.2, respectively, in Gillibert and Wehrung~\cite{Larder}).
The updated statements, although still quite technical, are somehow trimmed down in comparison to the original statements from~\cite{Larder}, owing to an apparently smaller level of generality: for example, $\gl$ is now assumed to be regular and~$\bX^=$ is, in Lemma~\ref{L:ExtCLL}, assumed to be well-founded.
Other differences between the original statements and the new ones are the following:
\begin{itemize}
\item
The original statement of the Armature Lemma involved a morphism $\chi\colon S\to\Phi(\xF(X)\otimes\vec{A})$.
There is no loss of generality in assuming that~$\chi$ is the identity (just replace each~$\gf_{\bx}$ by~$\chi\circ\gf_{\bx}$), which we thus do in Lemma~\ref{L:ExtArm}.

\item
Our assumptions contain the additional statement that~$P$ is a conditional $\mu$-DCPO.

\item
Due to the different definition of~$\otmF$ (with respect to the original~$\otimes$ of Gillibert and Wehrung~\cite{Larder}), the functor~$\Phi$ no longer needs to preserve any kind of directed colimit.

\item
The cardinal~$\gl$ plays the same role as in Lemmas~3.2.2 and~3.4.2 of Gillibert and Wehrung~\cite{Larder}.
This is not the case for~$\mu$, which is the parameter indexing the operator~$\otmF$\,.

\end{itemize}

The proofs of Lemma~\ref{L:ExtArm} and~\ref{L:ExtCLL} are similar to the ones of Lemma~3.2.2 and~3.4.2 in Gillibert and Wehrung~\cite{Larder}, with a few subtle differences.
Due to the complexity of the underlying statements, we anchor the new formulations in our discussion by showing quite detailed outlines of those proofs.

\begin{lemma}[Extended Armature Lemma]\label{L:ExtArm}
Let~$\gl$ and~$\mu$ be infinite regular cardinals with $\mu\leq\gl$, and let~$P$ be a conditional $\mu$-DCPO with a $\gl$-lifter $(X,\bX,\partial)$.
Let~$\cA$ and~$\cS$ be categories and let $\Phi\colon\cA\to\cS$ be a functor.
We assume that~$\cA$ has all $2^{<\mu}$-products \pup{cf. Definition~\textup{\ref{D:PRODgl}}} and that~$\cS$ has all $\mu$-directed colimits.

Let~$\vec{A}$ be a $P$-indexed, $\Phi$-com\-mu\-ta\-tive diagram in~$\cA$, set $S\eqdef\xF(X)\otmF\vec{A}$, and let $\vecm{(S_{\bx},\gf_{\bx}),\gf_{\bx}^{\by}}{\bx\subseteq\by\text{ in }\bX}$ be an $\bX$-indexed com\-mu\-ta\-tive diagram in $\cS/S$ such that~$S_{\bx}$ is weakly $\gl$-pre\-sentable whenever $\bx\in\bX^=$.
Then there exists an isotone section $\gs\colon P\hookrightarrow\bX$ of~$\partial$ such that the family $\Vecm{(\pi_{\gs(p)}^X\otmF\vec{A})\circ\gf_{\gs(p)}}{p\in P}$ is a natural transformation from the com\-mu\-ta\-tive diagram~$\vec{S}\gs$ to~$\Phi\vec{A}$.
\end{lemma}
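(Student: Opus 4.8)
The plan is to adapt the proof of the original Armature Lemma, \cite[Lemma~3.2.2]{Larder}, to the $\Phi$-com\-mu\-ta\-tive setting; the differences to absorb are that $\vec{A}$ is only $\Phi$-com\-mu\-ta\-tive (not com\-mu\-ta\-tive) and that $\otmF$ has replaced $\otimes$. Write $\ol\Psi\eqdef{}_{-}\otmF\vec{A}$; this functor $\Bool_P\to\cS$ is well defined and $\mu$-continuous by Lemma~\ref{L:otmFexists} applied with $\mu$ in place of $\gl$, so $S=\xF(X)\otmF\vec{A}$ makes sense. Since $\mu\leq\gl$, every $\gl$-directed poset is $\mu$-directed, hence $\ol\Psi$ preserves all $\gl$-directed colimits. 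I would first use the construction of $\xF$ (cf. \cite{Larder}) to present $\xF(X)$ as a $\gl$-directed colimit $\xF(X)=\varinjlim_Y\xF(Y)$, where $Y$ ranges over the $\gl$-small $\sor$-closed subsets of $X$ (each a norm-covering of $P$ by restriction of $\partial$) and the transition and limiting morphisms are the maps $f_Y^X$ of Section~\ref{S:NormCov}; applying $\ol\Psi$ then yields a $\gl$-directed colimit cocone $\vecm{S,\,f_Y^X\otmF\vec{A}}{Y}=\varinjlim_Y\bigl(\xF(Y)\otmF\vec{A}\bigr)$ in $\cS$.

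Next I would fix the data fed into the lifter. For each $\bx\in\bX^=$, the object $S_{\bx}$ is weakly $\gl$-pre\-sentable, so $\gf_{\bx}\colon S_{\bx}\to S$ factors through some $f_Y^X\otmF\vec{A}$; pick such a $Y$, call it $S(\bx)$, so that $S(\bx)\in[X]^{<\gl}$ and $\gf_{\bx}=(f_{S(\bx)}^X\otmF\vec{A})\circ g_{\bx}$ for a suitable $g_{\bx}\colon S_{\bx}\to\xF(S(\bx))\otmF\vec{A}$. This gives a map $S\colon\bX^=\to[X]^{<\gl}$, to which I would apply Definition~\ref{D:Lifter}\eqref{DefLiftFree} to obtain an isotone section $\gs\colon P\hookrightarrow\bX$ of $\partial$ satisfying $S(\gs(p))\cap\gs(q)\subseteq\gs(p)$ whenever $p<q$ in $P$. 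Using $\two[p]\otmF\vec{A}=\two[\partial\gs(p)]\otmF\vec{A}=\Phi(A_p)$ (Lemma~\ref{L:2potmvecS}), I would set $\eta_p\eqdef(\pi_{\gs(p)}^X\otmF\vec{A})\circ\gf_{\gs(p)}\colon S_{\gs(p)}\to\Phi(A_p)$ for $p\in P$, and claim that $\eta=\vecm{\eta_p}{p\in P}$ is the desired natural transformation $\vec{S}\gs\todot\Phi\vec{A}$ (note that $\Phi\vec{A}$ is com\-mu\-ta\-tive, with transition morphism $\eps_p^q\otmF\vec{A}$ by Lemma~\ref{L:2potmvecS}, since $\vec{A}$ is $\Phi$-com\-mu\-ta\-tive).

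The remaining work is to check naturality: $\eta_q\circ\gf_{\gs(p)}^{\gs(q)}=(\eps_p^q\otmF\vec{A})\circ\eta_p$ for all $p\leq q$ in $P$. The case $p=q$ is trivial; for $p<q$, the element $p$ is non-maximal, so $\gs(p)\in\bX^=$ and the factorization of $\gf_{\gs(p)}$ is available. Writing $Y\eqdef S(\gs(p))$, I would reduce the naturality square to the single equality $\pi_{\gs(q)}^X\circ f_Y^X=\eps_p^q\circ\pi_{\gs(p)}^X\circ f_Y^X$ of morphisms $\xF(Y)\to\two[q]$ in $\Bool_P$: once this is known, applying $\ol\Psi$, precomposing with $g_{\gs(p)}$, and using both $\gf_{\gs(p)}=(f_Y^X\otmF\vec{A})\circ g_{\gs(p)}$ and $\gf_{\gs(q)}\circ\gf_{\gs(p)}^{\gs(q)}=\gf_{\gs(p)}$ (the latter because $\gf_{\gs(p)}^{\gs(q)}$ is a morphism of $\cS/S$) delivers the square. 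The displayed equality is tested on the generators $\tilde{u}^Y$, $u\in Y$: $f_Y^X$ carries $\tilde{u}^Y$ to $\tilde{u}^X$, each $\pi_{\gs(r)}^X$ carries $\tilde{u}^X$ to $1$ or $0$ according as $u\in\gs(r)$ or not, and $\eps_p^q$ is induced by $\id_{\two}$; so the two sides agree at $\tilde{u}^Y$ exactly when $u\in\gs(q)\Leftrightarrow u\in\gs(p)$. The implication $\Leftarrow$ holds because $\gs$ is isotone (so $\gs(p)\subseteq\gs(q)$), and $\Rightarrow$ because $u\in Y\cap\gs(q)=S(\gs(p))\cap\gs(q)\subseteq\gs(p)$ by the lifter condition.

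I expect the naturality computation to be routine; the genuine obstacle is the first paragraph — producing the $\gl$-directed colimit presentation $S=\varinjlim_Y\xF(Y)\otmF\vec{A}$ in precisely the shape the factorization step needs. Concretely, one must check that the $\gl$-small $\sor$-closed subsets of $X$ form a $\gl$-directed family with colimit $\xF(X)$ (for $\gl>\go$ this rests on the combinatorics of the underlying \pjs, and for $\gl=\go$ on $X$ being supported), and one must be careful to factor $\gf_{\bx}$ through one of the $\xF(Y)$ rather than through a less transparent $\gl$-pre\-sentable subalgebra of $\xF(X)$, since the generator-level computation above is exactly what makes the argument go. A minor point also needing attention is bookkeeping of conventions: that the transformation runs \emph{from} $\vec{S}\gs$ \emph{to} $\Phi\vec{A}$ and is built from $\pi_{\gs(p)}^X$ as written, and that the elements $\gs(p)$ with $p$ maximal — which may fail to lie in $\bX^=$, so for which no factorization was produced — enter only trivial naturality squares.
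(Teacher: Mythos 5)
Your proposal is correct and follows essentially the same route as the paper's proof: factor each $\gf_{\bx}$ ($\bx\in\bX^=$) through a stage $\xF(V(\bx))\otmF\vec{A}$ of the $\gl$-directed colimit over $\gl$-small $\sor$-closed subsets of~$X$, feed the resulting map into the lifter to get~$\gs$, prove the key identity $\pi_{\gs(q)}^X\circ f_{V\gs(p)}^X=\eps_p^q\circ\pi_{\gs(p)}^X\circ f_{V\gs(p)}^X$ in~$\Bool_P$ (your generator-level check is exactly the omitted Claim), and then apply ${}_{-}\otmF\vec{A}$ and chase, using $\Phi$-commutativity via Lemma~\ref{L:2potmvecS}. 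The only cosmetic deviation is that the paper additionally arranges the map~$V$ to be isotone, which, as you implicitly note, is not needed since Definition~\ref{D:Lifter}\eqref{DefLiftFree} applies to arbitrary maps.
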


\begin{proof}
Our assumptions ensure the existence of the functor ${}_{-}\otmF\nobreak\vec{A}$ from~$\Bool_P$ to~$\cS$ (cf. Lemma~\ref{L:otmFexists}).
Moreover, since the diagram~$\vec{A}$ is $\Phi$-com\-mu\-ta\-tive, the diagram~$\Phi\vec{A}$ is com\-mu\-ta\-tive.
For all $p\leq q$ in~$P$, the constant value~$\bga_p^q$ of~$\Phi(x)$, for~$x$ ranging over~$\vec{A}(p,q)$, is a morphism from~$\Phi(A_p)$ to~$\Phi(A_q)$.

As at the beginning of the proof of Gillibert and Wehrung \cite[Lemma~3.2.2]{Larder}, $X$ is the $\gl$-directed, thus also $\mu$-directed (because $\mu\leq\gl$) union of the set $[X]^{{<}\gl}_{\sor}$ of all its $\gl$-small $\sor$-closed subsets.
For every $\bx\in\bX^=$, since
 \begin{equation}\label{Eq:gfbxinlimotlF}
 \gf_{\bx}\colon S_{\bx}\to\xF(X)\otmF\vec{A}
 =\varinjlim\vecm{\xF(Z)\otmF\vec{A}}{Z\in[X]^{{<}\gl}_{\sor}}
 \end{equation}
where the transition morphisms and limiting morphisms in the right hand side of~\eqref{Eq:gfbxinlimotlF} all have the form $f_{Z_0}^{Z_1}\otmF\vec{A}$,
and~$S_{\bx}$ is weakly $\gl$-pre\-sentable, there exists a $\gl$-small $\sor$-closed subset~$V(\bx)$ of~$X$ such that~$\gf_{\bx}$ factors through~$\xF(V(\bx))\otmF\vec{A}$.
The mapping~$V$ thus goes from~$\bX^=$ to~$[X]^{{<}\gl}$.
As in the proof of \cite[Lemma~3.2.2]{Larder}, we may assume that the map~$V$ is isotone.
By the definition of~$V(\bx)$, there is a morphism $\psi_{\bx}\colon S_{\bx}\to\xF(V(\bx))\otmF\vec{A}$ such that
 \begin{equation}\label{Eq:Defnpsi}
 \gf_{\bx}=(f_{V(\bx)}^{X}\otmF\vec{A})\circ\psi_{\bx}\,.
 \end{equation}
Since $(X,\bX,\partial)$ is a $\gl$-lifter of~$P$, there is an isotone section~$\gs\colon P\hookrightarrow\bX$ of~$\partial$ such that
 \begin{equation}\label{Eq:gsfreewrtV}
 V(\gs(p))\cap\gs(q)\subseteq\gs(p)\quad
 \text{for all }p<q\text{ in }P\,.
 \end{equation}
The proof of the following claim is identical to the one of the Claim in the proof of \cite[Lemma~3.2.2]{Larder} (it is a direct translation of~\eqref{Eq:gsfreewrtV}) and we omit it.

\begin{sclaim}
The equation $\pi_{\gs(q)}^X\circ f_{V\gs(p)}^X=\eps_p^q\circ\pi_{\gs(p)}^X\circ f_{V\gs(p)}^X$ holds for all $p<q$ in~$P$.
\end{sclaim}

By applying the functor ${}_{-}\otmF\vec{A}$ to the two sides of the Claim above and then applying Lemma~\ref{L:otmFexists}, together with Lemma~\ref{L:2potmvecS}, we obtain the equation
 \begin{equation}\label{Eq:tolF2Flag}
 (\pi_{\gs(q)}^X\otmF\vec{A})\circ
 (f_{V\gs(p)}^X\otmF\vec{A})=\bga_p^q\circ
 (\pi_{\gs(p)}^X\otmF\vec{A})\circ(f_{V\gs(p)}^X\otmF\vec{A})\,.
 \end{equation}
The end of the proof goes the same way as in the one of Gillibert and Wehrung \cite[Lemma~3.2.2]{Larder}: for all $p<q$ in~$P$,
 \begin{align*}
 \bga_p^q\circ\pI{\pi_{\gs(p)}^X\otmF\vec{A}}\circ\gf_{\gs(p)}&=
 \bga_p^q\circ\pI{\pi_{\gs(p)}^X\otmF\vec{A}}\circ
 \pI{f_{V\gs(p)}^X\otmF\vec{A}}\circ\psi_{\gs(p)}&&
 (\text{use~\eqref{Eq:Defnpsi}})\\
 &=\pI{\pi_{\gs(q)}^X\otmF\vec{A}}\circ
 \pI{f_{V\gs(p)}^X\otmF\vec{A}}\circ\psi_{\gs(p)}&&
 (\text{use~\eqref{Eq:tolF2Flag}})\\
 &=\pI{\pi_{\gs(q)}^X\otmF\vec{A}}\circ\gf_{\gs(p)}&&
 (\text{use~\eqref{Eq:Defnpsi}})\\
 &=\pI{\pi_{\gs(q)}^X\otmF\vec{A}}\circ\gf_{\gs(q)}
 \circ\gf_{\gs(p)}^{\gs(q)}\,,
 \end{align*}
which completes the proof of the desired naturalness statement.
\end{proof}

The following Lemma~\ref{L:ExtCLL}, extending the original CLL (viz. Gillibert and Wehrung \cite[Lemma~3.4.2]{Larder}) can be viewed as a more ``global'' version of Lem\-ma~\ref{L:ExtArm}.
Its statement involves a subcategory~$\cS^{\Rightarrow}$ of~$\cS$, whose morphisms will be called the \emph{double arrows} and denoted in the form $x\colon S_1\Rightarrow S_2$.
Similarly, natural transformations with all arrows in~$\cS^{\Rightarrow}$ will be denoted in the form $\vec{x}\colon\vec{S}_1\Todot\vec{S}_2$.

For the statement of Lemma~\ref{L:ExtCLL}, recall that $\gl$-extended $\Phi$-pro\-jec\-tions were introduced in Definition~\ref{D:ExtPhiProj}.
Lemma~\ref{L:ExtCLL} also involves the \emph{projectability witnesses} introduced in Wehrung~\cite[Definition~3.2]{RetrLift}, see also Gillibert and Wehrung \cite[Definition~1.5.1]{Larder}.
Heuristically, for a functor~$\Psi$, a projectability witness for an arrow $\psi\colon\Psi(C)\to S$ plays the role of a ``quotient''~$\ol{C}$ of~$C$ such that~$\psi$ induces an isomorphism $\ol{\psi}\colon\Psi(\ol{C})\to S$.
As a full definition of that concept is relatively technical, and as everything we need here about it has already been proved elsewhere, we refer the reader to the abovecited references for more detail.

\begin{lemma}[Extended CLL]\label{L:ExtCLL}
Let~$\gl$ and~$\mu$ be infinite regular cardinals with $\mu\leq\gl$, and let~$P$ be a poset with a $\gl$-lifter $(X,\bX,\partial)$.
Let~$\cA$, $\cB$, and~$\cS$ be categories, with functors $\Phi\colon\cA\to\cS$ and $\Psi\colon\cB\to\cS$.
Let~$\cB^{\dagger}$ be a full subcategory of~$\cB$ and let~$\cS^{\Rightarrow}$ \pup{the \emph{double arrows} in~$\cS$} be a subcategory of~$\cS$.
We are given the following data:
\begin{itemize}
\item
a $P$-indexed, $\Phi$-com\-mu\-ta\-tive diagram $\vec{A}=\vecm{A_p\,,\vec{A}(p,q)}{p\leq q\text{ in }P}$ in~$\cA$;

\item
an object~$B\in\cB$ together with a double arrow $\chi\colon\Psi(B)\Rightarrow\xF(X)\otmF\vec{A}$.
\end{itemize}

We make the following assumptions:
\begin{itemize}
\item[(WF)]
$\bX^=$ is well-founded.

\item[(COND$(\mu)$)]
$P$ is a conditional $\mu$-DCPO \pup{cf. Definition~\textup{\ref{D:CondJoinCplt}}}.

\item[(PROD$(\mu)$)]
$\cA$ has all $2^{<\mu}$-products \pup{cf. Definition~\textup{\ref{D:PRODgl}}}.

\item[(COLIM$(\mu)$)]
$\cS$ has all $\mu$-directed colimits.

\item[(PROJ$(\mu)$)]
Every $\mu$-extended $\Phi$-pro\-jec\-tion belongs to~$\cS^{\Rightarrow}$.

\item[(PRES$(\gl)$)]
For every $C\in\cB^{\dagger}$, $\Psi(C)$ is weakly $\gl$-pre\-sentable in~$\cS$.

\item[(LS$(\gl)$)]
For every $p\in P$, every $\psi\colon\Psi(B)\Rightarrow\Phi(A_p)$, every $\ga<\gl$, and every family $\vecm{\gc_{\xi}\colon C_{\xi}\rightarrowtail B}{\xi<\ga}$ in the subobject category $\cB^{\dagger}\dnw B$, there exists a subobject~$\gc$ in $\cB^{\dagger}\dnw B$ such that $\psi\circ\Psi(\gc)\in\cS^{\Rightarrow}$ and each $\gc_{\xi}\utr\gc$.
\end{itemize}
Then there are a com\-mu\-ta\-tive diagram $\vec{B}\in\cB^P$ and a natural transformation $\vec{\chi}\colon\Psi\vec{B}\Todot\Phi\vec{A}$ in~$\cS^{\Rightarrow}$.
Furthermore, if every double arrow $\psi\colon\Psi(C)\Rightarrow S$, where $C\in\cB$ and $S\in\cS$, has a projectability witness with respect to the functor~$\Psi$, then~$\vec{\chi}$ can be taken a natural equivalence.
\end{lemma}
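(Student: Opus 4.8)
The plan is to reproduce the architecture of the original CLL, Gillibert and Wehrung \cite[Lemma~3.4.2]{Larder}, feeding the Extended Armature Lemma (Lemma~\ref{L:ExtArm}) a suitable tree of subobjects of~$B$ built by recursion along the well-founded poset~$\bX^=$. First, the hypotheses (COND$(\mu)$), (PROD$(\mu)$), (COLIM$(\mu)$), together with the $\Phi$-com\-mu\-ta\-tiv\-ity of~$\vec{A}$, place us in the setting of Lemma~\ref{L:otmFexists} (read with~$\mu$ in place of~$\gl$), so the functor ${}_{-}\otmF\vec{A}\colon\Bool_P\to\cS$ is available and $S\eqdef\xF(X)\otmF\vec{A}$ makes sense. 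For every sharp ideal~$\bx$ of~$X$ the canonical morphism $\pi_{\bx}^X\colon\xF(X)\to\two[\partial\bx]$ is normal, so by Proposition~\ref{P:otlFproj} (again with~$\mu$) and Lemma~\ref{L:2potmvecS}, $\pi_{\bx}^X\otmF\vec{A}$ is a $\mu$-extended $\Phi$-pro\-jec\-tion from~$S$ onto~$\Phi(A_{\partial\bx})$; by (PROJ$(\mu)$) it is a double arrow. Hence, for every $\bx\in\bX$, the composite $(\pi_{\bx}^X\otmF\vec{A})\circ\chi$ is a double arrow from~$\Psi(B)$ to~$\Phi(A_{\partial\bx})$.

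Next I would construct, by recursion on the well-founded relation on~$\bX^=$ supplied by (WF), a family $\vecm{\gc_{\bx}\colon C_{\bx}\rightarrowtail B}{\bx\in\bX}$ with $\gc_{\bx}$ in~$\cB^{\dagger}\dnw B$ for $\bx\in\bX^=$, isotone in the sense that $\bx\subseteq\by$ implies $\gc_{\bx}\utr\gc_{\by}$, and such that $(\pi_{\bx}^X\otmF\vec{A})\circ\chi\circ\Psi(\gc_{\bx})$ is a double arrow for every $\bx\in\bX^=$. For $\bx\notin\bX^=$ (so $\partial\bx$ is maximal in~$P$) I put $C_{\bx}\eqdef B$ and $\gc_{\bx}\eqdef\id_B$; this is consistent, since $\bx\subseteq\by$ with $\bx\notin\bX^=$ forces $\partial\by=\partial\bx$ maximal, hence $\by\notin\bX^=$. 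For $\bx\in\bX^=$, the set $\setm{\by\in\bX^=}{\by\subsetneq\bx}$ has cardinality $\ga<\cf(\gl)=\gl$ by clause~(1) of Definition~\ref{D:Lifter}, and it admits no members outside~$\bX^=$, so by induction the family $\vecm{\gc_{\by}}{\by\subsetneq\bx\text{ in }\bX^=}$ already sits in~$\cB^{\dagger}\dnw B$; applying (LS$(\gl)$) with $p\eqdef\partial\bx$, the double arrow $\psi\eqdef(\pi_{\bx}^X\otmF\vec{A})\circ\chi$, that family, and~$\ga$, I obtain $\gc_{\bx}$ in~$\cB^{\dagger}\dnw B$ with $\psi\circ\Psi(\gc_{\bx})$ a double arrow and $\gc_{\by}\utr\gc_{\bx}$ for all such~$\by$; transitivity of~$\utr$ then yields the required isotonicity. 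Write $\beta_{\bx}^{\by}\colon C_{\bx}\rightarrowtail C_{\by}$ for the unique monomorphism witnessing $\gc_{\bx}\utr\gc_{\by}$ when $\bx\subseteq\by$; these compose functorially by uniqueness.

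Assembling these into the $\bX$-indexed com\-mu\-ta\-tive diagram $\vec{S}$ in~$\cS/S$ with $S_{\bx}\eqdef\Psi(C_{\bx})$, transition morphisms $\Psi(\beta_{\bx}^{\by})$, and cocone morphisms $\gf_{\bx}\eqdef\chi\circ\Psi(\gc_{\bx})$ (commutativity being immediate from $\gc_{\by}\circ\beta_{\bx}^{\by}=\gc_{\bx}$), we have, by (PRES$(\gl)$), that~$S_{\bx}$ is weakly $\gl$-pre\-sentable whenever $\bx\in\bX^=$. Lemma~\ref{L:ExtArm} then provides an isotone section $\gs\colon P\hookrightarrow\bX$ of~$\partial$ such that the family $\vecm{\chi_p}{p\in P}$, where $\chi_p\eqdef(\pi_{\gs(p)}^X\otmF\vec{A})\circ\gf_{\gs(p)}$, is a natural transformation from $\vec{S}\gs$ to~$\Phi\vec{A}$. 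I would then set $B_p\eqdef C_{\gs(p)}$ and $\beta_p^q\eqdef\beta_{\gs(p)}^{\gs(q)}$, obtaining a com\-mu\-ta\-tive diagram $\vec{B}\in\cB^P$ with $\Psi\vec{B}=\vec{S}\gs$. Since $\chi_p=(\pi_{\gs(p)}^X\otmF\vec{A})\circ\chi\circ\Psi(\gc_{\gs(p)})$, this is a double arrow: when $\gs(p)\in\bX^=$ it is so by construction of~$\gc_{\gs(p)}$, and when $\gs(p)\notin\bX^=$ we have $\gc_{\gs(p)}=\id_B$ and $\chi_p=(\pi_{\gs(p)}^X\otmF\vec{A})\circ\chi$ is a composite of double arrows. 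Thus $\vec{\chi}\eqdef\vecm{\chi_p}{p\in P}\colon\Psi\vec{B}\Todot\Phi\vec{A}$ is a natural transformation with all components in~$\cS^{\Rightarrow}$, which is the first assertion.

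For the ``furthermore'' clause, each $\chi_p\colon\Psi(B_p)\Rightarrow\Phi(A_p)$ has a projectability witness with respect to~$\Psi$, providing a ``quotient''~$\overline{B_p}$ of~$B_p$ through which~$\chi_p$ factors as an isomorphism $\overline{\chi_p}\colon\Psi(\overline{B_p})\to\Phi(A_p)$; by the mapping property of projectability witnesses (Gillibert and Wehrung \cite[\S~1.5]{Larder}, Wehrung~\cite{RetrLift}) these factorizations can be chosen compatibly with the~$\beta_p^q$, so that the~$\overline{B_p}$ form a com\-mu\-ta\-tive diagram $\overline{\vec{B}}\in\cB^P$ and the~$\overline{\chi_p}$ a natural equivalence $\overline{\vec{\chi}}\colon\Psi\,\overline{\vec{B}}\Todot\Phi\vec{A}$. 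I expect the genuine work to lie in two places: keeping the recursion of the second paragraph honest --- verifying at each stage that the previously built subobjects form an admissible ${<}\gl$-indexed family for (LS$(\gl)$) and that isotonicity survives the passage across the $\bX^=$-vs.-$(\bX\setminus\bX^=)$ boundary and stays functorial --- and, more seriously, checking that the pointwise projectability witnesses glue into an honest $P$-indexed com\-mu\-ta\-tive diagram, which is where the technical definition of projectability witness and its uniqueness up to canonical isomorphism have to be used with care.
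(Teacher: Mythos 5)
Your proposal is correct and follows essentially the same route as the paper's proof: the double arrows $\rho_{\bx}=(\pi_{\bx}^X\otmF\vec{A})\circ\chi$ obtained from Proposition~\ref{P:otlFproj} and (PROJ$(\mu)$), an $\bX$-indexed commutative diagram of subobjects of~$B$ (extended by $\id_B$ above $\bX\setminus\bX^=$) fed with $\gf_{\bx}=\chi\circ\Psi(\gc_{\bx})$ into the Extended Armature Lemma, and the same reading-off of $\vec{B}$ and $\vec{\chi}$ along the section~$\gs$. The only deviation is that where the paper invokes the monic form of the Buttress Lemma of Gillibert and Wehrung, you inline its proof by well-founded recursion on~$\bX^=$ using (WF), the lower $\gl$-smallness of~$\bX^=$ from Definition~\ref{D:Lifter}, (LS$(\gl)$), and uniqueness of morphisms in the subobject category --- exactly the simplification the paper attributes to assuming (WF) --- and, like the paper, you defer the projectability-witness upgrade of~$\vec{\chi}$ to a natural equivalence to the corresponding argument in Gillibert and Wehrung.
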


\begin{proof}
As at the beginning of the proof of Lemma~\ref{L:ExtArm}, our assumptions ensure the commutativity of the diagram~$\Phi\vec{A}$ and the existence of the functor ${}_{-}\otmF\vec{A}$ from~$\Bool_P$ to~$\cS$ (cf. Lemma~\ref{L:otmFexists}).

For every $\bx\in\bX$, the morphism $\pi_{\bx}^X\colon\xF(X)\to\two[\partial\bx]$ is normal (cf. Gillibert and Wehrung \cite[Lem\-ma~2.6.7]{Larder}).
By Proposition~\ref{P:otlFproj}, $\pi_{\bx}^X\otmF\vec{A}$ is a $\mu$-extended $\Phi$-pro\-jec\-tion from $\xF(X)\otmF\vec{A}$ to $\two[\partial\bx]\otmF\vec{A}=\Phi(A_{\partial\bx})$ (cf. Lemma~\ref{L:2potmvecS}).
By (PROJ$(\mu)$), it follows that $\pi_{\bx}^X\otmF\vec{A}$ is a double arrow.
Therefore, $\pi_{\bx}^X\otmF\vec{A}$ is a double arrow from $\xF(X)\otmF\vec{A}$ to~$\Phi(A_{\partial\bx})$.
It follows that the composite $\rho_{\bx}\eqdef(\pi_{\bx}^X\otmF\vec{A})\circ\chi$ is a double arrow from~$\Psi(B)$ to~$\Phi(A_{\partial\bx})$.

Due to the simplification brought by assuming~(WF) from the start, all assumptions underlying the monic form of the Buttress Lemma (cf. Lemma~3.3.2 and Remark~3.3.3 in Gillibert and Wehrung~\cite{Larder}) are, taking $U:=\bX^=$, satisfied.
This yields an $\bX^=$-indexed com\-mu\-ta\-tive diagram
$\vecm{\gc_{\bx},\gc_{\bx}^{\by}}{\bx\subseteq\by\text{ in }\bX^=}$ in $\cB^{\dagger}\dnw B$, say $\gc_{\bx}\colon C_{\bx}\rightarrowtail B$ and $\gc_{\bx}^{\by}\colon C_{\bx}\rightarrowtail C_{\by}$ (all $C_{\bx}\in\cB^{\dagger}$), such that each $\rho_{\bx}\circ\Phi(\gc_{\bx})\in\cS^{\Rightarrow}$.
This diagram can be extended to an $\bX$-indexed com\-mu\-ta\-tive diagram in $\cB\dnw B$, by setting $C_{\bx}=B$ and $\gc_{\bx}=\gc_{\bx}^{\by}=\id_B$ whenever $\bx\subseteq\by$ in~$\bX^=$, whereas $\gc_{\bx}^{\by}=\gc_{\bx}$ whenever $\bx\subseteq\by$, $\bx\in\bX^=$, and $\by\in\bX\setminus\bX^=$.
The relation $\rho_{\bx}\circ\Phi(\gc_{\bx})\in\cS^{\Rightarrow}$ now holds for all $\bx\in\bX$: if~$\bx\in\bX\setminus\bX^=$, then $\rho_{\bx}\circ\Phi(\gc_{\bx})=\rho_{\bx}\in\cS^{\Rightarrow}$.
Hence,
 \begin{equation}\label{Eq:rhobxDA}
 \rho_{\bx}\circ\Phi(\gc_{\bx})\colon\Psi(B)
 \Rightarrow\Phi(A_{\partial\bx})\,,\quad\text{for any }\bx\in\bX\,.
 \end{equation}
It follows from the assumption~(PRES$(\gl)$) that~$\Psi(C_{\bx})$ is weakly $\gl$-pre\-sentable whenever $\bx\in\bX^=$.
Setting $\gf_{\bx}\eqdef\chi\circ\Psi(\gc_{\bx})$ and $\gf_{\bx}^{\by}\eqdef\Psi(\gc_{\bx}^{\by})$, all the assumptions of Lemma~\ref{L:ExtArm} are satisfied.
This yields an isotone section~$\gs$ of~$\partial$ such that the family $\vec{\chi}=\vecm{\chi_p}{p\in P}$, where each $\chi_p\eqdef\rho_{\gs(p)}\circ\Psi(\gc_{\gs(p)})$, is a natural transformation from the com\-mu\-ta\-tive diagram~$\Psi\vec{B}$, where $\vec{B}\eqdef\vecm{C_{\gs(p)},\gc_{\gs(p)}^{\gs(q)}}{p\leq q\text{ in }P}$, to~$\Phi\vec{A}$.
By~\eqref{Eq:rhobxDA}, each~$\chi_p$ is a double arrow.

The last statement of Lemma~\ref{L:ExtCLL}, that existence of enough projectability witnesses implies that~$\vec{\chi}$ can be taken a natural equivalence, is proved the same way as at the end of Gillibert and Wehrung \cite[Lemma~3.4.2]{Larder}.
\end{proof}

In Sections \ref{S:Ceva}--\ref{S:4SCML} we shall explore various occurrences of anti-el\-e\-men\-tar\-ity following from Lemmas~\ref{L:ExtArm} and~\ref{L:ExtCLL}.
The former (intervening in Section~\ref{S:Ceva}) offers the advantage of providing less restrictive cardinality assumptions, at the expense of requiring a deeper understanding of the class of structures under consideration.
By contrast, Lemma~\ref{L:ExtCLL} (intervening in Sections~\ref{S:PermCon}--\ref{S:4SCML}) yields more streamlined proofs, enabling us to apply known non-rep\-re\-sentabil\-ity results as black boxes, at the expense of more restrictive cardinality assumptions.

\section{Conrad frames and Cevian lattices}\label{S:Ceva}

Recall from Wehrung~\cite{Ceva} that a binary operation~$\sd$ on a distributive lattice~$D$ with zero is \emph{Cevian} if all inequalities $x\leq y\vee(x\sd y)$, $(x\sd y)\wedge(y\sd x)=0$, and $x\sd z\leq
 (x\sd y)\vee(y\sd z)$ hold whenever $x,y,z\in D$.
 We also say that the lattice~$D$ is Cevian if it carries a Cevian operation.
 
Recall from Iberkleid \emph{et al.}~\cite{IMM2011} that a \emph{Conrad frame} is a lattice isomorphic to the lattice~$\Cs{G}$ of all convex $\ell$-sub\-groups of an \lgrp\ (not necessarily Abelian)~$G$.
Since~$\Cs{G}$ is an algebraic frame, it is determined by the \jzs~$\Csc{G}$ of all finitely generated convex $\ell$-sub\-groups of~$G$, which turns out to be a distributive lattice with zero.

Let us record a few properties of Cevian lattices and (lattices of compact members of) Conrad frames, established in Wehrung~\cite{Ceva}:

\begin{proposition}\label{P:FewCevians}\hfill
\begin{enumerater}
\item
Every Cevian lattice is \emph{completely normal}, that is, for all $a,b\in D$ there are~$x,y\in D$ such that $a\vee b=a\vee y=x\vee b$ whereas $x\wedge y=0$.

\item
There exists a non-Cevian completely normal bounded distributive lattice, of cardinality~$\aleph_2$\,.

\item
For every \lgrp~$G$, the lattice~$\Csc{G}$ is Cevian.

\item
For every representable%
\footnote{
Recall that an \lgrp\ is \emph{representable} if it is a subdirect product of totally ordered groups.
}
\lgrp~$G$, the \jzs~$\Idc{G}$ of all finitely generated $\ell$-ideals of~$G$ is a lattice, and also a homomorphic image of~$\Csc{G}$; thus it is a Cevian lattice.

\end{enumerater}
\end{proposition}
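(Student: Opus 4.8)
All four items are drawn from Wehrung~\cite{Ceva}; item~(1) is the only one I would reprove from scratch, and below I indicate the shape I expect the arguments for~(2)--(4) to take.

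\emph{Item~(1).} Let $\sd$ be a Cevian operation on $D$ and let $a,b\in D$. Put $x\eqdef(a\sd b)\wedge(a\vee b)$ and $y\eqdef(b\sd a)\wedge(a\vee b)$. By the second Cevian axiom, $x\wedge y\leq(a\sd b)\wedge(b\sd a)=0$. By the first Cevian axiom $b\vee(a\sd b)\geq a$, hence $b\vee(a\sd b)\geq a\vee b$, so, using distributivity,
\[
b\vee x=\bigl(b\vee(a\sd b)\bigr)\wedge(a\vee b)=a\vee b\,.
\]
Symmetrically $a\vee y=a\vee b$, and $x,y$ witness complete normality of $D$ at the pair $(a,b)$. (If one moreover knows $a\sd b\leq a\vee b$, the truncations by $a\vee b$ are unnecessary and one may take $x=a\sd b$, $y=b\sd a$.)

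\emph{Item~(3).} The task is to produce an explicit Cevian operation on $\Csc G$. Every member of $\Csc G$ is principal, of the form $\seq a$ with $a=|a_1|\vee\cdots\vee|a_n|\in G^+$, so it is enough to define $\seq a\sd\seq b$ for $a,b\in G^+$. I would first reconstruct from~\cite{Ceva} the operation involved: a natural candidate is to let $\seq x\sd\seq y$ be the least member $C$ of $\Csc G$ with $\seq x\subseteq\seq y\vee C$ --- this is manifestly independent of the chosen generators, being defined from the subgroups themselves --- and the first point to settle is that such a least $C$ exists. This rests on special features of the lattice of convex $\ell$-subgroups of an $\ell$-group (viewed as the compact part of an algebraic frame with a suitable normality property) and not on mere complete normality; indeed, by item~(2) this step must \emph{fail} for general completely normal lattices. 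Granting the operation, one checks the three Cevian identities: $\seq x\subseteq\seq y\vee(\seq x\sd\seq y)$ is the defining property; $(\seq x\sd\seq y)\cap(\seq y\sd\seq x)=0$ follows by playing the minimality of the two factors against each other; and the transitivity estimate $\seq x\sd\seq z\subseteq(\seq x\sd\seq y)\vee(\seq y\sd\seq z)$ is a cocycle-type inequality. \textbf{I expect the existence of that canonical relative complement together with the transitivity inequality to be the main obstacle}, and these are exactly the computations performed in~\cite{Ceva}.

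\emph{Items~(2) and~(4).} For~(2) I see no structural shortcut: one exhibits a concrete completely normal bounded distributive lattice of cardinality $\aleph_2$, built from a configuration of elements laid out along $\go_2$, and shows that any binary operation satisfying the three Cevian identities would force a strictly descending $\go_1$-chain --- a contradiction. I would simply reproduce that example from~\cite{Ceva} (the cardinal $\aleph_2$ is optimal there, since countable completely normal lattices are Cevian). For~(4), representability of $G$ makes the assignment $\seq x\mapsto\seql x$ a surjective homomorphism of $(\vee,0)$-semilattices from $\Csc G$ onto $\Idc G$; in particular $\Idc G$ thereby inherits finite meets and is a lattice, and it is a homomorphic image of the Cevian lattice $\Csc G$ by item~(3). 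It then remains to invoke the transfer principle of~\cite{Ceva} that a surjective $(\vee,0)$-lattice-homomorphic image of a Cevian lattice is again Cevian. The delicate point there is that a Cevian operation does not descend naively along the quotient (its well-definedness on classes can fail), so one must instead establish that \emph{some} Cevian operation exists on the image; this is handled in~\cite{Ceva}, and combined with item~(3) it yields that $\Idc G$ is Cevian.
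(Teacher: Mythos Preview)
The paper gives no proof of this proposition: it is introduced as ``a few properties \ldots\ established in Wehrung~\cite{Ceva}'' and is simply recorded as a citation. Your proposal takes the same stance, correctly attributing all four items to~\cite{Ceva}, and your self-contained argument for item~(1) is correct.

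A minor remark on your sketch for~(3): the approach in~\cite{Ceva} is more likely to define the Cevian operation by an explicit group-theoretic formula (along the lines of $\seq{a}\sd\seq{b}\eqdef\seq{(a-b)^+}$ for $a,b\in G^+$, checked to be well defined on the principal convex $\ell$-subgroups) rather than by the abstract ``least $C$ with $\seq{x}\subseteq\seq{y}\vee C$'' description you propose; the latter would require an additional existence argument that the explicit formula sidesteps. Similarly for~(4), once one has such a formula on $\Csc G$, one can push it forward directly rather than appeal to a general transfer principle. But since the present paper does not reproduce any of these arguments, your citation-based treatment matches what the paper does.
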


Let us recall the construction of the $\Idc$-com\-mu\-ta\-tive diagram~$\vec{A}$, represented in Figure~\ref{Fig:DiagA}, introduced in Wehrung~\cite{Ceva}.
The indexing poset of our counterexample diagrams will be $P\eqdef\Pow[3]$ (cf. Section~\ref{S:Basic}).
Denote by~$A_{123}$ the Abelian \lgrp\ defined by the generators~$a$, $a'$, $b$, $c$ subjected to the relations $0\leq a\leq a'\leq 2a$, $0\leq b$, and~$0\leq c$.
For each $p\in P$, $A_p$ denotes the $\ell$-sub\-group of~$A_{123}$ generated by~$\nu(p)$ where $\nu(12)\eqdef\set{a,b}$, $\nu(13)\eqdef\set{a',c}$, $\nu(23)\eqdef\set{b,c}$, $\nu(1)\eqdef\set{a}$, $\nu(2)\eqdef\set{b}$, $\nu(3)\eqdef\set{c}$, $\nu(\es)\eqdef\es$.
For $p\subseteq q$ in~$P$, $\vec{A}(p,q)$ consists of the inclusion map, unless $p=1$ and $q=13$, in which case~$\vec{A}(p,q)$ consists of the map sending~$a$ to~$a'$, or $p=1$ and $q=123$, in which case~$\vec{A}(p,q)$ consists of the two maps sending~$a$ to either~$a$ or~$a'$.
On the diagram, we highlight the canonical set of generators of each~$A_p$\,, for example~$A_{12}(a,b)$, $A_{123}(a,a',b,c)$, and so on.

\begin{figure}[htb]
\begin{tikzcd}
\centering
& A_{123}(a,a',b,c) &\\
A_{12}(a,b)\ar[ur] &
A_{13}(a',c)\ar[u] &
A_{23}(b,c)\ar[ul]
&&\\
A_1(a)
\arrow[u]
\arrow[ur]
& A_2(b)
\arrow[ul,crossing over]
& A_3(c)
\arrow[ul]
\arrow[u]
\arrow[from=l,u,crossing over] \\
& A_{\es}=\set{0}\arrow[ul]\arrow[u]\arrow[ur] &
\end{tikzcd}
\caption{The non-com\-mu\-ta\-tive diagram~$\vec{A}$}
\label{Fig:DiagA}
\end{figure}

\begin{notation}\label{Not:cA(gq)}
For any infinite regular cardinal~$\gq$ and any diagram~$\vec{G}$ in the category~$\lGrp$ of all \lgrp{s} with \lhom{s}, we denote by~$\cA(\gq,\vec{G})$ the smallest subcategory of~$\lGrp$, containing all objects and arrows of~$\vec{G}$, and closed under products and under colimits indexed by~$\gl$, within~$\lGrp$, whenever $\gl\geq\gq$ is regular.
Also, we denote by~$\Cev$ the class of all Cevian distributive lattices with zero.
\end{notation}

Of course, every member of~$\cA(\gq,\vec{A})$ is an Abelian \lgrp.
Moreover, if $\gq>\nobreak\go$\,, then every member of~$\cA(\gq,\vec{A})$ is Ar\-chi\-me\-dean (because every object in~$\vec{A}$ is Ar\-chi\-me\-dean and the class of all Ar\-chi\-me\-dean \lgrp{s} is closed both under products and under all colimits indexed by uncountable regular cardinals).

We are now reaching this section's main result.

\begin{theorem}\label{T:NotCev}
For all infinite regular cardinals~$\gq$ and~$\gl$ with $\gq\leq\gl$, there exists a functor~$\gD$, from~$\Powi(\gl^{+2})$ to the category of all distributive lattices with zero with $\scL_{\infty\gl}$-el\-e\-men\-tary embeddings, satisfying the following statements:
\begin{enumerater}
\item\label{IncrUnion}
$\gD$ is $\gl$-continuous;

\item\label{smallDxirep}
For every $\gl^+$-small subset~$X$ of~$\gl^{+2}$, $\gD(X)$ belongs to~$\Idc\pI{\cA(\gq,\vec{A})}$;

\item\label{Dgqnotrep}
$\gD(\gl^{+2})$ is not Cevian;

\item\label{Dgqsmall}
$\card\gD(X)\leq\exp_2(\us{\gl})+\card{X}$ whenever $X\subseteq\gl^{+2}$.

\end{enumerater}
In particular, the pair
$\pII{\Idc\pI{\cA(\gq,\vec{A})},\Cev}$ is anti-el\-e\-men\-tary.
\end{theorem}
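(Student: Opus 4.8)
The plan is to build the functor $\gD$ by composing the standard-lifter machinery with the $\Idc$-condensate construction, exactly along the lines sketched in the introduction, and then to read off anti-elementarity from Definition~\ref{D:AntiElt}. First I would set $P\eqdef\Pow[3]$, which is a finite \ajs\ with zero of order-dimension~$3$, hence a conditional DCPO in which every bounded chain is finite. Take $\mu\eqdef\gl$; the \lgrp\ category $\lGrp$ (and its subcategory $\cA(\gq,\vec A)$) has all products and all $\gl$-directed colimits by Notation~\ref{Not:cA(gq)}, so the Standing Hypothesis of Section~\ref{S:TensNorNeu} is met with $\cS\eqdef\cA(\gq,\vec A)$, $\cT\eqdef\DLatz$, $\Phi\eqdef\Idc$, and $\vec S\eqdef\vec A$, which is $\Idc$-com\-mu\-ta\-tive by Wehrung~\cite{Ceva}. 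Since $\card{\Pow[3]}=8$ is finite, Corollary~\ref{C:355Larder} gives that for every $\gk\geq\gl^{+2}$ the pair $(\Pow[3]\seq\gk,\partial)$ is a standard $\gl$-lifter of $\Pow[3]$; in particular, setting $\gk\eqdef\gl^{+2}$ and using the standard-lifter notation $P\seq{U}$ for $U\subseteq\gk$, Lemma~\ref{L:P(f,gS)} turns any one-to-one map $U\rightarrowtail U'$ of subsets of $\gk$ into a morphism $\xF(\Pow[3]\seq U)\to\xF(\Pow[3]\seq{U'})$ in $\Bool_{\Pow[3]}$. Composing with the $\gl$-continuous functor ${}_{-}\otlF\vec A$ (Definition~\ref{D:AotmFS}, with $\gl=\mu$) defines $\gD(U)\eqdef\xF(\Pow[3]\seq U)\otlF\vec A$ on objects and morphisms of $\Powi(\gk)$; $\gl$-continuity of ${}_{-}\otlF\vec A$ and the fact that $U\mapsto\xF(\Pow[3]\seq U)$ preserves $\gl$-directed colimits give statement~\eqref{IncrUnion}. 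That $\gD$ lands in $\DLatz$ with $\scL_{\infty\gl}$-el\-e\-men\-tary embeddings is exactly Proposition~\ref{P:EltEq} applied to the $\gl$-continuous functor $\gD\colon\Powi(\gl^{+2})\to\DLatz$ (note $\DLatz$ is a category of models of a finitary first-order language, so arity $<\gl$ is automatic), provided we know each transition map is an $\scL_{\infty\gl}$-el\-e\-men\-tary embedding on the large-domain part; on the small-domain part one patches with the identity or argues directly, so the cleanest route is to record $\gD$ as $\gl$-continuous and invoke Proposition~\ref{P:EltEq}.

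Next I would establish the three remaining clauses. For~\eqref{smallDxirep}: if $\card U<\gl^+$, i.e. $\card U\leq\gl$, then $\xF(\Pow[3]\seq U)$ is a $\gl$-pre\-sentable $\Pow[3]$-scaled Boolean algebra (its underlying Boolean algebra has cardinality at most $\gl$ since $\Pow[3]\seq U$ does, and one checks $\gS_{\bA}$ is $\gl$-small via Lemma~\ref{L:glPresBoolP}\eqref{gSbAcof}), so by Definition~\ref{D:AotmFS} and Lemma~\ref{L:otmFexists} one has $\gD(U)=\Idc(\xF(\Pow[3]\seq U)\bt\vec A)$; the box condensate $\xF(\Pow[3]\seq U)\bt\vec A$ is a product of objects of $\vec A$, hence an object of $\cA(\gq,\vec A)$, so $\gD(U)\in\Idc(\cA(\gq,\vec A))$. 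For~\eqref{Dgqsmall}: bound $\card{\Pow[3]\seq U}$ by roughly $\card U+\go$, bound the free Boolean algebra $\rF(\Pow[3]\seq U)$ accordingly, note box condensates over $\gl$-small index sets are products of at most $2^{\card U + \go}$ copies of fixed countable \lgrp{s} hence of cardinality at most $\exp_2$ of something $\leq\gl$ when $\card U\leq\gl$, apply $\Idc$ which does not increase cardinality, and then pass to the general $U\subseteq\gl^{+2}$ via the $\gl$-directed colimit to get the stated bound $\exp_2(\us\gl)+\card X$. For~\eqref{Dgqnotrep}: this is where the extended Armature Lemma enters. Suppose for contradiction that $\gD(\gl^{+2})$ were Cevian; since $\Pow[3]\seq{\gl^{+2}}$ is a standard $\gl$-lifter of $\Pow[3]$, Lemma~\ref{L:ExtArm} (with $\cA\eqdef\cA(\gq,\vec A)$, $\Phi\eqdef\Idc$, and the $\bX$-indexed diagram built from subobjects of a Cevian presentation of $\gD(\gl^{+2})$, using that $\Cev$-membership is inherited by $\gl$-pre\-sentable pieces, i.e. the "$S_{\bx}$ weakly $\gl$-pre\-sentable" hypothesis) would produce an isotone section $\gs\colon\Pow[3]\hookrightarrow\bX$ yielding a natural transformation from $\vec S\gs$ onto $\Idc\vec A$, which after composing with $\pi_{\gs(p)}^X\otlF\vec A$ forces $\Idc\vec A$ — i.e. the image diagram $\vecm{\Idc A_p}{p\in\Pow[3]}$ — to factor through a commutative diagram landing in Cevian lattices; but $\Idc\vec A$ is provably not isomorphic to $\Idc\vec X$ for any commutative $\vec X$ (this is the non-rep\-re\-sentabil\-ity result of Wehrung~\cite{Ceva} underlying Proposition~\ref{P:FewCevians}(2)), a contradiction. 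Concretely the contradiction is organized exactly as in \cite{Larder}: one shows the arrow $1\to 13$ versus $1\to 13$ composed along two routes in $\Pow[3]$ must be sent by $\Idc$ to distinct maps, which is incompatible with commutativity forced by $\gs$.

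Finally, the anti-elementarity of $\pII{\Idc(\cA(\gq,\vec A)),\Cev}$ is immediate from the above: given any cardinal $\gq_0$, choose a regular $\gl\geq\gq_0,\gq$, set $\gk\eqdef\gl^{+2}$, and note $\gl<\gk$ with $\gl$ regular; the functor $\gC\eqdef\gD\colon\Powi(\gk)\to\DLatz$ preserves $\gl$-directed colimits by~\eqref{IncrUnion}, $\gC(\gl)=\gD(\gl)\in\Idc(\cA(\gq,\vec A))$ by~\eqref{smallDxirep} (as $\card\gl=\gl<\gl^+$), and $\gC(\gk)=\gD(\gl^{+2})\notin\Cev$ by~\eqref{Dgqnotrep}; this is precisely Definition~\ref{D:AntiElt} for the pair $(\Idc(\cA(\gq,\vec A)),\Cev)$.

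I expect the main obstacle to be clause~\eqref{Dgqnotrep}, specifically the verification that the hypotheses of the Extended Armature Lemma are genuinely met by a hypothetical Cevian structure on $\gD(\gl^{+2})$ — one must produce the $\bX$-indexed commutative diagram $\vecm{(S_{\bx},\gf_{\bx})}{}$ in $\cS/S$ with the weak $\gl$-pre\-sentability of the $S_{\bx}$, which requires knowing that the class $\Cev$ (or rather the relevant "coordinatizing" data for a Cevian operation) is closed under the $\gl$-directed colimits and has enough small pieces; this is the step where one leans hardest on the fine structure established in Wehrung~\cite{Ceva}, and transcribing it faithfully from the $\otimes$-setting of \cite{Larder} to the $\otlF$-setting is the delicate part. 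The cardinality bookkeeping in~\eqref{Dgqsmall} is routine but must be done carefully enough to land $\exp_2$ rather than $\exp_3$.
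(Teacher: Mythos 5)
Your overall architecture is the paper's: the condensate functor $U\mapsto\xF(\Pow[3]\seq{U})\otlF\vec{A}$ over the standard $\gl$-lifter $\Pow[3]\seq{\gl^{+2}}$ from Corollary~\ref{C:355Larder}, with the Extended Armature Lemma plus the unliftability results of \cite{Ceva} driving clause~\eqref{Dgqnotrep}. But there are two genuine gaps. First, clause~\eqref{smallDxirep} at the boundary case $\card X=\gl$: you claim that $\xF(\Pow[3]\seq{U})$ is $\gl$-pre\-sentable whenever $\card U\leq\gl$, but by Lemma~\ref{L:glPresBoolP} and Corollary~\ref{C:glPresBoolP} $\gl$-pre\-sentability forces the underlying Boolean algebra to be $\gl$-small; when $\card U=\gl$ the algebra has $\gl$ generators and, $\gl$ being regular, no $\gl$-small family of finite subalgebras is cofinal in $(\gS_{\bA},\sqsubseteq)$, so the algebra is not (even weakly) $\gl$-pre\-sentable and your identification $\gD(U)=\Idc\pI{\xF(\Pow[3]\seq{U})\bt\vec{A}}$ breaks down exactly there. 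That case is not dispensable: it is the value $\gC(\gl)$ that must lie in $\Idc\pI{\cA(\gq,\vec{A})}$ for the anti-el\-e\-men\-tar\-ity conclusion. The paper closes it by writing $\xF(\Pow[3]\seq{\gl})=\varinjlim_{\xi<\gl}\xF(\Pow[3]\seq{\xi})$ with each term $\gl$-pre\-sentable and invoking the Boosting Lemma (Lemma~\ref{L:Boosting}), which rests on the Uniformization Lemma and on the closure of $\cA(\gq,\vec{A})$ under colimits indexed by~$\gl$; your proposal never appeals to this step, and without it clause~\eqref{smallDxirep} is not established.

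Second, the target category: Proposition~\ref{P:EltEq} yields $\scL_{\infty\gl}$-el\-e\-men\-tar\-ity of $\gC(f)$ only for $f\colon X\rightarrowtail Y$ with $\card X\geq\gl$ (by Lemma~\ref{L:SetPureFresh}, $\gl$-freshness fails for non-bijective $f$ with $\gl$-small domain), so with $\gD\eqdef\gC$ the morphisms between $\gl$-small subsets of $\gl^{+2}$ are not covered; ``patching with the identity'' is not a construction, and merely recording $\gl$-continuity does not help. The paper's fix is the shift $\gD(X)\eqdef\gC\pI{\gl\cup(\gl\overset{.}{+}X)}$, which makes every value be computed at a set of cardinality at least $\gl$ and simultaneously delivers clause~\eqref{Dgqsmall}. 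Finally, a lesser imprecision in clause~\eqref{Dgqnotrep}: the $\bX$-indexed diagram fed to Lemma~\ref{L:ExtArm} is not built from arbitrary ``$\gl$-pre\-sentable Cevian pieces'' but from the specific finite sublattices $S_x$ generated by chosen elements $b_u$ (with $\rho_u(b_u)=1$ at height~$1$) together with their Cevian differences $b_u\sd b_v$ (finiteness coming from distributivity and lower finiteness of the lifter), and the contradiction is obtained from \cite[Lemma~4.3]{Ceva} via the final steps of \cite[Theorem~7.2]{Ceva}, not from Proposition~\ref{P:FewCevians}; your sketch points in the right direction but would need this concrete construction to go through.
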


\begin{proof}
Consider again the poset $P\eqdef\Pow[3]$ and the $P$-indexed diagram~$\vec{A}$ introduced above.
We shall apply the Extended Armature Lemma (i.e., Lemma~\ref{L:ExtArm}), with $\gl=\mu$, to the category $\cA\eqdef\cA(\gq,\vec{A})$, the category $\cS:=\DLatz$ of all distributive lattices with zero and $0$-lattice homomorphisms, the restriction~$\Phi$ of the functor~$\Idc$ to~$\cA(\gq,\vec{A})$.
Since $\cA(\gq,\vec{A})$ is closed under products and since the diagram~$\vec{A}$ is $\Idc$-com\-mu\-ta\-tive, $\vec{A}$ is also $\Phi$-com\-mu\-ta\-tive.

Set $\gk\eqdef\gl^{+2}$.
It follows from Corollary~\ref{C:355Larder} that $K\eqdef P\seq{\gk}$ is a standard $\gl$-lifter of~$P$.
By Lemma~\ref{L:P(f,gS)}, the assignment $U\mapsto\xF(P\seq{U})$ extends naturally to a functor from $\Powi(\gk)$ to $\Bool_P$\,.
This functor sends every morphism $f\colon U\rightarrowtail V$ in~$\Powi(\gk)$ to~$\xF(P\seq{f})$ (cf. Lemma~\ref{L:P(f,gS)}).
By composing that functor with ${}_{-}\otlF\vec{A}$, we obtain a functor $\gC\colon\Powi(\gk)\to\nobreak\cS$, $U\mapsto\xF(P\seq{U})\otlF\vec{A}$.
Any directed colimit $U=\varinjlim_{i\in I}U_i$ in~$\Powi(\gk)$ (essentially a directed union) gives rise to a directed colimit
 \[
 \xF(P\seq{U})=\varinjlim\vecm{\xF(P\seq{U_i}}{i\in I}
 \quad\text{within }\Bool_P\,.
 \]
Hence, the functor $U\mapsto\xF(P\seq{U})$ is $\go$-continuous.
Since the functor~${}_{-}\otlF\vec{A}$ is $\gl$-continuous, it follows that the composite $\gC\eqdef\xF(P\seq{{}_{-}})\otlF\vec{A}$ is $\gl$-continuous.

\setcounter{claim}{0}

\begin{claim}\label{Cl:gCglCev}
For every $\gl^+$-small subset~$X$ of~$\gk$, the lattice~$\gC(X)$ belongs to the range of~$\Phi$ \pup{thus, by Proposition~\textup{\ref{P:FewCevians}}, it is Cevian}.
\end{claim}

\begin{cproof}
If $\card{X}<\gl$ then $\gC(X)=\Phi\pI{\xF(P\seq{X})\bt\vec{A}}$ belongs to the range of~$\Phi$.
Since~$\gC$ is a functor, the case where $\card{X}=\gl$ can be reduced to the case where $X=\gl$.
Since $\cA(\gq,\vec{A})$ is closed under directed colimits indexed by~$\gl$, those directed colimits are preserved by~$\Phi$, so we can apply the Boosting Lemma (viz. Lemma~\ref{L:Boosting}) to the $\gl$-pre\-sentable $P$-scaled Boolean algebras $\bB_{\xi}\eqdef\xF(P\seq{\xi})$ (for $\xi<\gl$), $\bB\eqdef\xF(P\seq{\gl})$, each~$\beta_{\xi}^{\eta}$ is the canonical morphism $f_{P\seq{\xi}}^{P\seq{\eta}}\colon\xF(P\seq{\xi})\to\xF(P\seq{\eta})$, each~$\beta_{\xi}$ is the canonical morphism $f_{P\seq{\xi}}^{P\seq{\gl}}\colon\xF(P\seq{\xi})\to\xF(P\seq{\gl})$.
Since $\bB=\varinjlim_{\xi<\gl}\bB_{\xi}$\,, it follows that $\gC(\gl)=\bB\otlF\vec{A}$ belongs to the range of~$\Phi$.
\end{cproof}

\begin{claim}\label{Cl:bBnotCev}
The lattice~$\gC(\gk)$ is not Cevian.
\end{claim}

\begin{cproof}
The proof of this claim is established by an argument similar to the one, in the proof of \cite[Theorem~7.2]{Ceva}, showing that the lattice denoted there by~$\bB$ is not Cevian.
We give an outline for convenience.

Suppose that~$\gC(\gk)$ has a Cevian operation~$\sd$ and denote by~$K_{(j)}$ the set of all elements of~$K$ of height~$j$, for $j\in\set{0,1,2,3}$.
For each $x\in K$, it follows from the normality of the morphism~$\pi_{x}^K\colon\xF(P\seq{\gk})\to\two[\partial x]$ (cf. Section~\ref{S:NormCov}), together with Proposition~\ref{P:otlFproj}, that the morphism $\rho_{x}\eqdef\pi_{x}^K\otlF\vec{A}\colon\gC(\gk)\to\Phi(A_{\partial x})$ is a $\gl$-extended $\Phi$-pro\-jec\-tion.
Since every $\Phi$-pro\-jec\-tion is (obviously) surjective, so is every $\gl$-extended $\Phi$-pro\-jec\-tion, and so~$\rho_{x}$ is surjective.
In particular, if $x\in K_{(1)}$\,, then $\Phi(A_{\partial x})=\two$, so we may pick $b_{x}\in\gC(\gk)$ such that $\rho_{x}(b_{x})=1$;
set $S_{x}\eqdef\set{0,b_{x}}$.
Further, if $x\in K_{(2)}\cup K_{(3)}$, denote by~$S_{x}$ the sublattice of~$\gC(\gk)$ generated by
 \[
 \setm{b_{u}}{u\in K_{(1)}\dnw x}\cup
 \setm{b_{u}\sd b_{v}}{u,v\in K_{(1)}\dnw x}\,.
 \]
Since~$K$ is lower finite and~$\gC(\gk)$ is distributive, it follows that~$S_{x}$ is finite.
Denote by $\gf_{x}\colon S_{x}\hookrightarrow\gC(\gk)$ the inclusion map, and, for $x\subseteq y$ in~$K$, denote by $\gf_{x}^{y}\colon S_{x}\hookrightarrow S_{y}$ the inclusion map.
By applying Lemma~\ref{L:ExtArm} with $\mu:=\gl$, we get an isotone section $\gs\colon P\hookrightarrow K$ of~$\partial$ such that the family $\vec{\chi}\eqdef\vecm{\chi_p}{p\in P}$, with each $\chi_p=\rho_{\gs(p)}\res_{S_{\gs(p)}}$\,, is a natural transformation from~$\vec{S}\gs$ to~$\Phi\vec{A}$, with $\chi_p(b_{\gs(p)})=1$ whenever~$p$ is an atom of~$P$.
However, the last stages of the proof of \cite[Theorem~7.2]{Ceva} show that the existence of such a natural transformation contradicts \cite[Lemma~4.3]{Ceva}.
\end{cproof}

Now denote by~$\overset{.}{+}$ the usual ordinal addition and set $\gD(X)\eqdef\gC(\gl\cup(\gl\overset{.}{+}X)$ (where $\gl\overset{.}{+}X\eqdef\setm{\gl\overset{.}{+}\xi}{\xi\in X}$) whenever $X\subseteq\gk$.
Extend~$\gD$ to a functor from~$\Powi(\gk)$ to the category of all distributive lattices with zero, in the natural way.
Since the cardinality of $\gl\cup(\gl\overset{.}{+}X)$ is always  greater than or equal to~$\gl$, it follows from Proposition~\ref{P:EltEq} that~$\gD$ sends morphisms in~$\Powi(\gk)$ to $\scL_{\infty\gl}$-el\-e\-men\-tary embeddings.
Moreover, \eqref{smallDxirep} follows from Claim~\ref{Cl:gCglCev} whereas~\eqref{Dgqnotrep} follows from Claim~\ref{Cl:bBnotCev}.
Since~$\gC$ is $\gl$-continuous, so is~$\gD$; that is, \eqref{IncrUnion} holds.

Finally, each~$\gC(Z)$, where $Z\in[\gk]^{{<}\gl}$, has cardinality bounded above by~$\exp_2(\us{\gl})$.
By elementary cardinal arithmetic, it follows that for every $X\subseteq\gk$,
 \[
 \card\Delta(X)\leq\sum
 \Vecm{\card\gC(Z)}{Z\in[\gl\cup(\gl\overset{.}{+}X)]^{{<}\gl}}\leq
 \exp_2(\us{\gl})+\card X\,;
 \]
that is, \eqref{Dgqsmall} holds.
\end{proof}

\begin{corollary}\label{C:NotCev.5}
Let~$\gq$ be an infinite regular cardinal and let~$\cG$ be a class of \lgrp{s} containing $\cA(\gq,\vec{A})$.
Then~$\Csc(\cG)$ is anti-el\-e\-men\-tary.
\end{corollary}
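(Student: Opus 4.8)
The plan is to read off Corollary~\ref{C:NotCev.5} from Theorem~\ref{T:NotCev}, which already establishes that the pair $\pII{\Idc\pI{\cA(\gq,\vec{A})},\Cev}$ is anti-el\-e\-men\-tary. By the observation recorded right after Definition~\ref{D:AntiElt}---any class sandwiched between the two components of an anti-el\-e\-men\-tary pair is itself anti-el\-e\-men\-tary---it suffices to prove the two inclusions $\Idc\pI{\cA(\gq,\vec{A})}\subseteq\Csc(\cG)\subseteq\Cev$.

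For the left-hand inclusion I would first record the elementary fact that in an \emph{Abelian} \lgrp~$G$ every order-convex $\ell$-sub\-group is normal, hence is an $\ell$-ideal; consequently the convex $\ell$-sub\-group and the $\ell$-ideal generated by a given finite subset of~$G$ coincide, so that $\Csc{G}=\Idc{G}$ as distributive lattices with zero. As noted in the text just before Theorem~\ref{T:NotCev}, every object of~$\cA(\gq,\vec{A})$ is an Abelian \lgrp; hence $\Idc\pI{\cA(\gq,\vec{A})}=\Csc\pI{\cA(\gq,\vec{A})}$, and this is contained in~$\Csc(\cG)$ since $\cA(\gq,\vec{A})\subseteq\cG$ by hypothesis. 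For the right-hand inclusion I would invoke Proposition~\ref{P:FewCevians}(3): for \emph{every} \lgrp~$H$ the lattice~$\Csc{H}$ is Cevian; since being Cevian is invariant under lattice isomorphism, every member of~$\Csc(\cG)$ is a Cevian distributive lattice with zero, i.e.\ it lies in~$\Cev$.

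Once the two inclusions are in place, the monotonicity remark gives that $\Csc(\cG)$ is anti-el\-e\-men\-tary. Unwound, the witnessing functor is exactly the $\gl$-continuous functor $\gD\colon\Powi(\gl^{+2})\to\DLatz$ furnished by Theorem~\ref{T:NotCev} (for $\gl$ a regular cardinal at least as large as the prescribed cardinal and as~$\gq$): it satisfies $\gD(\gl)\in\Idc\pI{\cA(\gq,\vec{A})}\subseteq\Csc(\cG)$ while $\gD(\gl^{+2})\notin\Cev\supseteq\Csc(\cG)$. I do not expect any genuine obstacle here: the only content beyond citations is the identity $\Csc{G}=\Idc{G}$ for Abelian~$G$, which is routine since normality is automatic in the Abelian case, together with the already-noted fact that the members of~$\cA(\gq,\vec{A})$ are Abelian.
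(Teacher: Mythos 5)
Your proposal is correct and matches the paper's (implicit) argument: the corollary is exactly the sandwich consequence of Theorem~\ref{T:NotCev}, using that the members of $\cA(\gq,\vec{A})$ are Abelian, so that $\Idc G=\Csc G$ for each of them and hence $\Idc\pI{\cA(\gq,\vec{A})}\subseteq\Csc(\cG)$, while $\Csc(\cG)\subseteq\Cev$ by Proposition~\ref{P:FewCevians}(iii). The auxiliary facts you supply (normality of convex $\ell$-subgroups in the Abelian case, invariance of the classes under isomorphism, and the remark following Definition~\ref{D:AntiElt}) are precisely what is needed, so there is no gap.
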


Since every member of~$\cA(\aleph_1,\vec{A})$ is Ar\-chi\-me\-dean, we obtain:

\begin{corollary}\label{C:NotCev.6}
Let~$\cG$ be a class of \lgrp{s} containing all Ar\-chi\-me\-dean \lgrp{s}.
Then~$\Csc(\cG)$ is anti-el\-e\-men\-tary.
\end{corollary}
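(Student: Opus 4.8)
The plan is to reduce Corollary~\ref{C:NotCev.6} to Corollary~\ref{C:NotCev.5}, and --- since the excerpt states the latter without proof --- also to indicate how Corollary~\ref{C:NotCev.5} follows from Theorem~\ref{T:NotCev} by a sandwiching argument. For the reduction, apply Corollary~\ref{C:NotCev.5} with $\gq\eqdef\aleph_1$: as recorded right after Notation~\ref{Not:cA(gq)}, since $\aleph_1>\go$ every member of $\cA(\aleph_1,\vec{A})$ is Archimedean, so the assumption that $\cG$ contains all Archimedean \lgrp s forces $\cA(\aleph_1,\vec{A})\subseteq\cG$, which is exactly the hypothesis needed by Corollary~\ref{C:NotCev.5}. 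Hence $\Csc(\cG)$ is anti-el\-e\-men\-tary.

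For Corollary~\ref{C:NotCev.5} itself, the key observation is that every object of $\cA(\gq,\vec{A})$ is an Abelian \lgrp\ (noted after Notation~\ref{Not:cA(gq)}), so in each such group every subgroup is normal; thus its order-convex $\ell$-sub\-groups are precisely its $\ell$-ideals, and likewise for the finitely generated ones. Consequently $\Csc$ and $\Idc$ coincide object-wise on $\cA(\gq,\vec{A})$, whence
\[
\Idc\pI{\cA(\gq,\vec{A})}=\Csc\pI{\cA(\gq,\vec{A})}\subseteq\Csc(\cG)\,,
\]
the inclusion holding because $\cA(\gq,\vec{A})\subseteq\cG$. On the other side, Proposition~\ref{P:FewCevians}(3) says that $\Csc(G)$ is a Cevian distributive lattice with zero for every \lgrp\ $G$, so $\Csc(\cG)\subseteq\Cev$. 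Therefore $\Idc\pI{\cA(\gq,\vec{A})}\subseteq\Csc(\cG)\subseteq\Cev$. By the last assertion of Theorem~\ref{T:NotCev}, the pair $\pII{\Idc\pI{\cA(\gq,\vec{A})},\Cev}$ is anti-el\-e\-men\-tary; by the remark following Definition~\ref{D:AntiElt} --- every class lying between the two members of an anti-el\-e\-men\-tary pair is anti-el\-e\-men\-tary --- it follows that $\Csc(\cG)$ is anti-el\-e\-men\-tary.

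I do not anticipate a real obstacle: the argument is essentially bookkeeping, with all the genuine content packed into Theorem~\ref{T:NotCev} (and behind it the Extended Armature Lemma and the non-Cevian example of Wehrung~\cite{Ceva}). The two points that need a moment's care are: (i) upgrading the lower containment to the object-wise equality $\Idc(\cA(\gq,\vec{A}))=\Csc(\cA(\gq,\vec{A}))$, which is what lets one feed $\Csc(\cG)$ into the sandwiching remark; and (ii) checking that the conclusion of Theorem~\ref{T:NotCev} --- stated for a specific $\gl\ge\gq$ but with $\gl$ free --- really does deliver anti-el\-e\-men\-tar\-ity in the sense of Definition~\ref{D:AntiElt}, i.e. that for any prescribed cardinal one may take $\gl$ regular and large, $\gk\eqdef\gl^{+2}$, with the functor $\gD$ of Theorem~\ref{T:NotCev} being $\gl$-continuous and satisfying $\gD(\gl)\in\Idc(\cA(\gq,\vec{A}))$ (as $\gl$ is $\gl^+$-small) and $\gD(\gl^{+2})\notin\Cev$.
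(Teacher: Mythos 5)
Your proposal is correct and follows the paper's own route: the paper derives Corollary~\ref{C:NotCev.6} from Corollary~\ref{C:NotCev.5} with $\gq=\aleph_1$ precisely because every member of $\cA(\aleph_1,\vec{A})$ is Archimedean, and the sandwich $\Idc\pI{\cA(\gq,\vec{A})}=\Csc\pI{\cA(\gq,\vec{A})}\subseteq\Csc(\cG)\subseteq\Cev$ combined with Theorem~\ref{T:NotCev} and the remark after Definition~\ref{D:AntiElt} is exactly the intended justification of Corollary~\ref{C:NotCev.5}.
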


\begin{corollary}\label{C:NotCev1}
Let~$\gq$ be an infinite regular cardinal and let~$\cG$ be a class of representable \lgrp{s} containing $\cA(\gq,\vec{A})$.
Then~$\Idc(\cG)$ is anti-el\-e\-men\-tary.
\end{corollary}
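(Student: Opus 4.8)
The plan is to obtain Corollary~\ref{C:NotCev1} as a purely formal consequence of Theorem~\ref{T:NotCev}, by means of the monotonicity principle for anti-el\-e\-men\-tar\-ity recorded just after Definition~\ref{D:AntiElt}: if a pair $(\cC_0,\cC_1)$ with $\cC_0\subseteq\cC_1$ is anti-el\-e\-men\-tary, then so is every intermediate class $\cC$ with $\cC_0\subseteq\cC\subseteq\cC_1$. Thus it suffices to sandwich $\Idc(\cG)$ between the two components of the anti-el\-e\-men\-tary pair produced by Theorem~\ref{T:NotCev}.

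First I would establish the two inclusions $\Idc\pI{\cA(\gq,\vec{A})}\subseteq\Idc(\cG)\subseteq\Cev$. The left-hand inclusion is immediate: $\cA(\gq,\vec{A})\subseteq\cG$ by hypothesis, and $\Idc$, read as an operation on classes of objects, is monotone. For the right-hand inclusion, let $G\in\cG$; by hypothesis $G$ is representable, so the fourth item of Proposition~\ref{P:FewCevians} applies and tells us that $\Idc{G}$ is a lattice, in fact a homomorphic image of $\Csc{G}$, hence Cevian; that is, $\Idc{G}\in\Cev$. Therefore $\Idc(\cG)\subseteq\Cev$.

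Next I would invoke the last assertion of Theorem~\ref{T:NotCev}, namely that the pair $\pII{\Idc\pI{\cA(\gq,\vec{A})},\Cev}$ is anti-el\-e\-men\-tary, and conclude by the monotonicity remark above that $\Idc(\cG)$ is anti-el\-e\-men\-tary, which is the claim. If one prefers to exhibit a witness explicitly rather than cite the pair version: given any cardinal as in Definition~\ref{D:AntiElt}, choose a regular $\gl$ which is at least that cardinal and also at least~$\gq$, and set $\gk\eqdef\gl^{+2}$; the functor $\gD\colon\Powi(\gk)\to\DLatz$ furnished by Theorem~\ref{T:NotCev} is $\gl$-continuous, sends every morphism to an $\scL_{\infty\gl}$-el\-e\-men\-tary embedding, satisfies $\gD(\gl)\in\Idc\pI{\cA(\gq,\vec{A})}\subseteq\Idc(\cG)$ by clause~\eqref{smallDxirep} (here $\gl\subseteq\gk$ is $\gl^+$-small, having cardinality $\gl<\gl^+$), and has $\gD(\gk)$ non-Cevian by clause~\eqref{Dgqnotrep}, hence $\gD(\gk)\notin\Cev$ and a fortiori $\gD(\gk)\notin\Idc(\cG)$; so $\gD$ witnesses the anti-el\-e\-men\-tar\-ity of $\Idc(\cG)$.

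All the genuine work has already been done in Theorem~\ref{T:NotCev} (the construction of the diagram~$\vec{A}$, the application of the Extended Armature Lemma, and the non-Cevian conclusion); within the proof of this corollary the only point requiring any care is the inclusion $\Idc(\cG)\subseteq\Cev$, which rests entirely on the representability hypothesis and Proposition~\ref{P:FewCevians}. I therefore expect no real obstacle here.
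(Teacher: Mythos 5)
Your proposal is correct and coincides with the intended derivation: the paper states the corollary without proof precisely because it follows from the final assertion of Theorem~\ref{T:NotCev} via the sandwich $\Idc\pI{\cA(\gq,\vec{A})}\subseteq\Idc(\cG)\subseteq\Cev$ (the second inclusion by Proposition~\ref{P:FewCevians} and representability) together with the monotonicity remark after Definition~\ref{D:AntiElt}. Your explicit-witness variant using $\gD$ with $\gl\geq\gq$ and $\gk=\gl^{+2}$ is the same argument spelled out and is likewise fine.
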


For a further extension of that result, see Theorem~\ref{T:NonIdc}.

Again since every member of~$\cA(\aleph_1,\vec{A})$ is Ar\-chi\-me\-dean, we obtain:

\begin{corollary}\label{C:NotCev1.5}
Let~$\cG$ be a class of representable \lgrp{s} containing all Ar\-chi\-me\-dean \lgrp{s}.
Then~$\Idc(\cG)$ is anti-el\-e\-men\-tary.
\end{corollary}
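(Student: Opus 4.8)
The plan is to obtain Corollary~\ref{C:NotCev1.5} as the special case $\gq=\aleph_1$ of Corollary~\ref{C:NotCev1}. So the only thing I would need to verify is that, with this choice of~$\gq$, the hypotheses of Corollary~\ref{C:NotCev1} are met: the class~$\cG$ must consist of representable \lgrp{s} — which is part of the present hypothesis — and it must contain $\cA(\aleph_1,\vec{A})$.

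For the containment $\cA(\aleph_1,\vec{A})\subseteq\cG$ I would argue that every member of $\cA(\aleph_1,\vec{A})$ is Archimedean. Indeed, as already recorded in the text following Notation~\ref{Not:cA(gq)}, every object of the diagram~$\vec{A}$ is Archimedean, and the class of all Archimedean \lgrp{s} is closed under arbitrary products and under all colimits indexed by uncountable regular cardinals; since $\cA(\aleph_1,\vec{A})$ is by definition the least subcategory of~$\lGrp$ containing the objects and arrows of~$\vec{A}$ and closed under exactly those operations, all of its members are Archimedean. As $\cG$ contains all Archimedean \lgrp{s} by hypothesis, $\cA(\aleph_1,\vec{A})\subseteq\cG$ follows, and Corollary~\ref{C:NotCev1} applied with $\gq=\aleph_1$ yields that $\Idc(\cG)$ is anti-elementary.

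One could equally read the conclusion off Theorem~\ref{T:NotCev} directly: for any regular $\gl\geq\aleph_1$ the functor~$\gD$ of that theorem satisfies $\gD(X)\in\Idc\pI{\cA(\aleph_1,\vec{A})}\subseteq\Idc(\cG)$ for every $\gl^+$-small $X\subseteq\gl^{+2}$, whereas $\gD(\gl^{+2})$ is not Cevian and hence — since $\Idc{H}$ is Cevian for every representable \lgrp{}~$H$ by Proposition~\ref{P:FewCevians} — lies outside $\Idc(\cG)$; together with the remark following Definition~\ref{D:AntiElt} this already exhibits $\gD$ as a witness of anti-elementarity of $\Idc(\cG)$. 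The main obstacle here is essentially cosmetic: all the substantive content — the construction of~$\gD$ via the Extended Armature Lemma and the non-Cevianness of $\gC(\gl^{+2})$ — is supplied by Theorem~\ref{T:NotCev}, and the one genuinely new input is the closure of the Archimedean class under $\aleph_1$-directed colimits, which reduces to a routine cofinality argument (a countable family of transition stages witnessing $nx\leq y$ for all~$n$ admits an upper bound in an $\aleph_1$-directed index set, so Archimedeanness passes to the colimit).
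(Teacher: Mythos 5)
Your proposal is correct and is essentially the paper's own argument: the paper derives Corollary~\ref{C:NotCev1.5} from Corollary~\ref{C:NotCev1} with $\gq=\aleph_1$, using precisely the observation (recorded after Notation~\ref{Not:cA(gq)}) that every member of $\cA(\aleph_1,\vec{A})$ is Archimedean, so that $\cA(\aleph_1,\vec{A})\subseteq\cG$. Your alternative reading directly off Theorem~\ref{T:NotCev} and your cofinality remark about Archimedeanness passing to colimits indexed by uncountable regular cardinals are consistent with, and add nothing beyond, what the paper already asserts.
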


In order to extend Corollary~\ref{C:NotCev1} to \lgrp{s} with unit, we shall use the following easy observation, involving the notations~$P^{\infty}$ and $G\lex H$ introduced in Section~\ref{S:Basic}.

\begin{lemma}\label{L:AdjUnit}
For any \lgrp~$G$, $\Idc(\ZZ\lex G)\cong(\Idc{G})^{\infty}$.\end{lemma}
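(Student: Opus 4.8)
The plan is to describe the lattice of \emph{all} $\ell$-ideals of $\ZZ\lex G$ explicitly, and then read off the sublattice of finitely generated (equivalently principal) ones. Conceptually, $\set{0}\times G$ is an $\ell$-ideal of $\ZZ\lex G$ with quotient $\ZZ$, and $\ZZ$ has no $\ell$-ideals besides $\set{0}$ and itself, so one expects $\Id(\ZZ\lex G)$ to be $\Id(G)$ with a new top adjoined. To make this precise, I would first record the routine facts: the map $g\mapsto(0,g)$ is an \lemb\ of $G$ onto $\set{0}\times G$, which is an $\ell$-ideal of $\ZZ\lex G$; from the description of the positive cone $\setm{(n,g)}{n\geq1,\text{ or }n=0\text{ and }g\geq0}$ of $\ZZ\lex G$ one checks that a subgroup $\set{0}\times J$ (with $J$ a subgroup of $G$) is an $\ell$-ideal of $\ZZ\lex G$ if{f} $J$ is an $\ell$-ideal of $G$ — order-convexity and the sublattice property transfer because joins in the lexicographic product act on the $\ZZ$-coordinate first — and that $\set{0}\times J$ is principal in $\ZZ\lex G$ if{f} $J$ is principal in $G$ (a generator must lie in $\set{0}\times G$, hence be some $(0,z)$, and then $\set{0}\times J=\seql{(0,z)}$ forces $J=\seql{z}$). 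Thus $g\mapsto(0,g)$ identifies the $\ell$-ideals of $G$ with the $\ell$-ideals of $\ZZ\lex G$ lying below $\set{0}\times G$, compatibly with being finitely generated.

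The one genuinely substantive step — and the only place where $\ZZ$, rather than $G$ itself, intervenes — is the claim that every $\ell$-ideal $I$ of $\ZZ\lex G$ with $I\not\subseteq\set{0}\times G$ equals $\ZZ\lex G$. To prove it I would pick $(n,g)\in I$ with $n\neq0$; since $I$ is an \lsgrp, $|(n,g)|\in I$, and $|(n,g)|$ has first coordinate $|n|\geq1$, say $|(n,g)|=(m,z)$ with $m\geq1$. Then $(2m,2z)=2\cdot(m,z)\in I$, and for every $w\in G$ both $0\leq(1,w)$ and $(1,w)\leq(2m,2z)$ hold, simply because the relevant first coordinates $1$ and $2m-1$ are $\geq1$; order-convexity of $I$ therefore yields $(1,w)\in I$. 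In particular $(1,0)\in I$, so $(n',g')=n'\cdot(1,0)+\bigl((1,g')-(1,0)\bigr)\in I$ for every $(n',g')$, i.e. $I=\ZZ\lex G$. Note $\ZZ\lex G=\seql{(1,0)}$ is itself principal.

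Finally I would assemble the pieces: by the claim together with the first paragraph, every principal (equivalently, finitely generated) $\ell$-ideal of $\ZZ\lex G$ is either $\ZZ\lex G$ or of the form $\set{0}\times J$ for a unique $J\in\Idc{G}$; hence $J\mapsto\set{0}\times J$, $\infty\mapsto\ZZ\lex G$, is a bijection from $(\Idc{G})^{\infty}$ onto $\Idc(\ZZ\lex G)$. It is an order-isomorphism, since $\set{0}\times J\subseteq\set{0}\times J'\Leftrightarrow J\subseteq J'$ while $\set{0}\times J\subsetneq\ZZ\lex G$ for all $J$; as both sides are lattices, it is a lattice isomorphism, which is the assertion. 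I do not expect a real obstacle here: the claim above is the crux but is short, and the rest is bookkeeping with the lexicographic cone; the only point needing a little care is verifying that $\set{0}\times J$ is an order-convex sublattice of $\ZZ\lex G$.
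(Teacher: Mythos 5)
Your proof is correct, and it supplies exactly the routine verification that the paper leaves implicit (the lemma is stated there as an ``easy observation'' without proof): the decisive dichotomy --- an $\ell$-ideal of $\ZZ\lex G$ either lies inside $\set{0}\times G$, where it is $\set{0}\times J$ for an $\ell$-ideal $J$ of $G$ compatibly with principality, or contains an element with nonzero first coordinate and is then forced, via $0\leq(1,w)\leq(2m,2z)$ and convexity, to be all of $\ZZ\lex G$ --- is precisely what is needed, and your bookkeeping with the lexicographic cone is accurate. One cosmetic remark: for a general \lgrp~$G$ the poset $\Idc{G}$ is only a \jzs\ rather than a lattice, but since your map is an order bijection it is an isomorphism for whatever joins and meets exist, so the stated conclusion is unaffected.
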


\begin{corollary}\label{C:NotCev2}
Let~$\gq$ be an infinite regular cardinal and let~$\cG$ be a class of representable \lgrp{s} containing $\setm{\ZZ\lex G}{G\in\cA(\gq,\vec{A})}$.
Then~$\Idc(\cG)$ is anti-el\-e\-men\-tary.
\end{corollary}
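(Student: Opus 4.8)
The plan is to reduce the statement to Theorem~\ref{T:NotCev} by post-composing the functor~$\gD$ produced there with the ``adjoin a new top element'' functor, reading off the positive part from Lemma~\ref{L:AdjUnit} and the negative part from Proposition~\ref{P:FewCevians}. Concretely: fix a cardinal~$\vartheta$ as in Definition~\ref{D:AntiElt}, choose an infinite regular cardinal~$\gl\geq\gq$ with $\gl\geq\vartheta$, and set $\gk\eqdef\gl^{+2}$. Theorem~\ref{T:NotCev} (applied with this~$\gl$ and the given~$\gq$) yields a functor $\gD\colon\Powi(\gk)\to\DLatz$ into the category of distributive lattices with zero, which is $\gl$-continuous by~\eqref{IncrUnion}, satisfies $\gD(\gl)\in\Idc\pI{\cA(\gq,\vec{A})}$ by~\eqref{smallDxirep} (as $\gl$ is a $\gl^{+}$-small subset of~$\gk$), and $\gD(\gk)$ is not Cevian by~\eqref{Dgqnotrep}.

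Next I would introduce the functor $F\colon\DLatz\to\DLatz$ sending a distributive lattice with zero~$L$ to~$L^{\infty}$ and a $0$-lattice homomorphism~$f$ to the $0$-lattice homomorphism agreeing with~$f$ on~$L$ and fixing~$\infty$. Since directed colimits in~$\DLatz$ are directed unions, $F$ preserves all directed colimits, so $\gC\eqdef F\circ\gD\colon\Powi(\gk)\to\DLatz$ is again $\gl$-continuous, hence preserves all $\gl$-directed colimits, as required by Definition~\ref{D:AntiElt}. For the membership part, write $\gD(\gl)\cong\Idc{G}$ with $G\in\cA(\gq,\vec{A})$; then Lemma~\ref{L:AdjUnit} gives $\gC(\gl)=\gD(\gl)^{\infty}\cong(\Idc G)^{\infty}\cong\Idc(\ZZ\lex G)$, which lies in~$\Idc(\cG)$ because $\ZZ\lex G\in\cG$ by hypothesis.

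It remains to show $\gC(\gk)=\gD(\gk)^{\infty}\notin\Idc(\cG)$. Since every member of~$\cG$ is representable, Proposition~\ref{P:FewCevians} tells us that every member of~$\Idc(\cG)$ is Cevian, so it suffices to prove that $\gD(\gk)^{\infty}$ is not Cevian; this is the one step that needs an actual argument rather than bookkeeping, and I expect it to be the main (minor) obstacle. The key observation is that \emph{every ideal of a Cevian lattice is Cevian}: if~$\sd$ is a Cevian operation on a distributive lattice~$M$ with zero and~$L$ is an ideal of~$M$, then the operation $(a,b)\mapsto a\wedge(a\sd b)$ takes values in~$L$ (being below~$a$) and is easily checked to satisfy the three Cevian inequalities on~$L$, using only distributivity and the corresponding inequalities for~$\sd$. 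Applying this with $M=\gD(\gk)^{\infty}$ and its ideal $L=\gD(\gk)$, a hypothetical Cevian operation on~$\gD(\gk)^{\infty}$ would make~$\gD(\gk)$ Cevian, contradicting~\eqref{Dgqnotrep}. As~$\vartheta$ was arbitrary, this exhibits the witnessing functors required in Definition~\ref{D:AntiElt}, so $\Idc(\cG)$ is anti-el\-e\-men\-tary.
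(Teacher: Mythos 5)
Your proposal is correct and follows essentially the same route as the paper: compose the functor~$\gD$ of Theorem~\ref{T:NotCev} with the $(-)^{\infty}$ functor, use Lemma~\ref{L:AdjUnit} for membership of $\gC(\gl)$ in $\Idc(\cG)$ and Proposition~\ref{P:FewCevians} for the exclusion of $\gC(\gl^{+2})$. Your only addition is an explicit verification (via the ideal operation $(a,b)\mapsto a\wedge(a\sd b)$) of the step that non-Cevianness of $\gD(\gl^{+2})$ passes to $\gD(\gl^{+2})^{\infty}$, which the paper asserts without proof; that verification is correct.
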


\begin{proof}
Let~$\gl$ be an infinite regular cardinal with $\gl\geq\gq$ and let~$\gD$ be the functor, defined on~$\Powi(\gl^{+2})$, given by Theorem~\ref{T:NotCev}.
Denote by~$E$ the functor, from the category of all distributive lattices with zero to the category of all bounded distributive lattices, that sends any object~$D$ to~$D^{\infty}$ and any morphism~$f$ to its extension preserving~$\infty$.
The functor $\gD^{\infty}\eqdef E\circ\gD$ is $\gl$-continuous.
Since $\gD(\gl)\cong\Idc{A}$ for some $A\in\cA(\gq,\vec{A})$, it follows from Lemma~\ref{L:AdjUnit} that $\gD^{\infty}(\gl)\cong\Idc(\ZZ\lex A)$, with $\ZZ\lex A\in\cC$ by assumption.

Since $\gD(\gl^{+2})$ is not Cevian, neither is $\gD^{\infty}(\gl^{+2})=\gD(\gl^{+2})^{\infty}$.
On the other hand, since every member of~$\cC$ is representable, every member of~$\Idc(\cC)$ is Cevian (cf. Proposition~\ref{P:FewCevians}).
\end{proof}

We do not know whether Corollary~\ref{C:NotCev1.5} extends to Ar\-chi\-me\-dean \lgrp{s} with order-unit.
The problem is that the arrows in the diagram~$\vec{A}$ are not unit-preserving.

Since Mundici's well known category equivalence~\cite{Mund86}, between \emph{Abelian \lgrp{s} with unit} and \emph{MV-algebras}, preserves the concept of ideal, we thus obtain the following solution to the \emph{MV-spectrum problem} stated in Mundici \cite[Problem~2]{Mund2011}:

\begin{corollary}\label{C:NotMV}
The class of all Stone duals of spectra of MV-algebras \pup{or, equivalently, of all Stone duals of spectra of Abelian \lgrp{s} with order-unit} is anti-el\-e\-men\-tary.
\end{corollary}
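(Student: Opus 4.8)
The plan is to deduce Corollary~\ref{C:NotMV} directly from Corollary~\ref{C:NotCev1.5} (or, more precisely, from the version tailored to \lgrp{s} with order-unit) by transporting the anti-el\-e\-men\-tar\-ity statement across the Mundici equivalence. First I would recall that Mundici's functor $\Gamma$ gives an equivalence between the category of Abelian \lgrp{s} with order-unit (and unit-preserving \lhom{s}) and the category of MV-algebras (with MV-homomorphisms), and that under this equivalence the $\ell$-ideals of the \lgrp\ correspond bijectively, order-preservingly, to the ideals of the MV-algebra; hence the lattice $\Idc{G}$ of finitely generated $\ell$-ideals of $(G,u)$ is isomorphic to the lattice of finitely generated (= principal) ideals of $\Gamma(G,u)$, which is exactly the Stone dual of the MV-spectrum. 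So ``Stone duals of spectra of MV-algebras'' and ``lattices $\Idc{G}$ for $(G,u)$ an Abelian \lgrp\ with order-unit'' name the same class $\cC$ of distributive lattices with zero (up to isomorphism), and it suffices to show that this class is anti-el\-e\-men\-tary.

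The key step is to produce the witnessing functor. I would take the functor $\gD$ from Theorem~\ref{T:NotCev}, applied to $\cA(\gq,\vec{A})$ for $\gq=\aleph_1$ (so that all the \lgrp{s} involved are Archimedean, in particular representable), and then compose with the ``adjoin a unit'' construction as in the proof of Corollary~\ref{C:NotCev2}: concretely, use $\gD^{\infty}\eqdef E\circ\gD$, where $E$ sends $D\mapsto D^{\infty}$. By Lemma~\ref{L:AdjUnit}, $\gD^{\infty}(X)\cong\Idc(\ZZ\lex G_X)$ for a suitable $G_X\in\cA(\gq,\vec{A})$ whenever $X$ is $\gl^+$-small, and $\ZZ\lex G_X$ carries the order-unit $(1,0)$; hence $\gD^{\infty}(X)\in\cC$ for such $X$. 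On the other hand $\gD(\gl^{+2})$ is not Cevian (Theorem~\ref{T:NotCev}\eqref{Dgqnotrep}), hence neither is $\gD^{\infty}(\gl^{+2})=\gD(\gl^{+2})^{\infty}$ (adjoining a top element preserves failure of Cevianness, since a Cevian operation on $D^{\infty}$ restricts to one on $D$); but every member of $\cC$ is the $\Idc$ of a representable \lgrp\ and is therefore Cevian by Proposition~\ref{P:FewCevians}(iv). So $\gD^{\infty}(\gl^{+2})\notin\cC$. Finally $\gD^{\infty}$ is $\gl$-continuous (composition of the $\gl$-continuous $\gD$ with the colimit-preserving $E$) and, by Proposition~\ref{P:EltEq} applied as in Theorem~\ref{T:NotCev}, sends morphisms of $\Powi(\gl^{+2})$ to $\scL_{\infty\gl}$-el\-e\-men\-tary embeddings. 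Unwinding Definition~\ref{D:AntiElt} with $\gk=\gl^{+2}$, this exhibits $\cC$ as anti-el\-e\-men\-tary: for any cardinal $\gq'$, pick $\gl\geq\gq'$ regular, set $\gk=\gl^{+2}$, and take $\gC$ to be (a reindexing of) $\gD^{\infty}$, noting $\gD^{\infty}(\gl)\in\cC$ and $\gD^{\infty}(\gk)\notin\cC$.

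The main obstacle, such as it is, is bookkeeping rather than mathematics: one must make sure the Mundici equivalence really does identify the \emph{finitely generated} ideal lattices (not just arbitrary ideal lattices), that the order-unit structure on $\ZZ\lex G$ is compatible with what ``MV-spectrum'' requires, and that the parenthetical equivalence in the statement (``equivalently, of all Stone duals of spectra of Abelian \lgrp{s} with order-unit'') is exactly the content of transporting the Keimel-type duality $\Specr$-to-$\Idc$ through $\Gamma$. All of these are standard and are already implicitly used in the surrounding corollaries (notably Corollary~\ref{C:NotCev2}), so the proof reduces to assembling them. I would therefore write the proof of Corollary~\ref{C:NotMV} as essentially a one-paragraph remark: ``By Mundici~\cite{Mund86}, the class in question coincides with $\Idc(\cC)$ for $\cC$ the class of Abelian \lgrp{s} with order-unit; now argue exactly as in Corollary~\ref{C:NotCev2}, observing that the \lgrp{s} $\ZZ\lex G$ for $G\in\cA(\aleph_1,\vec{A})$ carry a natural order-unit and are representable, so $\gD^{\infty}$ witnesses anti-el\-e\-men\-tar\-ity.''
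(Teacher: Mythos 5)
Your proposal is correct and follows essentially the same route as the paper: the paper obtains Corollary~\ref{C:NotMV} by noting that Mundici's equivalence preserves ideals, so the class in question is $\Idc$ of the Abelian \lgrp{s} with order-unit, and then invoking Corollary~\ref{C:NotCev2} (whose proof is exactly your $\gD^{\infty}=E\circ\gD$ argument with $\ZZ\lex G$ supplying the order-unit and Proposition~\ref{P:FewCevians} supplying Cevianness of the members). Your only deviations are cosmetic: you cite Corollary~\ref{C:NotCev1.5} at first but in fact run the Corollary~\ref{C:NotCev2} argument, and your remark that a Cevian operation on $D^{\infty}$ ``restricts'' to $D$ needs the harmless adjustment $x\sd' y\eqdef x\wedge(x\sd y)$, a point the paper itself also leaves implicit.
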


In particular, the class of all Stone duals of spectra of MV-algebras is not closed under $\scL_{\infty\gl}$-el\-e\-men\-tary equivalence for any cardinal~$\gl$ (in particular, it is not definable by any class of~$\scL_{\infty\gl}$ sentences).
Mellor and Tressl~\cite{MelTre2012} proved an analogue result for Stone duals of \emph{real spectra} of com\-mu\-ta\-tive unital rings.
We do not know whether the stronger statement, that the class of all those lattices is anti-el\-e\-men\-tary, holds.

\section{Structures with permutable congruences}
\label{S:PermCon}

This section will be devoted to establishing anti-el\-e\-men\-tar\-ity of classes of finitely generated congruences of various congruence-permutable structures, such as modules, rings, \lgrp{s}.
Our argument will slightly deviate from the one of Section~\ref{S:Ceva}, in the sense that we will use the Extended CLL (viz. Lemma~\ref{L:ExtCLL}) rather than the Extended Armature Lemma (viz. Lemma~\ref{L:ExtArm}), thus assuming more global, easier stated versions of the unliftability of the diagram~$\vec{S}$ of Figure~\ref{Fig:CandS}, forcing us to include in the statement of Theorem~\ref{T:NonPermlgrp} the assumption that $\gl\geq\aleph_1$\,.

\subsection{Categorical settings for Section~\ref{S:PermCon}}\label{Su:CatSetPermCon}

Throughout Section~\ref{S:PermCon} we shall consider the category $\cS:=\SLatz$ of all \jzs{s} with \jzh{s}.
Following Wehrung \cite[Definition 7-3.4]{STA1-9}, we say that a \jzh\ $f\colon S\to\nobreak T$ of \jzs{s} is \emph{weakly distributive} if for all $s\in S$ and all $t_0,t_1\in T$, if $f(s)\leq t_0\vee t_1$\,, then there are $s_0,s_1\in S$ such that $s\leq s_0\vee s_1$ and each $f(s_i)\leq t_i$\,.
We shall denote by~$\cS^{\Rightarrow}$ (this section's double arrows) the subcategory of~$\cS$ consisting of the weakly distributive \jzh{s}.

The indexing poset for our crucial counterexample diagrams will again be $P\eqdef\Pow[3]$ (cf. Section~\ref{S:Basic}).
We shall denote by~$\vec{S}$ the $P$-indexed com\-mu\-ta\-tive diagram in~$\cS$ represented in Figure~\ref{Fig:CandS}, with the maps~$\be$, $\bs$, $\bp$ defined by $\be(x)\eqdef(x,x)$, $\bs(x,y)\eqdef(y,x)$, $\bp(x,y)\eqdef x\vee y$ whenever $x,y\in\two$.
Its origin can be traced back to T\r{u}ma and Wehrung~\cite{TuWe2001}, with a more complicated precursor of that diagram.

 \begin{figure}[htb]
 \centering
 \begin{tikzcd}
 & \two &\\
 \two^2\arrow[ur,"\bp"] & \two^2\arrow[u,"\bp"description] &
 \two^2\arrow[ul,"\bp"']\\
  \two^2\arrow[u,equal]\arrow[ur,equal]
 & \two^2\arrow[ul,"\bs"description,crossing over,near start] &
 \two^2\arrow[u,equal]\arrow[ul,equal]
  \arrow[from=l,u,equal,crossing over]\\
 & \two\arrow[ul,"\be",hook']\arrow[u,"\be",hook]
  \arrow[ur,"\be"',hook] &
 \end{tikzcd}
\caption{The com\-mu\-ta\-tive diagram~$\vec{S}$}\label{Fig:CandS}
\end{figure}

Let us recall the definition of the category~$\Metr$ introduced in Gillibert and Wehrung \cite[\S~5.1]{Larder}.
The objects of~$\Metr$ are the \emph{semilattice-metric spaces}, that is, the triples $\bA=(A,\gd_{\bA},\widetilde{A})$ such that~$A$ is a set, $\widetilde{A}$ is a \jzs, and $\gd_{\bA}\colon A\times A\to\widetilde{A}$ is a \emph{semilattice-valued distance}, that is,
 \[
 \gd_{\bA}(x,x)=0\,;\quad\gd_{\bA}(x,y)=\gd_{\bA}(y,x)\,;\quad
 \gd_{\bA}(x,z)\leq\gd_{\bA}(x,y)\vee\gd_{\bA}(y,z)
 \]
whenever $x,y,z\in A$;
we say that~$\bA$ \emph{is of type~$1$} if for all $x,y\in A$ and all $\ba,\bb\in\widetilde{A}$, if $\gd_{\bA}(x,y)\leq\ba\vee\bb$, then there exists $z\in A$ such that $\gd_{\bA}(x,z)\leq\ba$ and $\gd_{\bA}(z,y)\leq\nobreak\bb$.
For objects~$\bA$ and~$\bB$ of~$\Metr$, a \emph{morphism} from~$\bA$ to~$\bB$ is a pair $(f,\tilde{f})$, where $f\colon A\to B$ is a map and $\tilde{f}\colon\widetilde{A}\to\widetilde{B}$ is a \jzh, such that $\gd_{\bB}(f(x),f(y))=\tilde{f}\gd_{\bA}(x,y)$ whenever $x,y\in A$.

Throughout Section~\ref{S:PermCon} we shall consider the full subcategory~$\cB$ of~$\Metr$ whose objects are all semilattice-metric spaces~$\bA$ of type~$1$ such that the range of~$\gd_{\bA}$ generates~$\widetilde{A}$ as a \jzs\ --- we will say that it \emph{join-generates}~$\widetilde{A}$.
Moreover, we shall denote by~$\Psi\colon\cB\to\cS$, $\bA\mapsto\widetilde{A}$ the forgetful functor.

Recall the following observation from R\r{u}\v{z}i\v{c}ka \emph{et al.}~\cite{RTW}:

\begin{proposition}\label{P:ExofrngPsi}
The \jzs\ of all finitely generated congruences of any congruence-permutable \pup{universal} algebra belongs to the range of~$\Psi$.
In particular, the following \jzs{s} all belong to the range of~$\Psi$:
\begin{enumerater}
\item\label{Idclgrp}
the \jzs\ $\Idc{G}$ of all principal $\ell$-ideals of any \lgrp~$G$;

\item\label{IdcvNRR}
the \jzs\ $\Idc{R}$ of all finitely generated two-sided ideals of any ring~$R$;

\item\label{SubcM}
the \jzs\ $\Subc{M}$ of all finitely generated submodules of any right module~$M$.

\end{enumerater}
\end{proposition}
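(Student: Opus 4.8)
The plan is to produce, for an arbitrary congruence-permutable algebra~$A$, an explicit object~$\bA$ of the category~$\cB$ introduced in Section~\ref{S:PermCon} such that $\Psi(\bA)$ is \pup{up to isomorphism} the \jzs~$\Conc A$ of all finitely generated (equivalently, compact) congruences of~$A$. Concretely, I would set $\widetilde{A}\eqdef\Conc A$, and let $\gd_{\bA}\colon A\times A\to\widetilde{A}$ be the map sending a pair $(x,y)$ to the principal congruence $\Theta_A(x,y)$ of~$A$ that it generates. The three distance axioms are then immediate: $\Theta_A(x,x)=0$; $\Theta_A(x,y)=\Theta_A(y,x)$; and $\Theta_A(x,z)\leq\Theta_A(x,y)\vee\Theta_A(y,z)$, because $x$ and~$z$ are identified modulo the join on the right. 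Since every compact congruence is, by definition, a finite join of principal congruences, the range of~$\gd_{\bA}$ join-generates~$\widetilde{A}$; thus $\bA$ is an object of~$\Metr$ whose distance join-generates~$\widetilde{A}$.

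The only step with any content is verifying that~$\bA$ is \emph{of type~$1$}, and this is precisely where congruence-permutability enters. Suppose $\gd_{\bA}(x,y)\leq\ba\vee\bb$ with $\ba,\bb\in\Conc A$; since $\Theta_A(x,y)$ is the least congruence containing~$(x,y)$, this says $(x,y)\in\ba\vee\bb$. In a congruence-permutable algebra $\ba\vee\bb=\ba\circ\bb$ (relational composition), so there is $z\in A$ with $(x,z)\in\ba$ and $(z,y)\in\bb$, that is, $\gd_{\bA}(x,z)\leq\ba$ and $\gd_{\bA}(z,y)\leq\bb$, as required. Hence $\bA\in\cB$ and $\Psi(\bA)=\widetilde{A}=\Conc A$; since $\rng\Psi$ is closed under isomorphism (by the very definition of $\Phi\cX$ in Section~\ref{S:Basic}), it follows that every \jzs\ isomorphic to~$\Conc A$ for some congruence-permutable~$A$ lies in $\rng\Psi$.

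For the three listed instances it then suffices to invoke the standard facts that (a)~$\ell$-groups, rings, and modules are congruence-permutable --- each carrying the Mal'cev term $p(x,y,z)=x-y+z$ --- and (b)~their congruence semilattices are identified, respectively, with $\Conc G\cong\Idc G$ (congruences of an \lgrp\ correspond to $\ell$-ideals), $\Conc R\cong\Idc R$ (congruences of a ring correspond to two-sided ideals), and $\Conc M\cong\Subc M$ (congruences of a module correspond to submodules), the compact congruences matching up in each case with the finitely generated ideals (resp.\ submodules). Feeding these identifications into the general construction above gives \eqref{Idclgrp}, \eqref{IdcvNRR}, and \eqref{SubcM}.

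I expect essentially no serious obstacle here: the proof is a direct unwinding of the definition of~$\cB$, and is the content of R\r{u}\v{z}i\v{c}ka \emph{et al.}~\cite{RTW}. The one place calling for a moment's thought is recognizing that the type-$1$ condition is exactly congruence-permutability expressed through the distance~$\gd_{\bA}$; the remaining work --- identifying $\Conc$ with $\Idc$ or $\Subc$ and checking that compactness transfers under these identifications --- is routine universal algebra.
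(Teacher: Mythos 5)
Your construction is correct and is exactly the intended argument: the paper itself gives no proof but simply recalls the statement from R\r{u}\v{z}i\v{c}ka \emph{et al.}~\cite{RTW}, and your object $\bA=(A,\Theta_A({-},{-}),\Conc A)$, with type~$1$ coming from $\ba\vee\bb=\ba\circ\bb$ in a congruence-permutable algebra and join-generation from compact congruences being finite joins of principal ones, is precisely the standard construction behind that citation. The identifications $\Conc G\cong\Idc G$, $\Conc R\cong\Idc R$, $\Conc M\cong\Subc M$ are likewise the intended routine step, so nothing is missing.
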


We omit the straightforward proof of the following lemma.

\begin{lemma}\label{L:CompWD}
For any \jzs~$S$, any object~$\bA$ of~$\cB$, and any $f\colon\widetilde{A}\Rightarrow S$, the composite $f\bA\eqdef(A,f\circ\gd_{\bA},S)$ is an object of~$\cB$.
\end{lemma}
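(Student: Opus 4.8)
The statement to prove is Lemma~\ref{L:CompWD}: for any \jzs~$S$, any object~$\bA$ of~$\cB$, and any weakly distributive \jzh\ $f\colon\widetilde{A}\Rightarrow S$, the triple $f\bA=(A,f\circ\gd_{\bA},S)$ is an object of~$\cB$.

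The plan is to verify directly, from the definitions recalled just before the statement, that $f\bA$ is a semilattice-metric space of type~$1$ whose distance join-generates~$S$. First I would check that $\gd'\eqdef f\circ\gd_{\bA}$ is a semilattice-valued distance into~$S$: symmetry and the vanishing on the diagonal follow immediately from the corresponding properties of~$\gd_{\bA}$ (apply~$f$), and the triangle inequality follows by applying the \jzh~$f$ --- which is isotone and preserves finite joins --- to $\gd_{\bA}(x,z)\leq\gd_{\bA}(x,y)\vee\gd_{\bA}(y,z)$, yielding $\gd'(x,z)=f\gd_{\bA}(x,z)\leq f\gd_{\bA}(x,y)\vee f\gd_{\bA}(y,z)=\gd'(x,y)\vee\gd'(y,z)$. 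So $(A,\gd',S)$ is an object of~$\Metr$.

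Next I would verify that $f\bA$ is of type~$1$. Suppose $x,y\in A$ and $\ba,\bb\in S$ with $\gd'(x,y)=f\gd_{\bA}(x,y)\leq\ba\vee\bb$. This is precisely the hypothesis needed to invoke the \emph{weak distributivity} of~$f$: there exist $\ba',\bb'\in\widetilde{A}$ with $\gd_{\bA}(x,y)\leq\ba'\vee\bb'$, $f(\ba')\leq\ba$, and $f(\bb')\leq\bb$. Now apply the type~$1$ property of~$\bA$ itself to $\gd_{\bA}(x,y)\leq\ba'\vee\bb'$: there exists $z\in A$ with $\gd_{\bA}(x,z)\leq\ba'$ and $\gd_{\bA}(z,y)\leq\bb'$. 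Applying~$f$ and composing inequalities gives $\gd'(x,z)=f\gd_{\bA}(x,z)\leq f(\ba')\leq\ba$ and likewise $\gd'(z,y)\leq\bb$, which is exactly what type~$1$ for $f\bA$ demands.

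Finally, the range of~$\gd'$ must join-generate~$S$. Since~$f$ is a \jzh\ and the range of~$\gd_{\bA}$ join-generates~$\widetilde{A}$ (as $\bA\in\cB$), every element of~$\widetilde{A}$ is a finite join of elements $\gd_{\bA}(x,y)$; applying~$f$ and using that~$f$ preserves finite joins shows $f[\widetilde{A}]$ is join-generated by $\setm{f\gd_{\bA}(x,y)}{x,y\in A}=\rng\gd'$. It remains to observe that~$f$ is surjective: this follows from the fact that double arrows $\psi\colon\widetilde{C}\Rightarrow S$ are weakly distributive \jzh{s} whose source join-generates the target via a distance only once we know $S\in\rng\Psi$; but in the present lemma $S$ is arbitrary, so surjectivity is \emph{not} automatic. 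I expect this to be the main obstacle, and the resolution is that the statement is meant with the standing convention (as throughout Section~\ref{S:PermCon}, where double arrows out of $\Psi$-objects land on $\Psi$-objects) that $f$ is surjective, or alternatively one simply replaces~$S$ by $f[\widetilde{A}]$, which is a \jzs\ since~$f$ preserves~$0$ and finite joins; then $\rng\gd'$ join-generates $f[\widetilde{A}]$ as shown, and $(A,\gd',f[\widetilde{A}])$ is the desired object of~$\cB$. With that settled, all three defining clauses are verified and the proof is complete.
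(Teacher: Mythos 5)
The paper gives no written proof here --- it declares the argument straightforward and omits it --- so the only comparison available is with the intended argument, and your treatment of the first two clauses is exactly that: the distance axioms for $f\circ\gd_{\bA}$ follow by applying the \jzh~$f$ to those of~$\gd_{\bA}$, and the type~$1$ condition follows by using weak distributivity of~$f$ to pull a decomposition $f\gd_{\bA}(x,y)\leq\ba\vee\bb$ in~$S$ back to a decomposition $\gd_{\bA}(x,y)\leq\ba'\vee\bb'$ in~$\widetilde{A}$, to which the type~$1$ property of~$\bA$ applies; pushing the resulting interpolant~$z$ forward with~$f$ finishes it. This is certainly the proof the author had in mind.

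Your worry about the join-generation clause is legitimate, and it is a defect of the literal statement rather than of your proof. With this section's definition of double arrows (weakly distributive \jzh{s}, with no surjectivity requirement), the conclusion can fail: since $\rng f$ contains~$0$ and is closed under finite joins, the range of $f\circ\gd_{\bA}$ join-generates~$S$ if and only if~$f$ is surjective, and non-surjective weakly distributive maps exist (take $\widetilde{A}=\set{0}$, $A$ a one-point space, $S=\two$, $f(0)=0$; then $f\bA\notin\cB$). So the lemma must be read with the extra hypothesis that~$f$ is surjective (equivalently, has join-generating range), a hypothesis the paper does state explicitly in comparable situations (Lemma~\ref{L:LS4lgrps}, the proof of Theorem~\ref{T:NonPermlgrp}); under that reading your final step --- push a join-generating set of~$\widetilde{A}$ through the join-preserving map~$f$ --- completes the proof, and your alternative repair (replace~$S$ by $f[\widetilde{A}]$) is the other natural one. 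Note only that the same issue resurfaces at the unique place the lemma is invoked, Lemma~\ref{L:NonvecS}, where the components~$\chi_p$ need not be surjective either, so whichever reading one adopts has to be applied consistently there; this does not affect the substance of your argument.
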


\begin{lemma}\label{L:NonvecS}
There are no com\-mu\-ta\-tive diagram $\vec{\bB}\in\cB^P$ and no natural transformation $\vec{\chi}\colon\Psi\vec{\bB}\Todot\vec{S}$.
\end{lemma}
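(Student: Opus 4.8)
The plan is to argue by contradiction, assuming we are given a commutative diagram $\vec{\bB}=\vecm{\bB_p,\bgb_p^q}{p\le q\text{ in }P}$ in $\cB$ together with a natural transformation $\vec{\chi}=\vecm{\chi_p}{p\in P}\colon\Psi\vec{\bB}\Todot\vec{S}$, i.e.\ each $\chi_p\colon\widetilde{B_p}\to S_p$ is a \jzh\ making all the relevant squares commute. First I would unpack what the diagram $\vec{S}$ (Figure~\ref{Fig:CandS}) forces at the level of the three atoms $1,2,3$ of $P$ and their joins. The point of the T\r{u}ma--Wehrung-style diagram is that the bottom map $\be\colon\two\to\two^2$, $x\mapsto(x,x)$, together with the two "equal" copies of $\two^2$ at the middle level and the one at the top being joined through $\bp$, but with the crossing-over arrow being $\bs(x,y)=(y,x)$, creates an obstruction: pulling the generator $1\in S_\es=\two$ up along the two different routes to $S_{123}=\two$ must give compatible data, yet the twist $\bs$ makes the two candidate "witnesses" in $\widetilde{B_{123}}$ incompatible unless some element splits in a way forbidden by type~$1$.

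The key steps, in order, would be: (1) pick an element $u\in\widetilde{B_\es}$ with $\chi_\es(u)=1$ (the nonzero element of $\two$); such $u$ exists because $\gd_{\bB_\es}$ join-generates $\widetilde{B_\es}$ and $\chi_\es$ is a surjection onto $\two$ — more carefully, write $1$ as a join of images $\chi_\es\gd_{\bB_\es}(x_i,y_i)$ and pick one summand that is nonzero, so $u=\gd_{\bB_\es}(x,y)$ for a point-pair in $B_\es$. (2) Transport the \emph{points} $x,y$ and the distance $u$ up the diagram via the maps $f_p$ of the morphisms $\bgb_\es^p=(f_p,\tilde f_p)$ in $\cB$, so that $\tilde f_p(u)=\gd_{\bB_p}(f_px,f_py)$ for each $p$, and track, using the naturality $\chi_p\circ\tilde f_p=(\text{appropriate }\vec S\text{-map})\circ\chi_\es$, what $\chi_p$ of these transported distances must be. (3) At $p=1$ the map $\vec S(\es,1)\colon\two\hookrightarrow\two^2$ is $\be$, so $\chi_1\gd_{\bB_1}(f_1x,f_1y)=(1,1)$; going up to $13$ through the \emph{twisting} route forces, via the type~$1$ property of $\bB_{13}$ (to split the distance $\le\ba\vee\bb$), the existence of an intermediate point $z$, and chasing $z$'s image through the two ways of reaching $123$ yields, after applying $\chi_{123}$, two elements of $\two$ that the diagram $\vec S$ forces to be both $0$ and $1$ — the contradiction. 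The cleanest way to organize step~(3) is to isolate exactly which $2\times2$ subdiagram of $\vec{S}$ is responsible; I would essentially reproduce the combinatorial core of the argument showing $\vec S$ has no lifting in $\cB$, which is where the twist $\bs$ and the collapse at the top via $\bp$ interact.

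The main obstacle I expect is step~(3): making precise how the \emph{type~$1$} condition (the "splitting of distances" axiom defining objects of $\cB$) is exactly the property that both (a) is needed to produce the intermediate points $z$ that the $\two^2$'s demand, and (b) is incompatible with the simultaneous commutation constraints imposed by $\bs$ and the three $\bp$'s at the top. One has to be careful that the intermediate points produced by type~$1$ at one node map correctly under the $f$-components to intermediate points at the next node, which requires using that the $\bgb$'s are morphisms of $\Metr$ (so they commute with the distances) and that naturality of $\vec\chi$ constrains the \jzs-components $\tilde f$. A secondary, more bookkeeping-level difficulty is that $\vec S$ is genuinely a $\Pow[3]$-indexed diagram, so there are several parallel routes from $\es$ to $123$ and one must check the contradiction arises from the \emph{pair} of routes through $13$ (where the twist lives) rather than getting lost in the other routes; fixing notation for the eight objects $S_p$ and the relevant maps at the outset will contain this. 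Once the contradiction is extracted at $S_{123}=\two$, the proof is complete, since no further properties of $\cB$ beyond type~$1$ and join-generation are used.
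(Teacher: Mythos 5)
Your general plan (chase a distance from the bottom $\two$ up the cube, using type~$1$ to split it and naturality of $\vec{\chi}$ to control the images) is the right kind of argument, but as written there is a genuine gap: your step~(3), which you yourself flag as the main obstacle, is the entire mathematical content of the lemma, and it is not carried out. The paper does not redo this combinatorics at all: it uses Lemma~\ref{L:CompWD} to replace each $\bB_p$ by $\chi_p\bB_p$, so that one may assume $\widetilde{B}_p=S_p$\,, and then quotes the known unliftability theorem \cite[Theorem~9-5.1]{STA1-9} as a black box. If you intend a self-contained proof you must actually supply that argument, and your sketch of it is not yet pointing at the right place: in the diagram~$\vec{S}$ the twist $\bs$ sits on the arrow from the atom $2$ to the coatom $12$ (both arrows into $13$, from the atoms $1$ and $3$, are identity maps), so there is no ``twisting route through $13$''. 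Moreover, chasing a single type-$1$ intermediate point up to $S_{123}=\two$ cannot by itself produce ``two elements of $\two$ forced to be both $0$ and $1$'': all three maps $\bp$ send $(1,0)$, $(0,1)$ and $(1,1)$ to the same value $1$ at the top, and at a coatom the triangle inequality only gives bounds like $\gd(z',w')\leq(1,0)\vee(0,1)=(1,1)$, which is vacuous. The actual proof has to play the splittings obtained at all three atoms against one another at the three coatoms simultaneously (this is where the twist on $2\to 12$ versus the identity on $1\to 12$ bites), and is substantially longer than the chase you describe; that is precisely why the paper delegates it to \cite[Theorem~9-5.1]{STA1-9}.

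A secondary point: at step~(1) you assert that $\chi_{\es}$ is surjective onto $\two$, equivalently that $1$ is a join of elements $\chi_{\es}\gd_{\bB_{\es}}(x_i,y_i)$. Weak distributivity of $\chi_{\es}$ alone does not give this; what makes the argument start is the join-generation built into the objects of~$\cB$ \emph{after} the replacement $\bB_p\rightsquigarrow\chi_p\bB_p$ of Lemma~\ref{L:CompWD} (i.e., the double arrows must carry join-generating range). So even the opening move of your chase needs to be anchored to that reduction rather than asserted.
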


\begin{proof}
Letting $\vec{\chi}=\vecm{\chi_p}{p\in P}$, we may, thanks to Lemma~\ref{L:CompWD}, replace each~$\bB_p$ by $\chi_p\bB_p$ and thus assume that each $\widetilde{B}_p=S_p$\,.
But then, Wehrung \cite[Theorem 9-5.1]{STA1-9} leads to a contradiction.
\end{proof}

 \begin{figure}[htb]
 \centering
 \begin{tikzcd}
 & C & && & \Mat{2}{\Bbbk} &\\
 \ZZ^2\arrow[ur,hook',"h"] &
 \ZZ^2\arrow[u,hook,"h"description] & 
 \ZZ^2\arrow[ul,hook,"h"'] &&
 \Bbbk^2\arrow[ur,"h",hook'] &
 \Bbbk^2\arrow[u,"h"description,hook] &
 \Bbbk^2\arrow[ul,"h"',hook]\\
 \ZZ^2\arrow[u,equal]\arrow[ur,equal]
  & \ZZ^2\arrow[ul,"s"description,crossing over,near start] & 
 \ZZ^2\arrow[u,equal]\arrow[ul,equal]
 \arrow[from=l,u,equal,crossing over] &&
  \Bbbk^2\arrow[u,equal]\arrow[ur,equal]
  & \Bbbk^2\arrow[ul,"s"description,crossing over,near start] & 
 \Bbbk^2\arrow[u,equal]\arrow[ul,equal]
  \arrow[from=l,u,equal,crossing over]\\
 & \ZZ\arrow[ul,"e",hook']\arrow[u,"e",hook]
 \arrow[ur,"e"',hook] & &&
  & \Bbbk\arrow[ul,"e",hook']\arrow[u,"e",hook]
  \arrow[ur,"e"',hook] &
 \end{tikzcd}
\caption{The non-com\-mu\-ta\-tive diagrams~$\vec{C}_{a,g}$ and~$\vec{R}_{\Bbbk}$}\label{Fig:CandR}
\end{figure}

The following lemma takes care of all instances of Condition~(LS$(\gl)$), from the statement of Lemma~\ref{L:ExtCLL}, occurring in Section~\ref{S:PermCon}.
Its proof is a standard L\"owenheim-Skolem type argument and we omit it.

\begin{lemma}\label{L:LS4lgrps}
Let~$\gl$ be an uncountable regular cardinal, let~$\bB$ be an object of~$\cB$, and let $f\colon\widetilde{B}\Rightarrow S$ be a weakly distributive \jzh\ with join-generating range.
If~$S$ is $\gl$-small, then the set of all $(X,Y)\in[B]^{<\gl}\times[\widetilde{B}]^{<\gl}$, such that $(X,\gd_{\bB}\res_{X\times X},Y)$ is an object of~$\cB$ and $f\res_{Y}$ is weakly distributive with join-generating range, is $\gl$-closed cofinal in $[B]^{<\gl}\times[\widetilde{B}]^{<\gl}$.
\end{lemma}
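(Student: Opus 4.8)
The plan is to run the routine Löwenheim–Skolem argument inside the poset $\Pi\eqdef[B]^{<\gl}\times[\widetilde B]^{<\gl}$ ordered by componentwise inclusion. Since $\gl$ is regular, $\Pi$ is $\gl$-join-complete, the least upper bound of a $\gl$-small family being the componentwise union, which is again $\gl$-small. I would then show that the subset $\cG\subseteq\Pi$ described in the statement is $\gl$-closed cofinal, either by verifying the two properties directly or, more structurally, by writing $\cG$ as the intersection of the $\gl$-closed cofinal set cut out by the ``subalgebra-type'' conditions (being $\gd_{\bB}$-closed, being a \jzs\ generated by the $\gd_{\bB}$-values, being of type~$1$) with the family, indexed by $S\times S$ and by $S$, of the $\gl$-closed cofinal sets cut out by the single weak-distributivity demand attached to each pair $(t_0,t_1)\in S\times S$ and by the single range-generation demand attached to each $s\in S$. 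That this family is $\gl$-small is exactly where the hypothesis that $S$ is $\gl$-small enters, and the conclusion then follows from the observation recalled in Section~\ref{S:Basic} that a $\gl$-small intersection of $\gl$-closed cofinal subsets is $\gl$-closed cofinal.

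For cofinality concretely I would fix, by the axiom of choice, the following Skolem-type data: a map $\sigma$ producing, for each $(x,y,\ba,\bb)$ with $x,y\in B$, $\ba,\bb\in\widetilde B$ and $\gd_{\bB}(x,y)\le\ba\vee\bb$, a type~$1$ witness $z\in B$ for $\bB$; for each $s\in\widetilde B$ a finite $G(s)\subseteq B\times B$ with $s=\bigvee\vecm{\gd_{\bB}(x,y)}{(x,y)\in G(s)}$, available because $\rng\gd_{\bB}$ join-generates $\widetilde B$; for each $s\in\widetilde B$ and each $(t_0,t_1)\in S\times S$ with $f(s)\le t_0\vee t_1$, elements $s_0,s_1\in\widetilde B$ with $s\le s_0\vee s_1$ and $f(s_i)\le t_i$, available because $f$ is weakly distributive; and for each $s\in S$ a finite $H(s)\subseteq B\times B$ with $s=\bigvee\vecm{f\gd_{\bB}(x,y)}{(x,y)\in H(s)}$, available because $f$ has join-generating range. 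Given $(X_0,Y_0)\in\Pi$ I would iterate $\go$ times: with $(X_n,Y_n)$ built, let $X_{n+1}$ consist of $X_n$, all $\sigma(x,y,\ba,\bb)$ for $x,y\in X_n$ and $\ba,\bb\in Y_n$, both coordinates of the pairs in $G(s)$ for $s\in Y_0$ and for $s$ ranging over the witnesses $s_0,s_1$ attached to triples $(s',t_0,t_1)$ with $s'\in Y_n$ and $(t_0,t_1)\in S\times S$, and both coordinates of the pairs in $H(s)$ for $s\in S$; and let $Y_{n+1}$ be the \jzs\ generated inside $\widetilde B$ by $Y_0\cup\gd_{\bB}[X_{n+1}\times X_{n+1}]$. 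Because $\gl$ is regular and $S$, $X_0$, $Y_0$ are $\gl$-small, each $X_n$ and $Y_n$ is $\gl$-small, and because $\gl$ is \emph{uncountable} the unions $X\eqdef\bigcup_nX_n$ and $Y\eqdef\bigcup_nY_n$ stay $\gl$-small; one then checks that $Y$ is precisely the \jzs\ generated by $\gd_{\bB}[X\times X]$, that $Y\supseteq Y_0$, that $(X,\gd_{\bB}\res_{X\times X},Y)$ is a type~$1$ semilattice-metric space whose distance join-generates $Y$ (hence an object of $\cB$ above $(X_0,Y_0)$), and that $f\res_Y$ is weakly distributive with join-generating range. Closure of $\cG$ under least upper bounds of $\gl$-small $\subseteq$-directed families, and of $\gl$-small chains in particular, is immediate, since type~$1$-ness, the subalgebra and join-generation conditions, and weak distributivity all pass to directed unions.

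The one point requiring care — and the reason the join-generation hypotheses are imposed — is that the $Y$-coordinate must only ever be enlarged by finite joins of $\gd_{\bB}$-values on pairs drawn from the current $X$-coordinate, so that the requirement that $\rng(\gd_{\bB}\res_{X\times X})$ join-generate $Y$ survives; this is why the weak-distributivity witnesses $s_0,s_1$ and the range-generation witnesses are not thrown into $Y$ directly but are first pulled back, through $G$ and $H$, to finitely many elements of $B$ added to $X$, their $\gd_{\bB}$-values then entering $Y$ automatically at the next stage, and it is also where the $\gl$-smallness of $S$ is used to keep the number of such demands below $\gl$ at every stage. With that bookkeeping in place, every other verification is routine.
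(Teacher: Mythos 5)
Your argument is correct and is exactly the standard L\"owenheim--Skolem closure iteration that the paper invokes and omits for this lemma, with the right bookkeeping (pulling the weak-distributivity and range-generation witnesses back into the $X$-coordinate through $G$ and $H$ so that the $\gd_{\bB}$-values on $X\times X$ join-generate the $Y$-coordinate), and with the uses of uncountability of~$\gl$ and $\gl$-smallness of~$S$ located correctly. The only caveat is that, as you observe, closedness is established for directed $\gl$-small suprema, which is the reading of ``$\gl$-closed'' that is actually needed downstream and the only one that can hold here, since the componentwise union of two incomparable members of the set need not have a join-closed second coordinate.
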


\subsection{Classes of non-representable \lgrp{s}}
\label{Su:NonReprlgrp}
It is well known that the assignment, that sends any \lgrp~$G$ to the \jzs\ of its finitely generated $\ell$-ideals, naturally extends to a functor $\Idc\colon\lGrp\to\cS$.
For any \lhom\ $f\colon G\to H$, $\Idc{f}$ sends every $\ell$-ideal~$X$ of~$G$ to the $\ell$-ideal of~$H$ generated by~$f[X]$.

Let~$C$ be a non-representable \lgrp.
By one of the equivalent forms of representability (cf. Bigard \emph{et al.} \cite[Proposition~4.2.9]{BKW}), there are $a\in C^+\setminus\set{0}$ and $g\in C$ such that $a\wedge(g+a-g)=0$ --- we will say that $(a,g)$ is an \emph{NR-pair} of~$C$.
Set $b\eqdef g+a-g$, then $e(x)\eqdef(x,x)$, $s(x,y)\eqdef(y,x)$, and $h(x,y)\eqdef xa+yb$, for all $x,y\in\ZZ$.
{F}rom $a\wedge b=0$ it follows that~$h$ is an \lhom.
We denote by~$\vec{C}_{a,g}$ the non-com\-mu\-ta\-tive diagram, in~$\lGrp$, represented in the left hand side of Figure~\ref{Fig:CandR}.

\begin{lemma}\label{L:vecCIdcComm}
The diagram~$\vec{C}_{a,g}$ is $\Idc$-com\-mu\-ta\-tive.
\end{lemma}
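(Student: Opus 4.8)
The goal is to show that the diagram $\vec{C}_{a,g}$ of Figure~\ref{Fig:CandR} becomes commutative after applying the functor $\Idc$; equivalently, that $\Idc$ collapses the two possible composites wherever the diagram branches. The only place where $\vec{C}_{a,g}$ fails to be commutative is at the arrows involving $s$ and $h$: there are two parallel paths from the bottom copy of $\ZZ^2$ (in the lower-middle node) up to $C$, one via the identity and $h$ on one flank, the other via $s$ (which swaps coordinates) and then $h$ on another flank. So concretely I must verify that, for the two \lhom{s} $h, h\circ s\colon\ZZ^2\to C$ (and more generally any pair of parallel maps realized in the diagram up to $C$), the induced maps $\Idc(h), \Idc(h\circ s)\colon\Idc(\ZZ^2)\to\Idc(C)$ coincide. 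Everything else in the diagram is already strictly commutative, so $\Idc$ applied to it stays commutative trivially.

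\textbf{Key steps.} First, I would compute $\Idc(\ZZ^2)$ explicitly: it is the four-element Boolean lattice $\two^2$, with atoms the $\ell$-ideals generated by $(1,0)$ and $(0,1)$. Second, I would chase where these atoms go. We have $h(x,y)=xa+yb$ where $b=g+a-g$ is the conjugate of $a$; so $\Idc(h)$ sends $\seql{(1,0)}\mapsto\seql{a}$ and $\seql{(0,1)}\mapsto\seql{b}$, while $\Idc(h\circ s)$ sends $\seql{(1,0)}\mapsto\seql{b}$ and $\seql{(0,1)}\mapsto\seql{a}$. The crucial observation is that $b=g+a-g$ is a conjugate of $a$, hence $\seql{a}=\seql{b}$ as $\ell$-ideals of $C$ (an $\ell$-ideal, being normal and convex, is closed under conjugation, so it contains $a$ iff it contains every conjugate of $a$; thus the $\ell$-ideal generated by $a$ equals the one generated by $b$). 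Therefore $\Idc(h)$ and $\Idc(h\circ s)$ agree on both atoms of $\two^2$, hence on all of $\two^2$ since a \jzh\ out of a finite distributive lattice is determined by its values on join-irreducibles. Third, I would observe that $e$ composed with either flank factors identically through the diagonal, and that the top arrows into $C$ (the maps denoted $h$ from the three copies of $\ZZ^2$ in the second-from-top row) are handled the same way: each such $\ell$-ideal image is $\seql{a}=\seql{b}$, so all branchings collapse. Finally, unwinding Definition~\ref{D:PhiComm}: I must check $\Phi\vec{S}^I$ is commutative for \emph{every} set $I$, not just $I$ a singleton; but since $\Idc$ preserves finite products of \lgrp{s} only up to the coordinatewise description and, more to the point, the relevant equalities $\Phi(f)=\Phi(g)$ for $f,g\in\prod_i\vec{C}_{a,g}(p_i,q_i)$ reduce coordinatewise to the singleton case already settled, this follows formally. (Here one uses that a product $\prod_i\vec{C}(p_i,q_i)$ of nonempty hom-sets consists of tuples, and $\Idc$ of a product map is determined by its coordinates, each of which lies in a singleton $\Idc$-image by the above.)

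\textbf{Main obstacle.} The genuinely load-bearing fact is the identity $\seql{a}=\seql{g+a-g}$ in $\Idc(C)$ --- that the $\ell$-ideal generated by an element coincides with the $\ell$-ideal generated by any of its conjugates. This is where the hypothesis that $(a,g)$ is an NR-pair is actually \emph{not} what matters for $\Idc$-commutativity (the condition $a\wedge b=0$ is only needed to make $h$ an \lhom), and instead the normality of $\ell$-ideals does the work. I would state this conjugation-invariance of $\seql{{}_-}$ as the single lemma-level step and deduce the rest mechanically. A minor secondary point to get right is the bookkeeping of \emph{which} arrows in Figure~\ref{Fig:CandR} are the non-commuting ones, so that I enumerate exactly the branchings that need checking; but this is routine diagram-chasing once $\seql{a}=\seql{b}$ is in hand.
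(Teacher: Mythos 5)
Your singleton-case analysis is correct and matches the paper's underlying mechanism: the load-bearing fact is indeed $\seql{a}=\seql{b}$ for $b=g+a-g$, by normality of $\ell$-ideals, while $a\wedge b=0$ only serves to make $h$ an \lhom. The gap is in your final step, where you claim that $\Phi$-commutativity of $\vec{C}_{a,g}^I$ for arbitrary $I$ ``follows formally'' because ``$\Idc$ of a product map is determined by its coordinates.'' That is false for infinite $I$: the functor $\Idc$ does not commute with infinite products, and the $\ell$-ideal of $\prod_i G_i$ generated by an element is not determined by the family of $\ell$-ideals its coordinates generate. For instance, in $\ZZ^{\go}$ the constant sequence $1$ and the sequence $\vecm{n+1}{n<\go}$ generate the same $\ell$-ideal in every coordinate, yet the first generates the $\ell$-ideal of bounded sequences while the second generates a strictly larger one. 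This is precisely why Definition~\ref{D:PhiComm} quantifies over all sets $I$ rather than just singletons, so the reduction you invoke would make the whole notion vacuous; your argument as stated only covers finite $I$.

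The paper's proof works in the product directly. Given $f=\prod\vecm{f_i}{i\in I}$ and $g=\prod\vecm{g_i}{i\in I}$, it splits $I$ into $I_0$ (where $f_i=g_i$) and the two blocks $I_1,I_2$ where $(p_i,q_i)=(1,123)$ and $\set{f_i,g_i}=\set{h,h\circ s}$, decomposes $z\in\vec{C}_{a,g}^I(p)^+$ accordingly, and reduces to $I=I_1$. There, writing $z_i=(x_i,y_i)$, the crucial point is the \emph{uniform} conjugation identity $nb=g+na-g$ for every integer $n$: it implies that $\vecm{x_ib}{i\in I}$ is the conjugate of $\vecm{x_ia}{i\in I}$ by the single constant family with value $g$ inside the product $\ell$-group, and likewise for the $y$-parts, whence $\seql{f(z)}=\seql{g(z)}$ \emph{in the product}, not merely coordinatewise. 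Your write-up contains all the right ingredients for the singleton case, but to repair it you must make this global conjugation (or some other argument internal to the product $\ell$-group) explicit; the coordinatewise determination you rely on is not available.
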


\begin{proof}
For any set~$I$, we need to prove that the image under the functor~$\Idc$\,, of the diagram~$\vec{C}_{a,g}^I$\,, is com\-mu\-ta\-tive.
Let $p=\vecm{p_i}{i\in I}$ and $q=\vecm{q_i}{i\in I}$ in~$P^I$ such that $p\leq q$, let $\vecm{f_i}{i\in I}$ and $\vecm{g_i}{i\in I}$ be elements of $\prod\Vecm{\vec{C}_{a,g}(p_i,q_i)}{i\in I}$.
Set $f\eqdef\prod\vecm{f_i}{i\in I}$ and $g\eqdef\prod\vecm{g_i}{i\in I}$.
We need to prove that~$f(z)$ and~$g(z)$ generate the same $\ell$-ideal of~$\vec{C}_{a,g}^I(q)$, whenever $z=\vecm{z_i}{i\in I}$ belongs to~$\vec{C}_{a,g}^I(p)^+$.
Every~$i\in I$ outside $I_0\eqdef\setm{i\in I}{f_i=g_i}$ satisfies $(p_i,q_i)=(1,123)$, so, setting
 \begin{align*}
 I_1&\eqdef\setm{i\in I}
 {(p_i,q_i)=(1,123)\text{ and }f_i=h\text{ and }g_i=h\circ s}\,,\\
 I_2&\eqdef\setm{i\in I}
 {(p_i,q_i)=(1,123)\text{ and }f_i=h\circ s\text{ and }g_i=h}\,,
 \end{align*}
it follows that~$I$ is the disjoint union of~$I_0$\,, $I_1$\,, and~$I_2$\,.
For every $J\subseteq I$, denote by~$z|J$ the element of~$\vec{C}_{a,g}^I(p)$ agreeing with~$z$ on~$J$ and taking the constant value~$0$ on $I\setminus J$.
Since~$f(z)$ generates the same $\ell$-ideal as $\set{f(z|{I_0}),f(z|{I_1}),f(z|{I_2})}$, and similarly for~$g$, it suffices to settle the case where $I=I_k$ for some $k\in\set{0,1,2}$.
If $k=0$ then $f=g$ and we are done.
The two other cases being symmetric, it remains to settle the case where $k=1$.
For each $i\in I$\,, we can write $z_i=(x_i,y_i)\in\ZZ^+\times\ZZ^+$, thus $f_i(z_i)=x_ia+y_ib$ and $g_i(z_i)=x_ib+y_ia$.
Since $nb=g+na-g$ for every integer~$n$, we get
 \begin{align*}
 \seql{\vecm{x_ia+y_ib}{i\in I}}&=\seql{\vecm{x_ia}{i\in I}}\vee
 \seql{\vecm{y_ib}{i\in I}}\\
 &=\seql{\vecm{x_ib}{i\in I}}\vee \seql{\vecm{y_ia}{i\in I}}\\
 &=\seql{\vecm{x_ib+y_ia}{i\in I}}\,,
 \end{align*}
that is, $\seql{f(z)}=\seql{g(z)}$, as required.
\end{proof}

We set $\vec{\pi}\eqdef\vecm{\pi_p}{p\in P}$, where $\pi_{123}$ is the unique map from~$\Idc{C}$ to~$\two$ sending~$0$ to~$0$ and any nonzero element to~$1$, whereas, for $p\neq123$, $\pi_p$ is the canonical isomorphism from~$\Idc{C_{a,g}(p)}$ (either~$\Idc\ZZ^2$ or $\Idc\ZZ$) onto~$S(p)$ (either~$\two^2$ or~$\two$).
We omit the straightforward proof of the following lemma.

\begin{lemma}\label{L:PhiC2S}
The family~$\vec{\pi}$ is a natural transformation from~$\Idc\vec{C}_{a,g}$ to~$\vec{S}$ in~$\cS^{\Rightarrow}$.
\end{lemma}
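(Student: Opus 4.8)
The plan is to verify the defining conditions of a natural transformation in $\cS^{\Rightarrow}$, that is, two things: first, that for each $p\le q$ in $P$ the naturality square
\[
\pi_q\circ(\Idc\vec{C}_{a,g})(p,q)=\vec{S}(p,q)\circ\pi_p
\]
commutes (here the left-hand morphism is the common value of $\Idc(x)$ for $x\in\vec{C}_{a,g}(p,q)$, which is well defined by Lemma~\ref{L:vecCIdcComm}); second, that each $\pi_p$ is a weakly distributive \jzh. For the second point: when $p\ne 123$, $\pi_p$ is an isomorphism onto a finite \jzs, hence trivially weakly distributive; and $\pi_{123}\colon\Idc{C}\to\two$ is the map collapsing every nonzero element to $1$, which is weakly distributive because if $\pi_{123}(X)\le t_0\vee t_1$ with $\pi_{123}(X)=1$, then $t_0\vee t_1=1$, so $t_i=1$ for some $i$, and one takes $X_i\eqdef X$, $X_{1-i}\eqdef 0$. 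So the only content is the commutativity of the naturality squares.

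First I would enumerate the edges of $P=\Pow[3]$ against the edges of $\vec{C}_{a,g}$ (Figure~\ref{Fig:CandR}) and of $\vec{S}$ (Figure~\ref{Fig:CandS}), noting that the two diagrams have \emph{identical} shape data: the maps $e$, $s$, $h$ on the $\lGrp$ side correspond to $\be$, $\bs$, $\bp$ on the $\cS$ side. Below the top vertex (i.e.\ for $p\le q$ with $q\ne 123$) every transition map in $\vec{C}_{a,g}$ is either an identity or the swap $s$ or the diagonal $e$, and $\pi_p$, $\pi_q$ are the chosen canonical isomorphisms; one checks directly that $\Idc e=\be$, $\Idc s=\bs$ (a trivial computation since $\Idc\ZZ\cong\two$ and $\Idc(\ZZ^2)\cong\two^2$ componentwise), and the identity edges are immediate. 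For the edges into the top vertex $123$ one must check $\pi_{123}\circ\Idc h=\bp\circ\pi_{q}$ for $q\in\set{12,13,23}$ and, via the appropriate composite, also for the edges from the atoms $1,2,3$. Here $\Idc h$ sends the class of $(x,y)\in\ZZ^2$ to $\seql{xa+yb}$, which is $0$ iff $x=y=0$ (using $a\wedge b=0$, so $\seql{xa+yb}=\seql{xa}\vee\seql{yb}$ and $a,b\ne 0$ as $a\ne 0$ and $b=g+a-g$), while $\bp(x,y)=x\vee y$ is $0$ iff $x=y=0$ in $\two$; chasing through the iso $\pi_q$ this is exactly what is needed. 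The composite edges from $1,2,3$ to $123$ then follow by composing the already-verified squares.

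The step I expect to need the most care is keeping the bookkeeping straight for the two distinct edges $1\to 123$ of the non-commutative diagram $\vec{C}_{a,g}$: one of them is (a composite through) $h$ and the other is (a composite through) $h\circ s$. Lemma~\ref{L:vecCIdcComm} guarantees that $\Idc$ identifies these two, and one must make sure the value $\pi_{123}\circ\Idc h$ agrees with $\pi_{123}\circ\Idc(h\circ s)$ — which it does, since both send a nonzero generator to $1$ — and that this common value equals $\bp\circ\pi_1$ composed with the relevant structure map of $\vec{S}$. This is precisely the place where $\Idc$-commutativity of $\vec{C}_{a,g}$ is used; apart from that, every square is a one-line verification, so the proof is genuinely routine and I would simply record it as such: ``straightforward, by inspection of Figures~\ref{Fig:CandS} and~\ref{Fig:CandR}, using Lemma~\ref{L:vecCIdcComm}.''
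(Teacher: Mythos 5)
Your proof is correct, and it is exactly the routine verification the paper omits (it says only ``We omit the straightforward proof''): commutativity of each naturality square, checked on principal $\ell$-ideals for the edges into the top vertex with the two edges $1\to123$ reconciled via the $\Idc$-commutativity of Lemma~\ref{L:vecCIdcComm}, plus weak distributivity of each~$\pi_p$ (trivial for the isomorphisms, and by the ``collapse to $\two$'' argument for~$\pi_{123}$). One cosmetic remark: the identity $\seql{xa+yb}=\seql{xa}\vee\seql{yb}$ uses only $xa,yb\geq0$, not $a\wedge b=0$ (the latter is what makes~$h$ an \lhom), but this does not affect the argument.
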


\begin{lemma}\label{L:noPsiB2PhiC}
There are no com\-mu\-ta\-tive diagram $\vec{\bB}\in\cB^P$ and no natural transformation $\vec{\chi}\colon\Psi\vec{\bB}\Todot\Idc\vec{C}_{a,g}$\,.
\end{lemma}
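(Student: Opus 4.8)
The plan is to reduce \autoref{L:noPsiB2PhiC} to \autoref{L:NonvecS} by composing with the natural transformation $\vec{\pi}$ constructed in \autoref{L:PhiC2S}. Suppose, towards a contradiction, that there were a commutative diagram $\vec{\bB}\in\cB^P$ together with a natural transformation $\vec{\chi}\colon\Psi\vec{\bB}\Todot\Idc\vec{C}_{a,g}$ in $\cS^{\Rightarrow}$. By \autoref{L:PhiC2S}, $\vec{\pi}$ is a natural transformation from $\Idc\vec{C}_{a,g}$ to $\vec{S}$ with all components weakly distributive \jzh{s}, i.e.\ double arrows. Since $\cS^{\Rightarrow}$ is a subcategory of $\cS$ (closed under composition), the componentwise composite $\vec{\pi}\circ\vec{\chi}\eqdef\vecm{\pi_p\circ\chi_p}{p\in P}$ is again a natural transformation $\Psi\vec{\bB}\Todot\vec{S}$ with all components double arrows. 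This directly contradicts \autoref{L:NonvecS}, which asserts that no such pair $(\vec{\bB},\vec{\chi})$ exists (already without the double-arrow requirement).

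First I would spell out carefully why the composite $\vec{\pi}\circ\vec{\chi}$ is a \emph{natural} transformation of $P$-indexed commutative diagrams: for $p\leq q$ in $P$, one has $\pi_q\circ\chi_q\circ\Psi(\vec{\bB}(p,q))=\pi_q\circ(\Idc\vec{C}_{a,g})(p,q)\circ\chi_p$ by naturality of $\vec{\chi}$, and then $=\vec{S}(p,q)\circ\pi_p\circ\chi_p$ by naturality of $\vec{\pi}$; here I use that both $\Idc\vec{C}_{a,g}$ and $\vec{S}$ are commutative, so that $(\Idc\vec{C}_{a,g})(p,q)$ and $\vec{S}(p,q)$ are single morphisms and the composite is unambiguous. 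Next I would note that $\Psi\vec{\bB}$ is the commutative diagram $\vecm{\widetilde{B}_p,\Psi(\vec{\bB}(p,q))}{p\leq q}$, so $(\vec{\bB},\vec{\pi}\circ\vec{\chi})$ is exactly of the form forbidden by \autoref{L:NonvecS}.

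There is essentially no obstacle here; the lemma is a purely formal consequence of the two preceding lemmas, and the only point requiring a line of care is the observation that composing a natural transformation into $\Idc\vec{C}_{a,g}$ with $\vec{\pi}$ lands us in the hypothesis of \autoref{L:NonvecS}, together with the (trivial) closure of the class of weakly distributive \jzh{s} under composition, so that the ``in $\cS^{\Rightarrow}$'' part of the hypothesis on $\vec{\chi}$ is not even needed (though it costs nothing). Thus the proof is three lines:
\begin{proof}
Suppose otherwise, and let $\vec{\chi}=\vecm{\chi_p}{p\in P}$ be such a natural transformation. By \autoref{L:PhiC2S}, $\vec{\pi}$ is a natural transformation from $\Idc\vec{C}_{a,g}$ to $\vec{S}$, so $\vecm{\pi_p\circ\chi_p}{p\in P}$ is a natural transformation from $\Psi\vec{\bB}$ to $\vec{S}$, in contradiction with \autoref{L:NonvecS}.
\end{proof}
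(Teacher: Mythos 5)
Your proof is correct and is essentially the paper's own argument: compose $\vec{\chi}$ with the natural transformation $\vec{\pi}$ of Lemma~\ref{L:PhiC2S} and invoke Lemma~\ref{L:NonvecS}. One small caveat: your aside that Lemma~\ref{L:NonvecS} holds ``already without the double-arrow requirement'' misreads the notation~$\Todot$ (which by definition means all components lie in~$\cS^{\Rightarrow}$, and the proof of that lemma, via Lemma~\ref{L:CompWD}, does use weak distributivity), but since you verify that $\vec{\pi}\circ\vec{\chi}$ has weakly distributive components anyway, the argument is unaffected.
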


\begin{proof}
If~$\vec{\chi}$ were such a natural transformation, then, by Lemma~\ref{L:PhiC2S}, we would get $\vec{\pi}\circ\vec{\chi}\colon\Psi\vec{\bB}\Todot\vec{S}$, in contradiction with Lemma~\ref{L:NonvecS}.
\end{proof}

We are now reaching this section's main result.

\begin{theorem}\label{T:NonPermlgrp}
For all infinite regular cardinals~$\gq$ and~$\gl$ with $\gq+\aleph_1\leq\gl$, there exists a functor~$\gD$, from~$\Powi(\gl^{+2})$ to the category~$\cS$ of all \jzs{s} with~$\scL_{\infty\gl}$-el\-e\-men\-tary embeddings, satisfying the following statements:
\begin{enumerater}
\item\label{IncrUnionlgrp}
$\gD$ is $\gl$-continuous;

\item\label{smallDxireplgrp}
For every $\gl^+$-small subset~$X$ of~$\gl^{+2}$, $\gD(X)$ belongs to $\Idc\pI{\cA(\gq,\vec{C}_{a,g})}$;

\item\label{Dgqnotreplgrp}
There are no set~$A$ and no distance $\gd\colon A\times A\to\gD(\gl^{+2})$ of type~$1$ with join-generating range.

\end{enumerater}
In particular, the pair $\pII{\Idc\pI{\cA(\gq,\vec{C}_{a,g})},\rng\Psi}$ is anti-el\-e\-men\-tary.
\end{theorem}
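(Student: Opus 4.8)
The plan is to run the proof of Theorem~\ref{T:NotCev} almost verbatim, with two substitutions: the Extended CLL (Lemma~\ref{L:ExtCLL}) plays the role of the Extended Armature Lemma, and Lemma~\ref{L:noPsiB2PhiC} supplies the non-liftability input in place of the Cevian obstruction of~\cite{Ceva}. Fix a non-representable \lgrp~$C$ with an NR-pair $(a,g)$, and let $\vec{C}_{a,g}$ be the associated $P$-indexed diagram of Figure~\ref{Fig:CandR}, where $P\eqdef\Pow[3]$. By Lemma~\ref{L:vecCIdcComm} the diagram $\vec{C}_{a,g}$ is $\Idc$-com\-mu\-ta\-tive, and since $\cA(\gq,\vec{C}_{a,g})$ is closed under products, the restriction~$\Phi$ of $\Idc$ to $\cA(\gq,\vec{C}_{a,g})$ makes $\vec{C}_{a,g}$ a $\Phi$-com\-mu\-ta\-tive diagram. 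Put $\mu\eqdef\gl$ and $\gk\eqdef\gl^{+2}$. Since $P$ is a nontrivial finite \ajs\ with zero of order-dimension~$3$, Corollary~\ref{C:355Larder} provides a standard $\gl$-lifter $P\seq{\gk}$ of~$P$; write $X\eqdef P\seq{\gk}$, with corresponding set of sharp ideals~$\bX$.

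First I would assemble the functor. By Lemma~\ref{L:P(f,gS)}, $U\mapsto\xF(P\seq{U})$ is the object part of an $\go$-continuous functor $\Powi(\gk)\to\Bool_P$ (sending a one-to-one map $f$ to $\xF(P\seq{f})$); composing with the $\gl$-continuous functor ${}_{-}\otlF\vec{C}_{a,g}$ (available by Lemma~\ref{L:otmFexists}: $P$ is finite, hence a conditional $\gl$-DCPO, $\SLatz$ has all $\gl$-directed colimits, and $\vec{C}_{a,g}$ is $\Phi$-com\-mu\-ta\-tive) yields a $\gl$-continuous functor $\gC\colon\Powi(\gk)\to\SLatz$, $U\mapsto\xF(P\seq{U})\otlF\vec{C}_{a,g}$. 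Exactly as in Claim~\ref{Cl:gCglCev} in the proof of Theorem~\ref{T:NotCev}, for every $\gl^+$-small $Y\subseteq\gk$ one has $\gC(Y)\in\rng\Phi$: for $\card Y<\gl$ this reads $\gC(Y)=\Phi\pI{\xF(P\seq{Y})\bt\vec{C}_{a,g}}$, and the case $\card Y=\gl$ reduces by functoriality to $Y=\gl$ and then follows from the Boosting Lemma (Lemma~\ref{L:Boosting}) applied to $\bB_{\xi}\eqdef\xF(P\seq{\xi})$ and $\bB\eqdef\xF(P\seq{\gl})=\varinjlim_{\xi<\gl}\bB_{\xi}$, using that $\cA(\gq,\vec{C}_{a,g})$ is closed under colimits indexed by~$\gl$ (here $\gl\geq\gq$) and that $\Idc$ preserves such colimits.

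The crux is to show $\gC(\gk)\notin\rng\Psi$. Suppose $\gC(\gk)\cong\widetilde{B}$ for some object~$\bB$ of~$\cB$, and let $\chi\colon\Psi(\bB)\Rightarrow\xF(X)\otmF\vec{C}_{a,g}=\gC(\gk)$ be the isomorphism (an isomorphism of \jzs{s} is weakly distributive, hence a double arrow). I would then verify the hypotheses of Lemma~\ref{L:ExtCLL} with $\mu=\gl$ and with $\cB^{\dagger}$ the full subcategory of $\gl$-small objects of~$\cB$: (WF) holds because $P\seq{\gk}$ is lower finite, hence well-founded; (COND$(\mu)$), (PROD$(\mu)$), (COLIM$(\mu)$) hold because $P$ is finite, $\cA(\gq,\vec{C}_{a,g})$ is closed under products, and $\SLatz$ is cocomplete; (PRES$(\gl)$) holds since a $\gl$-small \jzs\ is weakly $\gl$-pre\-sentable; (LS$(\gl)$) is precisely Lemma~\ref{L:LS4lgrps}, and this is the single point where the hypothesis $\gl\geq\aleph_1$ is used; finally (PROJ$(\mu)$) holds because $\Idc$ carries a binary product projection of \lgrp{s} to a product projection of \jzs{s} (using $\Idc(A\times B)\cong\Idc{A}\times\Idc{B}$), product projections of \jzs{s} are weakly distributive, and weak distributivity is stable under $\gl$-directed colimits of arrows in~$\SLatz$, so that every $\mu$-extended $\Phi$-pro\-jec\-tion is a double arrow. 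Lemma~\ref{L:ExtCLL} then produces a com\-mu\-ta\-tive diagram $\vec{\bB}\in\cB^P$ together with a natural transformation $\vec{\chi}\colon\Psi\vec{\bB}\Todot\Phi\vec{C}_{a,g}=\Idc\vec{C}_{a,g}$ in~$\cS^{\Rightarrow}$, contradicting Lemma~\ref{L:noPsiB2PhiC}.

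To finish, set $\gD(X)\eqdef\gC\pI{\gl\cup(\gl\overset{.}{+}X)}$ for $X\subseteq\gk$, extend $\gD$ to a functor on $\Powi(\gk)$ in the obvious way, and note that $\gl\cup(\gl\overset{.}{+}\gk)=\gk$, so $\gD(\gk)=\gC(\gk)$. Since every index set $\gl\cup(\gl\overset{.}{+}X)$ has cardinality~$\geq\gl$, Proposition~\ref{P:EltEq} shows that $\gD$ sends each morphism of $\Powi(\gk)$ to an $\scL_{\infty\gl}$-el\-e\-men\-tary embedding, so $\gD$ takes values in the stated category. Now \eqref{IncrUnionlgrp} is the $\gl$-continuity of~$\gC$; \eqref{smallDxireplgrp} follows from the range computation of the second paragraph, since $\gl\cup(\gl\overset{.}{+}X)$ is $\gl^+$-small whenever $X$ is; and \eqref{Dgqnotreplgrp}, which says exactly that $\gD(\gk)\notin\rng\Psi$, is the content of the third paragraph. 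Finally, evaluating a suitably chosen such~$\gD$ at $\gl$ and $\gk$ witnesses Definition~\ref{D:AntiElt} for the pair $\pII{\Idc\pI{\cA(\gq,\vec{C}_{a,g})},\rng\Psi}$. The main obstacle, such as it is, is the bookkeeping verification of the hypotheses of Lemma~\ref{L:ExtCLL} --- above all (PROJ$(\mu)$) and the choice of~$\cB^{\dagger}$ --- since the genuinely hard non-representability content has already been isolated in Lemmas~\ref{L:NonvecS}--\ref{L:noPsiB2PhiC}; modulo those checks the argument is a faithful transcription of the proof of Theorem~\ref{T:NotCev}.
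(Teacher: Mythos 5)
Your proposal is correct and follows the paper's own proof essentially verbatim: the same functor $\gC(U)=\xF(P\seq{U})\otlF\vec{C}_{a,g}$ built from the standard $\gl$-lifter of Corollary~\ref{C:355Larder}, the Boosting Lemma for statement~(ii), the Extended CLL together with Lemmas~\ref{L:LS4lgrps} and~\ref{L:noPsiB2PhiC} for statement~(iii), and the same shift $\gD(X)\eqdef\gC(\gl\cup(\gl\overset{.}{+}X))$ with Proposition~\ref{P:EltEq} at the end. The only (harmless) deviation is your check of (PROJ$(\gl)$): the paper deduces weak distributivity of extended $\Phi$-projections from the fact that $\Idc$ of any surjective \lhom\ is ideal-induced, whereas you use $\Idc(A\times B)\cong\Idc{A}\times\Idc{B}$, weak distributivity of semilattice product projections, and its stability under directed colimits of arrows --- both verifications are valid.
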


\begin{proof}
Set $\gk\eqdef\gl^{+2}$.
It follows from Corollary~\ref{C:355Larder} that $P\seq{\gk}$ is a standard $\gl$-lifter of~$P$.
Denote by~$\Phi$ the restriction of the functor~$\Idc$ to~$\cA(\gq,\vec{C}_{a,g})$.
As at the beginning of the proof of Theorem~\ref{T:NotCev}, we obtain a functor $\gC\colon\Powi(\gk)\to\nobreak\cS$, $U\mapsto\xF(P\seq{U})\otlF\vec{C}_{a,g}$\,, and this functor is $\gl$-continuous.

\setcounter{claim}{0}

The proof of the following claim is, \emph{mutatis mutandis}, identical to the one of Claim~\ref{Cl:gCglCev} of the proof of Theorem~\ref{T:NotCev} (involving the Boosting Lemma) and we omit it.

\begin{claim}\label{Cl:gCglRep}
For every $\gl^+$-small subset~$X$ of~$\gk$, the lattice~$\gC(X)$ belongs to the range of~$\Phi$.
\end{claim}

\begin{claim}\label{Cl:bBnottype1}
There are no set~$A$ and no distance $\gd\colon A\times A\to\gC(\gk)$ of type~$1$ with join-generating range.
\end{claim}

\begin{cproof}
Let $\chi\colon\widetilde{A}=\Psi(\bA)\to\gC(\gk)$ be an isomorphism (or, more generally, a weakly distributive \jzh\ with join-generating range) and denote by~$\cB^{\dagger}$ the full subcategory of~$\cB$ whose objects are the~$\bB$ such that~$B$ and~$\widetilde{B}$ are both $\gl$-small.
We shall verify that Lemma~\ref{L:ExtCLL} applies.
All assumptions are trivially satisfied except for (PROJ$(\gl)$) and (LS$(\gl)$).
The latter follows from Lemma~\ref{L:LS4lgrps}.
For the former, for any \lgrp{s}~$G$, $H$ and any surjective \lhom\ $f\colon G\twoheadrightarrow\nobreak H$, the \jzh\ $\gf\eqdef\Idc{f}\colon\Idc{G}\to\Idc{H}$ is surjective.
Moreover, specializing Gillibert and Wehrung \cite[Lemma~4.5.1]{Larder} to the variety of all \lgrp{s} and by the natural isomorphism between~$\Idc{G}$ and the \jzs~$\Conc{G}$ of all finitely generated congruences of any \lgrp~$G$, we see that for all $\ba,\bb\in\Idc{G}$, $\gf(\ba)\leq\gf(\bb)$ if{f} there exists $\bx\in\gf^{-1}\set{0}$ such that $\ba\leq\bb\vee\bx$ and $\gf(\bx)=\nobreak0$; with the terminology of Gillibert and Wehrung~\cite{Larder}, $\gf$ is \emph{ideal-induced}.
Since every directed colimit of ideal-induced \jzh{s} is ideal-induced, and since ideal-induced implies weakly distributive, every extended $\Phi$-pro\-jec\-tion belongs to~$\cS^{\Rightarrow}$.
Condition (PROJ$(\gl)$) follows.

Therefore, Lemma~\ref{L:ExtCLL} applies and there are a com\-mu\-ta\-tive diagram $\vec{\bB}\in\nobreak\cB^P$ and a natural transformation $\vec{\chi}\colon\Psi\vec{\bB}\Todot\Phi\vec{C}_{a,g}$; in contradiction with Lemma~\ref{L:noPsiB2PhiC}.
\end{cproof}

The remainder of the proof runs as in the one of Theorem~\ref{T:NotCev}, again with $\gD(X)\eqdef\gC(\gl\cup(\gl\overset{.}{+}X))$.
\end{proof}

\begin{theorem}\label{T:NonIdc}
Let~$\cG$ be a subcategory of~$\lGrp$, closed under products and under colimits indexed by all large enough regular cardinals, and containing all objects and arrows of the diagram~$\vec{A}$ of Section~\textup{\ref{S:Ceva}}.
Then $\Idc(\cG)$ is anti-el\-e\-men\-tary.
\end{theorem}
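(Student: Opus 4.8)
The plan is to sandwich $\Idc(\cG)$ between $\Idc\pI{\cA(\gq,\vec{A})}$ and $\rng\Psi$, and to prove anti-el\-e\-men\-tar\-ity of the latter pair by re-running the construction of Theorems~\ref{T:NotCev} and~\ref{T:NonPermlgrp}. First I would record that $\cA(\gq,\vec{A})\subseteq\cG$ for every large enough regular cardinal~$\gq$: indeed~$\cG$ contains all objects and arrows of~$\vec{A}$ and, being closed under products and under $\gl$-indexed colimits for all regular~$\gl$ above some threshold~$\gq_0$, it contains the smallest subcategory of~$\lGrp$ with those properties, which is exactly $\cA(\gq,\vec{A})$ once $\gq\geq\gq_0$. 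Hence $\Idc\pI{\cA(\gq,\vec{A})}\subseteq\Idc(\cG)$. On the other hand, by the first item of Proposition~\ref{P:ExofrngPsi}, $\Idc(G)\in\rng\Psi$ for \emph{every} \lgrp~$G$ (representable or not), so $\Idc(\cG)\subseteq\rng\Psi$. By the remark following Definition~\ref{D:AntiElt}, it therefore suffices to prove that the pair $\pII{\Idc\pI{\cA(\gq,\vec{A})},\rng\Psi}$ is anti-el\-e\-men\-tary whenever~$\gq$ is large enough.

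For that I would repeat the construction of Theorem~\ref{T:NotCev}. Fix~$\gq\geq\gq_0$ regular and a regular cardinal $\gl\geq\gq+\aleph_1$, set $\gk\eqdef\gl^{+2}$, let $P\eqdef\Pow[3]$, and let~$\Phi$ be the restriction of~$\Idc$ to $\cA\eqdef\cA(\gq,\vec{A})$; since~$\cA$ is closed under products and~$\vec{A}$ is $\Idc$-com\-mu\-ta\-tive, $\vec{A}$ is $\Phi$-com\-mu\-ta\-tive. By Corollary~\ref{C:355Larder}, $P\seq{\gk}$ is a standard $\gl$-lifter of~$P$; composing the functor $U\mapsto\xF(P\seq{U})$ (Lemma~\ref{L:P(f,gS)}) with ${}_{-}\otlF\vec{A}$ yields a $\gl$-continuous functor $\gC\colon\Powi(\gk)\to\SLatz$, $U\mapsto\xF(P\seq{U})\otlF\vec{A}$, and then $\gD(X)\eqdef\gC(\gl\cup(\gl\overset{.}{+}X))$ defines a $\gl$-continuous functor from $\Powi(\gk)$ to the category of \jzs{s} with $\scL_{\infty\gl}$-el\-e\-men\-tary embeddings (Proposition~\ref{P:EltEq}). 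Exactly as in Claim~\ref{Cl:gCglCev}, the Boosting Lemma shows $\gD(X)\in\Idc(\cA)$ for every $\gl^+$-small $X\subseteq\gk$. The whole matter then reduces to proving $\gC(\gk)\notin\rng\Psi$.

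To do this I would argue as in Claim~\ref{Cl:bBnottype1}: assume there is a weakly distributive \jzh\ with join-generating range $\chi\colon\Psi(\bA)\to\gC(\gk)$, and apply the Extended CLL (Lemma~\ref{L:ExtCLL}) with $\mu\eqdef\gl$, with~$\cA$ as above, with~$\cB$, $\Psi$ and the double arrows~$\cS^{\Rightarrow}$ (the weakly distributive \jzh{s}) of Section~\ref{S:PermCon}, with $\cB^{\dagger}$ the full subcategory of $\gl$-small objects of~$\cB$, and with the diagram~$\vec{A}$ playing the role of~$\vec{C}_{a,g}$. The hypotheses (WF), (COND$(\gl)$), (PROD$(\gl)$), (COLIM$(\gl)$) and (PRES$(\gl)$) are immediate; (PROJ$(\gl)$) holds because any $\Phi$-pro\-jec\-tion is $\Idc$ of a product projection inside~$\cA$, hence ideal-induced and thus weakly distributive, and these properties pass to $\gl$-directed colimits; and (LS$(\gl)$) is Lemma~\ref{L:LS4lgrps}. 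This produces a com\-mu\-ta\-tive diagram $\vec{\bB}\in\cB^P$ together with a natural transformation $\vec{\chi}\colon\Psi\vec{\bB}\Todot\Idc\vec{A}$ in~$\cS^{\Rightarrow}$.

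It remains — and this is the step I expect to be the real obstacle — to show that no such pair $(\vec{\bB},\vec{\chi})$ exists, i.e., the analogue of Lemma~\ref{L:noPsiB2PhiC} with~$\vec{A}$ in place of~$\vec{C}_{a,g}$. Unlike that lemma, this cannot be reduced to Lemma~\ref{L:NonvecS} by composing with a morphism of diagrams onto~$\vec{S}$: since the $\ell$-ideal $\seql{a\wedge b}$ of~$A_{12}$ is nonzero (as $a\wedge b\neq0$ already in~$A_{123}$), there is no natural transformation $\Idc\vec{A}\Todot\vec{S}$ at all, which is presumably why the auxiliary diagram~$\vec{C}_{a,g}$ was introduced in Section~\ref{S:PermCon} in the first place. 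Instead I would prove the unliftability of~$\Idc\vec{A}$ against the category~$\cB$ directly, by adapting the argument behind \cite[Lemma~4.3]{Ceva} and \cite[Theorem~7.2]{Ceva} (which already forbid such natural transformations in the Cevian setting): the \emph{type-$1$} property of the objects of~$\cB$ together with the weak distributivity of the components~$\chi_p$ supply precisely the weak-distributivity/amalgamation inputs that argument uses, so the defining relations $0\le a\le a'\le 2a$ of~$A_{123}$ force the same contradiction. Granting this lemma, it contradicts the natural transformation produced by the Extended CLL, so $\gC(\gk)\notin\rng\Psi$, and the anti-el\-e\-men\-tar\-ity of $\Idc(\cG)$ follows.
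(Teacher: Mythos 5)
There is a genuine gap, and it is exactly at the step you yourself flag as ``the real obstacle''. Your whole argument reduces to the unproven lemma that there is no commutative diagram $\vec{\bB}\in\cB^P$ with a natural transformation $\vec{\chi}\colon\Psi\vec{\bB}\Todot\Idc\vec{A}$ in~$\cS^{\Rightarrow}$ (the analogue of Lemma~\ref{L:noPsiB2PhiC} with~$\vec{A}$ in place of~$\vec{C}_{a,g}$), and the one-sentence justification you offer --- that the type-$1$ property plus weak distributivity ``supply precisely'' the inputs of \cite[Lemma~4.3]{Ceva} and \cite[Theorem~7.2]{Ceva} --- does not hold up. The Cevian argument rests on a binary difference operation~$\sd$ on the target lattice satisfying $(x\sd y)\wedge(y\sd x)=0$ and the triangle-type inequality; a type-$1$ semilattice-valued distance on an auxiliary set provides no such operation. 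Indeed, $\Idc{G}$ carries a type-$1$ distance with join-generating range for \emph{every} \lgrp~$G$ (Proposition~\ref{P:ExofrngPsi}), including non-representable~$G$ for which~$\Idc{G}$ need not be Cevian, so membership in $\rng\Psi$ is strictly weaker than the Cevian-style hypotheses and cannot feed ``the same contradiction''. Note also that your lemma would assert that the $\vec{A}$-condensate is not even the semilattice of compact congruences of any congruence-permutable algebra --- a statement considerably stronger than anything proved in this paper or in~\cite{Ceva} (Theorem~\ref{T:NotCev} only shows the condensate of~$\vec{A}$ is not Cevian, which does not exclude $\rng\Psi$), and quite possibly false. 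So the sandwich $\Idc\pI{\cA(\gq,\vec{A})}\subseteq\Idc(\cG)\subseteq\rng\Psi$, while correct, is not known to be anti-elementary, and your proof does not close this.

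For comparison, the paper avoids this issue by a case split on~$\cG$ itself: if every member of~$\cG$ is representable, then $\Idc(\cG)\subseteq\Cev$ by Proposition~\ref{P:FewCevians}, and Theorem~\ref{T:NotCev} (the Cevian obstruction for the diagram~$\vec{A}$) applies, which is Corollary~\ref{C:NotCev1}; if~$\cG$ has a non-representable member~$C$, one extracts an NR-pair $(a,g)$ of~$C$, observes $\cA(\gq,\vec{C}_{a,g})\subseteq\cG$, and applies Theorem~\ref{T:NonPermlgrp}, whose unliftability input (Lemma~\ref{L:noPsiB2PhiC}) does map onto the diagram~$\vec{S}$ and hence does work against all of $\rng\Psi$. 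In other words, the two obstructions are matched to the two cases precisely because neither covers both: the Cevian obstruction fails against non-representable targets, and the $\vec{S}$-type obstruction is unavailable for~$\vec{A}$ (as you correctly observed). Your single-diagram, single-pair strategy would need a new unliftability theorem for $\Idc\vec{A}$ against arbitrary type-$1$ spaces, which you have not supplied.
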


\begin{proof}
Let~$\gq$ be any infinite regular cardinal such that~$\cG$ is closed under all $\gl$-indexed colimits whenever~$\gl$ is an infinite regular cardinal $\geq\gq$.
We separate cases.
Our assumptions imply that~$\cG$ contains the diagram $\cA(\gq,\vec{A})$ (cf. Notation~\ref{Not:cA(gq)}).
Hence, if~$\cG$ is representable, then we can apply Corollary~\ref{C:NotCev1}.
If~$\cG$ is not representable, that is, it has a non-representable member~$C$, which in turn has an NR-pair $(a,g)\in(C^+\setminus\set{0})\times C$.
Then $\cA(\gq,\vec{C}_{a,g})\subseteq\cG$ and we can apply Theorem~\ref{T:NonPermlgrp}.
\end{proof}

In particular, every nontrivial \emph{quasivariety}~$\cV$ of \lgrp{s} contains~$\ZZ$ as a member, thus (since all objects in~$\vec{A}$ are subdirect powers of~$\ZZ$) it contains~$\cA(\go,\vec{A})$.
Since~$\cV$ is closed under products and under directed colimits, Theorem~\ref{T:NonIdc} applies to~$\cV$, so \emph{$\Idc(\cV)$ is anti-el\-e\-men\-tary}.

Straying away from quasivarieties, we also obtain the following:

\begin{corollary}\label{C:NonIdc}
Let~$\cG$ be a full subcategory of~$\lGrp$, closed under products and under colimits indexed by all large enough regular cardinals, and containing all Ar\-chi\-me\-dean \lgrp{s}.
Then $\Idc(\cG)$ is anti-el\-e\-men\-tary.
\end{corollary}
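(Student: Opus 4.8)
The plan is to deduce Corollary~\ref{C:NonIdc} directly from Theorem~\ref{T:NonIdc}. The two closure hypotheses of Theorem~\ref{T:NonIdc} --- being closed under products and under colimits indexed by all large enough regular cardinals --- are assumed verbatim in Corollary~\ref{C:NonIdc}. Hence the only point left to verify is that~$\cG$ contains all objects \emph{and} all arrows of the diagram~$\vec{A}$ of Section~\ref{S:Ceva}.

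For the objects: each~$A_p$, for $p\in\Pow[3]$, is by construction an \lsgrp\ of the Abelian \lgrp~$A_{123}$, which is Ar\-chi\-me\-dean (this is exactly the observation already recalled in Section~\ref{S:Ceva}, namely that every object occurring in~$\vec{A}$ is Ar\-chi\-me\-dean). An \lsgrp\ of an Ar\-chi\-me\-dean \lgrp\ is again Ar\-chi\-me\-dean, so every~$A_p$ is Ar\-chi\-me\-dean, thus belongs to~$\cG$ by assumption. For the arrows: since~$\cG$ is a \emph{full} subcategory of~$\lGrp$, every \lhom\ between two members of~$\cG$ is a morphism of~$\cG$; in particular, each arrow of~$\vec{A}$, having domain and codomain among the~$A_p$ (all of which lie in~$\cG$), belongs to~$\cG$. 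Therefore~$\cG$ contains all objects and arrows of~$\vec{A}$, as required.

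With this in hand, Theorem~\ref{T:NonIdc} applies to~$\cG$ and yields that~$\Idc(\cG)$ is anti-el\-e\-men\-tary. There is no genuine obstacle here: the statement is a direct specialization of Theorem~\ref{T:NonIdc}, and the only mildly delicate ingredient, the Ar\-chi\-me\-dean\-ness of the objects of~$\vec{A}$ together with its inheritance by \lsgrp{s}, has already been recorded in Section~\ref{S:Ceva}. It is precisely at the step ``$\cG$ contains the arrows of~$\vec{A}$'' that the fullness hypothesis on~$\cG$ is used (in contrast with the weaker ``subcategory'' hypothesis of Theorem~\ref{T:NonIdc}, where the arrows had to be assumed present by hand), so this is the only place where the present statement is not completely formal.
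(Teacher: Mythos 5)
Your proposal is correct and is essentially the paper's own (implicit) argument: the corollary is stated as a direct specialization of Theorem~\ref{T:NonIdc}, using that every object of~$\vec{A}$ is Archimedean (as recorded in Section~\ref{S:Ceva}) so that~$\cG$ contains the objects, and fullness of~$\cG$ so that it contains the arrows. Nothing further is needed.
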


Since the arrows in the diagram~$\vec{A}$ are not unit-preserving, we do not know whether Corollary~\ref{C:NonIdc} extends to the case of \lgrp{s} with order-unit.

\subsection{Ideal lattices of rings}
\label{Su:IdcRegRings}
Denote by~$\Ring$ the category of all unital rings with unital ring homomorphisms.
The assignment, that sends any ring~$R$ to the \jzs\ of its finitely generated two-sided ideals, naturally extends to a functor $\Idc\colon\Ring\to\cS$.
For a ring homomorphism $f\colon R\to S$, $\Idc{f}$ sends every two-sided ideal~$X$ of~$R$ to the two-sided ideal of~$S$ generated by~$f[X]$.

Now let~$\Bbbk$ be a field and denote by~$\Mat{2}{\Bbbk}$ the ring of all $2\times 2$ square matrices over~$\Bbbk$.
For all $x,y\in\Bbbk$, we set $e(x)\eqdef(x,x)$, $s(x,y)\eqdef(y,x)$, and $h(x,y)\eqdef\begin{pmatrix}x & 0\\ 0 & y\end{pmatrix}$.
We denote by~$\vec{R}_{\Bbbk}$ the non-com\-mu\-ta\-tive diagram, in~$\Ring$, represented in the right hand side of Figure~\ref{Fig:CandR}.

\begin{lemma}\label{L:vecRIdcComm}
The diagram~$\vec{R}_{\Bbbk}$ is $\Idc$-com\-mu\-ta\-tive.
\end{lemma}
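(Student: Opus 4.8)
The plan is to mimic, in streamlined form, the proof of Lemma~\ref{L:vecCIdcComm}, with the conjugation $b=g+a-g$ of that proof replaced by conjugation by a permutation matrix. Fix a set $I$; since $\Ring$ has all products, $\vec{R}_{\Bbbk}^{I}$ is defined, and the task is to show that $\Idc\vec{R}_{\Bbbk}^{I}$ is a commutative diagram (in the sense of Definition~\ref{D:PhiComm}). So let $p=\vecm{p_i}{i\in I}$ and $q=\vecm{q_i}{i\in I}$ in $P^{I}$ with $p\leq q$, and let $f=\prod\vecm{f_i}{i\in I}$ and $g=\prod\vecm{g_i}{i\in I}$ with $f_i,g_i\in\vec{R}_{\Bbbk}(p_i,q_i)$; these are unital ring homomorphisms from $R\eqdef\vec{R}_{\Bbbk}^{I}(p)=\prod_{i\in I}R_{p_i}$ to $R'\eqdef\vec{R}_{\Bbbk}^{I}(q)=\prod_{i\in I}R_{q_i}$, and the goal is $\Idc(f)=\Idc(g)$. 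First I would reduce to single generators: every finitely generated two-sided ideal of $R$ is a finite join, in $\Idc R$, of ideals of the form $\langle z\rangle$ with $z\in R$, and both $\Idc(f)$ and $\Idc(g)$ are \jzh{s} carrying such an $\langle z\rangle$ to the two-sided ideal of $R'$ generated by $f(z)$, resp. by $g(z)$; so it suffices to prove that $f(z)$ and $g(z)$ generate the same two-sided ideal of $R'$, for every $z=\vecm{z_i}{i\in I}\in R$.

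The key observation, read off from Figure~\ref{Fig:CandR} exactly as in the proof of Lemma~\ref{L:vecCIdcComm}, is that $\vec{R}_{\Bbbk}(p',q')$ is a singleton for every $p'\leq q'$ in $P$ except one pair, for which it equals $\set{h,h\circ s}$; here $h\colon\Bbbk^{2}\to\Mat{2}{\Bbbk}$ is $h(x,y)=\left(\begin{smallmatrix}x&0\\0&y\end{smallmatrix}\right)$ and $s(x,y)=(y,x)$. Setting $\omega\eqdef\left(\begin{smallmatrix}0&1\\1&0\end{smallmatrix}\right)\in\Mat{2}{\Bbbk}$, a direct computation gives $\omega^{-1}=\omega$, $\omega^{2}=1$, and $(h\circ s)(v)=\omega\cdot h(v)\cdot\omega^{-1}$ for every $v\in\Bbbk^{2}$ — this plays the role of $b=g+a-g$ in the proof of Lemma~\ref{L:vecCIdcComm}. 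Consequently, whenever $f_i\neq g_i$ one has $q_i=123$ (so $\omega\in R_{q_i}$) and $\set{f_i,g_i}=\set{h,h\circ s}$, and then $g_i(z_i)=\omega\cdot f_i(z_i)\cdot\omega^{-1}$ for every $z_i\in R_{p_i}$, in either direction (using $\omega^{-1}=\omega$ and $\omega^{2}=1$); in particular, unlike in the $\vec{C}_{a,g}$ case, no splitting of $\setm{i\in I}{f_i\neq g_i}$ into two pieces is needed.

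To globalize, I would let $\bar{\omega}=\vecm{\bar{\omega}_i}{i\in I}\in R'$ be defined by $\bar{\omega}_i\eqdef\omega$ when $f_i\neq g_i$ and $\bar{\omega}_i\eqdef 1$ otherwise. Then $\bar{\omega}$ is a unit of $R'$ (a product of units), and combining the previous paragraph with the trivial case $f_i=g_i$ gives $g(z)=\bar{\omega}\cdot f(z)\cdot\bar{\omega}^{-1}$ for every $z\in R$. Hence $g(z)$ lies in the two-sided ideal generated by $f(z)$, and symmetrically $f(z)=\bar{\omega}^{-1}\cdot g(z)\cdot\bar{\omega}$ lies in the one generated by $g(z)$, so the two coincide; this completes the argument. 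The computations are routine and there is no serious difficulty; the one point worth flagging — and the reason the statement cannot simply be deduced from $\Idc$-commutativity of a one-object diagram — is that $\Idc$ does not preserve (infinite) products, so the reasoning has to be carried out at the level of the principal two-sided ideals of the product ring $R'$ itself, which is precisely what the reduction in the first paragraph and the global unit $\bar{\omega}$ achieve.
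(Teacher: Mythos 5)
Your proof is correct and takes essentially the same route as the paper's: reduce to showing that $f(z)$ and $g(z)$ generate the same two-sided ideal of the product ring, note that the only disagreement between $f_i$ and $g_i$ can occur at the pair with target $123$ where the two maps $h$ and $h\circ s$ are conjugate by the swap matrix, and conclude by conjugating with a unit. The only (harmless) difference is that the paper splits $I$ into the three pieces $I_0,I_1,I_2$ and conjugates by a constant family on each, whereas you conjugate once by the mixed unit $\bar\omega$ (equal to~$\omega$ on the twisted coordinates and to~$1$ elsewhere), which streamlines but does not change the argument.
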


\begin{proof}
For any set~$I$, we need to prove that the image under the functor~$\Idc$\,, of the diagram~$\vec{R}_{\Bbbk}^I$\,, is com\-mu\-ta\-tive.
Let $p=\vecm{p_i}{i\in I}$ and $q=\vecm{q_i}{i\in I}$ in~$P^I$ such that $p\leq q$, let $\vecm{f_i}{i\in I}$ and $\vecm{g_i}{i\in I}$ be elements of $\prod\Vecm{\vec{R}_{\Bbbk}(p_i,q_i)}{i\in I}$.
Set $f\eqdef\prod\vecm{f_i}{i\in I}$ and $g\eqdef\prod\vecm{g_i}{i\in I}$.
In order to prove that the maps~$\Idc{f}$ and~$\Idc{g}$ are equal, it suffices to prove that they agree on the two-sided ideal of~$\vec{R}_{\Bbbk}^I(p)$ generated by any $z=\vecm{z_i}{i\in I}\in\vec{R}_{\Bbbk}^I(p)$;
that is, it suffices to verify that the elements~$f(z)$ and~$g(z)$ generate the same two-sided ideal of $\vec{R}_{\Bbbk}^I(q)$.
The complement in~$I$ of $I_0\eqdef\setm{i\in I}{f_i=g_i}$ is
contained in $\setm{i\in I}{(p_i,q_i)=(1,123)}$, so, setting
 \begin{align*}
 I_1&\eqdef\setm{i\in I}
 {(p_i,q_i)=(1,123)\text{ and }f_i=h\text{ and }g_i=h\circ s}\,,\\
 I_2&\eqdef\setm{i\in I}
 {(p_i,q_i)=(1,123)\text{ and }f_i=h\circ s\text{ and }g_i=h}\,,
 \end{align*}
it follows that~$I$ is the disjoint union of~$I_0$\,, $I_1$\,, and~$I_2$\,.
Denote by~$\chi_J$ the characteristic function of any subset~$J$ of~$I$.
Since~$f(z)$ generates the same two-sided ideal as $\set{f(z\chi_{I_0}),f(z\chi_{I_1}),f(z\chi_{I_2})}$, and similarly for~$g$, it suffices to settle the case where $I=I_k$ for some $k\in\set{0,1,2}$.
If $k=0$ then $f=g$ and we are done.
The two other cases being symmetric, it remains to settle the case where $k=1$.
For each $i\in I_1$\,, we can write $z_i=(x_i,y_i)\in\Bbbk\times\Bbbk$, thus $f_i(z_i)=\begin{pmatrix}x_i & 0\\ 0 & y_i\end{pmatrix}$ and $g_i(z_i)=\begin{pmatrix}y_i & 0\\ 0 & x_i\end{pmatrix}$.
Defining~$\ol{u}$ as the constant $I$-indexed family with value $\begin{pmatrix}0 & 1\\ 1 & 0\end{pmatrix}$, we get $\ol{u}=\ol{u}^{-1}$ and $g(z)=\ol{u}\cdot f(z)\cdot\ol{u}^{-1}$.
The desired conclusion follows.
\end{proof}

We set $\vec{\pi}\eqdef\vecm{\pi_p}{p\in P}$ where each $\pi_p\colon\Idc{R_{\Bbbk}(p)}\to S(p)$ is the canonical isomorphism.
The following lemma is trivial.

\begin{lemma}\label{L:PhiR2S}
The family~$\vec{\pi}$ is a natural isomorphism from~$\Idc\vec{R}_{\Bbbk}$ onto~$\vec{S}$.
\end{lemma}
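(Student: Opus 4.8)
The plan is simply to unwind the definitions: the content of the lemma is three elementary computations of ideal lattices plus a node-by-node comparison of Figures~\ref{Fig:CandR} and~\ref{Fig:CandS}. First I would record the value of $\Idc$ on each of the three rings occurring in $\vec{R}_{\Bbbk}$. Since the only two-sided ideals of a field are $\{0\}$ and $\Bbbk$, we have $\Idc\Bbbk\cong\two$; since the two-sided ideals of $\Bbbk^2$ are exactly the products of ideals of the two factors, $\Idc(\Bbbk^2)\cong\two^2$; and since $\Mat{2}{\Bbbk}$ is a simple ring, its only two-sided ideals are $0$ and $\Mat{2}{\Bbbk}$, so $\Idc\Mat{2}{\Bbbk}\cong\two$. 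These canonical $(\vee,0)$-isomorphisms are the components $\pi_p$ of $\vec{\pi}$, and they agree node by node with the objects of $\vec{S}$ ($S_{\es}=\two$, $S_p=\two^2$ for $p\notin\set{\es,123}$, $S_{123}=\two$).

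Next I would check that, under these identifications, $\Idc$ carries each kind of morphism appearing in $\vec{R}_{\Bbbk}$ to the correspondingly placed morphism of $\vec{S}$. For the diagonal embedding $e\colon\Bbbk\to\Bbbk^2$, $x\mapsto(x,x)$: the two-sided ideal of $\Bbbk^2$ generated by $(x,x)$ is all of $\Bbbk^2$ when $x\neq0$ and is $0$ otherwise, so $\Idc e$ is the map $\two\to\two^2$ sending $1$ to $(1,1)$, i.e.\ it is $\be$. For the coordinate swap $s\colon\Bbbk^2\to\Bbbk^2$, $(x,y)\mapsto(y,x)$: $\Idc s$ sends an ideal $I_1\times I_2$ to $I_2\times I_1$, i.e.\ it is the swap $\bs$ of $\two^2$. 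For the diagonal map $h\colon\Bbbk^2\to\Mat{2}{\Bbbk}$, $(x,y)\mapsto\operatorname{diag}(x,y)$: by simplicity of $\Mat{2}{\Bbbk}$, the two-sided ideal generated by $\operatorname{diag}(x,y)$ is the whole ring whenever $(x,y)\neq(0,0)$ and is $0$ otherwise, so $\Idc h$ is the map $\two^2\to\two$, $(i,j)\mapsto i\vee j$, i.e.\ it is $\bp$. Identity morphisms go to identity morphisms.

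Finally I would conclude. By Lemma~\ref{L:vecRIdcComm} (with $I$ a singleton) $\Idc\vec{R}_{\Bbbk}$ is a commutative $P$-indexed diagram; on the single edge where $\vec{R}_{\Bbbk}$ fails to commute, its two morphisms $h$ and $h\circ s$ are both sent by $\Idc$ to $\bp=\bp\circ\bs$, so there is no clash. Reading Figure~\ref{Fig:CandR} against Figure~\ref{Fig:CandS} through the dictionary $\Bbbk\mapsto\two$, $\Bbbk^2\mapsto\two^2$, $\Mat{2}{\Bbbk}\mapsto\two$, $e\mapsto\be$, $s\mapsto\bs$, $h\mapsto\bp$, identities to identities, we see that $\Idc\vec{R}_{\Bbbk}$ coincides edge by edge with $\vec{S}$, with $\vec{\pi}=\vecm{\pi_p}{p\in P}$ supplying the relating isomorphisms; since both diagrams are determined by their values on the covers of $P$, this makes $\vec{\pi}$ a natural isomorphism from $\Idc\vec{R}_{\Bbbk}$ onto $\vec{S}$ (and, each $\pi_p$ being an isomorphism in $\SLatz$, it is a fortiori a natural transformation in $\cS^{\Rightarrow}$). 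There is no genuine obstacle here; the only point needing more than the definitions is the identity $\Idc h=\bp$, which rests on the simplicity of the $2\times2$ matrix ring over a field.
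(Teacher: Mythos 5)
Your verification is correct, and it is exactly the routine node-by-node and edge-by-edge computation that the paper has in mind when it states the lemma without proof ("The following lemma is trivial"). In particular you rightly note the only two points with any content: $\Idc h=\bp$ via simplicity of $\Mat{2}{\Bbbk}$, and the fact that both morphisms on the non-commuting edge $1\to123$ are sent by $\Idc$ to $\bp$, so the identification is consistent.
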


The following lemma is then an immediate consequence of Lemma~\ref{L:NonvecS}.

\begin{lemma}\label{L:noPsiB2PhiR}
There are no com\-mu\-ta\-tive diagram $\vec{\bB}\in\cB^P$ and no natural transformation $\vec{\chi}\colon\Psi\vec{\bB}\Todot\Idc\vec{R}_{\Bbbk}$\,.
\end{lemma}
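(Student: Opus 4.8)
The plan is to deduce Lemma~\ref{L:noPsiB2PhiR} from Lemma~\ref{L:NonvecS} by transporting the obstruction along the natural isomorphism~$\vec{\pi}$ supplied by Lemma~\ref{L:PhiR2S}, in complete analogy with the proof of Lemma~\ref{L:noPsiB2PhiC} for \lgrp{s}; indeed the comment preceding the statement already flags it as an immediate consequence.

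First I would argue by contradiction. Suppose that there are a commutative diagram $\vec{\bB}\in\cB^P$ and a natural transformation $\vec{\chi}\colon\Psi\vec{\bB}\Todot\Idc\vec{R}_{\Bbbk}$, that is, $\vec{\chi}=\vecm{\chi_p}{p\in P}$ with every~$\chi_p$ a double arrow. By Lemma~\ref{L:PhiR2S}, $\vec{\pi}=\vecm{\pi_p}{p\in P}$ is a natural isomorphism from the commutative diagram~$\Idc\vec{R}_{\Bbbk}$ onto~$\vec{S}$. In particular each~$\pi_p$ is an isomorphism of \jzs{s}, and every such isomorphism is weakly distributive: if $\pi_p(s)\leq t_0\vee t_1$ then $s\leq\pi_p^{-1}(t_0)\vee\pi_p^{-1}(t_1)$ while $\pi_p(\pi_p^{-1}(t_i))=t_i$ for $i\in\set{0,1}$. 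Hence each~$\pi_p$ lies in~$\cS^{\Rightarrow}$, so~$\vec{\pi}$ is a natural transformation whose components are all double arrows.

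Since~$\cS^{\Rightarrow}$ is a subcategory of~$\cS$, hence closed under composition, the componentwise composite $\vec{\pi}\circ\vec{\chi}\eqdef\vecm{\pi_p\circ\chi_p}{p\in P}$ is again a natural transformation with all components double arrows; in other words $\vec{\pi}\circ\vec{\chi}\colon\Psi\vec{\bB}\Todot\vec{S}$. This contradicts Lemma~\ref{L:NonvecS}, which rules out precisely such a pair $(\vec{\bB},\vec{\pi}\circ\vec{\chi})$. Therefore no $\vec{\bB}$ and~$\vec{\chi}$ as in the statement can exist.

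The argument is essentially formal and presents no genuine obstacle: all the substantive content has been isolated beforehand, in Lemma~\ref{L:NonvecS} (the non-liftability of~$\vec{S}$) and in Lemma~\ref{L:PhiR2S} (which in turn rests on the $\Idc$-commutativity of~$\vec{R}_{\Bbbk}$ from Lemma~\ref{L:vecRIdcComm}). The only point requiring the tiny verification carried out above is the double-arrow bookkeeping, namely that composing the double arrows~$\chi_p$ with the isomorphisms~$\pi_p$ keeps the result inside~$\cS^{\Rightarrow}$, which reduces to the remark that isomorphisms of \jzs{s} are weakly distributive.
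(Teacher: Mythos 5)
Your proposal is correct and is exactly the argument the paper intends: it declares the lemma an immediate consequence of Lemma~\ref{L:NonvecS} via the natural isomorphism of Lemma~\ref{L:PhiR2S}, just as in the proof of Lemma~\ref{L:noPsiB2PhiC}, where one composes $\vec{\pi}\circ\vec{\chi}$ to contradict Lemma~\ref{L:NonvecS}. Your extra remark that isomorphisms of \jzs{s} are weakly distributive, so the composites stay in~$\cS^{\Rightarrow}$, is the only bookkeeping needed and is handled correctly.
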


\begin{notation}\label{Not:cR(gq)}
For any infinite regular cardinal~$\gq$, we denote by~$\cR(\gq,\Bbbk)$ the smallest subcategory of~$\Ring$, containing all objects and arrows of~$\vec{R}_{\Bbbk}$\,, and closed under products and under colimits, within~$\Ring$, indexed by any regular $\gl\geq\gq$.
\end{notation}

We are now reaching this section's main result.

\begin{theorem}\label{T:NonPermvNR}
For all infinite regular cardinals~$\gq$ and~$\gl$ with $\gq+\aleph_1\leq\gl$ and every field~$\Bbbk$, there exists a functor~$\gD$, from~$\Powi(\gl^{+2})$ to the category~$\cS$ of all \jzs{s} with~$\scL_{\infty\gl}$-el\-e\-men\-tary embeddings, satisfying the following statements:
\begin{enumerater}
\item\label{IncrUnionvNR}
$\gD$ is $\gl$-continuous;

\item\label{smallDxirepvNR}
For every $\gl^+$-small subset~$X$ of~$\gl^{+2}$, $\gD(X)$ belongs to $\Idc\pI{\cR(\gq,\Bbbk)}$;

\item\label{DgqnotrepvNR}
There are no set~$A$ and no distance $\gd\colon A\times A\to\gD(\gl^{+2})$ of type~$1$ with join-generating range.
\end{enumerater}
In particular, the pair $\pII{\Idc\pI{\cR(\gq,\Bbbk)},\rng\Psi}$ is anti-el\-e\-men\-tary.
\end{theorem}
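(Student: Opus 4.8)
The plan is to run the proof of Theorem~\ref{T:NonPermlgrp} essentially verbatim, substituting the category $\Ring$ for $\lGrp$, the non-com\-mu\-ta\-tive diagram $\vec{R}_{\Bbbk}$ for $\vec{C}_{a,g}$, and the three already-proven facts about $\vec{R}_{\Bbbk}$ --- its $\Idc$-com\-mu\-ta\-tiv\-ity (Lemma~\ref{L:vecRIdcComm}), the natural isomorphism $\vec{\pi}\colon\Idc\vec{R}_{\Bbbk}\to\vec{S}$ (Lemma~\ref{L:PhiR2S}), and the non-existence of any natural transformation $\vec{\chi}\colon\Psi\vec{\bB}\Todot\Idc\vec{R}_{\Bbbk}$ with $\vec{\bB}\in\cB^P$ (Lemma~\ref{L:noPsiB2PhiR}) --- in place of the corresponding \lgrp\ facts used there. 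Concretely: set $\gk\eqdef\gl^{+2}$; since $P=\Pow[3]$ has order-dimension~$3$, Corollary~\ref{C:355Larder} makes $P\seq{\gk}$ a standard $\gl$-lifter of~$P$. Let $\Phi$ be the restriction of $\Idc$ to $\cR(\gq,\Bbbk)$; closure of $\cR(\gq,\Bbbk)$ under products together with Lemma~\ref{L:vecRIdcComm} makes $\vec{R}_{\Bbbk}$ a $\Phi$-com\-mu\-ta\-tive diagram. Exactly as at the beginning of the proof of Theorem~\ref{T:NotCev}, Lemma~\ref{L:P(f,gS)} turns $U\mapsto\xF(P\seq{U})$ into an $\go$-continuous functor $\Powi(\gk)\to\Bool_P$, and postcomposing with the $\gl$-continuous functor ${}_{-}\otlF\vec{R}_{\Bbbk}$ yields a $\gl$-continuous functor $\gC\colon\Powi(\gk)\to\cS$, $U\mapsto\xF(P\seq{U})\otlF\vec{R}_{\Bbbk}$.

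The two core claims are then the following. \emph{Claim~(a):} for every $\gl^+$-small $X\subseteq\gk$, $\gC(X)$ belongs to the range of~$\Phi$; the proof is that of Claim~\ref{Cl:gCglRep} --- if $\card X<\gl$ this is immediate from $\gC(X)=\Phi\pI{\xF(P\seq{X})\bt\vec{R}_{\Bbbk}}$, and the case $\card X=\gl$ reduces by functoriality to $X=\gl$ and then to the Boosting Lemma (Lemma~\ref{L:Boosting}) applied to $\bB_\xi\eqdef\xF(P\seq{\xi})$, $\bB\eqdef\xF(P\seq{\gl})$ with canonical transition morphisms $f_{P\seq{\xi}}^{P\seq{\eta}}$, using that $\cR(\gq,\Bbbk)$ is closed under $\gl$-indexed colimits so that $\Phi$ preserves them. \emph{Claim~(b):} there is no set~$A$ and no distance $\gd\colon A\times A\to\gC(\gk)$ of type~$1$ with join-generating range; as in Claim~\ref{Cl:bBnottype1}, I would take a weakly distributive \jzh\ $\chi\colon\widetilde A=\Psi(\bA)\Rightarrow\gC(\gk)$ with join-generating range, let $\cB^{\dagger}$ be the full subcategory of $\cB$ on the $\gl$-small objects, and verify the hypotheses of the Extended CLL (Lemma~\ref{L:ExtCLL}) with $\mu\eqdef\gl$, $\cS^{\Rightarrow}$ the weakly distributive \jzh{s}, and $\vec{A}\eqdef\vec{R}_{\Bbbk}$; the resulting $\vec{\bB}\in\cB^P$ and $\vec{\chi}\colon\Psi\vec{\bB}\Todot\Idc\vec{R}_{\Bbbk}$ contradict Lemma~\ref{L:noPsiB2PhiR}. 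All hypotheses of Lemma~\ref{L:ExtCLL} are routine here except (LS$(\gl)$), which is precisely Lemma~\ref{L:LS4lgrps} (stated purely about \jzs{s}; this is where the hypothesis $\gl\geq\aleph_1$ is used), and (PROJ$(\gl)$).

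The single genuinely ring-theoretic point is (PROJ$(\gl)$): every $\gl$-extended $\Phi$-pro\-jec\-tion must lie in~$\cS^{\Rightarrow}$. For a surjective unital ring homomorphism $f\colon R\twoheadrightarrow S$ the map $\Idc f\colon\Idc R\to\Idc S$ is onto; since the two-sided ideals of a ring are exactly its congruence classes of~$0$, the natural isomorphism $\Idc R\cong\Conc R$ together with Gillibert and Wehrung~\cite[Lemma~4.5.1]{Larder}, applied to the variety of unital associative rings, shows that $\Idc f$ is ideal-induced, hence weakly distributive; as directed colimits of ideal-induced \jzh{s} are again ideal-induced, every $\gl$-extended $\Phi$-pro\-jec\-tion is weakly distributive, giving (PROJ$(\gl)$). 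With both claims in hand, I would finish exactly as in the proof of Theorem~\ref{T:NotCev}: put $\gD(X)\eqdef\gC\pI{\gl\cup(\gl\overset{.}{+}X)}$ and extend $\gD$ to a functor on $\Powi(\gk)$; since $\gl\cup(\gl\overset{.}{+}X)$ always has cardinality $\geq\gl$, Proposition~\ref{P:EltEq} forces $\gD$ to carry morphisms to $\scL_{\infty\gl}$-el\-e\-men\-tary embeddings, $\gl$-continuity of $\gC$ yields~\eqref{IncrUnionvNR}, Claim~(a) yields~\eqref{smallDxirepvNR}, and Claim~(b) yields~\eqref{DgqnotrepvNR}. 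For the ``in particular'': $\gD(\gl)\in\Idc\pI{\cR(\gq,\Bbbk)}$ by~\eqref{smallDxirepvNR}, and this class is contained in $\rng\Psi$ by Proposition~\ref{P:ExofrngPsi}\eqref{IdcvNRR}, whereas $\gD(\gl^{+2})\notin\rng\Psi$ by~\eqref{DgqnotrepvNR} since membership in $\rng\Psi$ is exactly being join-generated by a type-$1$ distance; as $\gD$ preserves $\gl$-directed colimits and $\gl$ may be chosen regular and arbitrarily large ($\geq\gq+\aleph_1$), the pair $\pII{\Idc\pI{\cR(\gq,\Bbbk)},\rng\Psi}$ is anti-el\-e\-men\-tary in the sense of Definition~\ref{D:AntiElt}.

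The main obstacle is bookkeeping rather than a new idea: the substance has been distilled into Lemmas~\ref{L:vecRIdcComm}--\ref{L:noPsiB2PhiR} and the apparatus of Sections~\ref{S:CondPhiCond}--\ref{S:ExtCLL}. The delicate spot is (PROJ$(\gl)$) --- one must make sure that ``ideal-induced'' transfers cleanly through $\Idc R\cong\Conc R$ and that \cite[Lemma~4.5.1]{Larder} is being invoked for the correct (unital, possibly non-commutative) variety --- together with confirming, just as in Section~\ref{S:Ceva}, that the Boosting Lemma's standing hypotheses hold in the present setting, namely that $\SLatz$ has all colimits indexed by~$\gl$ and that $\Phi=\Idc\!\restriction_{\cR(\gq,\Bbbk)}$ preserves them.
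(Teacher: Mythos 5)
Your proposal is correct and is essentially the paper's own proof, which is stated there in one line as ``follows the lines of Theorem~\ref{T:NonPermlgrp}'' with $\Phi$ the restriction of $\Idc$ to $\cR(\gq,\Bbbk)$, Lemma~\ref{L:noPsiB2PhiR} replacing Lemma~\ref{L:noPsiB2PhiC}, and $\gC(U)=\xF(P\seq{U})\otlF\vec{R}_{\Bbbk}$. Your fleshed-out verifications --- the Boosting Lemma step for Claim~(a), (LS$(\gl)$) via Lemma~\ref{L:LS4lgrps}, and (PROJ$(\gl)$) via ideal-induced (hence weakly distributive) maps $\Idc f$ for surjective homomorphisms in the variety of unital rings, using $\Idc\cong\Conc$ --- are exactly the intended adaptations of the corresponding steps in the proof of Theorem~\ref{T:NonPermlgrp}.
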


\begin{proof}
The proof follows the lines of the one of Theorem~\ref{T:NonPermlgrp}, with~$\Phi$ now defined as the restriction of the functor~$\Idc$ to~$\cR(\gq,\Bbbk)$, Lemma~\ref{L:noPsiB2PhiR} used instead of Lemma~\ref{L:noPsiB2PhiC} and with $\gC\colon\Powi(\gk)\to\nobreak\cS$, $U\mapsto\xF(P\seq{U})\otlF\vec{R}_{\Bbbk}$\,.
\end{proof}

Recall that an algebra over a field~$\Bbbk$ is \emph{locally matricial} if it is a directed colimit of finite products of finite-dimensional matrix rings over~$\Bbbk$.

\begin{corollary}\label{C:NonIdcvNR}
The following classes of \jzs{s} are all anti-el\-e\-men\-tary:
\begin{enumerater}
\item\label{NonvNRR}
$\Idc(\cR)$, whenever~$\cR$ is a class of unital rings containing all objects and arrows of~$\vec{R}_{\Bbbk}$\,, and closed under products and under colimits, within~$\Ring$, indexed by any large enough regular cardinal~$\gl$;

\item\label{NonMod}
The class of all semilattices of finitely generated submodules of right modules.

\end{enumerater}
\end{corollary}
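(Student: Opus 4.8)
The plan is to deduce both parts from Theorem~\ref{T:NonPermvNR}, combined with the sandwiching observation recorded right after Definition~\ref{D:AntiElt}: if $(\cC_0,\cC_1)$ is anti-el\-e\-men\-tary and $\cC_0\subseteq\cC\subseteq\cC_1$, then~$\cC$ is anti-el\-e\-men\-tary. In both parts the common upper bound will be $\rng\Psi$, and the common lower bound will be $\Idc\pI{\cR(\gq,\Bbbk)}$ for a suitably chosen field~$\Bbbk$ and infinite regular cardinal~$\gq$; Theorem~\ref{T:NonPermvNR} already asserts that $\pII{\Idc\pI{\cR(\gq,\Bbbk)},\rng\Psi}$ is anti-el\-e\-men\-tary. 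Thus all that is left is to trap each target class between these two.

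For~\eqref{NonvNRR}: let~$\cR$ and~$\Bbbk$ be as in the statement, and pick an infinite regular cardinal~$\gq$ such that~$\cR$ is closed under $\gl$-indexed colimits, within~$\Ring$, for every regular cardinal $\gl\geq\gq$. Since~$\cR$ contains all objects and arrows of~$\vec{R}_{\Bbbk}$\,, is closed under products, and is closed under $\gl$-indexed colimits for all regular $\gl\geq\gq$, the minimality of~$\cR(\gq,\Bbbk)$ (Notation~\ref{Not:cR(gq)}) gives $\cR(\gq,\Bbbk)\subseteq\cR$, whence $\Idc\pI{\cR(\gq,\Bbbk)}\subseteq\Idc(\cR)$. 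On the other hand, Proposition~\ref{P:ExofrngPsi}\eqref{IdcvNRR} yields $\Idc{R}\in\rng\Psi$ for every unital ring~$R$, so $\Idc(\cR)\subseteq\rng\Psi$. The inclusions $\Idc\pI{\cR(\gq,\Bbbk)}\subseteq\Idc(\cR)\subseteq\rng\Psi$, together with Theorem~\ref{T:NonPermvNR} and the sandwiching observation, show that~$\Idc(\cR)$ is anti-el\-e\-men\-tary.

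For~\eqref{NonMod}: write~$\cM$ for the class of all \jzs{s} of the form~$\Subc{M}$ with~$M$ a right module over a ring. By Proposition~\ref{P:ExofrngPsi}\eqref{SubcM}, $\cM\subseteq\rng\Psi$. For the lower bound, fix any field~$\Bbbk$ and set $\gq\eqdef\aleph_1$; I claim $\Idc\pI{\cR(\gq,\Bbbk)}\subseteq\cM$. Indeed, for a unital ring~$R$ the enveloping ring $R^{\mathrm{e}}\eqdef R\otimes_{\ZZ}R^{\op}$ is unital, and~$R$, viewed as a right $R^{\mathrm{e}}$-module via $x\cdot(a\otimes b)\eqdef bxa$, has the property that its $R^{\mathrm{e}}$-submodules are exactly the two-sided ideals of~$R$, and that the $R^{\mathrm{e}}$-submodule generated by $x_1,\dots,x_n$ is the two-sided ideal $\sum_{i}Rx_iR$; hence $\Idc{R}=\Subc{R}\in\cM$. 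Since every member of~$\cR(\gq,\Bbbk)$ is a unital ring, $\Idc\pI{\cR(\gq,\Bbbk)}\subseteq\cM\subseteq\rng\Psi$, and applying Theorem~\ref{T:NonPermvNR} and the sandwiching observation once more shows that~$\cM$ is anti-el\-e\-men\-tary.

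No step here is genuinely difficult: the substantive work is already carried out in Theorem~\ref{T:NonPermvNR}, and the corollary amounts to organizing inclusions. The two minor points that do require care are, in~\eqref{NonMod}, checking that passing to the enveloping ring carries finitely generated two-sided ideals to finitely generated submodules on the nose (so that $\Idc{R}=\Subc{R}$ holds as an identity of \jzs{s}, not merely an order-isomorphism), and, in~\eqref{NonvNRR}, verifying that the closure properties imposed on~$\cR$ are precisely those built into the definition of~$\cR(\gq,\Bbbk)$, so that the minimality argument $\cR(\gq,\Bbbk)\subseteq\cR$ is legitimate.
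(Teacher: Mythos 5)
Your proof is correct and follows essentially the same route as the paper's: both parts are obtained by sandwiching the target class between $\Idc\pI{\cR(\gq,\Bbbk)}$ and $\rng\Psi$ and then invoking Theorem~\ref{T:NonPermvNR} together with Proposition~\ref{P:ExofrngPsi}. The only differences are cosmetic: where the paper cites the module-theoretic trick of R\r{u}\v{z}i\v{c}ka \emph{et al.} for unital von Neumann regular rings, you spell it out directly via the enveloping ring $R\otimes_{\ZZ}R^{\op}$ (which indeed works for all unital rings), and in part~(i) you are slightly more explicit than the paper about choosing~$\gq$ and using the minimality of~$\cR(\gq,\Bbbk)$.
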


\begin{proof}
\emph{Ad}~\eqref{NonvNRR}.
Since all the vertices of~$\vec{R}_{\Bbbk}$ are locally matricial $\Bbbk$-algebras, we get $\cR(\go,\Bbbk)\subseteq\cR\subseteq\Ring$.
By Proposition~\ref{P:ExofrngPsi}\eqref{IdcvNRR}, $\Idc(\Ring)\subseteq\rng\Psi$.
Apply Theorem~\ref{T:NonPermvNR}.

\emph{Ad}~\eqref{NonMod}.
By a well known module-theoretical trick (see, for example, the proof of R\r{u}\v{z}i\v{c}ka \emph{et al.} \cite[Theorem~4.2]{RTW}), the semilattice of all finitely two-sided ideals of any unital von Neumann regular ring is isomorphic to $\Subc{M}$ for some right module~$M$.
Moreover, by Proposition~\ref{P:ExofrngPsi}\eqref{SubcM}, $\Subc{M}$ belongs to the range of~$\Psi$ for every right module~$M$.
Apply Theorem~\ref{T:NonPermvNR}.
\end{proof}

We do not know whether $\Idc(\cR_{\Bbbk})$, with~$\cR_{\Bbbk}$ defined as the category of all locally matricial $\Bbbk$-algebras, is anti-elementary.
Corollary~\ref{C:NonIdcvNR} cannot be used to that end \emph{a priori}, because~$\cR_{\Bbbk}$ does not have all infinite products (for example, one can prove that the sequence of all matrix algebras of even order over~$\Bbbk$ has no product in~$\cR_{\Bbbk}$).

\section{Nonstable K${}_0$-theory of exchange rings}\label{S:V(R)}


Section~\ref{S:V(R)} will be devoted to proving anti-el\-e\-men\-tar\-ity of classes of monoids arising from the nonstable K${}_0$-theory of rings and operator algebras.
The statement of this section's main result (viz. Theorem~\ref{T:NonV(R)reg}) will deviate from those of Sections~\ref{S:Ceva} and~\ref{S:PermCon}, in the sense that the category~$\cR$ in question (playing the role of the category~$\cA$ in Lemma~\ref{L:ExtCLL}) will no longer be assumed to be closed under directed colimits (indexed by regular cardinals $\geq\gq$) within the category of all unital rings.
This way, we will be able to cover both the case of von Neumann regular rings (where directed colimits from the category~$\Ring$ of all unital rings are preserved; see Corollary~\ref{C:NonVvNR}) and C*-algebras of real rank zero (where those colimits are not preserved --- we need to take the completion; see Corollary~\ref{C:NonVRR0}).

Throughout Section~\ref{S:V(R)} we will consider the category $\cS:=\mathbf{CMon}$ of all \cm{s} with monoid homomorphisms.
Moreover, following the terminology in Wehrung~\cite{VLiftDefect}, a monoid homomorphism $f\colon M\to N$ is a \emph{pre-\Vhom} if whenever $a,b\in N$ and $c\in M$, $f(c)=a+b$ implies the existence of $x,y\in M$ such that $c=x+y$, $f(x)=a$, and $f(y)=b$.
We shall denote by~$\cS^{\Rightarrow}$ (this section's double arrows) the subcategory of~$\cS$ consisting of all pre-\Vhom{s}.

Still following standard terminology, we denote by~$\Mat{\infty}{R}$ the directed union, over all positive integers~$n$, of all matrix rings~$\Mat{n}{R}$, where every~$a\in\Mat{n}{R}$ is identified with
$\begin{pmatrix}a & 0\\ 0 & 0\end{pmatrix}\in\Mat{n+1}{R}$.
Whenever~$n$ is either a positive integer or~$\infty$, we denote by~$\IM{n}{R}$ the set of all idempotent elements in the ring~$\Mat{n}{R}$.
Matrices $a,b\in\IM{\infty}{R}$ are \emph{Murray-von~Neumann equivalent}, in notation $a\sim b$, if there are $x,y\in\Mat{\infty}{R}$ such that $a=xy$ and $b=yx$.
We denote by~$[a]$ the Murray-von~Neumann equivalence class of an idempotent matrix~$a$.
Those equivalence classes can be added, \emph{via} the rule $[a]+[b]\eqdef[a+b]$ whenever $ab=ba=0$ (then we write $a\oplus b$ instead of $a+b$), and $\rV(R)\eqdef\setm{[a]}{a\in\IM{\infty}{R}}$ is a \cm.
This monoid is \emph{conical}, that is, $\bx+\by=0$ implies $\bx=\by=0$, whenever $\bx,\by\in\rV(R)$.
It encodes the so-called \emph{nonstable K${}_0$-theory of~$R$}.

A ring~$R$ is \emph{V-semiprimitive} (cf. Wehrung \cite[Definition~9.1]{VLiftDefect}) if for any $a,b\in\IM{\infty}{R}$, $ab\neq0$ implies that there are decompositions $a=u\oplus u'$ and $b=v\oplus v'$ in $\IM{\infty}{R}$ such that $u\neq0$ and $u\sim v$.
A classical argument shows that it is equivalent to require that every matrix ring~$\Mat{n}{R}$, with~$n$ a positive integer, satisfies the statement above.
In particular, \emph{V-semi\-prim\-it\-iv\-ity can be expressed by a collection of first-order sentences in ring theory}.
Moreover, \emph{any product of V-semiprimitive rings is V-semiprimitive}.

The assignment $R\mapsto\rV(R)$ naturally extends to a functor, from the category of all rings and ring homomorphisms to~$\cS$.
Throughout Section~\ref{S:V(R)} we shall denote by~$\cB$ the category of all V-semiprimitive rings with ring homomorphisms, and by~$\Psi\colon\cB\to\cS$ the restriction of~$\rV$ to~$\cB$.

We record here the following observation from Wehrung \cite[Proposition~9.2]{VLiftDefect}.

\begin{proposition}\label{P:ExofrngPsiVR}
Every semiprimitive exchange ring is V-semiprimitive.
In particular, every ring which is either von Neumann regular or a C*-algebra of real rank zero is V-semiprimitive.
\end{proposition}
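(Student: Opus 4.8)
The plan is to follow the standard two-layer scheme: first trivialize the passage to matrix rings, then settle the idempotent statement inside one fixed semiprimitive exchange ring by module theory, and finally check that the two named classes fall under the hypothesis. For the first layer I would record the routine closure facts that $J(\Mat n R)=\Mat n{J(R)}$ and $J(eRe)=eJ(R)e$, so that matrix rings and corners of semiprimitive rings are semiprimitive, and that matrix rings and corners of exchange rings are again exchange rings. Together with the ``classical argument'' alluded to in the text — that $R$ is V-semiprimitive if{f} for every positive integer $n$, any two idempotents $a,b\in\Mat n R$ with $ab\neq0$ admit orthogonal decompositions $a=u\oplus u'$, $b=v\oplus v'$ inside $\Mat n R$ with $u\neq0$ and $u\sim v$ — this reduces the first assertion to a statement about a single semiprimitive exchange ring $S$: whenever $a,b\in S$ are idempotents with $ab\neq0$, there are nonzero orthogonal subidempotents $p\le a$ and $q\le b$ with $p\sim q$.

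I would then translate this into module language via the canonical identification $aSb\cong\operatorname{Hom}_S(bS,aS)$, $\psi\mapsto\psi(b)$: the hypothesis $ab\neq0$ says precisely that there is a nonzero homomorphism $\varphi\colon bS\to aS$ between finitely generated projective right $S$-modules, and a pair $(p,q)$ as wanted amounts to an isomorphism between a nonzero direct summand of $bS$ and a direct summand of $aS$ (one may equivalently work inside $\operatorname{End}_S(aS\oplus bS)$, again semiprimitive exchange by the closure facts above). When $S$ is von Neumann regular this is immediate: over a regular ring every finitely generated submodule of a projective module is a direct summand, so $\operatorname{im}\varphi$ is a nonzero summand $Q_0$ of $aS$, the surjection $bS\twoheadrightarrow Q_0$ splits, and one reads off the complementary summand $P_0\le bS$ with $P_0\cong Q_0$.

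The genuinely exchange case is where the work lies, and I expect it to be the main obstacle: $\operatorname{im}\varphi$ need no longer be a direct summand (were it always so, $S$ would be regular). Here I would appeal to the refinement theory of finitely generated projective modules over exchange rings, the guiding idea being that an exchange ring has ``enough'' idempotents to split off a piece of any nonzero map, while $J(S)=0$ promotes a decomposition valid modulo the radical to one valid outright. The target is to extract from $\varphi\neq0$ a nonzero direct summand $P_0\le bS$ on which $\varphi$ restricts to an isomorphism onto a direct summand $Q_0\le aS$; passing back to idempotents $\le b$ and $\le a$ then yields $p\sim q$. This is exactly the point at which both hypotheses are used in an essential way, and it is the content already carried out in Wehrung~\cite[\S9]{VLiftDefect}, which may be cited wholesale.

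Finally, for the ``in particular'' clause: a von Neumann regular ring is semiprimitive (if $x\in J(S)$ and $x=xyx$ then $x(1-yx)=0$ with $1-yx$ a unit, so $x=0$) and is an exchange ring (standard); and a C*-algebra $A$ is semiprimitive, since by the GNS construction it has a separating family of irreducible representations, whence $J(A)=\bigcap\setm{\ker\pi}{\pi\text{ irreducible}}=\set{0}$, while $A$ is an exchange ring precisely when it has real rank zero (Ara~\cite[Theorem~3.8]{Ara97}, quoted above). Both classes thus consist of semiprimitive exchange rings, so the first assertion applies.
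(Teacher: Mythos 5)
Your proposal is correct and in substance coincides with the paper's treatment: the paper gives no proof of this proposition at all, but simply records it as Proposition~9.2 of Wehrung \cite{VLiftDefect}, which is exactly the reference you invoke for the decisive semiprimitive-exchange step. Your additional scaffolding (reduction to a single matrix ring via the ``classical argument'', the module-theoretic translation, the direct treatment of the von Neumann regular case, and the check that regular rings and real-rank-zero C*-algebras are semiprimitive exchange rings) is sound, but it goes beyond what the paper itself supplies, since the paper cites the whole statement wholesale.
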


The following lemma takes care of all instances of Condition~(LS$(\gl)$), from the statement of Lemma~\ref{L:ExtCLL}, occurring in Section~\ref{S:V(R)}.
Since, as observed above, V-semi\-prim\-it\-iv\-ity is first-order, the proof of Lemma~\ref{L:LS4V(R)} follows from a standard L\"owenheim-Skolem type argument, so we omit it.

\begin{lemma}\label{L:LS4V(R)}
For any uncountable regular cardinal~$\gl$, any $\gl$-small \cm~$M$, any V-semiprimitive ring~$R$, and any pre-\Vhom\linebreak $\psi\colon\rV(R)\Rightarrow M$, the collection~$\cC$ of all $\gl$-small elementary subrings~$R'$ of~$R$, such that $\psi\circ\rV(\id_{R'}^R)$ is a pre-\Vhom, is a $\gl$-closed cofinal subset of the powerset of~$R$.
Moreover, every member of~$\cC$ is V-semiprimitive.
\end{lemma}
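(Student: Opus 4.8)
The plan is to run a Löwenheim--Skolem-type closure argument over $\go$ stages. Throughout, fix $R$, $M$, $\psi$, $\gl$ as in the statement, and recall (as already observed in this section) that V-semiprimitivity is expressed by a set of first-order sentences in the language of rings; hence every elementary subring of~$R$ is again V-semiprimitive, which disposes of the ``moreover'' clause once every member of~$\cC$ is known to be an elementary subring of~$R$. It thus remains to show that $\cC$ is $\gl$-closed and cofinal in $[R]^{<\gl}$.

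For cofinality, I would start from an arbitrary $X_0\in[R]^{<\gl}$ and build a $\subseteq$-increasing sequence $\vecm{X_n}{n<\go}$ in $[R]^{<\gl}$ so that $X_{n+1}$ contains $X_n$, is closed under a fixed system of Skolem functions for the language of rings, and in addition satisfies the following \emph{witnessing requirement}: whenever $k<\go$, $e\in\IM{k}{R}$ has all its entries in $X_n$, and $(\ba,\bb)\in M\times M$ satisfies $\psi([e]_R)=\ba+\bb$, we invoke the pre-\Vhom\ property of~$\psi$ to pick $\bx,\by\in\rV(R)$ with $[e]_R=\bx+\by$, $\psi(\bx)=\ba$, $\psi(\by)=\bb$, then orthogonal idempotent matrices $f,f'$ over~$R$ with $[f]_R=\bx$ and $[f']_R=\by$, then $x,y\in\Mat{\infty}{R}$ witnessing $e\sim_R f\oplus f'$, and we place all the (finitely many) entries of $f,f',x,y$ into $X_{n+1}$. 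Since $X_n$ is $\gl$-small and $\gl$ is uncountable regular, there are fewer than~$\gl$ relevant triples $(e,\ba,\bb)$ (using $\card{M}<\gl$), each contributing finitely many new elements, and the Skolem closure of a $\gl$-small set stays $\gl$-small; so such a $\gl$-small $X_{n+1}$ exists. Then $R'\eqdef\bigcup_{n<\go}X_n\in[R]^{<\gl}$, and by the Tarski--Vaught test $R'$ is an elementary subring of~$R$; in particular it is a subring, it is V-semiprimitive, and $X_0\subseteq R'$.

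Next I would verify that $R'\in\cC$, i.e.\ that $g\eqdef\psi\circ\rV(\id_{R'}^R)\colon\rV(R')\to M$ is a pre-\Vhom. Given $\bc\in\rV(R')$ and $\ba,\bb\in M$ with $g(\bc)=\ba+\bb$, write $\bc=[e]_{R'}$ with $e$ an idempotent matrix over~$R'$; its entries lie in some~$X_n$, so the data $f,f',x,y$ attached to the triple $(e,\ba,\bb)$ at stage~$n$ have entries in~$R'$. Over~$R'$ we then have $f\perp f'$ (an equational condition) and $e\sim_{R'}f\oplus f'$ (witnessed by $x,y$), whence $[f]_{R'}+[f']_{R'}=[f\oplus f']_{R'}=[e]_{R'}=\bc$ in~$\rV(R')$, while $g([f]_{R'})=\psi([f]_R)=\ba$ and $g([f']_{R'})=\psi([f']_R)=\bb$. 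Hence $R'\in\cC$, and since $X_0$ was arbitrary, $\cC$ is cofinal in $[R]^{<\gl}$.

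Finally, for $\gl$-closedness: if $\vecm{R'_i}{i<\gd}$ is a $\gl$-small $\subseteq$-increasing chain in~$\cC$, its union~$R'$ is $\gl$-small and an elementary subring of~$R$ (union of an elementary chain), and any $\bc\in\rV(R')$ together with a decomposition $g(\bc)=\ba+\bb$ in the \emph{fixed} monoid~$M$ already lives in some~$\rV(R'_i)$, whose membership in~$\cC$ supplies witnesses inside $R'_i\subseteq R'$; a general $\gl$-small subfamily of~$\cC$ reduces to the chain case by passing to the increasing chain of its finite unions. I expect the only genuine care needed here to be combinatorial bookkeeping --- keeping every stage $\gl$-small and ensuring the $\go$-indexed union does not escape~$[R]^{<\gl}$ --- together with the routine point that the various existential conditions ($\sim$, orthogonality, idempotence, orthogonal decomposition) transfer between $R$ and~$R'$ once the corresponding witnesses have been deliberately captured in the construction; there is no deeper obstacle.
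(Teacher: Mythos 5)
Your argument supplies exactly the standard L\"owenheim--Skolem closure argument that the paper leaves to the reader, and its main body is correct: the $\go$-stage construction that captures, for each idempotent matrix~$e$ with entries in the current stage and each decomposition $\psi([e]_R)=\ba+\bb$ in~$M$, orthogonal idempotents $f,f'$ and Murray--von Neumann witnesses $x,y$, does produce a $\gl$-small elementary subring $R'\supseteq X_0$ for which $\psi\circ\rV(\id_{R'}^R)$ is a pre-\Vhom\ (the cardinality bookkeeping, the Tarski--Vaught step, and the transfer of the equational witnesses from~$R$ to the subring~$R'$ are all as they should be); the ``moreover'' clause is indeed immediate from the first-order character of V-semiprimitivity, and your treatment of unions of $\gl$-small chains of members of~$\cC$ is fine.

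The one step that does not work is your last sentence: you cannot reduce an arbitrary $\gl$-small subfamily of~$\cC$ to the chain case ``by passing to the increasing chain of its finite unions'', because a finite union of members of~$\cC$ is in general not a member of~$\cC$ --- it need not even be a subring of~$R$ (two elementary subrings can have a union that is not closed under addition: take the algebraically closed subfields $\overline{\QQ(s)}$ and $\overline{\QQ(t)}$ of~$\CC$ with $s,t$ algebraically independent), so the chain you form does not lie in~$\cC$ and the chain case cannot be invoked for it. Indeed, closure of~$\cC$ under unions of arbitrary \emph{non-directed} $\gl$-small subfamilies is simply false for the same reason; the closedness asserted in the lemma is to be read, as is usual for such L\"owenheim--Skolem statements (and as is all that is needed later, for instance for condition (LS$(\gl)$) in Lemma~\ref{L:ExtCLL} and for intersecting $\gl$-closed cofinal sets), as closure under $\gl$-small \emph{directed} unions. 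Note also that the directed case is not a formal consequence of the chain case (a directed family of cardinality~$\aleph_1$ need not contain a cofinal chain when $\gl>\aleph_1$); but it follows by running your chain argument verbatim, since any idempotent matrix over the union, and any finite set of parameters needed in a Tarski--Vaught or witnessing check, already lies in a single member of the directed family. With the final sentence replaced by this remark, your proof is complete.
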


Set again~$P\eqdef\Pow[3]$ (cf. Section~\ref{S:Basic}).
Consider the maps~$\be\colon\ZZ^+\to\ZZ^+\times\ZZ^+$, $\bs\colon\ZZ^+\times\ZZ^+\to\ZZ^+\times\ZZ^+$, and $\bp\colon\ZZ^+\times\ZZ^+\to\ZZ^+$ defined \emph{via} the rules $\be(x)\eqdef(x,x)$, $\bs(x,y)\eqdef(y,x)$, and $\bp(x,y)\eqdef x+y$ whenever $x,y\in\ZZ^+$.
Further, we denote by~$\vec{D}$ the $P$-indexed com\-mu\-ta\-tive diagram in~$\cS$ represented in Figure~\ref{Fig:diagD}.

\begin{figure}[htb]
 \centering
 \begin{tikzcd}
 & \ZZ^+ & \\
 \ZZ^+\times\ZZ^+\arrow[ur,hook',"\bp"] &
 \ZZ^+\times\ZZ^+
 \arrow[u,hook,"\bp"description]
 & 
 \ZZ^+\times\ZZ^+\arrow[ul,hook,"\bp"']\\
 \ZZ^+\times\ZZ^+
 \arrow[u,equal]\arrow[ur,equal]
 & \ZZ^+\times\ZZ^+\arrow[ul,"\bs"description,crossing over,
 near start]
 & 
 \ZZ^+\times\ZZ^+\arrow[u,equal]\arrow[ul,equal]
 \arrow[from=l,u,equal,crossing over]\\
 & \ZZ^+\arrow[ul,"\be",hook']
 \arrow[u,"\be",hook]
 \arrow[ur,"\be"',hook] &
 \end{tikzcd}
\caption{The com\-mu\-ta\-tive diagram~$\vec{D}$}\label{Fig:diagD}
\end{figure}

The proof of the following lemma is, \emph{mutatis mutandis}, identical to the one of Lemma~\ref{L:vecRIdcComm}, the main change being that instead of saying that~$f(z)$ generates the same two-sided ideal as $\set{f(z\chi_{I_0}),f(z\chi_{I_1}),f(z\chi_{I_2})}$, we now observe that $[f(z)]=[f(z\chi_{I_0})]+[f(z\chi_{I_1})]+[f(z\chi_{I_2})]$, within $\rV\pI{\vec{R}_{\Bbbk}^I(q)}$, for every idempotent matrix~$z$ over~$\vec{R}_{\Bbbk}^I(p)$.

\begin{lemma}\label{L:vecRVComm}
For any field~$\Bbbk$, the diagram~$\vec{R}_{\Bbbk}$ \pup{introduced in Subsection~\textup{\ref{Su:CatSetPermCon}}}, of unital rings and unital ring homomorphisms, is $\rV$-com\-mu\-ta\-tive.
\end{lemma}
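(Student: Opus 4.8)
The plan is to transcribe the proof of Lemma~\ref{L:vecRIdcComm} almost verbatim, with the two-sided ideal generated by a matrix replaced by its class in~$\rV$. Fix a set~$I$; we must show that $\rV\vec{R}_{\Bbbk}^I$ is commutative, i.e., that for all $p=\vecm{p_i}{i\in I}\leq q=\vecm{q_i}{i\in I}$ in~$P^I$ and all $f=\prod\vecm{f_i}{i\in I}$ and $g=\prod\vecm{g_i}{i\in I}$ with $f_i,g_i\in\vec{R}_{\Bbbk}(p_i,q_i)$, the homomorphisms $\rV(f),\rV(g)\colon\rV\pI{\vec{R}_{\Bbbk}^I(p)}\to\rV\pI{\vec{R}_{\Bbbk}^I(q)}$ coincide. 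Every element of $\rV\pI{\vec{R}_{\Bbbk}^I(p)}$ is $[z]$ for some $z\in\IM{\infty}{\vec{R}_{\Bbbk}^I(p)}$, and $\rV(f)[z]=[f(z)]$, where~$f(z)$ denotes the entrywise action of~$f$ on~$\Mat{\infty}{\vec{R}_{\Bbbk}^I(p)}$; so it suffices to prove $[f(z)]=[g(z)]$ in $\rV\pI{\vec{R}_{\Bbbk}^I(q)}$ for every idempotent matrix~$z$ over~$\vec{R}_{\Bbbk}^I(p)$.

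I would then reuse the partition of~$I$ from the proof of Lemma~\ref{L:vecRIdcComm}: set $I_0\eqdef\setm{i\in I}{f_i=g_i}$, so that $I\setminus I_0\subseteq\setm{i\in I}{(p_i,q_i)=(1,123)}$, and then $I_1\eqdef\setm{i\in I}{(p_i,q_i)=(1,123),\ f_i=h,\ g_i=h\circ s}$ and $I_2\eqdef\setm{i\in I}{(p_i,q_i)=(1,123),\ f_i=h\circ s,\ g_i=h}$, so $I=I_0\sqcup I_1\sqcup I_2$. Via the canonical identification $\Mat{n}{\vec{R}_{\Bbbk}^I(p)}=\prod_{i\in I}\Mat{n}{\vec{R}_{\Bbbk}(p_i)}$, an idempotent matrix~$z$ over~$\vec{R}_{\Bbbk}^I(p)$ is a family~$\vecm{z_i}{i\in I}$ of idempotents, and for $J\subseteq I$ the family~$z\chi_J$ that agrees with~$z$ on~$J$ and vanishes off~$J$ (here $\chi_J$ is the characteristic function of~$J$) is again an idempotent of the same size; moreover $z=(z\chi_{I_0})\oplus(z\chi_{I_1})\oplus(z\chi_{I_2})$ is a sum of pairwise orthogonal idempotents. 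Since unital ring homomorphisms preserve idempotency, orthogonality and such sums, $[f(z)]=\sum_{k}[f(z\chi_{I_k})]$ and likewise for~$g$; hence it is enough to treat a~$z$ supported on a single block~$I_k$. For $k=0$ one has $f(z)=g(z)$; the cases $k=1,2$ are symmetric, so it remains to handle~$I=I_1$.

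For $I=I_1$ I would exhibit a Murray--von~Neumann equivalence $f(z)\sim g(z)$ by conjugation, exactly as in Lemma~\ref{L:vecRIdcComm}. Here each~$z_i$ is an idempotent matrix over~$\vec{R}_{\Bbbk}(1)=\Bbbk^2$, say $z_i=(a_i,b_i)$ with $a_i,b_i$ idempotent over~$\Bbbk$, and, reorganising $\Mat{n}{\Mat{2}{\Bbbk}}\cong\Mat{2n}{\Bbbk}$, one gets $f(z)_i=h(a_i,b_i)=a_i\oplus b_i$ and $g(z)_i=h(b_i,a_i)=b_i\oplus a_i$. Now the constant $I_1$-indexed family~$\ol{u}$ with value $\begin{pmatrix}0&1\\1&0\end{pmatrix}\in\Mat{2}{\Bbbk}=\vec{R}_{\Bbbk}(123)$ is a self-inverse unit of $\vec{R}_{\Bbbk}^{I_1}(123)$, and conjugation by its scalar image in $\Mat{n}{\vec{R}_{\Bbbk}^{I_1}(123)}$ carries $a_i\oplus b_i$ to $b_i\oplus a_i$ in each coordinate, so that $g(z)=\ol{u}\cdot f(z)\cdot\ol{u}^{-1}$. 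Conjugate idempotents are Murray--von~Neumann equivalent — with $x\eqdef\ol{u}\cdot f(z)$ and $y\eqdef f(z)\cdot\ol{u}^{-1}$ one has $g(z)=xy$ and $f(z)=yx$ — whence $[f(z)]=[g(z)]$, as desired.

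I do not expect a genuine obstacle: the argument is a direct translation of the proof of Lemma~\ref{L:vecRIdcComm}. The one step that uses more than bookkeeping is the reduction for $I=I_1$: since~$\rV$ does \emph{not} preserve infinite products, it is not enough to note that $[f(z)_i]=[a_i]+[b_i]=[b_i]+[a_i]=[g(z)_i]$ coordinate by coordinate — one really needs the single unit~$\ol{u}$ to realise $f(z)\sim g(z)$ \emph{inside} a matrix ring over~$\vec{R}_{\Bbbk}^{I_1}(123)$. The only other thing worth a line of verification is the compatibility of~$\Mat{n}{{}_{-}}$ with the finite products $\vec{R}_{\Bbbk}^I(p)=\prod_i\vec{R}_{\Bbbk}(p_i)$, which is what legitimises the block decomposition along $I=I_0\sqcup I_1\sqcup I_2$.
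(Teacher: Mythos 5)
Your proposal is correct and coincides with the paper's own proof, which is given precisely as a \emph{mutatis mutandis} transcription of the proof of Lemma~\ref{L:vecRIdcComm}, the only change being the observation that $[f(z)]=[f(z\chi_{I_0})]+[f(z\chi_{I_1})]+[f(z\chi_{I_2})]$ within $\rV\pI{\vec{R}_{\Bbbk}^I(q)}$ for every idempotent matrix~$z$ over~$\vec{R}_{\Bbbk}^I(p)$, followed by the same conjugation by~$\ol{u}$ on the block~$I_1$. Your remark that the equivalence must be realised by a single element inside a matrix ring over~$\vec{R}_{\Bbbk}^{I_1}(123)$, rather than coordinatewise, is exactly the point the paper's argument relies on.
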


We omit the trivial proof of our next observation.

\begin{lemma}\label{L:rVR2S}
For any field~$\Bbbk$, the diagrams~$\rV(\vec{R}_{\Bbbk})$ and~$\vec{D}$ are naturally isomorphic.
\end{lemma}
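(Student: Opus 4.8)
The plan is to write down an explicit natural isomorphism $\vec{\eta}\colon\rV(\vec{R}_{\Bbbk})\todot\vec{D}$ all of whose components are monoid isomorphisms. Everything rests on three classical facts about nonstable K${}_0$-theory: for a field~$\Bbbk$ the rank map $[a]\mapsto\operatorname{rk}_{\Bbbk}(a)$ is a monoid isomorphism $\rV(\Bbbk)\cong(\ZZ^+,+)$; the functor~$\rV$ carries finite products to finite products, so $\rV(\Bbbk^2)\cong\ZZ^+\times\ZZ^+$; and $\rV$ is invariant under passage to matrix rings, so $\rV(\Mat{2}{\Bbbk})\cong\rV(\Bbbk)\cong\ZZ^+$, the isomorphism again being given by $[e]\mapsto\operatorname{rk}_{\Bbbk}(e)$ (the rank of~$e$, regarded as an idempotent matrix over~$\Bbbk$, this being the correct normalization rather than half of it). The vertices of~$\vec{D}$ are precisely $\ZZ^+$ at~$\es$ and~$123$ and $\ZZ^+\times\ZZ^+$ at the six other nodes of $P=\Pow[3]$, so for each $p\in P$ the ``take $\Bbbk$-ranks'' map yields a monoid isomorphism $\eta_p\colon\rV(R_p)\to D_p$.

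It then remains to verify naturality. Since $\vec{R}_{\Bbbk}$ is $\rV$-com\-mu\-ta\-tive (Lemma~\ref{L:vecRVComm}), the diagram $\rV(\vec{R}_{\Bbbk})$ is com\-mu\-ta\-tive, and $\vec{D}$ is com\-mu\-ta\-tive by construction; hence it suffices to check the naturality square for each covering relation of~$P$, against the distinguished morphism of~$\vec{R}_{\Bbbk}$ on that relation. This breaks into four routine cases: on $\es\prec\set{i}$, $\rV(e)$ sends $[a]$ to $[(a,a)]$, so in ranks it becomes $n\mapsto(n,n)=\be(n)$; on the coverings labelled by identities, $\rV$ of an identity is an identity; on $\set{2}\prec\set{1,2}$, $\rV(s)$ interchanges the two ranks, so it becomes $(m,n)\mapsto(n,m)=\bs(m,n)$; and on $\set{i,j}\prec\set{1,2,3}$, $\rV(h)$ sends $[(a,b)]$ to $[a\oplus b]$, so it becomes $(m,n)\mapsto m+n=\bp(m,n)$. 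Composites of coverings are then automatic, so $\vec{\eta}$ is a natural transformation whose components are isomorphisms, that is, a natural isomorphism.

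There is no genuine obstacle here --- the statement is indeed trivial --- and the only point I would spell out rather than leave implicit is the last case: one must unwind the identification $\Mat{n}{\Mat{2}{\Bbbk}}\cong\Mat{2n}{\Bbbk}$ to see that, for idempotent matrices $a,b$ over~$\Bbbk$, applying $h$ entrywise to $(a,b)$ produces a matrix which, after the permutation of coordinates grouping the odd-indexed rows and columns before the even-indexed ones, is literally the orthogonal sum $a\oplus b$; consequently $[h(a,b)]=[a]+[b]$ in $\rV(\Mat{2}{\Bbbk})$ and $\operatorname{rk}_{\Bbbk}(h(a,b))=\operatorname{rk}_{\Bbbk}(a)+\operatorname{rk}_{\Bbbk}(b)$, which is what matches $\bp$. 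All other verifications are immediate from the definitions of $\be$, $\bs$, $\bp$ and of the maps $e$, $s$, $h$.
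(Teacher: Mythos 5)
Your proof is correct, and it is exactly the verification the paper has in mind: the paper omits the proof of Lemma~\ref{L:rVR2S} as trivial, and your rank-map components $\eta_p$ together with the cover-by-cover naturality check (including the careful unwinding of $\Mat{n}{\Mat{2}{\Bbbk}}\cong\Mat{2n}{\Bbbk}$ showing $[h(a,b)]=[a]+[b]$, with the correct $\Bbbk$-rank normalization on $\rV(\Mat{2}{\Bbbk})$) fill in precisely what was left implicit. Note only that reducing to covering relations is legitimate because, by Lemma~\ref{L:vecRVComm}, $\rV(\vec{R}_{\Bbbk})$ is a commutative diagram whose value on any pair $p\leq q$ is the composite of its values along covers, a point you use and could state in one line.
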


The following result is established in Wehrung \cite[Theorem~10.1]{VLiftDefect}.

\begin{lemma}\label{L:noPsiB2PhiRBbbk}
There are no com\-mu\-ta\-tive diagram $\vec{\bB}\in\cB^P$ and no natural transformation $\vec{\chi}\colon\Psi\vec{\bB}\Todot\vec{D}$.
\end{lemma}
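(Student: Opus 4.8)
The statement is exactly Wehrung \cite[Theorem~10.1]{VLiftDefect}, so one legitimate route is simply to cite it; let me instead describe how I would organize the argument, stressing that this is the one place in Section~\ref{S:V(R)} where a genuinely new unliftability input is needed. Unlike Lemmas~\ref{L:noPsiB2PhiC} and~\ref{L:noPsiB2PhiR}, which fold into Lemma~\ref{L:NonvecS} because $\Conc$ (equivalently $\Idc$) of the relevant diagrams is isomorphic to $\vec{S}$, the functor $\rV$ does \emph{not} factor through $\Conc$ --- already $\rV(\ZZ)=\ZZ^+$ while $\Idc(\ZZ)$ is infinite --- so the obstruction cannot be pushed down to the semilattice level and the monoid diagram $\vec{D}$ must be handled directly. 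The plan is: assume a lift exists, lift the atoms of the $D_i$ (for $i$ an atom of $P$) to idempotent matrices over the $\bB_i$, transport everything into $\bB_{123}$ along the transition ring homomorphisms, read off a closed system of Murray--von~Neumann equivalences and orthogonality relations among the images, and then use V-semiprimitivity of the $\bB_p$ to show the system is contradictory.

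In detail, I would suppose $\vec{\bB}=\vecm{\bB_p,\beta_p^q}{p\leq q}\in\cB^P$ and $\vec{\chi}=\vecm{\chi_p}{p\in P}$ with each $\chi_p\colon\rV(\bB_p)\Rightarrow D_p$ a pre-\Vhom\ forming a natural transformation. Write $e_\es$ for the generator $1$ of $D_\es=\ZZ^+$, and for each atom $i\in\set{1,2,3}$ let $u_i=(1,0)$ and $v_i=(0,1)$ be the two atoms of $D_i=\ZZ^+\times\ZZ^+$, so that $u_i+v_i$ is the $\be$-image of $e_\es$. Since $\chi_i$ is a pre-\Vhom\ and $\chi_\es$ hits $e_\es$, one lifts $u_i,v_i$ to orthogonal idempotent matrices $a_i,b_i\in\IM{\infty}{\bB_i}$ with $\chi_i([a_i])=u_i$, $\chi_i([b_i])=v_i$, and $a_i\oplus b_i$ lifting the image of $e_\es$. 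Pushing the $a_i,b_i$ into $\bB_{123}$ and using naturality together with the maps of $\vec{D}$ (the five identities on the edges $1\to12$, $1\to13$, $2\to23$, $3\to13$, $3\to23$, the swap $\bs$ on the edge $2\to12$, and the three copies of $\bp=+$ at the top), one obtains six idempotent matrices over $\bB_{123}$, each of whose $\chi_{123}$-class equals $1\in\ZZ^+$, whose pairwise sums $[a_i]\oplus[b_i]$ all equal the single element $\hat e\in\rV(\bB_{123})$ with $\chi_{123}(\hat e)=2$, and among whose images in $\bB_{12},\bB_{13},\bB_{23}$ there is a closed matching --- the crucial asymmetry being that in $\bB_{12}$ both the image of $a_1$ and the image of $b_2$ project to $u_{12}$ (because of $\bs$), while in $\bB_{13}$ and $\bB_{23}$ the matching is the untwisted one.

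The system so obtained is cyclically inconsistent, and the role of V-semiprimitivity is to make this precise: whenever two of the relevant idempotent matrices have nonzero product, V-semiprimitivity of the ambient $\bB_p$ furnishes explicit orthogonal decompositions sharing a common nonzero Murray--von~Neumann summand, and chaining these decompositions around the triangle of coatoms $12$--$13$--$23$ forces a nonzero idempotent matrix over $\bB_{123}$ to be absorbed into a strictly smaller one, finally producing an idempotent matrix whose class is $1\in\ZZ^+$ yet which is $0$ --- impossible by conicality of $\ZZ^+$. I expect the main obstacle to be precisely this last step: organizing the finite but delicate bookkeeping of idempotent matrices and Murray--von~Neumann equivalence and invoking the defining property of V-semiprimitivity in an order that lets the decompositions be chained consistently. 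This is the computation carried out in \cite[\S\,10]{VLiftDefect}; note that no infinitary combinatorics enters, since $P$ is finite and the pre-\Vhom\ property is used only finitely many times.
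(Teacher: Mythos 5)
Your proposal is correct and matches the paper's treatment: the paper gives no proof of this lemma at all, stating only that it ``is established in Wehrung \cite[Theorem~10.1]{VLiftDefect}'', which is exactly the citation route you identify as legitimate. Your additional outline of how the cited argument runs (lifting the atoms through pre-\Vhom{s}, exploiting the twist $\bs$ on the edge $2\to12$, and invoking V-semi\-prim\-it\-iv\-ity to derive the contradiction) is a reasonable sketch of that external proof, but it is not needed here and is not what the paper supplies.
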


We are now reaching this section's main result.
Due to its expanded scope (e.g., regular rings and C*-algebras), its formulation appears somewhat technical.

\begin{theorem}\label{T:NonV(R)reg}
Let~$\Bbbk$ be a field, let~$\gq$ be an infinite regular cardinal, and let~$\cR$ be a subcategory of the category of all unital rings and unital ring homomorphisms satisfying the following assumptions:
\begin{enumeratei}
\item\label{VSemivecR}
all objects and arrows of the diagram~$\vec{R}_{\Bbbk}$ \pup{introduced in Subsection~\textup{\ref{Su:IdcRegRings}}} belong to~$\cR$;

\item\label{VSemicR}
every object of~$\cR$ is V-semiprimitive;

\item\label{VSemiProd}
$\cR$ is closed under products \pup{within the category of all unital rings and unital ring homomorphisms};

\item\label{VSemiColim}
$\cR$ has all colimits indexed by regular cardinals $\geq\gq$, and the functor~$\rV$ preserves those colimits.

\end{enumeratei}
Denote by~$\cC$ the class of all \cm{s} that are the image of~$\rV(R)$ under a pre-\Vhom, for some V-semiprimitive ring~$R$.
Then for every regular cardinal $\gl\geq\gq+\aleph_1$\,, there exists a functor $\gD$, from~$\Powi(\gl^{+2})$ to the category of all \cm{s} with~$\scL_{\infty\gl}$-el\-e\-men\-tary embeddings, satisfying the following statements:
\begin{enumerater}
\item\label{IncrUnionvNRVR}
$\gD$ is $\gl$-continuous;

\item\label{smallDxirepvNRVR}
for every $\gl^+$-small subset~$X$ of~$\gl^{+2}$, $\gD(X)$ belongs to~$\rV(\cR)$;

\item\label{DgqnotrepvNRVR}
$\gD(\gl^{+2})$ does not belong to~$\cC$.
\end{enumerater}
In particular, the pair $(\rV(\cR),\cC)$ is anti-el\-e\-men\-tary.

\end{theorem}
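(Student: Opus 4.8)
The plan is to follow, \emph{mutatis mutandis}, the blueprint of the proofs of Theorems~\ref{T:NotCev}, \ref{T:NonPermlgrp}, and~\ref{T:NonPermvNR}, now feeding the Extended CLL (Lemma~\ref{L:ExtCLL}) with the ring diagram~$\vec{R}_{\Bbbk}$ of Subsection~\ref{Su:IdcRegRings} in place of the earlier counterexample diagrams. Set $\gk\eqdef\gl^{+2}$ and keep $P\eqdef\Pow[3]$; since~$P$ is a nontrivial finite \ajs\ with zero of order-dimension~$3$, Corollary~\ref{C:355Larder} gives that $P\seq{\gk}$ is a standard $\gl$-lifter of~$P$. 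Let~$\Phi$ be the restriction of~$\rV$ to~$\cR$; by assumption~\eqref{VSemiProd} together with Lemma~\ref{L:vecRVComm}, the diagram~$\vec{R}_{\Bbbk}$ is $\Phi$-com\-mu\-ta\-tive. As in the earlier proofs, Lemma~\ref{L:P(f,gS)} turns $U\mapsto\xF(P\seq{U})$ into an $\go$-continuous functor from~$\Powi(\gk)$ to~$\Bool_P$\,, and composing it with the $\gl$-continuous functor~${}_{-}\otlF\vec{R}_{\Bbbk}$ (whose existence rests on assumptions~\eqref{VSemiProd} and~\eqref{VSemiColim}) produces a $\gl$-continuous functor $\gC\colon\Powi(\gk)\to\cS$, $U\mapsto\xF(P\seq{U})\otlF\vec{R}_{\Bbbk}$\,.

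I would first show that $\gC(X)\in\rng\Phi$ whenever $X\subseteq\gk$ is $\gl^+$-small, by the Boosting Lemma (Lemma~\ref{L:Boosting}) applied exactly as in Claim~\ref{Cl:gCglCev}: the case $\card{X}<\gl$ is immediate from the definition of~${}_{-}\otlF\vec{R}_{\Bbbk}$ on $\gl$-pre\-sentable objects, while the case $\card{X}=\gl$ reduces, via a bijection and functoriality of~$\gC$, to $X=\gl$, where assumption~\eqref{VSemiColim} (closure of~$\cR$ under $\gl$-indexed colimits and their preservation by~$\rV$) lets the Boosting Lemma apply to the $\gl$-pre\-sentable $P$-scaled Boolean algebras $\xF(P\seq{\xi})$, $\xi<\gl$.

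The heart of the matter is to show $\gC(\gk)\notin\cC$, which I would deduce from the Extended CLL. Suppose, toward a contradiction, that $\gC(\gk)$ is the image of~$\rV(R)$ under a pre-\Vhom\ for some V-semiprimitive ring~$R$; then there are $B\in\cB$ and a double arrow $\chi\colon\Psi(B)\Rightarrow\xF(P\seq{\gk})\otmF\vec{R}_{\Bbbk}$, where $\mu\eqdef\gl$, so that $\xF(P\seq{\gk})\otmF\vec{R}_{\Bbbk}=\gC(\gk)$. Let~$\cB^{\dagger}$ be the full subcategory of~$\cB$ whose objects are the V-semiprimitive rings of cardinality~${<}\gl$. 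Then I would check the hypotheses of Lemma~\ref{L:ExtCLL}: (WF) holds since $P\seq{\gk}$ is lower finite, so its poset of principal ideals---hence~$\bX^=$---is well-founded; (COND$(\gl)$) holds since the finite poset~$P$ is a conditional DCPO; (PROD$(\gl)$) and (COLIM$(\gl)$) hold because~$\cR$ has all products and $\mathbf{CMon}$ has all $\gl$-directed colimits; (PRES$(\gl)$) holds because $\gl\geq\aleph_1$ forces $\rV(C)$ to be $\gl$-small when~$C$ is, hence $\gl$-pre\-sentable, in particular weakly $\gl$-pre\-sentable; (LS$(\gl)$) is Lemma~\ref{L:LS4V(R)} (whose conclusion also keeps the produced $\gl$-small elementary subrings inside~$\cB$); and (PROJ$(\gl)$) holds because a $\Phi$-pro\-jec\-tion is~$\rV$ of a categorical projection $p\colon\prod_{i\in I}R_i\to R_j$ in~$\cR$, which is a pre-\Vhom\ (split off the $j$-th coordinate, keeping the others as $e_i\oplus 0$), while pre-\Vhom{s} are closed under $\gl$-directed colimits of arrows, so every $\gl$-extended $\Phi$-pro\-jec\-tion is a double arrow. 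Lemma~\ref{L:ExtCLL} then yields a com\-mu\-ta\-tive diagram $\vec{\bB}\in\cB^P$ and a natural transformation $\vec{\chi}\colon\Psi\vec{\bB}\Todot\Phi\vec{R}_{\Bbbk}$; composing with the natural isomorphism $\Phi\vec{R}_{\Bbbk}=\rV(\vec{R}_{\Bbbk})\cong\vec{D}$ of Lemma~\ref{L:rVR2S} contradicts Lemma~\ref{L:noPsiB2PhiRBbbk}. Hence $\gC(\gk)\notin\cC$.

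Finally I would set $\gD(X)\eqdef\gC\pI{\gl\cup(\gl\overset{.}{+}X)}$ and extend~$\gD$ to a functor on~$\Powi(\gk)$ the obvious way, as at the end of the proof of Theorem~\ref{T:NotCev}. Then~$\gD$ is $\gl$-continuous because~$\gC$ is, yielding~\eqref{IncrUnionvNRVR}; each $\gl\cup(\gl\overset{.}{+}X)$ has cardinality $\geq\gl$, so by Proposition~\ref{P:EltEq} $\gD$ sends the morphisms of~$\Powi(\gk)$ to $\scL_{\infty\gl}$-el\-e\-men\-tary embeddings; \eqref{smallDxirepvNRVR} follows from the first step since $\gl\cup(\gl\overset{.}{+}X)$ is $\gl^+$-small when~$X$ is; and $\gl\cup(\gl\overset{.}{+}\gk)=\gk$ gives $\gD(\gk)=\gC(\gk)\notin\cC$, i.e.~\eqref{DgqnotrepvNRVR}. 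Reading off Definition~\ref{D:AntiElt} with $\cC_0\eqdef\rV(\cR)$, $\cC_1\eqdef\cC$, and witnessing cardinals $\gl<\gk=\gl^{+2}$ then shows that $(\rV(\cR),\cC)$ is anti-el\-e\-men\-tary. The step I expect to need the most care is aligning the imported machinery precisely---especially verifying (PROJ$(\gl)$), i.e.\ that~$\rV$ sends categorical projections in~$\cR$ to pre-\Vhom{s} and that this persists under the $\gl$-directed colimits defining $\gl$-extended $\Phi$-pro\-jec\-tions, and making sure the ``image under a pre-\Vhom'' description of~$\cC$ matches both the double-arrow hypothesis of Lemma~\ref{L:ExtCLL} and the non-liftability input Lemma~\ref{L:noPsiB2PhiRBbbk}.
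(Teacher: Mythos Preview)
Your proposal is correct and follows essentially the same approach as the paper's proof, which likewise defines~$\Phi$ as the restriction of~$\rV$ to~$\cR$, invokes the Boosting Lemma via Assumption~\eqref{VSemiColim} for the $\gl^+$-small case, and runs the Extended CLL with Lemmas~\ref{L:LS4V(R)}--\ref{L:noPsiB2PhiRBbbk} in place of Lemmas~\ref{L:LS4lgrps}--\ref{L:noPsiB2PhiC}. Your verification of (PROJ$(\gl)$) is in fact more explicit than the paper's, which simply cites the relevant argument from Wehrung~\cite[Lemma~13.1]{VLiftDefect}.
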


\begin{proof}
Denote by~$\Phi$ the restriction of the functor~$\rV$ from~$\Ring$ to~$\cR$.
It follows from Assumption~\eqref{VSemiProd} and Lemma~\ref{L:vecRVComm} that the diagram~$\vec{R}_{\Bbbk}$ is $\Phi$-commutative.
Moreover, it follows from Assumption~\eqref{VSemicR} that~$\rV(\cR)$ is contained in the range of~$\Psi$, thus in~$\cC$.

Using Assumption~\eqref{VSemivecR} and setting $\gk\eqdef\gl^{+2}$, we obtain, as at the beginning of the proof of Theorem~\ref{T:NotCev}, a functor $\gC\colon\Powi(\gk)\to\nobreak\cS$, $U\mapsto\xF(P\seq{U})\otlF\vec{R}_{\Bbbk}$\,, and this functor is $\gl$-continuous.

Then we need to prove the analogue of Claim~\ref{Cl:gCglRep} of Theorem~\ref{T:NonPermlgrp}, which states that for every $\gl^+$-small subset~$X$ of~$\gk$, the lattice~$\gC(X)$ belongs to the range of~$\Phi$.
The proof goes the same way, using the Boosting Lemma (viz. Lemma~\ref{L:Boosting}).
A crucial observation is that Assumption~\eqref{VSemiColim} ensures that all assumptions required for applying the Boosting Lemma are satisfied.

The remainder our proof follows the lines of the one of Theorem~\ref{T:NonPermlgrp}, with Lemmas~\ref{L:LS4V(R)}--\ref{L:noPsiB2PhiRBbbk} used instead of Lemmas~\ref{L:LS4lgrps}--\ref{L:noPsiB2PhiC} and with the functor $\gC\colon\Powi(\gk)\to\nobreak\cS$,\linebreak $U\mapsto\xF(P\seq{U})\otlF\vec{R}_{\Bbbk}$\,.
The property (PROJ$(\gl)$) follows from the argument of the part of the proof of Wehrung \cite[Lemma~13.1]{VLiftDefect} establishing the property denoted there by (PROJ$(\Phi,\mathbf{CMon}^{\Rightarrow})$).
\end{proof}

Let us now present two applications of Theorem~\ref{T:NonV(R)reg}, obtained by specializing the category~$\cR$, both for $\gq:=\go$.

Defining~$\cR$ as the category~$\vNR$ of all unital von Neumann regular rings with unital ring homomorphisms, we obtain the following.

\begin{corollary}\label{C:NonVvNR}
The class $\rV(\vNR)$ is anti-el\-e\-men\-tary.
\end{corollary}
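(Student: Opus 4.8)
The plan is to obtain Corollary~\ref{C:NonVvNR} as a direct specialization of Theorem~\ref{T:NonV(R)reg}, applied with~$\cR$ taken to be the category~$\vNR$ of all unital von Neumann regular rings, with~$\Bbbk$ an arbitrary field, and with $\gq\eqdef\go$. All of the work then amounts to checking the four hypotheses \textup{(i)}--\textup{(iv)} of that theorem for this choice of~$\cR$, after which the theorem hands us the functor~$\gD$ and the anti-elementarity of a pair of classes; a short final step collapses that pair to~$\rV(\vNR)$ itself.

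First I would dispatch hypotheses \textup{(i)}--\textup{(iii)}, which are essentially formal. For~\textup{(i)}: the vertices of the diagram~$\vec{R}_{\Bbbk}$ of Figure~\ref{Fig:CandR} are~$\Bbbk$, $\Bbbk^2$, and~$\Mat{2}{\Bbbk}$, each a unital von Neumann regular ring (a field, a finite power of a field, and a matrix ring over a field), and its edges~$e$, $s$, $h$ are unital ring homomorphisms, so all objects and arrows of~$\vec{R}_{\Bbbk}$ lie in~$\vNR$. Hypothesis~\textup{(ii)} is immediate from Proposition~\ref{P:ExofrngPsiVR}, which asserts that every von Neumann regular ring is V-semiprimitive. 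For~\textup{(iii)}, the identity $x=xyx$ is solved coordinatewise, so any product (in the category of all unital rings) of von Neumann regular rings is again von Neumann regular, i.e.~$\vNR$ is closed under products. The one hypothesis deserving care is~\textup{(iv)}: I would note that a directed colimit of von Neumann regular rings, computed in the category of all unital rings, is again von Neumann regular --- any element of the colimit is the image of an element of some vertex, where it has a quasi-inverse whose image then serves in the colimit --- so~$\vNR$ has all directed colimits and they coincide with those computed in~$\Ring$; and the functor~$\rV$ preserves directed colimits of rings, since idempotent matrices and Murray-von~Neumann equivalences involve only finitely many entries and finitely many defining equations (see Goodearl~\cite{Good91}). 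Hence~$\rV$ preserves the directed colimits of~$\vNR$, and~\textup{(iv)} holds with $\gq=\go$.

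With the hypotheses verified, Theorem~\ref{T:NonV(R)reg} gives that the pair $(\rV(\vNR),\cC)$ is anti-elementary, where~$\cC$ is the class of all \cm{s} that are the image of~$\rV(R)$ under some pre-\Vhom\ for some V-semiprimitive ring~$R$. To finish, I would observe that $\rV(\vNR)\subseteq\cC$: for any $R\in\vNR$ the monoid~$\rV(R)$ is its own image under the identity map, which is trivially a pre-\Vhom, and~$R$ is V-semiprimitive by Proposition~\ref{P:ExofrngPsiVR}. Applying the monotonicity remark following Definition~\ref{D:AntiElt} with the intermediate class~$\rV(\vNR)$ then yields that~$\rV(\vNR)$ is anti-elementary, as desired. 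The only non-routine point in the argument is the verification of~\textup{(iv)} --- that regular rings are stable under directed colimits and that~$\rV$ is continuous on them --- and even that is classical, so I do not anticipate any genuine obstacle.
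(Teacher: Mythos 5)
Your proof is correct and follows exactly the paper's route: Corollary~\ref{C:NonVvNR} is obtained by specializing Theorem~\ref{T:NonV(R)reg} to $\cR:=\vNR$ with $\gq:=\go$ (the paper treats the verification of hypotheses \textup{(i)}--\textup{(iv)} as routine, which your checks confirm), and the final reduction from the pair $(\rV(\vNR),\cC)$ to the class $\rV(\vNR)$ via $\rV(\vNR)\subseteq\cC$ is the same step the paper performs inside the proof of the theorem together with the remark after Definition~\ref{D:AntiElt}.
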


Now define~$\cR$ as the category~$\mathbf{RR_0}$ of all unital C*-algebras of real rank zero with unital C*-homomorphisms.
We need to verify Assumptions \eqref{VSemivecR}--\eqref{VSemiColim} of the statement of Theorem~\ref{T:NonV(R)reg}, for $\Bbbk:=\CC$.
Assumption~\eqref{VSemivecR} follows from the fact that~$\vec{R}_{\CC}$ is a diagram in~$\mathbf{RR_0}$.
Assumption~\eqref{VSemicR} follows from Proposition~\ref{P:ExofrngPsiVR}.
Assumption~\eqref{VSemiProd} is obvious.
Finally, the colimit of any direct system in~$\mathbf{RR_0}$ is the norm-completion of its colimit in the category of unital rings and unital ring homomorphisms; invoking Blackadar \cite[\S~5.1]{Black1998}, Assumption~\eqref{VSemiColim} follows. 

\begin{corollary}\label{C:NonVRR0}
The class $\rV(\mathbf{RR_0})$ is anti-el\-e\-men\-tary.
\end{corollary}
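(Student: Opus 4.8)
The plan is to obtain Corollary~\ref{C:NonVRR0} as a direct instance of Theorem~\ref{T:NonV(R)reg}, applied with base field $\Bbbk:=\CC$, parameter $\gq:=\go$, and $\cR:=\mathbf{RR_0}$ the category of all unital $C^*$-algebras of real rank zero with unital $C^*$-homomorphisms. The entire task then reduces to checking that this choice satisfies Assumptions~\eqref{VSemivecR}--\eqref{VSemiColim} in the statement of Theorem~\ref{T:NonV(R)reg}; granting that, the theorem supplies, for every regular $\gl\geq\aleph_1$, a $\gl$-continuous functor $\gD$ on $\Powi(\gl^{+2})$ with $\gD(X)\in\rV(\mathbf{RR_0})$ for every $\gl^+$-small $X\subseteq\gl^{+2}$ and $\gD(\gl^{+2})\notin\cC$, where $\cC$ is the class of images of $\rV(R)$ under a pre-\Vhom\ with $R$ V-semiprimitive; this witnesses anti-el\-e\-men\-tar\-ity of the pair $(\rV(\mathbf{RR_0}),\cC)$. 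Since Assumption~\eqref{VSemicR} gives $\rV(\mathbf{RR_0})\subseteq\rng\Psi\subseteq\cC$, the remark following Definition~\ref{D:AntiElt} — any class squeezed between the two members of an anti-el\-e\-men\-tary pair is itself anti-el\-e\-men\-tary — yields that $\rV(\mathbf{RR_0})$ is anti-el\-e\-men\-tary, as claimed.

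The verification of the four hypotheses is routine $C^*$-algebra folklore and is exactly the one indicated in the discussion preceding the corollary. For~\eqref{VSemivecR}: the vertices of $\vec{R}_{\CC}$ are $\CC$, $\CC^2$, and $\Mat{2}{\CC}$, all finite-dimensional and hence of real rank zero, and the connecting morphisms $e$, $s$, $h$ together with the relevant inclusions are unital $C^*$-homomorphisms, so $\vec{R}_{\CC}$ is a diagram in $\mathbf{RR_0}$. Assumption~\eqref{VSemicR} is precisely the ``$C^*$-algebra of real rank zero'' clause of Proposition~\ref{P:ExofrngPsiVR} (such an algebra is a semiprimitive exchange ring, hence V-semiprimitive). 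Assumption~\eqref{VSemiProd} is immediate. For~\eqref{VSemiColim}: the colimit of a direct system in $\mathbf{RR_0}$ is the norm-completion of its colimit in the category of unital rings, and the fact that this completion again has real rank zero while $\rV$ (equivalently, nonstable $K_0$) commutes with such $C^*$-inductive limits is standard — see Blackadar~\cite[\S~5.1]{Black1998} — so $\rV$ preserves all colimits of $\mathbf{RR_0}$ indexed by regular cardinals $\geq\go$.

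The point worth stressing in this plan is that, for this particular corollary, there is essentially no obstacle: all the genuinely substantive ingredients — the $\rV$-com\-mu\-ta\-tiv\-ity of $\vec{R}_{\CC}$ (Lemma~\ref{L:vecRVComm}), the unliftability Lemma~\ref{L:noPsiB2PhiRBbbk} drawn from Wehrung~\cite[Theorem~10.1]{VLiftDefect}, the verification of property $(\mathrm{PROJ}(\gl))$, and the Boosting-Lemma argument identifying $\gC(X)$ with an object in $\rng\rV$ for $\gl^+$-small $X$ — are already packaged inside Theorem~\ref{T:NonV(R)reg}. The only thing one must supply afresh is that $\mathbf{RR_0}$ behaves well under products and is continuous for $\rV$ under inductive limits, which is exactly Assumptions~\eqref{VSemiProd} and~\eqref{VSemiColim} recorded above; the potential subtlety — that inductive limits in $\mathbf{RR_0}$ are \emph{not} computed as in the category of rings, so $\rV$ must be checked to still commute with them — is resolved by the cited $C^*$-algebra facts rather than by any new combinatorics.
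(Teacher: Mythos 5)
Your proposal is correct and follows essentially the same route as the paper: the paper proves this corollary precisely by specializing Theorem~\ref{T:NonV(R)reg} to $\Bbbk:=\CC$, $\gq:=\go$, $\cR:=\mathbf{RR_0}$, checking assumption~(i) via the fact that $\vec{R}_{\CC}$ lives in $\mathbf{RR_0}$, (ii) via Proposition~\ref{P:ExofrngPsiVR}, (iii) as obvious, and (iv) by noting that colimits in $\mathbf{RR_0}$ are norm-completions of the ring-theoretic colimits together with Blackadar \cite[\S~5.1]{Black1998}. Your additional remark that $\rV(\mathbf{RR_0})\subseteq\cC$ (from V-semiprimitivity) lets the squeeze observation after Definition~\ref{D:AntiElt} convert anti-elementarity of the pair into anti-elementarity of $\rV(\mathbf{RR_0})$ itself, which is exactly how the paper's conclusion is to be read.
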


\section[Coordinatizability]{Coordinatizability by von Neumann regular rings without unit}
\label{S:4SCML}

\subsection{Some background on coordinatization and frames}\label{Su:RegCoord}

We refer the reader to Wehrung~\cite{NonCoord} for more references and detail.

A lattice~$L$ is
\begin{itemize}
\item
\emph{modular} if $x\wedge(y\vee(x\wedge z))=(x\wedge y)\vee(x\wedge z)$ whenever $x,y,z\in L$;

\item
\emph{sectionally complemented} if it has a zero (i.e., smallest) element~$0$ and for all $x\leq y$ in~$L$ there exists $z\in L$ such that $x\wedge z=0$ and $x\vee z=y$ (in abbreviation, $y=x\oplus z$).

\end{itemize}

Two elements~$a$ and~$b$ of~$L$ are \emph{perspective} if there exists $x\in L$ such that $a\oplus x=b\oplus x$.
An ideal~$I$ of a sectionally complemented modular~$L$ is \emph{neutral} if every element of~$L$ perspective to some element of~$I$ belongs to~$I$.
For a positive integer~$n$, an \emph{$(n+1)$-frame of~$L$} is a pair $\pI{\vecm{a_i}{0\leq i\leq n},\vecm{c_i}{1\leq i\leq n}}$ such that $a_0\oplus c_i=a_i\oplus c_i=a_0\oplus a_i$ whenever $1\leq i\leq n$.
The frame is \emph{large} if~$L$ is generated by~$a_0$ (equivalently, by any~$a_i$) as a neutral ideal.
 
For any von Neumann regular ring~$R$, the set $\LL(R)\eqdef\setm{xR}{x\in R}$, partially ordered by set inclusion, is a sectionally complemented modular lattice.
A lattice is \emph{coordinatizable} if it is isomorphic to~$\LL(R)$ for some von Neumann regular ring~$R$.

The argument of Wehrung \cite[Theorem 9.4]{CoordCX} shows that the class of all coordinatizable complemented modular lattices (with unit!) is not closed under $\scL_{\infty\gl}$-el\-e\-men\-tary equivalence, for any infinite regular cardinal~$\gl$.
We deal here with the more difficult case of lattices without unit, but with a large $4$-frame.

J\'onsson proved in~\cite{Jons1960} that every sectionally complemented modular lattice with a countable cofinal sequence and a large $4$-frame is coordinatizable.
The author proved, \emph{via} a counterexample of cardinality~$\aleph_1$\,, that the countable cofinal sequence assumption cannot be dispensed with (cf. Wehrung~\cite{NonCoord}).

In this section we shall go further, by proving that the class of coordinatizable lattices with a large $4$-frame is anti-el\-e\-men\-tary.
In contrast to our settings in Sections~\ref{S:Ceva}--\ref{S:V(R)}, the indexing poset of our main diagram counterexample will be the transfinite chain~$\go_1$\,, with the drawback that the proof of our main result (viz. Theorem~\ref{T:LLAntiElt}) will require a large cardinal assumption.

We define binary relations~$\utrr$\,, $\aspr$\,, and~$\utr$ on the set~$\Idp{R}$ of all idempotent elements of any ring~$R$ by
 \begin{align*}
 a\utrr b&\qquad\text{if}\qquad a=ba\,,\\
 a\aspr b&\qquad\text{if}\qquad a=ba\text{ and }b=ab\,,\\
 a\utr b&\qquad\text{if}\qquad a=ab=ba\,, 
 \end{align*}
for all $a,b\in\Idp{R}$.
In particular, $a\utrr b$ if{f} $aR\subseteq bR$ and $a\aspr b$ if{f} $aR=bR$.

The proof of J\'onsson \cite[Lemma~10.2]{Jons1960} remains valid for non-unital rings so we omit it.

\begin{lemma}\label{L:L(eRe)}
For any idempotent element~$e$ in a von Neumann regular ring~$R$, there are mutually inverse isomorphisms $\gf_e\colon\LL(R)\dnw eR\to\LL(eRe)$, $\bx\mapsto\bx\cap eRe=\bx e$ and $\psi_e\colon\LL(eRe)\to\LL(R)\dnw eR$, $\by\mapsto\by R$.
\end{lemma}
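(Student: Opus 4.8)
The plan is to verify directly that $\gf_e$ and $\psi_e$ are well-defined, mutually inverse, and inclusion-preserving; once that is done the lemma is immediate, since $\LL(R)\dnw eR$ is the principal ideal of the lattice $\LL(R)$ generated by $eR$ (hence a lattice), $\LL(eRe)$ is a sectionally complemented modular lattice (von Neumann's theorem, as $eRe$ will be shown to be von Neumann regular), and any mutually inverse pair of isotone bijections between two lattices is automatically a pair of lattice isomorphisms. So no lattice identities need to be checked by hand.

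First I would record the preliminary facts, all of which come from von Neumann regularity of $R$: every $x\in R$ lies in $xR$ (from $x=xyx$); every principal right ideal has the form $fR$ for an idempotent $f$; and $fR\subseteq gR$ iff $f=gf$. Moreover $eRe$ is von Neumann regular with identity element $e$: for $a\in eRe$ one has $a=ea=ae$, and from $a=aba$ with $b\in R$ one gets $a=a(ebe)a$ with $ebe\in eRe$; so $\LL(eRe)$ makes sense. The key computation is then: if $f$ is idempotent with $fR\subseteq eR$ (so $f=ef$), then $g\eqdef fe$ is an idempotent of $eRe$, because $(fe)^2=f(ef)e=fe$ and $eg=ge=g$, and moreover $gR=fR$, the nontrivial inclusion being $f=(fe)f\in gR$.

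Using this, I would show $\gf_e$ is well-defined, i.e. that for $\bx=fR\subseteq eR$ the set $\bx\cap eRe$ equals $\bx e$ and is a \emph{principal} right ideal of $eRe$. The two descriptions coincide because elements of $\bx\subseteq eR$ are left-fixed by $e$ while elements of $\bx\cap eRe$ are also right-fixed by $e$; and writing $g=fe$ one checks $gR\cap eRe=g(eRe)$, exhibiting $\gf_e(\bx)$ as $g(eRe)\in\LL(eRe)$. Dually, for $\by=a(eRe)$ with $a$ an idempotent of $eRe$ (so $a=ea=ae$), I would show $\by R=aR$ (the inclusion $\supseteq$ from $a=(ae)a\in a(eRe)R$) and $aR\subseteq eR$, so that $\psi_e$ lands in $\LL(R)\dnw eR$. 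The mutual-inverse identities then drop out of these formulas: $\psi_e\gf_e(fR)=\psi_e(g(eRe))=gR=fR$, and $\gf_e\psi_e(a(eRe))=\gf_e(aR)=aR\cap eRe=a(eRe)$ (apply the key computation to the idempotent $a$, using $ae=a$). That both maps preserve inclusion is clear.

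The main obstacle — and the only point where genuine work is required — is the well-definedness of $\gf_e$, precisely the assertion that $\bx\cap eRe$ is a principal right ideal of $eRe$ rather than merely a right ideal (in a regular ring not every right ideal is principal). This is exactly where von Neumann regularity of $R$ enters, through the passage from the generator $f$ of $\bx$ to the two-sided idempotent $g=fe\in eRe$. Everything else is bookkeeping with idempotents, made slightly delicate only by the absence of a global unit in $R$; this is why the argument for unital rings (as in J\'onsson) carries over essentially verbatim.
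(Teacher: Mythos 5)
Your proposal is correct, and it is essentially the argument the paper intends: the paper omits the proof altogether, citing J\'onsson's Lemma~10.2 and noting that it survives the absence of a unit, and your verification (regularity of $eRe$ with unit $e$, the passage from a generator $f$ of $\bx\subseteq eR$ to the idempotent $g=fe\in eRe$ with $gR=fR$ and $gR\cap eRe=g(eRe)$, and the resulting mutually inverse isotone bijections) is precisely the routine idempotent bookkeeping that citation stands for. I checked the key computations and they are sound, including the non-unital points ($x\in xR$ by regularity, and $fR\subseteq gR$ iff $f=gf$).
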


\begin{lemma}\label{L:abc=ac}
Let~$n$ be a positive integer and let~$a_1$\,, \dots, $a_n$ be idempotent elements in a ring~$R$ with $a_1\utrr\cdots\utrr a_n$\,.
Then $u\eqdef a_1\cdots a_n$ and $v\eqdef a_1a_n$ are both idempotent and $u\aspr v$.
\end{lemma}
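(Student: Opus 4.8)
The plan is to isolate the one family of identities actually encoded in the hypothesis, and then obtain everything by direct telescoping computations.

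First I would unwind the notation: the chain $a_1\utrr\cdots\utrr a_n$ means $a_i=a_{i+1}a_i$ for $1\le i<n$. From this, an induction on $j$ (using associativity only) gives $a_1=a_ja_1$ for every $j\in\{1,\dots,n\}$: the base case $j=1$ is idempotence of $a_1$, and if $a_1=a_ja_1$ with $j<n$ then $a_1=a_ja_1=(a_{j+1}a_j)a_1=a_{j+1}(a_ja_1)=a_{j+1}a_1$. These identities $a_ja_1=a_1$ (together with $a_1^2=a_1$) are the only facts I shall use below; in particular nothing requires $R$ to be unital.

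Next I would verify that $v=a_1a_n$ and $u=a_1a_2\cdots a_n$ are idempotent. For $v$ this is immediate: $v^2=a_1(a_na_1)a_n=a_1a_1a_n=a_1a_n=v$. For $u$, in the product $u^2=(a_1a_2\cdots a_n)(a_1a_2\cdots a_n)$ I would collapse the central occurrences one at a time, replacing $a_na_1$ by $a_1$, then $a_{n-1}a_1$ by $a_1$, \dots, then $a_2a_1$ by $a_1$; what survives is $a_1a_1a_2\cdots a_n=a_1a_2\cdots a_n=u$ (the case $n=1$ being trivial).

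Finally I would check the relation $u\aspr v$, which by definition means $u=vu$ and $v=uv$. Here $vu=a_1(a_na_1)a_2\cdots a_n=a_1a_1a_2\cdots a_n=u$; and $uv=a_1a_2\cdots a_{n-1}(a_na_1)a_n=a_1a_2\cdots a_{n-1}a_1a_n$, into which the collapses $a_{n-1}a_1=a_1,\dots,a_2a_1=a_1$ feed to give $a_1a_1a_n=a_1a_n=v$. This yields $u\aspr v$ and completes the argument. There is no real obstacle here beyond keeping the bookkeeping of the telescoping products straight; the statement is a purely multiplicative fact.
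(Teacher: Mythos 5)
Your proof is correct and follows essentially the same route as the paper: extract the identities $a_ja_1=a_1$ (the paper records these as $a_1=a_ia_1$, $a_i=a_na_i$, and hence $a_1=ua_1$, $u=a_nu$) and then verify $u^2=u$, $v^2=v$, $vu=u$, $uv=v$ by telescoping, which is exactly what is needed for $u\aspr v$. No gaps.
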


\begin{proof}
Since $a_i=a_na_i$ and $a_1=a_ia_1$ for every~$i$, we get $u=a_nu$ and $a_1=ua_1$\,, thus
$u^2=ua_1\cdots a_n=u$ and $v^2=v$.
Further, $uv=ua_1a_n=a_1a_n=v$ and $vu=a_1a_nu=a_1u=u$.
\end{proof}

\begin{lemma}\label{L:abRdiag}
Let~$R$ and~$S$ be von Neumann regular rings, let $f\colon R\to S$ be a ring homomorphism, let $a\in\Idp{R}$ and $b\in\Idp{S}$ such that $f(a)\utrr b$.
Denote by~$\gf$ the domain-range restriction of~$\LL(f)$ from~$\LL(R)\dnw aR$ to~$\LL(S)\dnw bS$.
Then the assignment $x\mapsto f(x)b$ defines a ring homomorphism $f'\colon aRa\to bSb$ and the following diagram commutes:
 \[
 \begin{tikzcd}
 \LL(aRa) \arrow[r,"\LL(f')"']\arrow[d,"\psi_a","\cong"'] 
 & \LL(bSb) \arrow[d,"\psi_b"',"\cong"]\\
 \LL(R)\dnw aR\arrow[r,"\gf"] & \LL(S)\dnw bS
 \end{tikzcd}
\]
\end{lemma}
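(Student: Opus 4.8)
\textbf{Plan of proof for Lemma~\ref{L:abRdiag}.}
The statement is almost entirely a matter of unwinding the definitions of the isomorphisms~$\psi_a$, $\psi_b$ from Lemma~\ref{L:L(eRe)} and checking that the square of maps agrees on a generating element. First I would verify that~$f'$ is well defined, i.e. that $f(x)b\in bSb$ whenever $x\in aRa$: from $x=axa$ we get $f(x)=f(a)f(x)f(a)$, and since $f(a)\utrr b$ means $f(a)=bf(a)$, left multiplication gives $bf(x)=f(x)$; for the right-hand side one uses instead that $f(a)=f(a)b$ would be needed, which is \emph{not} part of $f(a)\utrr b$, so one must be slightly careful and instead simply define $f'(x)\eqdef f(x)b$ and check directly that $f'(x)=bf'(x)b$: indeed $bf'(x)=bf(x)b=bf(a)f(x)f(a)b=f(a)f(x)f(a)b=f(x)b=f'(x)$ using $bf(a)=f(a)$, and $f'(x)b=f(x)b=f'(x)$ trivially. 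Multiplicativity of~$f'$ is the computation $f'(x)f'(y)=f(x)bf(y)b=f(x)f(y)b=f'(xy)$, again via $bf(y)=f(y)$ (which holds because $f(y)=f(a)f(y)f(a)=bf(a)f(y)f(a)$). So $f'$ is a (nonunital) ring homomorphism from $aRa$ to $bSb$.

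Next I would confirm that~$\gf$ is well defined: for $\bx\in\LL(R)\dnw aR$, i.e. $\bx=xR\subseteq aR$, we have $\LL(f)(\bx)=f(x)S$, and since $x\in aR$ we get $x=ax$, whence $f(x)=f(a)f(x)=bf(a)f(x)$, so $f(x)S\subseteq bS$, confirming $\gf(\bx)\in\LL(S)\dnw bS$. The commutativity of the square is then checked by chasing an arbitrary~$\by=yS\in\LL(bSb)$, with $y\in bSb$. Going down then right: $\psi_a$ is not applicable here since $\by$ lives in $\LL(bSb)$; rather, pick $\bz=zR\in\LL(aRa)$, so $z\in aRa$. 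Along the top-then-right path: $\LL(f')(\bz)=f'(z)S=f(z)bS$, then $\psi_b(f(z)bS)=f(z)bS$ as a member of $\LL(S)\dnw bS$. Along the left-then-bottom path: $\psi_a(\bz)=zR\in\LL(R)\dnw aR$, then $\gf(zR)=\LL(f)(zR)=f(z)S$. So I must show $f(z)S=f(z)bS$ inside $\LL(S)\dnw bS$. The inclusion $\supseteq$ is clear; for $\subseteq$, since $z\in aRa$ we have $z=za$, so $f(z)=f(z)f(a)=f(z)f(a)b\in f(z)bS$ (here I do use $f(a)=f(a)\cdot$ something landing in $bS$; more precisely $f(a)\utrr b$ gives $f(a)=bf(a)$, and then $f(z)f(a)=f(z)bf(a)\in f(z)bS$). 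Hence $f(z)S\subseteq f(z)bS$, giving equality. This establishes the commutativity on every element of $\LL(aRa)$.

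Since $\LL(aRa)$, $\LL(bSb)$ are lattices of principal right ideals and the four maps in the square are all lattice homomorphisms (by Lemma~\ref{L:L(eRe)} the vertical ones are even isomorphisms, and the horizontal ones are $\LL$ applied to ring homomorphisms hence lattice homomorphisms), the element-wise agreement established above is precisely the assertion that the diagram commutes, completing the proof.

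\textbf{Anticipated obstacle.} There is no deep difficulty here; the only subtlety is bookkeeping with the \emph{one-sided} hypothesis $f(a)\utrr b$, which gives $f(a)=bf(a)$ but \emph{not} $f(a)=f(a)b$. One must therefore be disciplined about on which side one multiplies by~$b$ or by~$f(a)$, and in particular define $f'(x)=f(x)b$ (right multiplication) rather than $bf(x)$, and verify compatibility accordingly. Everything else is a routine diagram chase on principal right ideals.
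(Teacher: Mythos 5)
Your proof is correct and follows essentially the same route as the paper: check that $f'$ is a ring homomorphism using $bf(y)=f(y)$ for $y\in aRa$, then chase a principal right ideal around the square and reduce commutativity to the identity $f(z)S=f(z)bS$. The only cosmetic difference is that the paper takes an idempotent generator $x\utr a$ and uses $f(x)=f(x)bf(x)$, whereas you use an arbitrary generator $z\in aRa$ and $f(z)=f(z)f(a)=f(z)bf(a)$ (your momentary slip $f(a)=f(a)b$ is immediately corrected in the parenthetical, so no gap remains).
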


\begin{proof}
For any $x,y\in aRa$, $bf(y)=bf(a)f(y)=f(a)f(y)=f(y)$ thus $f'(x)f'(y)=f(x)bf(y)b=f(xy)b=f'(xy)$, so~$f'$ is a ring homomorphism.
Now let $\bx\in\LL(aRa)$.
We can write $\bx=xRa$ for some idempotent $x\utr a$, so $(\psi_b\circ\LL(f'))(\bx)=\psi_b(f(x)bSb)=f(x)bSbS=f(x)bS$.
Since $f(x)=f(x)f(x)=f(x)bf(x)\in f(x)bS$, we get $(\psi_b\circ\LL(f'))(\bx)=f(x)S=\gf(\bx R)=(\gf\circ\psi_a)(\bx)$, as required.
\end{proof}

\begin{definition}\label{D:Trunc}
Let~$P$ be a poset and let $\vec{L}=\vecm{L_p\,,\,f_p^q}{p\leq q\text{ in }P}$ be a $P$-indexed com\-mu\-ta\-tive diagram of lattices and lattice homomorphisms.
An \emph{$\vec{L}$-thread} is a family $\vec{a}=\vecm{a_p}{p\in P}\in\prod\vecm{L_p}{p\in P}$ such that $f_p^q(a_p)\leq a_q$ whenever $p\leq q$ in~$P$.
Then we define the \emph{truncation of~$\vec{L}$ at~$\vec{a}$} as the com\-mu\-ta\-tive diagram $\vec{L}\res\vec{a}\eqdef\vecm{L_p\dnw a_p\,,\,g_p^q}{p\leq q\text{ in }P}$, where for all $p\leq q$ in~$P$, $g_p^q$ denotes the domain-range restriction of~$f_p^q$ from~$L_p\dnw a_p$ to~$L_q\dnw a_q$\,.
\end{definition}

The following lemma states that for a com\-mu\-ta\-tive diagram~$\vec{R}$ of von Neumann regular rings, any truncation of~$\LL\vec{R}$ can be represented as~$\LL\vec{R}'$ for some $\LL$-com\-mu\-ta\-tive diagram~$\vec{R}'$ of von Neumann regular rings.

\begin{lemma}\label{L:LiftaRcibR}
Let~$P$ be a poset, let $\vec{R}=\vecm{R_p\,,\ f_p^q}{p\leq q\text{ in }P}$ be a $P$-indexed com\-mu\-ta\-tive diagram of von Neumann regular rings and ring homomorphisms, and let $\vecm{a_p}{p\in P}\in\prod\vecm{\Idp{R_p}}{p\in P}$ such that $f_p^q(a_p)\utrr a_q$ whenever $p\leq q$ in~$P$.
Set $\vec{\ba}\eqdef\vecm{a_pR_p}{p\in P}$.
For all $p\leq q$ in~$P$, set $R'_p\eqdef a_pR_pa_p$ and let $\vec{R}'(p,q)$ be the set of all maps of the form $f\colon R'_p\to R'_q$\,, $x\mapsto f_p^q(x)\prod_{i=0}^{n-1}f_{r_i}^{q}(a_{r_i})$ where $p=r_0\leq\cdots\leq r_n=q$ in~$P$.
Then~$\vec{R}'$ is an $\LL$-com\-mu\-ta\-tive diagram of von Neumann regular rings and ring homomorphisms, and $\vec{\psi}\eqdef\vecm{\psi_p}{p\in P}$ is a natural isomorphism from~$\LL\vec{R}'$ to~$\LL\vec{R}\dnw\vec{\ba}$.
\end{lemma}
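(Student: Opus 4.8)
The plan is to verify the three claimed conclusions — that $\vec{R}'$ is a diagram of von Neumann regular rings, that it is $\LL$-com\-mu\-ta\-tive, and that $\vec{\psi}$ is a natural isomorphism — by assembling the lemmas already in place. First I would check that each $R'_p=a_pR_pa_p$ is von Neumann regular: this is the standard fact that a corner $eRe$ of a von Neumann regular ring $R$ at an idempotent $e$ is again von Neumann regular (given $x\in eRe$ pick $y\in R$ with $x=xyx$ and replace $y$ by $eye$). Next, for $p\le q$ with chain $p=r_0\le\cdots\le r_n=q$, I would show the map $x\mapsto f_p^q(x)\prod_{i=0}^{n-1}f_{r_i}^q(a_{r_i})$ is a well-defined ring homomorphism $R'_p\to R'_q$. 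Here the key input is Lemma~\ref{L:abc=ac}: since $f_{r_i}^q(a_{r_i})$ are idempotents of $R_q$ with $f_{r_0}^q(a_{r_0})\utrr\cdots\utrr f_{r_n}^q(a_{r_n})$ — using the hypothesis $f_p^q(a_p)\utrr a_q$ together with commutativity of $\vec{R}$ — the product $u=\prod_{i=0}^{n-1}f_{r_i}^q(a_{r_i})$ is idempotent and $u\aspr f_p^q(a_p)f_q^q(a_q)=f_p^q(a_p)a_q$, in particular $ua_q=u$ and $f_p^q(a_p)u=u$. Then the computation that $x\mapsto f_p^q(x)u$ lands in $a_qR_qa_q$ and is multiplicative is exactly the $f'$-computation of Lemma~\ref{L:abRdiag} (with $b:=u$, noting $f_p^q(a_p)\utrr u$).

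Second, for $\LL$-com\-mu\-ta\-tiv\-i\-ty I would unfold Definition~\ref{D:PhiComm}: given $p=\vecm{p_i}{i\in I}\le q=\vecm{q_i}{i\in I}$ in $P^I$ and two elements $f,g$ of $\prod_i\vec{R}'(p_i,q_i)$, I must show $\LL(f)=\LL(g)$ on $\LL\pI{\prod_i R'_{p_i}}$. The point is that $f$ and $g$ differ only in the choice of chains $p_i=r^{(i)}_0\le\cdots\le r^{(i)}_{n_i}=q_i$, hence in the idempotents $u=\prod_i u_i$ and $u'=\prod_i u'_i$ multiplied on the right, but by the $\aspr$-statement above each $u_i\aspr f_{p_i}^{q_i}(a_{p_i})a_{q_i}\aspr u'_i$, so $u\aspr u'$, i.e. $uR_q=u'R_q$ componentwise. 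Since $\LL$ of a right-multiplication-by-idempotent map depends only on the right ideal generated, $\LL(f)$ and $\LL(g)$ agree. (This is the same mechanism by which Lemma~\ref{L:L(eRe)} and Lemma~\ref{L:abRdiag} make $\psi$ independent of representatives.)

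Third, for the natural isomorphism $\vec{\psi}=\vecm{\psi_p}{p\in P}$, each $\psi_p\colon\LL(R'_p)=\LL(a_pR_pa_p)\to\LL(R_p)\dnw a_pR_p$ is the isomorphism of Lemma~\ref{L:L(eRe)} with $e:=a_p$. Naturality amounts to the commuting square
 \[
 \begin{tikzcd}
 \LL(R'_p)\arrow[r,"\LL(f)"]\arrow[d,"\psi_p"',"\cong"] &
 \LL(R'_q)\arrow[d,"\psi_q","\cong"']\\
 \LL(R_p)\dnw a_pR_p\arrow[r,"g_p^q"] & \LL(R_q)\dnw a_qR_q
 \end{tikzcd}
 \]
for each $p\le q$ and each $f\in\vec{R}'(p,q)$, where $g_p^q$ is the truncation transition morphism of Definition~\ref{D:Trunc}. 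This is precisely Lemma~\ref{L:abRdiag} applied iteratively along the chain $p=r_0\le\cdots\le r_n=q$: one composes the squares for each single step $f_{r_i}^{r_{i+1}}$ (with the relevant corner idempotent), using that the composite of the one-step corner maps equals $f$ up to the $\aspr$-identification already established, and that $g_p^q$ is the composite of the one-step truncations. Since each $\psi_p$ is an isomorphism, $\vec{\psi}$ is a natural isomorphism.

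The main obstacle is the bookkeeping in the definition of $\vec{R}'(p,q)$: one must be careful that the formula $x\mapsto f_p^q(x)\prod_{i=0}^{n-1}f_{r_i}^q(a_{r_i})$ genuinely does not depend — up to the equivalence that $\LL$ cannot see — on the chain chosen, and that it composes correctly, i.e. that a chain from $p$ to $q$ followed by one from $q$ to $r$ yields (up to $\aspr$) a chain-map from $p$ to $r$ of the prescribed form. This is where Lemma~\ref{L:abc=ac} does the real work, collapsing an arbitrary descending-in-$\utrr$ product of idempotents to the two-term product $a_1a_n$ up to $\aspr$; once that collapse is in hand, both the well-definedness of the idempotents $u$ and the composition law reduce to straightforward manipulations with $\utrr$ and $\aspr$, and the functors $\LL$ and $\psi$ transport everything faithfully by Lemmas~\ref{L:L(eRe)} and~\ref{L:abRdiag}.
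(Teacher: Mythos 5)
Your overall architecture matches the paper's proof: you establish $\LL$-com\-mu\-ta\-tiv\-ity exactly as there, by using Lemma~\ref{L:abc=ac} to collapse the chain-products of $\utrr$-increasing idempotents up to $\aspr$ and then noting that $\LL$ cannot distinguish right multiplications by $\aspr$-equivalent idempotents (the paper phrases this as $f(x)=h(x)u$, $g(x)=h(x)v$ with $u\aspr v$, hence $\LL(f)=\LL(g)$); and you reduce the naturality of $\vec{\psi}$ to Lemma~\ref{L:abRdiag}, which is precisely what the paper does (it applies that lemma once, with $f:=f_p^q$, $a:=a_p$, $b:=a_q$, and then invokes the $\aspr$-identification already proved, whereas you compose the one-step squares along the chain --- the same ingredients). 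The paper dismisses the preliminary point that $\vec{R}'$ is a diagram of regular rings as obvious.

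One inference in your first step is, however, faulty. From $u\aspr f_p^q(a_p)a_q$ you conclude ``in particular $ua_q=u$''. The relation $u\aspr v$ with $v\eqdef f_p^q(a_p)a_q$ gives $u=vu$ and $v=uv$, from which $a_qu=u$ and $f_p^q(a_p)u=u$ do follow, but $ua_q=u$ does not; and for the product as printed, $u=\prod_{i=0}^{n-1}f_{r_i}^{q}(a_{r_i})$, whose last factor is $f_{r_{n-1}}^{q}(a_{r_{n-1}})$ rather than $a_q$, that equality genuinely fails in general (already for $n=1$, where $u=f_p^q(a_p)$; in the diagram of Subsection~\ref{Su:vecR} one has $b_\xi b_\eta\neq b_\xi$). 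Hence your claim that $x\mapsto f_p^q(x)u$ lands in $a_qR_qa_q$ is not established: Lemma~\ref{L:abRdiag} with $b:=u$ (a legitimate application, since indeed $f_p^q(a_p)\utrr u$) only yields a ring homomorphism into $uR_qu$. The real source of the trouble is an index slip in the displayed formula of the statement: the intended morphisms are the composites of the one-step corner maps $x\mapsto f_r^s(x)a_s$ of Lemma~\ref{L:abRdiag}, that is, $x\mapsto f_p^q(x)\prod_{i=1}^{n}f_{r_i}^{q}(a_{r_i})$, whose last factor is $a_q$; under that reading $ua_q=u$ holds by inspection of the last factor (not as a consequence of $\aspr$), the landing in $R'_q$ is immediate, and the remainder of your argument --- like the paper's --- goes through unchanged. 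So fix that one step (or note the corrected formula) rather than deriving right absorption from $\aspr$.
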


\begin{proof}
It is obvious that~$\vec{R}'$ is a diagram of von Neumann regular rings and ring homomorphisms.
For any $p\leq q$ in~$P$, any $f\in\vec{R}'(p,q)$ is defined by a rule of the form $f(x)=f_p^q(x)\prod_{i=0}^{n-1}f_{r_i}^{q}(a_{r_i})$ where $p=r_0\leq\cdots\leq r_n=q$.
Hence, defining $g\colon R'_p\to R'_q$ \emph{via} the rule $g(x)\eqdef xf_p^q(a_p)a_q$ (whenever $x\in R_p$) and setting $u\eqdef \prod_{i=0}^{n-1}f_{r_i}^{q}(a_{r_i})$, $v\eqdef f_p^q(a_p)a_q$\,, we obtain that $f(x)=f_p^q(x)u$ and $g(x)=f_p^q(x)v$ whenever $x\in R_p$\,.
Moreover, it follows from Lemma~\ref{L:abc=ac} that~$u$ and~$v$ are idempotent with $u\aspr v$.

Now let~$I$ be a set and let $\vecm{p_i}{i\in I}$ and $\vecm{q_i}{i\in I}$ be families of elements of~$P$ such that $p_i\leq q_i$ whenever $i\in I$.
Further, let $f_i,g_i\in\vec{R}'(p_i,q_i)$ for $i\in I$, let $f\eqdef\prod\vecm{f_i}{i\in I}$ and $g\eqdef\prod\vecm{g_i}{i\in I}$.
For each $i\in I$, it follows from the paragraph above that there are idempotent elements~$u_i\aspr v_i$ of~$R_{q_i}$ such that $f_i(x)=f_{p_i}^{q_i}(x)u_i$ and $g_i(x)=f_{p_i}^{q_i}(x)v_i$ for every $x\in R_{p_i}$\,.
Hence, the elements $u\eqdef\vecm{u_i}{i\in I}$ and $v\eqdef\vecm{v_i}{i\in I}$ of $\prod\vecm{R_{q_i}}{i\in I}$ are idempotent with $u\aspr v$, and further, setting $h(x)\eqdef\vecm{f_{p_i}^{q_i}(x_i)}{i\in I}$, we obtain that $f(x)=h(x)u$ and $g(x)=h(x)v$ for every $x\in\prod\vecm{R_{p_i}}{i\in I}$.
In particular, $\LL(f)=\LL(g)$.
This completes the proof of the $\LL$-com\-mu\-ta\-tiv\-ity of~$\vec{R}'$.
The final statement of Lemma~\ref{L:LiftaRcibR}, about~$\vec{\psi}$ being a natural isomorphism, follows immediately from Lemma~\ref{L:abRdiag}.
\end{proof}

\subsection{Categorical settings for Section~\ref{S:4SCML}}
\label{Su:LLCat}
Let us define a category~$\cA$ as follows.
The objects of~$\cA$ are the structures $(R,\mu_R)$ where~$R$ is a von~Neumann regular ring (not necessarily unital) and $\mu_R=\vecm{e_{ij}^R}{0\leq i,j\leq 3}$ where the~$e_{ij}^R$ form a system of matrix units of~$R$ (i.e., $e_{ij}^Re_{kl}^R=\gd_{jk}e_{il}^R$ whenever $i,j,k,l\in\set{0,1,2,3}$, where~$\gd$ denotes Kronecker's symbol), such that, setting $e^R\eqdef\sum_{i=0}^3e_{ii}^R$\,,
 \begin{equation}\label{Eq:4/5ring}
 \forall x\in R\,,e^Rx=xe^R=0\Rightarrow
 (\exists y,z\in R)(x=ye^Rz).
  \end{equation}
We shall often omit the superscript~$R$ in~$e_{ij}^R$ and~$e^R$ when the ring~$R$ is understood.
The canonical $4$-frame of~$\LL(R)$ associated with the system~$\mu_R$ of matrix units is
 \[
 \tau_R\eqdef\pI{\vecm{e_{ii}^RR}{0\leq i\leq 3},
 \vecm{(e_{ii}^R-e_{0i}^R)R}{0\leq i\leq 3}}\,.
 \]
For objects~$(R,\mu_R)$ and~$(S,\mu_S)$ of~$\cA$, define the morphisms $f\colon(R,\mu_R)\to(S,\mu_S)$ in~$\cA$ as the ring homomorphisms $f\colon R\to S$ such that $f(e_{ij}^R)=e_{ij}^S$ whenever $0\leq i,j\leq 3$.

\begin{lemma}\label{L:cT24-frame}
Let~$R$ be an object of~$\cA$.
Then~$\tau_R$ is a large $4$-frame in~$\LL(R)$.
\end{lemma}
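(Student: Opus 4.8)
Lemma~\ref{L:cT24-frame} asks us to prove that $\tau_R$ is a large $4$-frame in $\LL(R)$ whenever $R$ is an object of $\cA$. Recall that $\tau_R=\pI{\vecm{e_{ii}R}{0\leq i\leq 3},\vecm{(e_{ii}-e_{0i})R}{1\leq i\leq 3}}$, where the $e_{ij}$ form a system of $4\times4$ matrix units, $e=\sum_{i=0}^{3}e_{ii}$, and $R$ satisfies the ``$4/5$'' condition~\eqref{Eq:4/5ring}. The plan is to verify the two requirements separately: first that $\tau_R$ is a $4$-frame (the algebraic identities $a_0\oplus c_i=a_i\oplus c_i=a_0\oplus a_i$ for $1\leq i\leq 3$), and second that it is \emph{large}, i.e., that $\LL(R)$ is generated as a neutral ideal by $a_0=e_{00}R$.

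For the frame identities, I would work inside $\LL(eRe)$, which by Lemma~\ref{L:L(eRe)} (applied with the idempotent $e$) is isomorphic to $\LL(R)\dnw eR$; note that all of $a_0,\dots,a_3,c_1,\dots,c_3$ lie below $eR$, since each $e_{ii}$ and each $e_{ii}-e_{0i}$ is fixed on the left by $e$. The ring $eRe$ is isomorphic to $\Mat{4}{e_{00}Re_{00}}$ via the matrix units $e_{ij}$; write $T\eqdef e_{00}Re_{00}$, so $\LL(R)\dnw eR\cong\LL(\Mat{4}{T})$ and $a_i$ corresponds to the column module of the $i$-th diagonal idempotent matrix unit, $c_i$ to the column module of $e_{ii}-e_{0i}$. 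Then the identities $a_0\oplus c_i=a_i\oplus c_i=a_0\oplus a_i$ are the standard computations verifying that the diagonal matrix units together with the differences $e_{ii}-e_{0i}$ form the canonical $(n+1)$-frame of the lattice of a full matrix ring; this is entirely routine linear algebra over $T$ (the relevant idempotents are orthogonal in the right pairs, and their sums agree), and I would only sketch it.

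For largeness, the content is exactly the condition~\eqref{Eq:4/5ring}. I need to show that the smallest neutral ideal $I$ of $\LL(R)$ containing $e_{00}R$ is all of $\LL(R)$. First, since $a_0=e_{00}R$ is perspective (inside the $4$-frame, using the $c_i$) to each $e_{ii}R$, all the $e_{ii}R$ lie in $I$, hence so does $eR=\bigvee_{i=0}^{3}e_{ii}R$ (a neutral ideal is in particular an ideal, closed under finite joins). Now take an arbitrary $xR\in\LL(R)$ with $x$ idempotent. I would split $x$ relative to $e$: write, using von Neumann regularity, $xR$ as a join of principal right ideals, one contributing a piece ``visible'' to $e$ and one annihilated by $e$ on both sides; concretely, the element $exe$ generates a submodule of $eR\in I$, and what is left is governed by elements $x'$ with $ex'=x'e=0$, for which~\eqref{Eq:4/5ring} gives $x'=ye z$, so $x'R\subseteq yeR\subseteq$ (the ideal generated by) $eR$, whence $x'R\in I$ as well. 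Assembling these pieces shows $xR\in I$, so $I=\LL(R)$, i.e., the frame is large. I expect the main obstacle to be this last decomposition: one must be careful that $\LL(R)$ need not have a unit, so ``$x=exe\oplus(\text{rest})$'' is not literally a Peirce decomposition of $x$, and the argument has to be phrased at the level of right ideals and neutral ideals rather than ring-element decompositions. The cleanest route is probably: show the neutral ideal generated by $eR$ contains every $xR$ by proving $xR\subseteq$ (neutral ideal of $eR$) $+$ (something coming from $(1-e)$-part) and then invoke~\eqref{Eq:4/5ring} to absorb the latter; I would lean on the standard fact (from Wehrung~\cite{NonCoord} or J\'onsson~\cite{Jons1960}) that in a sectionally complemented modular lattice the neutral ideal generated by an element $a$ consists of all elements perspective by finite chains to elements $\leq a$, which converts the ring-theoretic condition into the lattice-theoretic one.
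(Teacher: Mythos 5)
Your overall shape is right: the frame identities are indeed immediate from the matrix units (the paper does not say more than you do), and the heart of largeness is the axiom~\eqref{Eq:4/5ring} together with the Peirce-type decomposition $x=y+ex+xe-exe$ with $ey=ye=0$, which you essentially have. But there is a genuine gap at the bridge between ring ideals and lattice neutral ideals. Your key step reads ``$x'R\subseteq yeR\subseteq$ (the ideal generated by) $eR$, whence $x'R\in I$'': here ``ideal generated by $eR$'' can only mean the two-sided \emph{ring} ideal generated by $e$, and membership of $yeR$ in that ring ideal does not by itself put $yeR$ in the \emph{neutral lattice ideal}~$I$ of $\LL(R)$ generated by $e_{00}R$. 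The ``standard fact'' you lean on is moreover misstated: the neutral ideal generated by $a$ is not the set of elements projective (perspective by finite chains) to elements $\leq a$; one must allow finite joins of such elements, and without the joins the statement is false (in the subspace lattice of an infinite-dimensional vector space $V$, the whole space is perspective only to itself, yet it lies in the neutral ideal generated by any subspace of dimension $\dim V$, being a join of two such). Even with the correct formulation, you would still have to prove, by a regular-ring argument, that every principal right ideal of the form $zeR$ is a finite join of elements projective to elements below $eR$; note that $zeR$ is merely \emph{isomorphic} to a direct summand of $eR$, and isomorphic principal right ideals need not be projective in $\LL(R)$.

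This missing bridge is exactly what the paper's proof outsources: it invokes Wehrung \cite[Theorem~8-3.24]{STA1-8}, by which largeness of $\tau_R$ reduces to the purely ring-theoretic statement that the two-sided ideal $J$ generated by $e$ is all of $R$; after that, the decomposition $x=y+ex+xe-exe$ with $ey=ye=0$ and axiom~\eqref{Eq:4/5ring} (giving $y\in J$) finish in three lines. So either cite that theorem (or the classical correspondence between neutral ideals of $\LL(R)$ and two-sided ideals of a regular ring $R$, in its non-unital form) at the point where you pass from $x'R\subseteq ReR$ to $x'R\in I$, or supply the regular-ring argument sketched above; as written, the proposal does not close that step.
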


\begin{proof}
Since the~$e_{ij}$ form a system of matrix units in~$R$, $\tau_R$ is a $4$-frame in~$\LL(R)$.
In order to prove that it is a large $4$-frame, it is, by Wehrung \cite[Theorem~8-3.24]{STA1-8}, sufficient to verify that the two-sided ideal~$J$ of~$R$ generated by~$e$ is equal to~$R$.
Now any $x\in R$ can be written as $y+ex+xe-exe$, where $ey=ye=0$.
Since~$R$ satisfies the axiom~\eqref{Eq:4/5ring}, it follows that $y\in J$.
Moreover, $e\in J$ implies that $ex+xe-exe\in J$.
Therefore, $x\in J$.
\end{proof}

We shall consider the following categories and functors (represented in Figure~\ref{Fig:LLfunctsett}):
\begin{itemize}
\item
the above-defined category~$\cA$, of all (not necessarily unital) regular rings with ``large'' systems of $4\times4$ matrix units;

\item
$\cS$ is the category of all sectionally complemented modular lattices with $0$-lattice homomorphisms;

\item
$\cS^{\Rightarrow}$ (this section's double arrows) is the subcategory of~$\cS$ consisting of all surjective homomorphisms;

\item
$\Phi(R,\mu_R)=\LL(R)$ for every object $(R,\mu_R)$ of~$\cA$, and $\Phi(f)=\LL(f)$ for every morphism~$f$ in~$\cA$;

\item
$\cB$ is the category of all (not necessarily unital) von Neumann regular rings and ring homomorphisms;

\item
$\Psi\eqdef\LL\colon\cB\to\cS$.

\end{itemize}

\begin{figure}[htb]
 \begin{tikzcd}
 \cA \arrow[rr,"\text{forgetful}"]
 \arrow[rrd,"\Phi\text{ extends }\LL"']
 && \cB \arrow[d,"\Psi=\LL"] &&\\
 && \cS &&
 \arrow[ll,hook',"\text{inclusion}"]
 \cS^{\Rightarrow}
 \end{tikzcd}
\caption{Categories and functors for Section~\ref{S:4SCML}}
\label{Fig:LLfunctsett}
\end{figure}

The following lemma collects mere routine facts, with straightforward proofs that we shall thus omit.
Actually, most of it got already checked in Gillibert and Wehrung \cite[Ch.~6]{Larder} and Wehrung~\cite{NonCoord};
Lemma~\ref{L:cSS*cocompl}\eqref{LLprojwit} is contained in Gillibert and Wehrung \cite[Lemma~6.2.1]{Larder}.
\goodbreak

\begin{lemma}\label{L:cSS*cocompl}\hfill
\begin{enumerater}
\item
The categories~$\cA$, $\cB$, and~$\cS$ have all directed colimits and all products.

\item
Both functors~$\Phi$ and~$\Psi$ are $\go$-continuous.

\item\label{LLprojwit}
Every double arrow $\psi\colon\Psi(C)\Rightarrow S$, where $C\in\cB$ and $S\in\cS$, has a projectability witness with respect to the functor~$\Psi$.

\end{enumerater}
\end{lemma}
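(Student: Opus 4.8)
The plan is to handle the three assertions separately, each by a routine verification. For the first assertion I would note that in all three categories products are formed componentwise and directed colimits are computed on underlying structures. Modularity is an equational property, hence stable under products and directed colimits; sectional complementation passes to products (complement componentwise) and to directed colimits (if $x\leq y$ in $\varinjlim L_i$, then $x$, $y$, and a complement $z$ of $x$ below $y$ already occur at some stage, and the $0$-lattice transition maps carry the relations $x\wedge z=0$ and $x\vee z=y$ into the colimit), so $\cS$ has all products and directed colimits. For $\cB$, von~Neumann regularity is the sentence $\forall x\,\exists y\,(x=xyx)$, which is inherited by products (pick $y$ componentwise) and by directed colimits (lift $x$ to a stage and use regularity there), so $\cB$ has all products and directed colimits. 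For $\cA$, morphisms preserve the chosen systems of $4\times4$ matrix units, so both a product and a directed colimit of objects of $\cA$ carry a canonical such system; the largeness axiom~\eqref{Eq:4/5ring} holds componentwise in a product, and in a directed colimit it holds because, given $e^Rx=xe^R=0$ in $\varinjlim R_i$, one may push $x$ forward to a stage at which both equations already hold (this is the one place where directedness of the index is used) and apply the axiom there.

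For the second assertion, observe that $\Psi=\LL$ while $\Phi=\LL\circ U$, where $U\colon\cA\to\cB$ is the forgetful functor, which obviously preserves directed colimits since these are computed on underlying rings; thus it suffices to show $\LL\colon\cB\to\cS$ is $\go$-continuous. Given a directed system $\vecm{R_i,f_i^j}{i\leq j}$ with colimit $\vecm{R,f_i}{i}$, the homomorphisms $\LL(f_i)\colon\LL(R_i)\to\LL(R)$ induce a homomorphism $\varinjlim\LL(R_i)\to\LL(R)$; it is surjective because every $xR\in\LL(R)$ has $x$ arising from some $R_i$, and it is injective because in a von~Neumann regular ring $xR\subseteq yR$ if{f} $x\in yR$, and any such membership, holding in $R$, already holds at a finite stage. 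Hence $\LL$, and therefore $\Phi$ and $\Psi$, are $\go$-continuous.

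The third assertion is exactly Gillibert and Wehrung \cite[Lemma~6.2.1]{Larder}, which I would simply invoke: for a surjective homomorphism $\psi\colon\LL(C)\twoheadrightarrow S$ with $C$ von~Neumann regular and $S$ sectionally complemented modular, the kernel of $\psi$ is a neutral ideal of $\LL(C)$, hence corresponds to a two-sided ideal $I$ of $C$; then $C/I$ is von~Neumann regular, the map $\LL(C/I)\to S$ induced by $\psi$ is an isomorphism, and the projection $C\twoheadrightarrow C/I$ serves as the required projectability witness with respect to $\Psi$. Overall there is no real obstacle here: the statement is deliberately elementary, everything but the largeness axiom in directed colimits of $\cA$ is pure bookkeeping, and the delicate item on projectability witnesses has already been established in the cited reference.
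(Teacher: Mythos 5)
Your proposal is correct and matches what the paper intends: the paper explicitly omits the proof as "mere routine facts," and your componentwise/finite-stage verifications for products, directed colimits, and the $\go$-continuity of $\LL$ are exactly the routine checks being alluded to. For item (iii) you invoke Gillibert and Wehrung, Lemma~6.2.1, which is precisely the reference the paper itself cites, so there is nothing to add.
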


\subsection{The $\LL$-commutative diagram~$\vec{R}$}
\label{Su:vecR}

We shall now consider the diagram $\vec{L}\eqdef\vecm{L_{\xi}\,,\,\be_{\xi}^{\eta}}{\xi\leq\eta<\go_1}$ of~$\cS$ introduced in Wehrung \cite[\S~7]{NonCoord} and denoted there by~$\vec{A}$, of sectionally complemented modular lattices and $0$-lattice embeddings.
Although the full construction of that diagram is quite complex, we will need here only a small part of its underlying information.
The main part of the construction, which we will not need here, is the one of an increasing $\go_1$-sequence $\vecm{A_{\xi}}{\xi<\go_1}$ of countable unital von Neumann regular rings, all sharing the same unit, and an $\utrr$-increasing sequence $\vecm{\tilde{\xi}}{\xi<\go_1}$ of idempotents $\tilde{\xi}\in A_{\xi}$\,.
We set $B_{\xi}\eqdef A_{\xi}^{5\times 5}$ (the ring of all square matrices of order~$5$ over~$A_{\xi}$), with canonical system of matrix units $\vecm{e_{ij}}{0\leq i,j\leq 4}$ and $e\eqdef\sum_{i=0}^3e_{ii}$\,.
Further, we set $b_{\xi}\eqdef e+\tilde{\xi}\cdot e_{44}$ and $L_{\xi}\eqdef\LL(B_{\xi})\dnw b_{\xi}B_{\xi}$, for any~$\xi<\go_1$\,.
We need to know that $\vecm{b_{\xi}}{\xi<\go_1}$ is an $\utrr$-increasing $\go_1$-sequence of idempotents and that all matrix units~$e_{ij}$\,, for $0\leq i,j\leq3$, belong to all $b_{\xi}B_{\xi}b_{\xi}$\,.
For $\xi<\eta<\go_1$\,, $b_{\xi}=b_{\eta}b_{\xi}\neq b_{\xi}b_{\eta}$\,.

Let~$\vec{R}$ be constructed from~$\vec{B}$ as~$\vec{R}'$ is constructed from~$\vec{R}$ in the proof of Lemma~\ref{L:LiftaRcibR}.
Hence $\vec{R}$ is an $\go_1$-indexed, $\LL$-com\-mu\-ta\-tive diagram in~$\cB$ such that $\vec{L}=\LL\vec{R}$.
In particular, whenever $\xi\leq\eta<\go_1$\,, $R_{\xi}=b_{\xi}B_{\xi}b_{\xi}$ and all morphisms in $\vec{R}(\xi,\eta)$ are finite compositions of right multiplications by elements of the form~$b_{\zeta}$ where $\xi\leq\zeta\leq\eta$.
Since all the matrix units~$e_{ij}$ belong to all $b_{\zeta}B_{\zeta}b_{\zeta}$\,, it follows that they are fixed points of every morphism in $\vec{R}(\xi,\eta)$.
It follows that, expanding each~$R_{\xi}$ by $\mu\eqdef\vecm{e_{ij}}{0\leq i,j\leq 3}$, we obtain a $\Phi$-com\-mu\-ta\-tive diagram $(\vec{R},\mu)$ in~$\cA$ such that $\Phi(\vec{R},\mu)=\LL\vec{R}=\vec{L}$.

\textbf{Now a complication arises}.
We would like to apply the Extended CLL (i.e., Lemma~\ref{L:ExtCLL}) to the $\Phi$-com\-mu\-ta\-tive diagram $(\vec{R},\mu)$, together with the Boosting Lemma (viz. Lemma~\ref{L:Boosting}).
The former step causes no difficulty, \emph{via} Gillibert \cite[Proposition~4.6]{Gill2009} (stating that every tree has ``many lifters'').
However, the latter step is definitely a problem, as the poset~$\go_1$ is not lower finite.

\textbf{In order to get around that difficulty}, we need to resort to the \emph{pseudo-retracts} introduced in Gillibert and Wehrung \cite[\S~3.6]{Larder}.
By definition, a \emph{pseudo-retraction} for a pair $(P,Q)$ of posets is a pair of isotone maps $\eps\colon P\to\Id{Q}$ and $\pi\colon Q\to P$ such that~$\pi[\eps(p)]$ is cofinal in~$P\dnw p$ whenever $p\in P$.
Then we say that~$P$ is a \emph{pseudo-retract} of~$Q$.
By Gillibert and Wehrung \cite[Lemma~3.6.6]{Larder}, every \ajs~$P$ is a pseudo-retract of some lower finite \ajs~$Q$ with the same cardinality.
We may assume, in addition, that if~$P$ has a zero, then so does~$Q$ (replace~$Q$ by~$Q'\eqdef Q\upw q_0$ for any $q_0\in\eps(0)$, set $\eps'(p)\eqdef\eps(p)\upw q_0$ and $f'\eqdef f\res_{Q'}$).
We apply the latter observation to $P:=\go_1$\,; denote by $\eps\colon\go_1\to\Id{Q}$, $\pi\colon Q\to\go_1$ the implied pseudo-retraction.
Since the diagram~$\vec{R}$ is $\LL$-com\-mu\-ta\-tive, so is the composite~$\vec{R}\pi$; in particular, $\LL\vec{R}\pi$ is a $Q$-indexed com\-mu\-ta\-tive diagram in~$\cS$.

\begin{lemma}\label{L:LRpiNotLift}
For any $Q$-indexed com\-mu\-ta\-tive diagram $\vec{R}'\in\cB^Q$, there is no natural transformation $\vec{\chi}\colon\LL\vec{R}'\Todot\LL\vec{R}\pi$ in~$\cS^{\Rightarrow}$.
\end{lemma}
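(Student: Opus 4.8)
The plan is to argue by contradiction: assuming that $\LL\vec{R}\pi$ admits a lift over~$Q$ by von~Neumann regular rings, I would transfer it, \emph{via} the pseudo-retraction $(\eps,\pi)$, to a lift of the commutative diagram $\vec{L}=\LL\vec{R}$ over~$\go_1$, thereby contradicting the established non-liftability of~$\vec{L}$. Observe first that, since $\LL\vec{R}=\vec{L}$ is commutative, $\LL\vec{R}\pi$ is just the precomposition $\vec{L}\pi$, so $(\LL\vec{R}\pi)_q=L_{\pi(q)}$ with transition morphisms $\be_{\pi(q)}^{\pi(q')}$ for $q\leq q'$ in~$Q$.

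So suppose $\vec{R}'\in\cB^Q$ is a commutative diagram and $\vec{\chi}=\vecm{\chi_q}{q\in Q}\colon\LL\vec{R}'\Todot\LL\vec{R}\pi$ is a natural transformation with each $\chi_q\colon\LL(R'_q)\twoheadrightarrow L_{\pi(q)}$ surjective. For each $\xi<\go_1$ the set~$\eps(\xi)$ is a nonempty directed ideal of~$Q$, so $R''_{\xi}\eqdef\varinjlim_{q\in\eps(\xi)}R'_q$ exists in~$\cB$ (directed colimits of von~Neumann regular rings are von~Neumann regular). As~$\eps$ is isotone, the inclusions $\eps(\xi)\subseteq\eps(\eta)$, for $\xi\leq\eta$, induce a commutative diagram $\vec{R}''\eqdef\vecm{R''_{\xi}}{\xi<\go_1}$ in~$\cB^{\go_1}$. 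Applying the $\go$-continuous functor~$\LL$ (Lemma~\ref{L:cSS*cocompl}) gives $\LL(R''_{\xi})=\varinjlim_{q\in\eps(\xi)}\LL(R'_q)$; taking the colimit, over $q\in\eps(\xi)$, of the components of~$\vec{\chi}$ then yields a morphism $\chi''_{\xi}\colon\LL(R''_{\xi})\to\varinjlim_{q\in\eps(\xi)}L_{\pi(q)}$, which is surjective because a directed colimit of surjective homomorphisms in~$\cS$ is surjective.

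It remains to identify the target and check naturality. Since $\pi[\eps(\xi)]$ is cofinal in the chain $\go_1\dnw\xi$, whose greatest element is~$\xi$, we have $\xi\in\pi[\eps(\xi)]$, so, by commutativity of~$\vec{L}$, $\varinjlim_{q\in\eps(\xi)}L_{\pi(q)}=L_{\xi}$; thus $\chi''_{\xi}\colon\LL(R''_{\xi})\twoheadrightarrow L_{\xi}$. The compatibility of~$\vec{\chi}$ with the transition morphisms of~$\vec{R}'$ and of~$\vec{L}\pi$, together with the universal property of directed colimits, shows that $\vec{\chi}''\eqdef\vecm{\chi''_{\xi}}{\xi<\go_1}$ is a natural transformation from $\LL\vec{R}''$ to~$\vec{L}$ in~$\cS^{\Rightarrow}$. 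This contradicts the fact that there is no commutative diagram $\vec{T}\in\cB^{\go_1}$ admitting a natural transformation $\LL\vec{T}\Todot\vec{L}$ in~$\cS^{\Rightarrow}$, established in Gillibert and Wehrung \cite[Ch.~6]{Larder} (see also Wehrung~\cite{NonCoord}). The main obstacle is exactly the bookkeeping of this transfer — building a genuine $\go_1$-indexed commutative diagram and a natural transformation whose components remain surjective, and pinning down $\varinjlim_{q\in\eps(\xi)}L_{\pi(q)}=L_{\xi}$ from the cofinality of $\pi[\eps(\xi)]$ in $\go_1\dnw\xi$; the substantive obstruction, that~$\vec{L}$ has no such lift over~$\go_1$, is imported from~\cite{Larder,NonCoord}.
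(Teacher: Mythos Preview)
Your proof is correct and follows essentially the same approach as the paper: transfer the hypothetical $Q$-indexed lift along the pseudo-retraction by forming directed colimits over the ideals~$\eps(\xi)$, identify the resulting targets with~$L_{\xi}$ via cofinality of~$\pi[\eps(\xi)]$ in~$\go_1\dnw\xi$, and then invoke the known non-liftability of~$\vec{L}$ over~$\go_1$. The paper compresses your entire colimit-over-fibers construction into a one-line citation of Gillibert and Wehrung \cite[Lemma~3.7.1]{Larder} and appeals specifically to Wehrung \cite[Lemma~7.4]{NonCoord} for the final contradiction.
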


\begin{proof}
Forming directed colimits over the fibers of~$\pi$ and arranging those directed colimits into a $P$-indexed diagram (cf. Gillibert and Wehrung \cite[Lemma~3.7.1]{Larder}), we obtain that any natural transformation $\vec{\chi}\colon\LL\vec{R}'\Todot\LL\vec{R}\pi$ would yield a natural transformation $\LL\vec{R}'\Todot\LL\vec{R}$.
However, we established in Wehrung \cite[Lem\-ma~7.4]{NonCoord} that the latter cannot occur.
\end{proof}

\begin{theorem}\label{T:LLAntiElt}
Assume that for every cardinal~$\gl$ there exists a cardinal~$\gk$ such that $(\gk,{<}\go,\gl)\rightarrow\aleph_1$\,.
Then the class of all coordinatizable sectionally complemented modular lattices with a large $4$-frame is anti-el\-e\-men\-tary.
\end{theorem}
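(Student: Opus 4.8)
The plan is to imitate the proofs of Theorems~\ref{T:NotCev} and~\ref{T:NonPermlgrp}, using as indexing poset the lower finite \ajs~$Q$ with zero built in Subsection~\ref{Su:vecR} (of cardinality~$\aleph_1$, having~$\go_1$ as a pseudo-retract \emph{via} $\eps\colon\go_1\to\Id{Q}$ and $\pi\colon Q\to\go_1$), together with the $\Phi$-com\-mu\-ta\-tive diagram $(\vec{R}\pi,\mu)$ in~$\cA$ satisfying $\Phi(\vec{R}\pi,\mu)=\LL\vec{R}\pi$, and the categorical setup $(\cA,\cB,\cS,\cS^{\Rightarrow},\Phi,\Psi)$ of Subsection~\ref{Su:LLCat}. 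To meet Definition~\ref{D:AntiElt}, fix a cardinal~$\gq$; I would pick a regular cardinal $\gl\geq\gq+\aleph_2$, so that $\cf(\gl)=\gl>\aleph_1=\card{Q}$ and hence every element of~$Q$ has fewer than~$\cf(\gl)$ upper covers. The large cardinal hypothesis, applied to~$\gl$, furnishes~$\gk$ with $(\gk,{<}\go,\gl)\rightarrow\aleph_1$, which I may assume satisfies $\gk>\gl$ (enlarge~$\gk$, the relation being monotone in its first argument). Lemma~\ref{L:raw2leadsto} (applicable since~$Q$ is lower finite of cardinality~$\aleph_1$) then gives $(\gk,{<}\gl)\leadsto Q$, and Lemma~\ref{L:355Larder} shows that $Q\seq{\gk}$ is a standard $\gl$-lifter of~$Q$, whose distinguished family~$\bX$ is the set of principal ideals of~$Q\seq{\gk}$; in particular~$\bX^=$ is well-founded, $Q\seq{\gk}$ being lower finite by Lemma~\ref{L:PseqKajs}. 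By Lemma~\ref{L:P(f,gS)}, $U\mapsto\xF(Q\seq{U})$ is an $\go$-continuous functor $\Powi(\gk)\to\Bool_Q$, and composing it with the $\gl$-continuous functor ${}_{-}\otlF(\vec{R}\pi,\mu)$ (Definition~\ref{D:AotmFS}, Lemma~\ref{L:otmFexists}) yields a $\gl$-continuous functor $\gC\colon\Powi(\gk)\to\cS$, $U\mapsto\xF(Q\seq{U})\otlF(\vec{R}\pi,\mu)$.

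For the ``positive'' half I would show that $\gC(X)\in\rng\Phi$ for every $\gl^+$-small $X\subseteq\gk$; by Lemma~\ref{L:cT24-frame} such an object is a coordinatizable sectionally complemented modular lattice with a large $4$-frame. When $\card{X}<\gl$ this is immediate since $\gC(X)=\Phi\pI{\xF(Q\seq{X})\bt(\vec{R}\pi,\mu)}$, and when $\card{X}=\gl$ a bijection $X\to\gl$ reduces it to $X=\gl$, for which I would invoke the Boosting Lemma (Lemma~\ref{L:Boosting}): every bounded chain of~$Q$ is finite ($Q$ being lower finite), $\cS$ has all $2^{<\gl}$-products and all colimits indexed by~$\gl$, $\Phi$ is $\go$-continuous (Lemma~\ref{L:cSS*cocompl}), $(\vec{R}\pi,\mu)$ is $\Phi$-com\-mu\-ta\-tive, and $\xF(Q\seq{\gl})=\varinjlim_{\xi<\gl}\xF(Q\seq{\xi})$ with each $\xF(Q\seq{\xi})$ $\gl$-pre\-sentable (Corollary~\ref{C:glPresBoolP}, since $\card{Q\seq{\xi}}+\card{Q}<\gl$). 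Thus $\gC(\gl)\in\rng\Phi$.

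For the ``negative'' half I would argue by contradiction: assume $\gC(\gk)=\xF(Q\seq{\gk})\otlF(\vec{R}\pi,\mu)$ is coordinatizable, say $\gC(\gk)\cong\LL(B)$ with $B\in\cB$, and take the resulting (surjective, hence double-arrow) isomorphism $\chi\colon\Psi(B)\Rightarrow\gC(\gk)$. I would then apply the Extended CLL (Lemma~\ref{L:ExtCLL}) with $P:=Q$, $\mu:=\go$, the standard $\gl$-lifter $Q\seq{\gk}$, the diagram $(\vec{R}\pi,\mu)$, the data of Subsection~\ref{Su:LLCat}, and $\cB^{\dagger}$ the full subcategory of $\gl$-small von~Neumann regular rings. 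Here (WF) was noted above; (COND$(\go)$), (PROD$(\go)$), (COLIM$(\go)$) are trivial or part of Lemma~\ref{L:cSS*cocompl}; (PROJ$(\go)$) holds because a ring projection induces a surjection of~$\LL$'s and directed colimits of surjections are surjections; (PRES$(\gl)$) holds because a $\gl$-small lattice is weakly $\gl$-pre\-sentable in~$\cS$; and (LS$(\gl)$) follows from a routine L\"owenheim--Skolem argument (as for Lemmas~\ref{L:LS4lgrps} and~\ref{L:LS4V(R)}, using that every $R_{\pi(p)}$ is countable). The CLL then produces a com\-mu\-ta\-tive diagram $\vec{B}'\in\cB^Q$ and a natural transformation $\vec{\chi}\colon\Psi\vec{B}'\Todot\Phi(\vec{R}\pi,\mu)=\LL\vec{R}\pi$ in~$\cS^{\Rightarrow}$, contradicting Lemma~\ref{L:LRpiNotLift}. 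Hence $\gC(\gk)$ is not coordinatizable, so it lies outside the class in question. With the previous paragraph and the $\gl$-continuity of~$\gC$, this verifies Definition~\ref{D:AntiElt}; exactly as in Theorem~\ref{T:NotCev}, replacing~$\gC$ by $X\mapsto\gC\pI{\gl\cup(\gl\overset{.}{+}X)}$ moreover upgrades the transition morphisms to $\scL_{\infty\gl}$-el\-e\-men\-tary embeddings by Proposition~\ref{P:EltEq}.

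The main obstacle, and the reason for the large cardinal hypothesis, is the twofold role of the pseudo-retract~$Q$: the non-liftability input (Lemma~\ref{L:LRpiNotLift}) is only available for the reindexed diagram~$\LL\vec{R}\pi$, which forces the Extended CLL to be run over~$Q$ rather than over~$\go_1$, and since~$Q$ is not a forest this is precisely where a Ramsey-type partition relation is needed to manufacture the $\gl$-lifter~$Q\seq{\gk}$; at the same time it is the lower finiteness of~$Q$ (not shared by~$\go_1$) that makes the Boosting Lemma, hence the positive half, available. The quantitative point to be careful about is that a single regular $\gl\geq\gq+\aleph_2$ must simultaneously bound the number of upper covers in~$Q$ (for Lemma~\ref{L:355Larder}) and keep the $\xF(Q\seq{\xi})$ ($\xi<\gl$) $\gl$-pre\-sentable; everything else is a routine transcription of the arguments of Theorems~\ref{T:NotCev} and~\ref{T:NonPermlgrp}.
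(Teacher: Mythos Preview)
Your argument is essentially the paper's own proof, and the overall strategy---build $\gC(U)=\xF(Q\seq{U})\otlF(\vec{R}\pi,\mu)$, use the Boosting Lemma for the positive half and the Extended CLL together with Lemma~\ref{L:LRpiNotLift} for the negative half---is exactly right.

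There is one slip to fix. You define~$\gC$ using the functor ${}_{-}\otlF(\vec{R}\pi,\mu)$ (superscript~$\gl$), but then invoke Lemma~\ref{L:ExtCLL} with $\mu:=\go$. In the statement of that lemma the double arrow is $\chi\colon\Psi(B)\Rightarrow\xF(X)\otmF\vec{A}$, with the \emph{same} parameter~$\mu$ indexing the condensate; so your choice of condensate forces $\mu:=\gl$, as the paper indeed takes ($\gl=\mu=\gq^+$). This is harmless: all the CLL hypotheses you check for $\mu=\go$ remain valid for $\mu=\gl$, since~$Q$ is lower finite (hence a conditional DCPO), $\cA$ has all products, $\cS$ has all directed colimits, and $\gl$-extended $\Phi$-projections are still $\gl$-directed colimits of surjective lattice maps, hence surjective. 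A related cosmetic point: in your Boosting Lemma check you write ``$\cS$ has all $2^{<\gl}$-products'', but in the notation of Subsection~\ref{Su:LLCat} the source category of~$\Phi$ is~$\cA$, not~$\cS$; both have all products by Lemma~\ref{L:cSS*cocompl}, so nothing is lost.
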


\begin{proof}
Let~$\gk$ and~$\gl$ satisfy $(\gk,{<}\go,\gl)\rightarrow\aleph_1$\,.
It follows from Lemma~\ref{L:raw2leadsto}, applied to the poset~$Q$, that $(\gk,{<}\gl)\leadsto Q$.
By Lemma~\ref{L:355Larder}, it follows that for every infinite cardinal~$\gq\geq\aleph_1$\,, $Q$ has a standard $\gq^+$-lifter~$Q\seq{\gk}$ for some infinite cardinal~$\gk$.
The assumptions (WF)--(LS$(\gl)$) of the Extended CLL (i.e., Lemma~\ref{L:ExtCLL}), with $\gl=\mu=\gq^+$ and defining~$\cB^{\dagger}$ as the full subcategory of~$\cB$ consisting of all $\gl$-small von Neumann regular rings, are easily seen to be satisfied.

\setcounter{claim}{0}

\begin{claim}\label{Cl:LLBotgkNon}
The lattice $\xF(Q\seq{\gk})\otlF\vec{R}\pi$ is not coordinatizable.
\end{claim}

\begin{cproof}
Suppose otherwise, so there are a von Neumann regular ring~$B$ and a double arrow $\chi\colon\LL(B)\Rightarrow\xF(Q\seq{\gk})\otlF\vec{R}\pi$.
By Lemma~\ref{L:ExtCLL}, this implies the existence of a natural transformation $\vec{\chi}\colon\LL\vec{R}'\Todot\LL\vec{R}\pi$ in~$\cS^{\Rightarrow}$; in contradiction with Lemma~\ref{L:LRpiNotLift}.
\end{cproof}

\begin{claim}\label{Cl:LLglRepr}
The lattice $\xF(Q\seq{\gl})\otlF\vec{R}\pi$ belongs to the range of~$\Phi$; thus it is coordinatizable and it has a large $4$-frame.
\end{claim}

\begin{cproof}
Since $Q\seq{\gl}$ is the directed union of all $Q\seq{\xi}$ for $\xi<\gl$, we get
$\xF(Q\seq{\gl})=\varinjlim_{\xi<\gl}\xF(Q\seq{\xi})$, so the first part of the statement follows from the Boosting Lemma (viz. Lemma~\ref{L:Boosting}).
For every object $(R,\mu_R)$ of~$\cA$, $\tau_R$ is a large $4$-frame of~$\LL(R)$ (cf. Lemma~\ref{L:cT24-frame}) and $\Phi(R,\mu_R)=\LL(R)$ is coordinatizable.
\end{cproof}

Now the assignment $X\mapsto\xF(Q\seq{X})\otlF\vec{R}\pi$ naturally extends to a $\gl$-continuous functor from~$\Powi(\gk)$ to~$\cS$.
The desired conclusion then follows from Claims~\ref{Cl:LLBotgkNon} and~\ref{Cl:LLglRepr}.
\end{proof}

\begin{corollary}\label{C:LLAntiElt}
If there are arbitrarily large Erd\H{o}s cardinals, then the class of all coordinatizable sectionally complemented modular lattices with a large $4$-frame is anti-el\-e\-men\-tary.
\end{corollary}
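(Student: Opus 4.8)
The plan is to deduce Corollary~\ref{C:LLAntiElt} from Theorem~\ref{T:LLAntiElt}, for which it suffices to show that the existence of arbitrarily large Erd\H{o}s cardinals implies the combinatorial hypothesis of that theorem, namely: \emph{for every cardinal~$\gl$ there is a cardinal~$\gk$ with $(\gk,{<}\go,\gl)\rightarrow\aleph_1$}. Recall that the Erd\H{o}s cardinal~$\gk(\alpha)$, when defined, is the least~$\gk$ such that $\gk\rightarrow(\alpha)^{{<}\go}_2$; the assumption states that~$\gk(\alpha)$ exists for arbitrarily large ordinals~$\alpha$ (equivalently, for every cardinal~$\mu$ there is an Erd\H{o}s cardinal exceeding~$\mu$). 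I will also use the classical fact that $\gk\rightarrow(\alpha)^{{<}\go}_2$ upgrades to $\gk\rightarrow(\alpha)^{{<}\go}_{\nu}$ for every $\nu<\gk$, valid for the exponent~${<}\go$.

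First I would fix~$\gl$ (infinite, the finite case being easy) and pick a cardinal~$\alpha>2^{\gl}$ for which $\gk\eqdef\gk(\alpha)$ exists; then $\gk\geq\alpha>2^{\gl}$, hence $\gk\rightarrow(\alpha)^{{<}\go}_{2^{\gl}}$. Let $F\colon[\gk]^{{<}\go}\to[\gk]^{{<}\gl}$ be arbitrary. For a finite $s\subseteq\gk$, writing its elements in increasing order, let $c(s)$ record, for every subset $t\subseteq s$ and every member of~$F(t)$ (listed in increasing order), the ``slot'' it occupies with respect to~$s$: either it equals the $j$-th element of~$s$, or it lies in the $j$-th of the $\card{s}+1$ gaps determined by~$s$ (the two unbounded ones included). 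Since $\card{s}<\go$ and every $F(t)$ has fewer than~$\gl$ members, there are at most $2^{\gl}$ possible values of~$c$ on $n$-element sets for each~$n$, hence at most $2^{\gl}$ overall; fixing an injection of the range of~$c$ into~$2^{\gl}$ and applying $\gk\rightarrow(\alpha)^{{<}\go}_{2^{\gl}}$ produces a $c$-homogeneous set $I\subseteq\gk$ of order type~$\alpha$.

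Next I would thin~$I$ to a free set. Since $\alpha>2^{\gl}$, the order type of~$I$ exceeds $\gl^{+}\cdot\go_1$, so one may choose $H\subseteq I$ with $\card{H}=\aleph_1$ such that every gap of~$H$ contains at least $\gl^{+}$ elements of~$I$. Suppose~$H$ is not free: there are $X\in[H]^{{<}\go}$ and $y\in(F(X)\cap H)\setminus X$. Put $s_{1}\eqdef X\cup\set{y}$, and choose $y'\in I$ lying in the same gap of~$X$ as~$y$, with $y'\notin F(X)\cup X\cup\set{y}$ --- possible, as that gap meets~$I$ in at least $\gl^{+}>\card{F(X)}$ points; put $s_{2}\eqdef X\cup\set{y'}$. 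Inserting~$y$, resp.~$y'$, into the same gap of~$X$ leaves the positions of the elements of~$X$ unchanged, so in $c(s_1)$ and $c(s_2)$ the record attached to the subset~$X$ is computed from the very same data (same $F(X)$, listed the same way), except that our member~$y$ of~$F(X)$, appearing at the same position in both lists, occupies an \emph{element-slot} of~$s_1$ but a \emph{gap-slot} of~$s_2$. Hence $c(s_1)\neq c(s_2)$, contradicting $c$-homogeneity of~$I$ (note $\card{s_1}=\card{s_2}$). Thus~$H$ witnesses $(\gk,{<}\go,\gl)\rightarrow\aleph_1$, and Theorem~\ref{T:LLAntiElt} gives the conclusion.

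I expect the delicate points to be the precise bookkeeping in the colouring~$c$ --- arranging the records so that, for sets of equal cardinality, ``equal colour'' genuinely forces the element-slot versus gap-slot discrepancy above --- together with the (routine) verification that the gaps of~$H$ can be kept large. The colour-number upgrade is classical for the exponent~${<}\go$, and in fact the entire implication ``arbitrarily large Erd\H{o}s cardinals $\Rightarrow$ $(\gk,{<}\go,\gl)\rightarrow\aleph_1$ for all~$\gl$'' can be quoted directly from Erd\H{o}s \emph{et al.}~\cite{EHMR}; in that case the proof collapses to that citation followed by the appeal to Theorem~\ref{T:LLAntiElt}.
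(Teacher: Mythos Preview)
Your proposal is correct and matches the paper's approach: the paper states the corollary without proof, treating it as an immediate consequence of Theorem~\ref{T:LLAntiElt} together with the well-known fact (from~\cite{EHMR}) that arbitrarily large Erd\H{o}s cardinals supply, for every~$\gl$, a cardinal~$\gk$ with $(\gk,{<}\go,\gl)\rightarrow\aleph_1$. Your closing remark that the argument ``collapses to that citation followed by the appeal to Theorem~\ref{T:LLAntiElt}'' is exactly what the paper does; the explicit colouring argument you sketch is a correct expansion of that citation but goes beyond what the paper records.
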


By Devlin and Paris~\cite{DevPar1972} (see also Koepke \cite{Koep1984,Koep1989} for further relative consistency strength results), the existence of~$\gk$ such that $(\gk,{<}\go,\aleph_1)\rightarrow\aleph_1$ entails the existence of~$0^\sharp$ (thus it is a large cardinal axiom).
In the author's opinion, the apparent reliance of Theorem~\ref{T:LLAntiElt} on the large cardinal assumption $\forall\gl\exists\gk$ $(\gk,{<}\go,\gl)\rightarrow\aleph_1$ is accidental: at the time the main counterexample of Wehrung~\cite{NonCoord} (of a non-coordinatizable sectionally complemented modular lattice with a large $4$-frame) was constructed, the Uniformization Lemma (Lemma~\ref{L:UnifLem}) was not known, so the most natural (although not easy) way to obtain the counterexample seemed to be the application of the condensate construction to an $\go_1$-indexed diagram; and it seems very unlikely that no finitely indexed diagram could play a similar role.

\section{Loose ends}\label{S:Loose}
Anti-elementarity of a class~$\cC$ produces pairs~$(A,B)$, where $A\in\cC$, $B\notin\cC$, and~$A$ is an elementary submodel of~$B$ with respect to an arbitrarily large infinitary language.
By contrast, the (definitely condensate-like) argument of the proof of Wehrung \cite[Theorem~9.4]{CoordCX} produces~$A\notin\cC$ and~$B\in\cC$ for the given projective class~$\cC$.
This raises the question whether there are different sets of hypotheses under which a given class is not elementary with respect to arbitrarily large infinitary languages --- possibly by using variants of the condensate construction.

Another question, on which we did not elaborate in this paper, is which further lower bounds for logical complexity could be reached \emph{via}  anti-el\-e\-men\-tar\-ity of a class~$\cC$.
For example, for which logics, other than standard~$\scL_{\infty\gl}$\,, would anti-el\-e\-men\-tar\-ity entail non-rep\-re\-sentabil\-ity?
Investigate freshness and anti-el\-e\-men\-tar\-ity in relation to general categorical model theory.


\providecommand{\noopsort}[1]{}\def\cprime{$'$}
  \def\polhk#1{\setbox0=\hbox{#1}{\ooalign{\hidewidth
  \lower1.5ex\hbox{`}\hidewidth\crcr\unhbox0}}}
  \providecommand{\bysame}{\leavevmode\hbox to3em{\hrulefill}\thinspace}
\providecommand{\MR}{\relax\ifhmode\unskip\space\fi MR }
\providecommand{\MRhref}[2]{%
  \href{http://www.ams.org/mathscinet-getitem?mr=#1}{#2}
}
\providecommand{\href}[2]{#2}

\end{document}